\newtheorem{theorem}{Theorem}
\newtheorem{lemma}{Lemma}
\newtheorem{definition}{Definition}
\newtheorem{proposition}{Proposition}
\newtheorem{example}{Example}
\newtheorem{corollary}{Corollary}
\begin{document}
\binoppenalty = 10000 %
\relpenalty   = 10000 %

\pagestyle{headings} \makeatletter

\renewcommand{\@oddhead}{\raisebox{0pt}[\headheight][0pt]{\vbox{\hbox to\textwidth{{Lindstr\"{o}m theorem for intuitionistic first-order logic}\hfill \strut\thepage}\hrule}}}
\makeatother

\title{A Lindstr\"{o}m theorem \\
 for intuitionistic first-order logic}
\author{Grigory Olkhovikov}

\address[a]{ Department of Philosophy I\\
Ruhr University Bochum\\
Bochum, Germany }

\address[a]{ Department of Philosophy \\
Ural Federal University\\
Ekaterinburg, Russia }

\email[a]{grigory.olkhovikov@rub.de, grigory.olkhovikov@gmail.com}

\author{Guillermo Badia}

\address[b]{
School of Historical and Philosophical Inquiry\\
University of Queensland\\
Brisbane, Australia }

\address[b]{
Institute of Philosophy and Scientific Method\\
Johannes Kepler University\\
Linz, Austria } 

 \email[b]{guillebadia89@gmail.com}

\author{Reihane Zoghifard}

\address[c]{ School of Mathematics\\
	Institute for Research in Fundamental Sciences (IPM), PO Box 19395-5746\\
	Tehran, Iran}

\email[c]{r.zoghi@gmail.com}

\date{}
\maketitle

\begin{abstract}
We extend the main result of \cite{baok} to the first-order
	intuitionistic logic (with and without equality), showing that it is the maximal (with respect
	to expressive power) abstract logic satisfying a certain form of
	compactness, the Tarski union property and preservation under
	asimulations. A similar result is also shown for the
	intuitionistic logic of constant domains.
	
	\bigskip
	
	\emph{Keywords:} Lindstr\"{o}m theorem, first-order logic,
	intuitionistic logic, constant domains, abstract model theory,
	asimulations.
	
	\bigskip
	
	\emph{Math subject classification:}  Primary 03C95; Secondary 03B55.
\end{abstract}

\section{Introduction}

In 1969, 
Per Lindstr\"{o}m published his most important contribution to mathematical logic \cite{lin}.  This article contained a new theoretical framework where a logic  was  treated, in an abstract manner, as a language equipped with a
semantics (sometimes also called a ``model-theoretic language'' \cite{fe}) rather than as a set of theorems or derivations in a
certain formal system. Lindstr\"{o}m's central result
characterized classical first-order logic as a maximal language, in a
wide class of languages equipped with a similar semantics,
enjoying a combination of desirable properties which basically
allowed to  derive the bulk of classical model-theoretic results
known about this logic. Thus, classical first-order logic was,
roughly, the largest logic allowing for the Compactness  and  L\"owenheim-Skolem theorems,
the first results of first-order model theory (for a textbook presentation of the result see \cite{flum}). Lindstr\"om's landmark 11-page paper subsequently generated several other model-theoretic characterizations of classical first-order logic, all obtained by Lindstr\"om himself \cite{lin2, lin3, lin4}. These results contributed to clarify
 the very notion of first-order model theory in the context of the model theory of other logics with more expressive power.  \emph{Abstract model theory} \cite{bar1, barfer} was born out of these investigations as a field where model theory itself is the object of meta-mathematical investigation.

Naturally, Lindstr\"om's breakthrough result inspired similar
characterizations for other types of logic, including modal
and non-classical ones \cite{mm, vanB, otto, enqvist}. Many logics in this group can be described
as the fragments of a corresponding classical first-order logic preserved
under an appropriate simulation notion by a variant of the methods
used in the proof of van Benthem modal characterization theorem.
Therefore, if one only cares about placing a given modal or
non-classical logic among its possible extensions that are still
fragments of classical first-order logic, this problem has been addressed in substantial generality by the first author in \cite{o2}.

In \cite{baok}, the first and second authors (inspired by \cite{otto, enqvist}) provided a Lindstr\"om theorem for intuitionistic propositional  logic. In particular, they showed that only the model theory of  intuitionistic propositional  logic enjoyed a Compactness property, the Tarski Union Property and preservation under asimulations. Asimulations had been introduced by the first author in studying the expressive power of intuitionistic propositional  logic in \cite{o}, and they turned out to be  equivalent to the intuitionistic bisimulations of  \cite{kurto}. More or less simultaneously with the writing of \cite{baok} by the first two authors, the third author had successfully co-authored an extension of the methods of \cite{otto} to prove a Lindstr\"om theorem for first-order modal logic in \cite{zopo}. It was also the third author, who brought in renewed enthusiasm to the efforts by the first and second authors to extend the techniques from  \cite{baok} to cover the first-order intuitionistic case.
This generalization ended up being rather non-trivial due to the complexities of intuitionistic first-order logic, and hence  the present article.

This paper is arranged as follows: Section \ref{S:Prel} is concerned with the technical  preliminaries on intuitionistic first-order logic without equality, in particular, notions of asimulation, unravellings, submodels and theories (among others) are introduced. Section \ref{S:Standard} describes what we will call \emph{standard intuitionistic logics}, specifically, six logical
systems including the intuitionistic logic of constant domains and the addition of the most common variants of equality.  Section \ref{S:Abstract} presents the notion of an \emph{abstract intuitionistic logic} and the three properties of logics that will be involved in our main result. Section \ref{S:main} (the main section) contains the relatively  lengthy proof  of the central theorem of the paper, showing that the Tarski Union Property, Compactness and preservation under asimulations characterize all the most typical versions of first-order intuitionistic logic (with and without identiy), including the intuitionistic logic of constant domains. Finally, Section \ref{S:conclusion} offers directions for further research.

\section{Intuitionistic first-order logic without identity}\label{S:Prel}

\subsection{Preliminaries}

In this subsection, we introduce the intuitionistic first-order
logic $\mathsf{IL}$ without identity, based on any set of
predicate letters of positive arity and individual constants. We
allow neither functions, nor improper predicate letters (i.e.
propositional letters) although the main results of our paper are
straightforwardly extendable to the languages with propositional
letters, and the same appears to be true for the languages with
functional symbols.

An ordered couple of sets $\Sigma = (Pred_\Sigma, Const_\Sigma)$
comprising all the predicate letters and constants allowed in a
given version of intuitionistic language will be called the
\emph{signature} of this language. Signatures will be denoted by
letters $\Sigma$ and $\Theta$. For a given signature $\Sigma$, the
elements of $Pred_\Sigma$ will be denoted by $P^n$ and $Q^n$,
where $n
> 0$ indicates the arity, and the elements of $Const_\Sigma$ will
be denoted by $c$, $d$, and $e$. All these notations and all of
the other notations introduced in this section can be decorated by
all types of sub- and superscripts. We will often use the notation
$\Sigma_n$ for the set of $n$-ary predicates in a given signature
$\Sigma$.

Even though we have defined signatures as ordered pairs, we will
somewhat abuse the notation in the interest of suggestivity and,
given a set $\Pi$ of predicates and a set $\Delta$ of individual
constants, we will denote by $\Sigma \cup \Pi \cup \Delta$ the
signature where the predicates from $\Pi$ are added to
$Pred_\Sigma$ with their respective arities and the constants from
$\Delta$ are added to $Const_\Sigma$.

If $\Sigma$ is a signature, then the set of formulas generated
from $\Sigma$ using the set of logical symbols $\{ \wedge, \vee,
\bot, \to, \forall, \exists \}$ and the set of (individual)
variables $Var := \{ v_i \mid i < \omega \}$ will be denoted
$IL(\Sigma)$ and will be called the set of intuitionistic
first-order $\Sigma$-formulas without identity. We will also use
the connectives $\neg$ and $\leftrightarrow$ under their standard
definitions. The elements of $Var$ will be also denoted by $x,y,z,
w$, and the elements of $IL(\Sigma)$ by Greek letters like
$\varphi$, $\psi$ and $\theta$.

For any $n > 0$, we will denote by $\bar{o}_n$ the sequence
$(o_1,\ldots, o_n)$ of objects of any kind; moreover, somewhat
abusing the notation, we will sometimes denote $\{o_1,\ldots,
o_n\}$ by $\{\bar{o}_n\}$. We will denote by
$(\bar{o}_n;\bar{r}_m)$ the ordered couple of two sequences
$\bar{o}_n$ and $\bar{r}_m$; the notation
$(\bar{o}_n)^{\frown}(\bar{r}_m)$ will be reserved for the
concatenation of $\bar{o}_n$ and $\bar{r}_m$. If $f$ is a
function, then we will denote the $n$-tuple $(f(o_1),\ldots,
f(o_n))$ by $f\langle\bar{o}_n\rangle$ (by contrast,
$f(\bar{o}_n)$ will denote the application of $f$ to $\bar{o}_n$
viewed as a separate argument of $f$). The ordered $1$-tuple will
be identified with its only member and the ordered $0$-tuple will
be denoted by $\Lambda$.

If $f$ is any function, then we will denote by $dom(f)$ its domain
and by $rang(f)$ the image of $dom(f)$ under $f$; if $rang(f)
\subseteq M$, we will also write $f: dom(f) \to M$.

For a given set $\Omega$ and $k < \omega$, the notation $\Omega^k$
(resp. $\Omega^{\neq k}$) will denote the $k$-th Cartesian power
of $\Omega$ (resp. the set of all $k$-tuples from $\Omega^k$ such
that their elements are pairwise distinct). If $X, Y$ are sets,
then we will write $X \Subset Y$, if $X \subseteq Y$ and $X$ is
finite.

For any given signature $\Sigma$, and any given $\varphi \in
IL(\Sigma)$, we define $BV(\varphi)$ and $FV(\varphi)$, its sets
of bound and free variables, by the usual inductions. These sets
are always finite. There is no problem to extend these definitions
to an arbitrary $\Gamma \subseteq IL(\Sigma)$, although in this
case $BV(\Gamma)$ and $FV(\Gamma)$ need not be finite.

We will denote the set of $IL(\Sigma)$-formulas with free
variables among the elements of $\bar{x}_n$ by
$IL_{\bar{x}_n}(\Sigma)$; in particular, $IL_\emptyset(\Sigma)$
will stand for the set of $\Sigma$-sentences. If $\varphi \in
IL_{\bar{x}_n}(\Sigma)$ ($\Gamma \subseteq
IL_{\bar{x}_n}(\Sigma)$), then we will also express this by
writing $\varphi(\bar{x}_n)$ ($\Gamma(\bar{x}_n)$).

For a given $\Sigma$, the elements of $IL(\Sigma)$ will be
interpreted over first-order Kripke $\Sigma$-models (called just
$\Sigma$-models below) of the form $\mathcal{M} = \langle W,
\prec, \mathfrak{A}, \mathbb{H}\rangle$. The tuple of the form
$\mathcal{M} = \langle W, \prec, \mathfrak{A}, \mathbb{H}\rangle$,
is a first-order Kripke $\Sigma$-model iff:
\begin{itemize}
    \item $W$ is a non-empty set of \emph{worlds} or \emph{nodes};
    \item $\prec \subseteq W \times W$ is a partial order;
    \item $\mathfrak{A}$ is a function defined on $W$ such that for all $w \in W$, $\mathfrak{A}(w) = (A_w, I_w)$ (also
    denoted $\mathfrak{A}_w$) is a classical first-order
    $\Sigma$-model, with $A_w$ as its domain and $I_w$ as its
    interpretation of the predicates and constants in $\Sigma$.
    The function $\mathfrak{A}$ is assumed to satisfy the
    following condition:
    $$
(\forall w,v \in W)(w \neq v \Rightarrow A_w \cap A_v =
\emptyset).
$$
    \item $\mathbb{H}$ is a function defined on $\{ (w,v) \in W \times W\mid w \prec v\}$ such
    that $\mathbb{H}(w,v):\mathfrak{A}_w \to
    \mathfrak{A}_v$ is a homomorphism (also denoted $\mathbb{H}_{wv}$), that is to say, we have
    $$
    I_w(P)(\bar{a}_n) \Rightarrow
    I_v(P)(\mathbb{H}_{wv}\langle\bar{a}_n\rangle), I_v(c) =
    \mathbb{H}_{wv}(I_w(c))
    $$
    for all $\bar{a}_n \in A_w^n$, every $c \in Const_\Sigma$, and every $P \in \Sigma_n$. In addition, the system of
    homomorphisms associated with $\mathbb{H}$ must satisfy the
    following conditions for all $w,v,u \in W$ such that $w \prec v \prec u$:
    \begin{itemize}
        \item $\mathbb{H}_{ww} = id_{A_w}$;
        \item $\mathbb{H}_{wu} = \mathbb{H}_{vu}\circ \mathbb{H}_{wv}$, in other words, $\mathbb{H}_{wu}(a) = \mathbb{H}_{vu}(\mathbb{H}_{wv}(a))$
        for all $a \in A_w$.
    \end{itemize}
\end{itemize}

As is usual, we denote the reduct of a $\Sigma$-model
$\mathcal{M}$ to a smaller signature $\Theta$ by
$\mathcal{M}\upharpoonright\Theta$.

We will denote by $Mod$ (resp. $Mod_\Theta$) the class of all
models in all signatures (resp. in the signature $\Theta$). For a
given $\mathcal{M} \in Mod_\Theta$ and a given $w \in W$, $A_w$ is
called the domain of $w$ and $I_w$ is called the interpretation
assigned to $w$. We will also lift this up to the model level, and
will say that $I$ and $A$ are the interpretation- and the
domain-function assigned to $\mathcal{M}$. Moreover, we will call
the domain of $\mathcal{M}$ (and write $\mathbb{A}$) the set
$\bigcup_{w \in W}A_w$.

We will strive to denote model components in a way that is
consistent with the notation for the model itself. Some examples of
typical notations for models and their components are given below:
$$
\mathcal{M} = \langle W, \prec,\mathfrak{A}, \mathbb{H}\rangle,
\mathcal{M}' = \langle W', \prec', \mathfrak{A}',
\mathbb{H}'\rangle, \mathcal{M}_n = \langle W_n, \prec_n,
\mathfrak{A}_n, \mathbb{H}_n\rangle,
$$
$$
\mathcal{N} = \langle U, \lhd,\mathfrak{B}, \mathbb{G}\rangle,
\mathcal{N}' = \langle U', \lhd',\mathfrak{B}',
\mathbb{G}'\rangle,\mathcal{N}_n = \langle U_n,
\lhd_n,\mathfrak{B}_n, \mathbb{G}_n\rangle,
$$
where $n \in \omega$, and for $v \in U$ we will assume that
$\mathfrak{B}_v = (B_v, J_v)$ and similarly in other cases. We
will also sometimes use the notation of the form $\mathcal{M} =
\langle W_\mathcal{M}, \prec_\mathcal{M},
\mathfrak{A}_\mathcal{M}, \mathbb{H}_\mathcal{M}\rangle$.
\begin{definition}\label{D:submodel}
For any given two $\Sigma$-models $\mathcal{M}$ and $\mathcal{N}$,
we will say that $\mathcal{M}$ is a submodel of $\mathcal{N}$ and
write $\mathcal{M} \subseteq \mathcal{N}$ iff all of the following
conditions hold:
\begin{itemize}
    \item $W \subseteq U$;
    \item $\prec = \lhd \cap (W \times W)$;
    \item $(\forall w \in W) (\mathfrak{A}_w \subseteq
    \mathfrak{B}_w)$, i.e. $\mathfrak{A}_w$ is a classical submodel of $\mathfrak{B}_w$;
    \item $(\forall w,v \in W)(w \lhd v \Rightarrow \mathbb{G}_{wv}\upharpoonright A_w
    = \mathbb{H}_{wv})$.
\end{itemize}
\end{definition}
Now we can prove the following lemma:
\begin{lemma}\label{L:submodel}
   Let $\Sigma$ be a signature, let $\mathcal{N}$ be a
    $\Sigma$-model, let $W \subseteq U$, and let the set $X$ be such that
    $\{ J_w(c) \mid c  \in Const_\Sigma,\,w \in W\} \subseteq X
\subseteq \bigcup_{w \in W}B_w \subseteq \mathbb{B}$ and be closed
under every function in
$\{\mathbb{G}_{wv} \mid w,v \in W\,\&\, w \lhd v \}$. Then
consider $\mathcal{M}$, such that for all $w,v \in W$, every $P
\in \Sigma_n$ and every $c \in Const_\Sigma$:
\begin{itemize}
    \item $\prec = \lhd \cap (W \times W)$;
    \item $A_w = X \cap B_w$;
    \item $I_w(P) = J_w(P) \cap A_w^n$;
    \item $I_w(c) = J_w(c)$;
    \item $w \lhd v \Rightarrow \mathbb{H}_{wv} = \mathbb{G}_{wv}\upharpoonright
    A_w$.
\end{itemize}
Then $\mathcal{M}$ is a $\Sigma$-model, we have that $\mathbb{A} = X$,
and $\mathcal{M} \subseteq \mathcal{N}$, and, moreover,
if $\mathcal{M}' \subseteq \mathcal{N}$ is such that $W' = W$ and
$\mathbb{A}' = X$, then also $\mathcal{M}' =
\mathcal{M}$. Therefore, we will also denote $\mathcal{M}$ by
$\mathcal{N}(W, \mathbb{A})$.
\end{lemma}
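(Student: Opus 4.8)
The plan is to verify the bulleted claims in the order in which they build on one another: first that the data listed actually define a $\Sigma$-model, then that its domain equals $X$, then that it is a submodel of $\mathcal{N}$, and finally the uniqueness statement that pins it down as $\mathcal{N}(W,\mathbb{A})$. First I would check that $\mathcal{M} = \langle W,\prec,\mathfrak{A},\mathbb{H}\rangle$ with $\mathfrak{A}_w = (A_w,I_w)$ as prescribed satisfies each clause in the definition of a first-order Kripke $\Sigma$-model. Non-emptiness of each $A_w$ follows because $J_w(c) \in X \cap B_w = A_w$ for any $c \in Const_\Sigma$ (and we may assume $Const_\Sigma \neq \emptyset$, or otherwise $A_w = X \cap B_w$ which is non-empty provided $X$ meets $B_w$; this point, if $Const_\Sigma$ can be empty, is something to handle by the hypothesis that $X \subseteq \bigcup_{w\in W}B_w$ together with $X$ being the witness of the relevant union). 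That $\prec$ is a partial order is immediate since it is the restriction of the partial order $\lhd$ to $W \times W$. Each $\mathfrak{A}_w$ is a well-defined classical $\Sigma$-model because $I_w(c) = J_w(c) \in A_w$ (here the closure of $X$ under constants is used) and $I_w(P) = J_w(P) \cap A_w^n \subseteq A_w^n$. The disjointness condition $w \neq v \Rightarrow A_w \cap A_v = \emptyset$ is inherited from $\mathcal{N}$ since $A_w \subseteq B_w$ and $A_v \subseteq B_v$.

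Next I would check the conditions on $\mathbb{H}$. For $w \prec v$ we set $\mathbb{H}_{wv} = \mathbb{G}_{wv}\upharpoonright A_w$; this is well-typed as a map $\mathfrak{A}_w \to \mathfrak{A}_v$ precisely because $X$ is closed under every $\mathbb{G}_{wv}$ with $w,v \in W$ and $w \lhd v$, so $\mathbb{G}_{wv}\langle A_w\rangle \subseteq X \cap B_v = A_v$. It is a homomorphism because its corestriction of a homomorphism: $I_w(P)(\bar a_n) \Rightarrow J_w(P)(\bar a_n) \Rightarrow J_v(P)(\mathbb{G}_{wv}\langle\bar a_n\rangle)$, and since $\mathbb{G}_{wv}\langle\bar a_n\rangle \in A_v^n$ this gives $I_v(P)(\mathbb{H}_{wv}\langle\bar a_n\rangle)$; and $\mathbb{H}_{wv}(I_w(c)) = \mathbb{G}_{wv}(J_w(c)) = J_v(c) = I_v(c)$. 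The identity law $\mathbb{H}_{ww} = id_{A_w}$ and the composition law $\mathbb{H}_{wu} = \mathbb{H}_{vu}\circ\mathbb{H}_{wv}$ for $w \prec v \prec u$ both follow by restricting the corresponding identities for $\mathbb{G}$, using that $\prec\, \subseteq\, \lhd$ on $W$ and that all intermediate images land in the appropriate $A$-sets (again by closure of $X$). This establishes that $\mathcal{M}$ is a $\Sigma$-model.

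For $\mathbb{A} = X$: by construction $\mathbb{A} = \bigcup_{w\in W}A_w = \bigcup_{w\in W}(X \cap B_w) = X \cap \bigcup_{w\in W}B_w = X$, using the hypothesis $X \subseteq \bigcup_{w\in W}B_w$. That $\mathcal{M} \subseteq \mathcal{N}$ is then a direct match against Definition \ref{D:submodel}: $W \subseteq U$ by hypothesis, $\prec = \lhd \cap (W\times W)$ by construction, $\mathfrak{A}_w \subseteq \mathfrak{B}_w$ since $A_w = X \cap B_w \subseteq B_w$, $I_w(P) = J_w(P)\cap A_w^n$ and $I_w(c) = J_w(c)$ give the classical-submodel relation, and $\mathbb{G}_{wv}\upharpoonright A_w = \mathbb{H}_{wv}$ by construction. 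Finally, for uniqueness, suppose $\mathcal{M}' \subseteq \mathcal{N}$ with $W' = W$ and $\mathbb{A}' = X$. From $W' = W$ and the submodel clause we get $\prec' = \lhd \cap (W \times W) = \prec$. For each $w$, $\mathcal{M}' \subseteq \mathcal{N}$ forces $A'_w \subseteq B_w$; since also $A'_w \subseteq \mathbb{A}' = X$, we have $A'_w \subseteq X \cap B_w = A_w$. Conversely I would argue $A_w \subseteq A'_w$: an element of $A_w = X \cap B_w$ lies in $X = \mathbb{A}' = \bigcup_{v\in W}A'_v$, hence in some $A'_v$ with $v \in W$; but $A'_v \subseteq B_v$, and $A'_w \subseteq B_w$, and the $B$'s are pairwise disjoint across distinct worlds, so if the element is also in $B_w$ then $v = w$ (or $A'_v \cap B_w = \emptyset$ unless $v = w$), giving membership in $A'_w$. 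Hence $A'_w = A_w$, and then $I'_w(P) = J_w(P) \cap (A'_w)^n = J_w(P)\cap A_w^n = I_w(P)$ and $I'_w(c) = J_w(c) = I_w(c)$ from the submodel clauses, and $\mathbb{H}'_{wv} = \mathbb{G}_{wv}\upharpoonright A'_w = \mathbb{G}_{wv}\upharpoonright A_w = \mathbb{H}_{wv}$. Thus $\mathcal{M}' = \mathcal{M}$.

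The main obstacle is bookkeeping rather than conceptual: one must use the closure of $X$ under the maps $\mathbb{G}_{wv}$ ($w,v \in W$) at exactly the right places — when typing $\mathbb{H}_{wv}$, when verifying it is a homomorphism, and when checking the composition law — and one must repeatedly invoke the disjointness of the $B_w$'s together with $X \subseteq \bigcup_{w\in W}B_w$ to control which world an element of $X$ belongs to, which is the crux of the uniqueness argument. Care is also needed that $\prec$ being the restriction of $\lhd$ means $w \prec v$ iff $w \lhd v$ for $w,v \in W$, so that the $\mathbb{H}_{wv}$ are defined on exactly the right pairs.
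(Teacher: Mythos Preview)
Your proposal is correct and follows essentially the same approach as the paper: the paper declares the verification that $\mathcal{M}\in Mod_\Sigma$ and $\mathcal{M}\subseteq\mathcal{N}$ to be ``immediate'' (you spell these out in more detail), computes $\mathbb{A}=X$ by exactly the same chain of equalities, and proves uniqueness by the same disjointness argument you give (using that any $a\in \mathbb{A}'\cap B_w$ must lie in some $A'_v\subseteq B_v$, forcing $v=w$). Your worry about non-emptiness of $A_w$ when $Const_\Sigma=\emptyset$ is not addressed in the paper either and can be safely suppressed.
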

\begin{proof} It is immediate to check that, under the settings
given in the Lemma, we have both $\mathcal{M}\in Mod_\Sigma$ and $\mathcal{M} \subseteq \mathcal{N}$. As for $\mathbb{A}$, note that we have
$$
\mathbb{A} = \bigcup_{w \in W}A_w = \bigcup_{w \in W}(X \cap B_w) = X\cap \bigcup_{w \in W}B_w = X,
$$
by the definiton of $A_w$ and the choice of $X$.

Now, assume that $\mathcal{M}' \subseteq \mathcal{N}$ is such that
$W' = W$ and $\mathbb{A}' = \mathbb{A}$. Then we will have $\prec'
=  \lhd \cap (W' \times W') =  \lhd \cap (W \times W) = \prec$.

Next, if $w \in W' = W$ and $a \in A'_w$, then $a \in \mathbb{A}'
= \mathbb{A}$, but also $A'_w \subseteq B_w$ by the assumption
that $\mathcal{M}' \subseteq \mathcal{N}$. Therefore $A'_w
\subseteq \mathbb{A} \cap B_w$. In the other direction, if $a \in
\mathbb{A} \cap B_w = \mathbb{A}' \cap B_w$, then we know that
there is a $v \in W = W'$ such that $a \in A'_v$. If $v = w$ then
we are done, otherwise, by $A'_v \subseteq \mathbb{A}' \cap B_v$,
we know that $a \in B_v$ for some $v \neq w$. But also we have $a
\in \mathbb{A} \cap B_w \subseteq B_w$. Thus $a \in B_w \cap B_v$,
which contradicts the fact that $\mathcal{N}$ is a model.
Therefore, for all $w \in W' = W$, we have $A'_w = \mathbb{A} \cap
B_w = A_w$.

But then, if $P \in \Sigma_n$, $c \in Const_\Sigma$, and $w \in W'
= W$, we have, by
 $\mathcal{M}' \subseteq \mathcal{N}$, that $\mathfrak{A}'_w \subseteq \mathfrak{B}_w$ and thus:
$$
I'_w(P) = J_w(P) \cap (A'_w)^{n} = J_w(P) \cap A_w^n = I_w(P),
$$
and:
$$
I'_w(c) = J_w(c) = I_w(c).
$$
It follows then that $\mathfrak{A}'_w = \mathfrak{A}_w$ for all $w
\in W' = W$.

Finally, for all $w,v \in W' = W$ such that $w \lhd v$ we have
that
$$
\mathbb{H}'_{wv} = \mathbb{G}_{wv}\upharpoonright
    A'_w = \mathbb{G}_{wv}\upharpoonright
    A_w = \mathbb{H}_{wv},
$$
so that $\mathcal{M}' = \mathcal{M}$ follows.
\end{proof}
Note that the same proof does not go through if the domain
function in a model is allowed to have non-disjoint values.

Moreover, we note, in passing, that one can even show the existence of a bijection between the family of submodels of any given 	$\Sigma$-model $\mathcal{N}$ and the family of all pairs $(W,X)$ satisfying the conditions of the above lemma. We do not do this, though, since it is not necessary for our main result.

It follows from the above lemma that if $W \subseteq U$ is
arbitrary, then there exists a unique $\mathcal{M} \subseteq
\mathcal{N}$ such that $\mathcal{M} = \mathcal{N}(W, \bigcup_{w
\in W}B_w)$. For simplicity, we will denote such a node-set induced
submodel of $\mathcal{N}$ by $\mathcal{N}(W)$.

If $\mathcal{M}_1 \subseteq,\ldots, \subseteq \mathcal{M}_n
\subseteq,\ldots$ is a countable chain of $\Sigma$-models then the
model:
$$
\bigcup_{n \in \omega}\mathcal{M}_n = (\bigcup_{n \in \omega}W_n,
\bigcup_{n \in \omega}\prec_n, \bigcup_{n \in
\omega}\mathfrak{A}_n, \bigcup_{n \in \omega}\mathbb{H}_n)
$$
is again a $\Sigma$-model and we have:
$$
\mathcal{M}_k \subseteq \bigcup_{n \in \omega}\mathcal{M}_n
$$
for every $k \in \omega$.

A \emph{pointed} $\Theta$-model is a pair of the form
$(\mathcal{M}, w)$ such that $w \in W$. The class of all pointed
models in all signatures (resp. in the signature $\Theta$) will be
denoted by $Pmod$ (resp. by $Pmod_\Theta$). A pair $(\mathcal{M},
w) \in Pmod_\Theta$ is called a \emph{rooted} $\Theta$-model iff
we have $w\mathrel{\prec}v$ for all $v \in W$. The class of rooted
$\Theta$-models will be denoted by $Rmod_\Theta$.

If $(\mathcal{M}, w)$ is a pointed $\Theta$-model, then one can
canonically generate from it a rooted model $([\mathcal{M}, w],
w)$ such that $[\mathcal{M}, w] = \mathcal{M}([W, w]) \subseteq
\mathcal{M}$ where,  $[W, w] = \{ v \in W\mid w \prec v\}$. In
what follows, we will also use the following alternative notation
for the generated submodels: $[\mathcal{M}, w] = \langle [W, w],
[\mathfrak{A}, w], [\prec, w], [\mathbb{H},w] \rangle$.

%In a given $\mathcal{M} \in Pmod_\Theta$, we will often be
%interested in looking at the binary relation $\asymp \subseteq W
%\times W$ induced by $\prec$, such that:
%$$ \asymp := (\prec \cup \succ)^\ast,
%$$
%where $R^\ast$ is the reflexive and transitive closure of $R$. The
%relation $\asymp$ gives rise to another important class of
%submodels. Namely, we will consider, for any given $\mathcal{M}
%\in Pmod_\Theta$ a submodel $\lfloor \mathcal{M}, w\rfloor =
%\mathcal{M}(\lfloor W, w\rfloor)$ of $\mathcal{M}$, where $\lfloor
%W, w\rfloor = \{ v \in W\mid w \asymp w\}$.

Next, we need a definition of isomorphism between models:
\begin{definition}\label{D:isomorphism}
{\em Let $\mathcal{M}$, $\mathcal{N}$ be $\Theta$-models. A pair
of functions $(g,h)$ such that $g: \mathbb{A} \to \mathbb{B}$ and
$h:W \to U$ is called an \emph{ isomorphism from $\mathcal{M}$
onto $\mathcal{N}$} (write $(g,h):\mathcal{M} \cong \mathcal{N}$)
iff $g$, $h$ are bijections, and for all $v,u \in W$, all $n <
\omega$, and all $a \in A_{v}$ it is true that
\begin{align}
&v\mathrel{\prec}u \Leftrightarrow h(v)\mathrel{\lhd}h(u)
\label{E:ic1}\tag{\text{i-rel}}\\
&(g\upharpoonright A_v):\mathfrak{A}_v \cong \mathfrak{B}_{h(v)}\label{E:ic2}\tag{\text{i-dom}}\\
&v\mathrel{\prec}u \Rightarrow (g(\mathbb{H}_{vu}(a)) =
\mathbb{G}_{h(v)h(u)}(g(a)))\label{E:ic2a}\tag{\text{i-map}}
\end{align}
where $(g\upharpoonright A_v):\mathfrak{A}_v \cong
\mathfrak{B}_{h(v)}$ means that $(g\upharpoonright A_v)$ is a
classical isomorphism from $\mathfrak{A}_v$ onto
$\mathfrak{B}_{h(v)}$.
 }
\end{definition}
If we have both $(g,h):\mathcal{M} \cong \mathcal{N}$ and, for a $w \in W$ we also have $h(w) = v \in U$, then we can express this more concisely by writing $(g,h):(\mathcal{M}, w) \cong (\mathcal{N}, v)$.

If $\Sigma$ is a signature, $1 \leq n < \omega$, and we have
$Const_\Sigma \cap \{\bar{c}_n\} = \emptyset$, then for any
$(\mathcal{M}, w) \in Rmod_\Sigma$, and any $\bar{a}_n \in A^n_w$,
we can canonically (and uniquely) extend $\mathcal{M}$ to the
model $\mathcal{M}' =(\mathcal{M}, \bar{c}_n/\bar{a}_n) \in
Mod_{\Sigma \cup \{\bar{c}_n\}}$ such that:
$$
I'_w\langle\bar{c}_n\rangle = \bar{a}_n.
$$
It follows then easily that for all $v \in W$ we will have:
$$
I'_v\langle\bar{c}_n\rangle =
\mathbb{H}_{wv}\langle\bar{a}_n\rangle.
$$
Of course, we will also have $(\mathcal{M}', w) \in Rmod_{\Sigma
\cup \{\bar{c}_n\}}$ under these settings. In case $(\mathcal{M},
w) \in Pmod_\Sigma \setminus Rmod_\Sigma$, a similar extension
might be still possible, but, generally speaking, it will not be
unique, since a given $a \in A_w$ may have more than one
$\mathbb{H}_{vw}$-preimage for a given $v\mathrel{\prec}w$, and
also $\mathcal{M}$ may consist of more than one $\prec$-connected
component. Even worse, if we make a wrong choice of $\bar{a}_n \in
A^n_w$ there might be no possible extensions of $\mathcal{M}$
making $\bar{a}_n$ the values of an $n$-tuple of fresh constants
at $w$. Consider the case when there is a $v \in W$ such that
$v\mathrel{\prec}w$, but $\{\bar{a}_n\}$ is disjoint from
$rang(\mathbb{H}_{vw})$. Therefore, in order to make a desired
extension possible, we have to allow the restriction of
$\mathcal{M}$ to a suitable submodel, provided that such a
restriction does not affect the $\prec$-successors of $w$ in
$\mathcal{M}$.

More precisely, we define the set $(\mathcal{M}, w)\oplus(\bar{c}_n/\bar{a}_n)$ of possible constant extensions
 of a given
$\mathcal{M}$ such that $(\mathcal{M}, w) \in Pmod_\Sigma$ for a
fresh tuple of constants $\bar{c}_n$ with the values $\bar{a}_n
\in A^n_w$ at $w$ as follows:
$$
(\mathcal{M}, w)\oplus(\bar{c}_n/\bar{a}_n) = \{\mathcal{N}\in
Mod_{\Sigma \cup \{\bar{c}_n\}}\mid [\mathcal{M}, w] \subseteq
\mathcal{N}\upharpoonright\Sigma \subseteq
\mathcal{M}\,\&\,J_w\langle\bar{c}_n\rangle = \bar{a}_n\}.
$$
It is easy to see that under this definition, the set
$(\mathcal{M}, w)\oplus(\bar{c}_n/\bar{a}_n)$ is always non-empty
since we will always have $([\mathcal{M},w], \bar{c}_n/\bar{a}_n)
\in (\mathcal{M},w)\oplus(\bar{c}_n/\bar{a}_n)$. Moreover, the
differences between the models in this set will only affect the
nodes in $W$ which are not $\prec$-accessible from $w$, so that
the following holds:
\begin{lemma}\label{L:cutoff}
$(\mathcal{M}, w) \in Pmod_\Sigma$, let $\bar{c}_n \notin \Sigma$,
let $\bar{a}_n \in A^n_w$, and let $\mathcal{N}\in
\mathcal{M}\oplus(\bar{c}_n/\bar{a}_n)$. Then we have:
$$
[\mathcal{N}, w] = ([\mathcal{M}, w], \bar{c}_n/\bar{a}_n).
$$
\end{lemma}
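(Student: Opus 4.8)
The plan is to verify that the two sides of the asserted identity agree component by component, leaning on the uniqueness clauses of Lemma~\ref{L:submodel} and of the canonical constant extension (recorded in the paragraph just before the lemma) to keep the bookkeeping short. Write $\mathcal{N}=\langle U,\lhd,\mathfrak{B},\mathbb{G}\rangle$ and unpack $\mathcal{N}\in(\mathcal{M},w)\oplus(\bar{c}_n/\bar{a}_n)$ as the chain $[\mathcal{M},w]\subseteq\mathcal{N}\upharpoonright\Sigma\subseteq\mathcal{M}$ together with $J_w\langle\bar{c}_n\rangle=\bar{a}_n$. Since $\prec$ is a partial order, $w\in[W,w]=W_{[\mathcal{M},w]}$, and taking first components of the two inclusions gives $[W,w]\subseteq U\subseteq W$; in particular $w\in U$, so $[\mathcal{N},w]$ and $J_w\langle\bar{c}_n\rangle$ are meaningful. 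The accessibility clause of Definition~\ref{D:submodel}, applied to $\mathcal{N}\upharpoonright\Sigma\subseteq\mathcal{M}$, yields $\lhd=\prec\cap(U\times U)$; combined with $[W,w]\subseteq U$ this shows that the node set $[U,w]=\{v\in U\mid w\lhd v\}$ of $[\mathcal{N},w]$ is exactly $\{v\in W\mid w\prec v\}=[W,w]$, the node set of $[\mathcal{M},w]$ and hence of $([\mathcal{M},w],\bar{c}_n/\bar{a}_n)$.

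Next I would show $B_v=A_v$ for every $v\in[W,w]$. From $\mathcal{N}\upharpoonright\Sigma\subseteq\mathcal{M}$ the classical model at $v$ in $\mathcal{N}\upharpoonright\Sigma$ is a submodel of $\mathfrak{A}_v$, so $B_v\subseteq A_v$; and from $[\mathcal{M},w]\subseteq\mathcal{N}\upharpoonright\Sigma$, recalling that the node-set-generated submodel $[\mathcal{M},w]=\mathcal{M}([W,w])$ retains $\mathcal{M}$'s full domain $A_v$ at each $v\in[W,w]$ (by Lemma~\ref{L:submodel}), we get $A_v\subseteq B_v$. Hence the total domain of $[\mathcal{M},w]$ equals $\bigcup_{v\in[W,w]}A_v=\bigcup_{v\in[W,w]}B_v$, which is also the total domain of $[\mathcal{N},w]=\mathcal{N}([U,w])$ since $[U,w]=[W,w]$. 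Now $[\mathcal{M},w]$ and $[\mathcal{N},w]\upharpoonright\Sigma$ are two submodels of $\mathcal{N}\upharpoonright\Sigma$ with the same node set $[W,w]$ and the same total domain, so the uniqueness clause of Lemma~\ref{L:submodel} forces $[\mathcal{N},w]\upharpoonright\Sigma=[\mathcal{M},w]$.

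It remains to see that $[\mathcal{N},w]$ interprets the fresh constants as the canonical constant extension does. Every node $v$ of $[\mathcal{N},w]$ satisfies $w\lhd v$, so the homomorphism condition on $\mathcal{N}$ gives $J_v\langle\bar{c}_n\rangle=\mathbb{G}_{wv}\langle\bar{a}_n\rangle$; and since $[\mathcal{N},w]\upharpoonright\Sigma=[\mathcal{M},w]$, the homomorphism $\mathbb{G}_{wv}$ of $[\mathcal{N},w]$ coincides with the homomorphism $[\mathbb{H},w]_{wv}$ of $[\mathcal{M},w]$. Thus $[\mathcal{N},w]$ is precisely a $\Sigma\cup\{\bar{c}_n\}$-model extending $[\mathcal{M},w]$ with value $\bar{a}_n$ for $\bar{c}_n$ at the root $w$; since $([\mathcal{M},w],w)\in Rmod_\Sigma$ and such an extension is unique, $[\mathcal{N},w]=([\mathcal{M},w],\bar{c}_n/\bar{a}_n)$, as claimed. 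The only mildly delicate points — and where I would expect to spend the real effort — are keeping straight the distinction between $\mathcal{M}$ and its generated submodel $[\mathcal{M},w]$ (same domains at shared nodes, fewer nodes) and tracking which model in each of the two inclusions plays the role of the larger one; everything past that is just unwinding the definitions of submodel, reduct, generated submodel and constant extension.
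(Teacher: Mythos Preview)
Your proof is correct and essentially follows the approach the paper implicitly intends: the paper does not give an explicit proof of this lemma, merely remarking that the models in $(\mathcal{M},w)\oplus(\bar{c}_n/\bar{a}_n)$ differ only at nodes not $\prec$-accessible from $w$, and your argument is exactly the careful unwinding of that observation via Definition~\ref{D:submodel}, Lemma~\ref{L:submodel}, and the uniqueness of the canonical constant extension on rooted models.
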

Another convenient  property of the generated submodels is that two
and more successive transitions to a generated submodel display a
sort of transitivity. We state this property, together with some other straightforward consequences of the definitions given thus far, in the following
lemma:
\begin{lemma}\label{L:cutoff-constants}
$(\mathcal{M}, w) \in Pmod_\Sigma$, let $\Theta \subseteq \Sigma$, let $n < \omega$, let
$\bar{c}_{n +1}$ be a tuple of pairwise distinct constants outside
$\Sigma$, let $\bar{a}_{n +1} \in A^{n +1}_w$, and let
$v\mathrel{\succ}w$ (or, equivalently, let $v \in [W,w]$). Then
the following statements hold:
\begin{enumerate}
\item $[[\mathcal{M}, w], v] = [\mathcal{M}, v]$.
    \item $
[([\mathcal{M}, w], \bar{c}_n/\bar{a}_n), v] = ([\mathcal{M}, v],
\bar{c}_n/\mathbb{H}_{wv}\langle\bar{a}_n\rangle)$.
\item $(([\mathcal{M}, w], \bar{c}_n/\bar{a}_n), c_{n +1}/a_{n
	+1}) = ([\mathcal{M}, w], \bar{c}_{n +1}/\bar{a}_{n +1}) = ([([\mathcal{M}, w], \bar{c}_n/\bar{a}_n), w], c_{n +1}/a_{n
	+1})$.
\item $([\mathcal{M}, w], \bar{c}_n/\bar{a}_n)\upharpoonright \Theta = [\mathcal{M}\upharpoonright \Theta, w]$.

\item $([\mathcal{M}, w], \bar{c}_n/\bar{a}_n)\upharpoonright(\Theta\cup \{\bar{c}_n\}) = ([\mathcal{M}\upharpoonright \Theta, w], \bar{c}_n/\bar{a}_n)$.
\end{enumerate}
\end{lemma}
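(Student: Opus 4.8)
The plan is to prove each of the five items by unwinding the definitions of the generated submodel $[\mathcal{M},w]$, of the constant extension $(\mathcal{M},w)\oplus(\bar{c}_n/\bar{a}_n)$ together with the canonical representative $([\mathcal{M},w],\bar{c}_n/\bar{a}_n)$, and of the reduct $\upharpoonright\Theta$, and then invoking the uniqueness clause of Lemma \ref{L:submodel} whenever two submodels of a common model are shown to have the same node-set and the same total domain. For item (1), I would note that both $[[\mathcal{M},w],v]$ and $[\mathcal{M},v]$ are generated submodels of $\mathcal{M}$ (the former via $[\mathcal{M},w]\subseteq\mathcal{M}$), and compute their node-sets: the node-set of $[[\mathcal{M},w],v]$ is $\{u\in[W,w]\mid v\prec u\}$, while that of $[\mathcal{M},v]$ is $\{u\in W\mid v\prec u\}$; since $v\succ w$ and $\prec$ is a partial order, $v\prec u$ already forces $w\prec u$, so the two node-sets coincide, and by Lemma \ref{L:submodel} (applied with $X=\bigcup_{u}A_u$ over that common node-set) the two submodels are identical.

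For item (2) I would use item (1) to reduce the question to the node-set $[W,v]=[[W,w],v]$, observe that on this node-set the underlying $\Sigma$-reduct of $([\mathcal{M},w],\bar{c}_n/\bar{a}_n)$ restricted to $v$'s cone is just $[\mathcal{M},v]$, and then check that the interpretation of each $c_i$ at the root $v$ is $\mathbb{H}_{wv}(a_i)$: indeed, by the paragraph preceding Lemma \ref{L:cutoff} the value of $c_i$ at an arbitrary node $u\succ w$ in $([\mathcal{M},w],\bar{c}_n/\bar{a}_n)$ is $\mathbb{H}_{wu}(a_i)$, and specializing to $u=v$ gives $\mathbb{H}_{wv}(a_i)$; uniqueness of the canonical constant extension over a rooted model then finishes it. Item (3) is a matter of observing that all three expressions denote models over $\Sigma\cup\{\bar{c}_{n+1}\}$ whose $\Sigma$-reduct is $[\mathcal{M},w]$ and whose constant values at $w$ are $\bar{a}_{n+1}$, so they are all equal to $([\mathcal{M},w],\bar{c}_{n+1}/\bar{a}_{n+1})$ by the uniqueness of the canonical extension; the only point requiring care is that $[([\mathcal{M},w],\bar{c}_n/\bar{a}_n),w]=([\mathcal{M},w],\bar{c}_n/\bar{a}_n)$ because the latter is already rooted at $w$ (its $\Sigma$-reduct is contained between $[\mathcal{M},w]$ and $\mathcal{M}$, hence has node-set contained in $[W,w]$, whose every node is $\succ w$).

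Items (4) and (5) concern commuting reducts with the two submodel operations. For (4), both $([\mathcal{M},w],\bar{c}_n/\bar{a}_n)\upharpoonright\Theta$ and $[\mathcal{M}\upharpoonright\Theta,w]$ are $\Theta$-models with the same node-set $[W,w]$ and the same total domain (taking a reduct changes neither $W$ nor the $A_u$'s, and adding/forgetting constants likewise does not), and on each node the interpretation of a $\Theta$-predicate or $\Theta$-constant agrees because both operations merely restrict $I_u$ to $\Theta$; so Lemma \ref{L:submodel} again yields equality, both being submodels of $\mathcal{M}\upharpoonright\Theta$. Item (5) is the same argument carried out over $\Theta\cup\{\bar{c}_n\}$, additionally checking that the value of each $c_i$ at $w$ is $a_i$ on both sides. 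I expect the main (though still modest) obstacle to be bookkeeping: making sure in each case that the two models being compared are genuinely submodels of one common ambient model before applying the uniqueness half of Lemma \ref{L:submodel}, and keeping straight which operation (generation, constant extension, reduct) touches which component of the tuple $\langle W,\prec,\mathfrak{A},\mathbb{H}\rangle$ — none of them is individually hard, but the disjointness-of-domains hypothesis is what makes the uniqueness clause available, so it must be invoked cleanly each time.
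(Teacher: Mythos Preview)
Your proposal is correct and follows essentially the same approach as the paper, which declares all parts ``rather straightforward'' and only singles out the second equation in Part~3, deriving it from Part~2 (with $v=w$, so that $\mathbb{H}_{ww}=\mathrm{id}_{A_w}$) together with the first equation of Part~3. Your direct observation that $([\mathcal{M},w],\bar{c}_n/\bar{a}_n)$ is already rooted at $w$ and hence equals its own generated submodel at $w$ is exactly what Part~2 with $v=w$ yields, so the two arguments coincide.
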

All parts of this Lemma are rather straightforward, except maybe the second equation in Part 3. But note that this equation follows from the other parts of the Lemma by the following reasoning:
\begin{align*}
	([([\mathcal{M}, w], \bar{c}_n/\bar{a}_n), w], c_{n +1}/a_{n
		+1}) = (([\mathcal{M}, w], \bar{c}_n/\bar{a}_n), c_{n +1}/a_{n
		+1}) = ([\mathcal{M}, w], \bar{c}_{n +1}/\bar{a}_{n +1}),
\end{align*}
applying first Part 2 and the fact that $\mathbb{H}_{ww} = id_{A_w}$, and then the first equation of Part 3.

Let $(\bar{x}_n)^\frown(\bar{y}_m) \in Var^{\neq n + m}$, and let
$\varphi \in IL_{(\bar{x}_n)^\frown(\bar{y}_m)}(\Sigma)$. If
$\bar{c}_n$ is a tuple of individual constants, then we can
define, by the usual induction, the result
$\varphi\binom{\bar{c}_n}{\bar{x}_n}\in IL_{\bar{y}_m}(\Sigma \cup
\{\bar{c}_n\})$ of simultaneously replacing every free occurrence
of $x_i$ with an occurrence of $c_i$ for every $1 \leq i \leq n$.

Now we can give the following definition of satisfaction.

Let $(\mathcal{M}, w)$ be a pointed $\Theta$-model, let
$\varphi\in IL_{\bar{x}_n}(\Theta)$, let $\bar{a}_n \in A^n_w$ and
let $c, \bar{c}_n \notin Const_\Theta$ and be pairwise distinct. Then we
say that $\bar{a}_n$ $\mathsf{IL}$-satisfies $\varphi$ at
$(\mathcal{M}, w)$ and write $\mathcal{M}, w \models_{\mathsf{IL}}
\varphi[\bar{a}_n]$, where the $\models_{\mathsf{IL}}$ is defined
inductively as follows:
\begin{align*}
&\mathcal{M}, w \models_{\mathsf{IL}} P(\bar{d}_m)[\bar{a}_n]
\Leftrightarrow \langle I_w\langle\bar{d}_m\rangle\rangle
\in I_w(P) \,\,\,\,\, \,\,\,\,\,\,\,\,\,\,\,\,\,\,\,  \,\,\,\,\, \,\,\,\,\,\,\,\,\,\,\,\,\,\,\, \,\,\,\,\, \,\,\,\,\,\,\,\,\,\,\,\,\,\,\,  \,\,\,\,\, \,\,\,\,\,\, P \in \Theta_m,\, \bar{d}_m\in Const^m_\Theta;&&\\
&\mathcal{M}, w \models_{\mathsf{IL}} \varphi[\bar{a}_n]
\Leftrightarrow (\exists \mathcal{N} \in
(\mathcal{M},w)\oplus(\bar{c}_n/\bar{a}_n))(\mathcal{N}, w
\models_{\mathsf{IL}}
\varphi\binom{\bar{c}_n}{\bar{x}_n}[\bar{a}_n]) \,\,\,\,\, \,\,\,\,\,\,\,\,\,\,\,\,\,\,\,  \,\,\,\,\, \,\,\,\,\,\,\,\,\,\,\,\,\,\,\, \varphi\text{ atomic};
&&\\
&\mathcal{M}, w \models_{\mathsf{IL}} (\varphi \wedge
\psi)[\bar{a}_n] \Leftrightarrow
\mathcal{M}, w \models_{\mathsf{IL}}\varphi[\bar{a}_n]\textup{ and } \mathcal{M}, w \models_{\mathsf{IL}}\psi[\bar{a}_n];\\
&\mathcal{M}, w \models_{\mathsf{IL}} (\varphi \vee
\psi)[\bar{a}_n] \Leftrightarrow
\mathcal{M}, w \models_{\mathsf{IL}}\varphi[\bar{a}_n]\textup{ or } \mathcal{M}, w \models_{\mathsf{IL}}\psi[\bar{a}_n];\\
&\mathcal{M}, w \models_{\mathsf{IL}} (\varphi \to
\psi)[\bar{a}_n] \Leftrightarrow \forall v(w\prec v \Rightarrow
(\mathcal{M}, v \not\models_{\mathsf{IL}}
\varphi[\mathbb{H}_{wv}\langle\bar{a}_n\rangle]\textup{ or }
\mathcal{M}, v \models_{\mathsf{IL}}
\psi[\mathbb{H}_{wv}\langle\bar{a}_n\rangle]));\\
&\mathcal{M}, w \not\models_{\mathsf{IL}}\bot[\bar{a}_n];\\
&\mathcal{M}, w \models_{\mathsf{IL}}(\exists x\varphi)[\bar{a}_n]
\Leftrightarrow (\exists a \in A_w)(\exists \mathcal{N} \in
(\mathcal{M},w)\oplus(c/a))(\mathcal{N}, w \models_{\mathsf{IL}} \varphi\binom{c}{x}[\bar{a}_n]);\\
&\mathcal{M}, w \models_{\mathsf{IL}} (\forall
x\varphi)[\bar{a}_n] \Leftrightarrow (\forall v \succ w)(\forall a
\in A_v)(\exists \mathcal{N} \in
(\mathcal{M},v)\oplus(c/a))(\mathcal{N}, v \models_{\mathsf{IL}}
\varphi\binom{c}{x}[\mathbb{H}_{wv}\langle\bar{a}_n\rangle]).
\end{align*}
 We also define, as usual, for any given $\Gamma
\subseteq IL_{\bar{x}_n}(\Theta)$ that $\mathcal{M}, w
\models_{\mathsf{IL}} \Gamma[\bar{a}_n]$ iff $\mathcal{M}, w
\models_{\mathsf{IL}} \varphi[\bar{a}_n]$ for all $\varphi \in
\Gamma$. In case $n = 0$ we write $\mathcal{M}, w
\models_{\mathsf{IL}} \varphi$ (resp. $\mathcal{M}, w
\models_{\mathsf{IL}} \Gamma$) in place of $\mathcal{M}, w
\models_{\mathsf{IL}} \varphi[\Lambda]$ (resp. $\mathcal{M}, w
\models_{\mathsf{IL}} \Gamma[\Lambda]$).

It is clear that our definition of $\models_{\mathsf{IL}}$ is somewhat too involved for the
needs of intuitionistic logic alone; the reason for these
complications is that we want to place intuitionistic logic within
a more general framework applicable to many other systems, either
based on it or related to it. However, it is easy to infer some
simplifications of the above definition by making the following
observations:
\begin{lemma}\label{L:satisfaction}
Let $(\mathcal{M}, w)$ be a pointed $\Theta$-model, let
$\varphi\in IL_{\bar{x}_n}(\Theta)$, let $v \in [W,w]$, let
$\bar{a}_n \in A^n_w$, let $\bar{c}_n$ be a tuple of pairwise
distinct individual constants outside $\Theta$, let $\bar{t}_m$ be
$\Theta$-terms, and let $P \in \Theta_m$. Then we have:
\begin{enumerate}
    \item  $\mathcal{M}, w \models_{\mathsf{IL}} P(\bar{t}_m)[\bar{a}_n]
\Leftrightarrow \alpha\langle\bar{t}_m\rangle \in I_w(P)$, where
for $1 \leq i \leq m$ and $1 \leq j \leq n$ we set that:
$$
\alpha(t_i):= \left\{%
\begin{array}{ll}
    I_w(t_i), & \hbox{if $t_i$ is a constant;} \\
    a_j, & \hbox{if $t_i = x_j$.} \\
\end{array}%
\right.
$$
    \item $\mathcal{M}, w \models_{\mathsf{IL}}\varphi[\bar{a}_n]
    \Leftrightarrow [\mathcal{M}, w], w
    \models_{\mathsf{IL}}\varphi[\bar{a}_n]$.

    \item $\mathcal{M}, w \models_{\mathsf{IL}}\varphi[\bar{a}_n]
    \Rightarrow \mathcal{M}, v
    \models_{\mathsf{IL}}\varphi[\mathbb{H}_{wv}\langle\bar{a}_n\rangle]$.

    \item $\mathcal{M}, w \models_{\mathsf{IL}}\varphi[\bar{a}_n]
    \Leftrightarrow ([\mathcal{M},w], \bar{c}_n/\bar{a}_n), w
    \models_{\mathsf{IL}}\varphi\binom{\bar{c}_n}{\bar{x}_n}$.
\end{enumerate}
\end{lemma}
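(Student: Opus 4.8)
The plan is to prove all four parts by a simultaneous induction on the complexity of $\varphi$, though in practice Part 1 is a base-case computation, Part 2 follows from Part 4 (taking $n=0$, or directly from the definition of $[\mathcal{M},w]$ plus the monotonicity built into the clause for $\to$), and Parts 3 and 4 carry the real weight. First I would dispatch Part 1: unfolding the atomic clause of $\models_{\mathsf{IL}}$ for $P(\bar{t}_m)[\bar{a}_n]$ requires passing to some $\mathcal{N}\in(\mathcal{M},w)\oplus(\bar{c}_n/\bar{a}_n)$ and evaluating $P\binom{\bar{c}_n}{\bar{x}_n}$, whose arguments are now all constants; by the definition of the constant-extension set we have $J_w(c_i)=a_i$ and $J_w$ agrees with $I_w$ on $\Theta$-constants, so $\langle J_w\langle\bar{t}_m\binom{\bar c_n}{\bar x_n}\rangle\rangle=\alpha\langle\bar t_m\rangle$, and since $[\mathcal{M},w]\subseteq\mathcal{N}\upharpoonright\Sigma\subseteq\mathcal{M}$ forces $J_w(P)\cap A_w^{\,|P|}=I_w(P)\cap A_w^{\,|P|}$ on the relevant tuples, membership in $I_w(P)$ is equivalent either way. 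The base case for Parts 3 and 4 then reduces to Part 1 together with the homomorphism condition $I_w(P)(\bar a)\Rightarrow I_v(P)(\mathbb{H}_{wv}\langle\bar a\rangle)$ and the commutation $I_v(c)=\mathbb{H}_{wv}(I_w(c))$.

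For the inductive step I would handle the connectives $\wedge,\vee$ trivially, $\bot$ vacuously, and then treat $\to$, $\exists$, $\forall$. For Part 3 (persistence along $\prec$), the $\to$ and $\forall$ cases are immediate from transitivity of $\prec$, the functoriality $\mathbb{H}_{wu}=\mathbb{H}_{vu}\circ\mathbb{H}_{wv}$, and the fact that the $\to$/$\forall$ clauses already quantify over all $\prec$-successors; the $\exists$ case needs a witness $a\in A_w$ at $w$ to be transported to $\mathbb{H}_{wv}(a)\in A_v$, after which the inductive hypothesis on the immediate subformula (applied inside the appropriate constant extension, using Lemma \ref{L:cutoff-constants}(2) to commute the cut-off-to-$v$ with the constant substitution) closes the case. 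For Part 4, the point is that $([\mathcal{M},w],\bar c_n/\bar a_n)$ is a genuine constant extension in $(\mathcal{M},w)\oplus(\bar c_n/\bar a_n)$, so one direction of the atomic clause is free; for the quantifier and implication steps I would repeatedly invoke Lemma \ref{L:cutoff-constants}, especially Part 3 (to add one constant at a time and commute the order of additions) and Part 5 (to commute a constant extension with a signature reduct), together with Lemma \ref{L:cutoff}, so that a nested expression like $(([\mathcal{M},w],\bar c_n/\bar a_n),\ldots)$ arising from evaluating $\exists x\varphi$ or $\forall x\varphi$ can be rewritten as a single constant extension of $[\mathcal{M},v]$ to which the induction hypothesis applies.

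The main obstacle I anticipate is bookkeeping in the $\forall$ case of Part 4, combined with the interaction between Part 2 and Part 3: the $\forall$ clause evaluates $\varphi\binom{c}{x}$ not at $w$ but at each $v\succ w$ inside a model of the form $(\mathcal{M},v)\oplus(c/a)$, so to relate $\mathcal{M},w\models_{\mathsf{IL}}(\forall x\varphi)[\bar a_n]$ to $([\mathcal{M},w],\bar c_n/\bar a_n),w\models_{\mathsf{IL}}(\forall x\varphi)\binom{\bar c_n}{\bar x_n}$ I must check that the $\prec$-successors of $w$ are the same in both models (true since $[\mathcal{M},w]\subseteq\mathcal{M}$ and the constant extension does not disturb $[W,w]$), that the values of $\bar c_n$ propagate correctly to each such $v$ as $\mathbb{H}_{wv}\langle\bar a_n\rangle$, and that Lemma \ref{L:cutoff-constants}(2),(3) let me identify $([([\mathcal{M},w],\bar c_n/\bar a_n),v],c/a)$ with $([\mathcal{M},v],\bar c_n/\mathbb{H}_{wv}\langle\bar a_n\rangle,c/a)$ before applying the induction hypothesis. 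Once the right lemma is cited at the right spot none of these steps is deep, but assembling them without circularity in the simultaneous induction — in particular making sure Part 3 is available at the subformula level wherever the $\exists$/$\forall$ cases of Parts 3 and 4 need it — is where care is required.
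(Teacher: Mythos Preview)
The paper does not supply a proof for this lemma at all; it is stated as a list of basic facts about the satisfaction relation and then used freely thereafter. Your proposal is the natural way to establish it and is correct in outline: the four parts are proved by a simultaneous induction on $\varphi$, with Part~1 handled by unfolding the atomic clause through an arbitrary $\mathcal{N}\in(\mathcal{M},w)\oplus(\bar c_n/\bar a_n)$, Parts~3 and~4 carried by the homomorphism/functoriality conditions on $\mathbb{H}$ together with Lemmas~\ref{L:cutoff} and~\ref{L:cutoff-constants} for the quantifier and $\to$ cases, and Part~2 recovered either directly or, as you note, from Part~4 via $[[\mathcal{M},w],w]=[\mathcal{M},w]$. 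Your identification of the $\forall$-case of Part~4 as the place requiring the most careful citation of Lemma~\ref{L:cutoff-constants}(2),(3) is exactly right, and there is no circularity since that lemma concerns only the model-level operations and not $\models_{\mathsf{IL}}$.
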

Besides the statement expressed in Lemma \ref{L:satisfaction}.4
(often termed `Substitution Lemma' in the literature), we will
need some further properties of substitutions which we formulate
below.
\begin{lemma}\label{L:substitution}
Let $\Theta$ be a signature, let $\bar{c}_n$ be a tuple of
pairwise distinct individual constants outside $\Theta$, let
$\varphi\in IL_{\bar{x}_m}(\Theta \cup \bar{c}_n)$, let $\Gamma
\subseteq IL_{\bar{x}_m}(\Theta \cup \bar{c}_n)$ and let
$\bar{y}_{n}, \bar{z}_{n} \in Var^{\neq {n}}$ be such that
$\{\bar{y}_{n}\} \subseteq Var\setminus (\{\bar{x}_m\} \cup
BV(\varphi))$ and $\{\bar{z}_{n}\} \subseteq Var\setminus
(\{\bar{x}_m\} \cup BV(\Gamma))$. Then all of the following hold:
\begin{enumerate}
\item There exists a $\psi \in  IL_{(\bar{x}_m)^\frown
    (\bar{y}_{n})}(\Theta)$ such that $\varphi =
    \psi\binom{\bar{c}_n}{\bar{y}_{n}}$.

    \item There exists a $\Delta \subseteq  IL_{(\bar{x}_m)^\frown
    (\bar{z}_{n})}(\Theta)$ such that $\Gamma =
    \Delta\binom{\bar{c}_n}{\bar{z}_{n}} = \{\psi\binom{\bar{c}_n}{\bar{z}_{n}}\mid \psi \in
    \Delta\}$.

    \item If, in addition, $\Gamma$ is finite, then $\Delta$ can
    also be chosen to be finite.
\end{enumerate}
\end{lemma}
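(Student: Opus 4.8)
The plan is to prove all three parts by a single induction on the construction of $\varphi$, pushing the constants $\bar{c}_n$ back out to variables $\bar{y}_n$ (resp.\ $\bar{z}_n$) that are guaranteed fresh by the hypotheses on $BV(\varphi)$ and $\{\bar{x}_m\}$. First I would set up the inductive claim for Part 1 in the form: for every $\varphi \in IL_{\bar{x}_m}(\Theta\cup\bar{c}_n)$ and every $\bar{y}_n \in Var^{\neq n}$ with $\{\bar{y}_n\}\subseteq Var\setminus(\{\bar{x}_m\}\cup BV(\varphi))$, there is $\psi\in IL_{(\bar{x}_m)^\frown(\bar{y}_n)}(\Theta)$ with $\varphi=\psi\binom{\bar{c}_n}{\bar{y}_n}$. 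In the atomic case $\varphi = R(\bar{t}_k)$ (a predicate of $\Theta$ applied to $\Theta\cup\bar{c}_n$-terms), I simply replace each occurrence of a constant $c_i$ among the $\bar{t}_k$ by the variable $y_i$; since the $y_i$ are not among $\{\bar{x}_m\}$ this stays a legitimate formula with free variables among $(\bar{x}_m)^\frown(\bar{y}_n)$, and substituting $\bar{c}_n$ back for $\bar{y}_n$ recovers $\varphi$ (here I also use that the $\bar{c}_n$ are pairwise distinct so the substitution map is unambiguous). The case $\varphi=\bot$ is trivial. For $\wedge,\vee,\to$ I apply the induction hypothesis to the immediate subformulas using the same $\bar{y}_n$ — legitimate because $BV$ of a subformula is contained in $BV(\varphi)$ — and glue the resulting $\psi$'s with the same connective; the substitution operation commutes with the connectives, so the equation is preserved.

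The genuinely delicate step, and the one I expect to be the main obstacle, is the quantifier case $\varphi = Qx\,\chi$ with $Q\in\{\forall,\exists\}$. Here $x\in BV(\varphi)$, so by hypothesis $x\notin\{\bar{y}_n\}$ and also $x\notin\{\bar{x}_m\}$ (since $\{\bar y_n\}$ and $\{\bar x_m\}$ together with $BV$ are arranged disjointly in the statement); thus I may regard $\chi$ as a formula in $IL_{(\bar{x}_m)^\frown(x)}(\Theta\cup\bar c_n)$ — but I must be careful that $x$ could coincide with one of the $\bar x_m$ in principle, so strictly I work with $IL_{(\bar x_m)^\frown(x)}$ allowing repetition, or first note $x\notin\{\bar x_m\}$ from the disjointness built into the statement of the lemma. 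I then need $\{\bar y_n\}\subseteq Var\setminus(\{\bar x_m\}\cup\{x\}\cup BV(\chi))$; this holds because $\{x\}\cup BV(\chi)\subseteq BV(\varphi)$ and the original hypothesis gives disjointness from $BV(\varphi)$. Applying the induction hypothesis to $\chi$ with the tuple $(\bar x_m)^\frown(x)$ in the role of the free-variable tuple yields $\psi'\in IL_{(\bar x_m)^\frown(x)^\frown(\bar y_n)}(\Theta)$ with $\chi=\psi'\binom{\bar c_n}{\bar y_n}$, and then $\psi := Qx\,\psi'$ works, since $\binom{\bar c_n}{\bar y_n}$ only touches free occurrences and $x\notin\{\bar y_n\}$ guarantees that the substitution passes through the binder, giving $\varphi = Qx\,\chi = Qx\,(\psi'\binom{\bar c_n}{\bar y_n}) = (Qx\,\psi')\binom{\bar c_n}{\bar y_n}$. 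Re-checking the free-variable bookkeeping here — that $FV(\psi)$ really lands in $\{\bar x_m\}\cup\{\bar y_n\}$ after removing $x$ — is the fiddly part.

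For Part 2, I would run the same argument set-wise: for each $\gamma\in\Gamma$ apply Part 1 with the tuple $\bar z_n$, which is legitimate for $\gamma$ because $BV(\gamma)\subseteq BV(\Gamma)$ and hence $\{\bar z_n\}\subseteq Var\setminus(\{\bar x_m\}\cup BV(\gamma))$; this produces some $\delta_\gamma\in IL_{(\bar x_m)^\frown(\bar z_n)}(\Theta)$ with $\gamma=\delta_\gamma\binom{\bar c_n}{\bar z_n}$. Setting $\Delta:=\{\delta_\gamma\mid\gamma\in\Gamma\}$ (choosing one $\delta_\gamma$ per $\gamma$, invoking choice if one wishes, though a canonical construction in Part 1 avoids this) gives $\Gamma=\Delta\binom{\bar c_n}{\bar z_n}$. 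Part 3 is then immediate: if $\Gamma$ is finite then the map $\gamma\mapsto\delta_\gamma$ has finite range, so $\Delta$ is finite. I would remark that the only subtlety across Parts 2–3 is that different $\gamma$'s with overlapping bound variables are still all covered by the single fresh tuple $\bar z_n$, which is exactly why the hypothesis is phrased with $BV(\Gamma)$ rather than $BV$ of individual formulas.
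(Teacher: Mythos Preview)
The paper states this lemma without proof, treating it as a routine syntactic fact, so there is no proof to compare against; your inductive argument is the standard one and is essentially correct.

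One small correction in the quantifier case: you assert $x\notin\{\bar{x}_m\}$ ``from the disjointness built into the statement,'' but the hypothesis only gives $\{\bar{y}_n\}\cap(\{\bar{x}_m\}\cup BV(\varphi))=\emptyset$, not that $\{\bar{x}_m\}$ and $BV(\varphi)$ are disjoint from each other. This does not actually matter: regardless of whether $x$ coincides with some $x_j$, we still have $\chi\in IL_{(\bar{x}_m)^\frown(x)}(\Theta\cup\bar{c}_n)$ (the paper's definition of $IL_{\bar{x}_n}$ only requires free variables to lie among the listed ones, with no distinctness requirement on the tuple), and the freshness condition $\{\bar{y}_n\}\subseteq Var\setminus(\{\bar{x}_m\}\cup\{x\}\cup BV(\chi))$ follows from $x\in BV(\varphi)$ and $BV(\chi)\subseteq BV(\varphi)$. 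So the induction hypothesis applies exactly as you say, and your first alternative (allowing repetition in the tuple) is the right fix.
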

\begin{definition}\label{D:renaming}
For any signatures $\Theta$ and $\Sigma$, assume that
$(\mathcal{M},w) \in Pmod_\Theta$ and $(\mathcal{N},w) \in
Pmod_\Sigma$ are such that $U = W$, $\prec = \lhd$, and $A_v =
B_v$ for all $v \in W$. Assume, further, that a couple of
bijections $f:Pred_\Theta \to Pred_\Sigma$ and $g: Const_\Theta
\to Const_\Sigma$ are given and that $f(\Theta_n) = \Sigma_n$ for
all $n < \omega$. Assume, finally, that for these bijections it is
true that, for every $v \in W$, every $c \in Const_\Theta$, every
$n < \omega$, and every $P \in \Theta_n$, we have $I_v(P) =
J_v(f(P))$ and $I_v(c) = J_v(g(c))$. Then $\Sigma$ is called an
$(f,g)$-\emph{renaming of }$\Theta$ and $(\mathcal{N},w)$ is
called an $(f,g)$-\emph{renaming of} $(\mathcal{M},w)$.
\end{definition}

\begin{lemma}\label{L:renaming}
For any signatures $\Theta$ and $\Sigma$ such that $\Sigma$ is an
$(f,g)$-renaming of $\Theta$, consider the mapping
$\tau_{(f,g)}:IL(\Theta) \to IL(\Sigma)$ defined inductively as
follows. We first set:
\begin{align*}
    t_g: = \left\{%
\begin{array}{ll}
    g(t), & \hbox{if $t \in Const_\Theta$;} \\
    t, & \hbox{otherwise.} \\
\end{array}%
\right.
\end{align*}
for any $\Theta$-term $t$ and then set:
\begin{align*}
    \tau_{(f,g)}(P(t_1,\ldots,t_n))&: =
    f(P)((t_1)_g,\ldots,(t_n)_g)&&P \in \Theta_n\\
    \tau_{(f,g)}(\bot)&: = \bot\\
    \tau_{(f,g)}(\varphi\circ\psi)&:=
    \tau_{(f,g)}(\varphi)\circ\tau_{(f,g)}(\psi)&&\circ\in\{\wedge, \vee,
    \to\}\\
    \tau_{(f,g)}(Qx\varphi)&:=
    Qx\tau_{(f,g)}(\varphi)&&x\in Var,\,Q\in\{\exists, \forall\}
\end{align*}
Then the following statements are true:
\begin{enumerate}
    \item $\tau_{(f,g)}$ is a bijection of $IL(\Theta)$ onto
    $IL(\Sigma)$.

    \item For any $\varphi \in IL(\Theta)$, $FV(\varphi) =
    FV(\tau_{(f,g)}(\varphi))$ and $BV(\varphi) =
    BV(\tau_{(f,g)}(\varphi))$.

    \item For any $\varphi \in IL(\Theta)$, if $\Theta_\varphi
    \subseteq \Theta$ is the smallest subsignature for which $\varphi \in
    IL(\Theta_\varphi)$, and $\Sigma_{\tau_{(f,g)}(\varphi)}
    \subseteq \Sigma$ is the smallest subsignature for which
    $\tau_{(f,g)}(\varphi)\in IL(\Sigma_{\tau_{(f,g)}(\varphi)})$,
    then for $f': = f\upharpoonright Pred_{\Theta_\phi}$ and $g': = g\upharpoonright
    Const_{\Theta_\varphi}$, it is true that
    $\Sigma_{\tau_{(f,g)}(\varphi)}$ is the $(f',g')$-renaming of
    $\Theta_\varphi$, and that $\tau_{(f,g)}(\varphi) =
    \tau_{(f',g')}(\varphi)$.

    \item If $(\mathcal{N},w) \in
Pmod_\Sigma$ is an $(f,g)$-renaming of $(\mathcal{M},w) \in
Pmod_\Theta$, $\varphi\in IL_{\bar{x}_n}(\Theta)$, and $\bar{a}_n
\in A^n_w$, then we have:
$$
\mathcal{M},w \models_{\mathsf{IL}} \varphi[\bar{a}_n]
\Leftrightarrow \mathcal{N},w \models_{\mathsf{IL}}
\tau_{(f,g)}(\varphi)[\bar{a}_n].
$$

\item If, moreover, $v \in [W,w]$ and $u \in [W, v]$, then
$([\mathcal{N},v], u) \in Pmod_\Sigma$ is an $(f,g)$-renaming of
$([\mathcal{M},v],u) \in Pmod_\Theta$.
\end{enumerate}
\end{lemma}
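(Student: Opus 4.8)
The five items split cleanly. Items (1)--(3) are essentially syntactic. For (1), $\tau_{(f,g)}$ lands in $IL(\Sigma)$ because $f(\Theta_n) = \Sigma_n$, so the atomic clause always produces a genuine $\Sigma$-formula; moreover $\Theta$ is then an $(f^{-1},g^{-1})$-renaming of $\Sigma$ (the arity condition and the clauses $J_v(Q) = I_v(f^{-1}(Q))$, $J_v(d) = I_v(g^{-1}(d))$ are immediate), hence $\tau_{(f^{-1},g^{-1})}$ is also defined, and a direct induction on formulas shows it is a two-sided inverse of $\tau_{(f,g)}$. Item (2) is an immediate induction, since $\tau_{(f,g)}$ leaves every variable occurrence in place and introduces none. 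For (3) one checks by induction that $\Theta_\varphi$ is exactly the set of predicate letters and constants occurring in $\varphi$ and $\Sigma_{\tau_{(f,g)}(\varphi)}$ the set of their $f$- and $g$-images; then $f' = f\upharpoonright Pred_{\Theta_\varphi}$ and $g' = g\upharpoonright Const_{\Theta_\varphi}$ are arity-preserving bijections onto the corresponding sets for $\Sigma_{\tau_{(f,g)}(\varphi)}$, the interpretation identities restrict, and since the recursion defining $\tau_{(f,g)}$ never feeds $f$ or $g$ a symbol outside $\Theta_\varphi$, one gets $\tau_{(f,g)}(\varphi) = \tau_{(f',g')}(\varphi)$. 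In everything that follows I shall use, besides the identities $U = W$, $\prec\, =\, \lhd$, $A_v = B_v$ from Definition \ref{D:renaming}, that the two homomorphism families agree, i.e.\ $\mathbb{H}_{vu} = \mathbb{G}_{vu}$ whenever $v\prec u$.

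Item (5) then follows directly from the generated-submodel machinery of Lemma \ref{L:submodel}: for $v\in[W,w]$ the node sets of $[\mathcal{M},v]$ and $[\mathcal{N},v]$ coincide (both equal $\{x\mid v\prec x\}$), and for every $x$ in this common set the passage to the generated submodel changes neither the domain (it is all of $A_x = B_x$, since $x$ is itself a node of the submodel), nor the interpretations $I_x$, $J_x$ (constants always survive), nor the maps $\mathbb{H}_{xy}$, $\mathbb{G}_{xy}$. Hence the renaming identities $I_x(P) = J_x(f(P))$, $I_x(c) = J_x(g(c))$ and $\mathbb{H}_{xy} = \mathbb{G}_{xy}$ persist, so $([\mathcal{N},v],u)$ is an $(f,g)$-renaming of $([\mathcal{M},v],u)$. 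The same bookkeeping shows, more simply, that $(\mathcal{M},v)$ and $(\mathcal{N},v)$ form a renaming pair for \emph{every} $v\in W$, a fact I use tacitly below.

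Item (4) is the heart and is proved by induction on the complexity of $\varphi$, uniformly over all signatures, all renamings, all witnessing pointed models, all tuples $\bar{x}_n$ with $FV(\varphi)\subseteq\{\bar{x}_n\}$, and all $\bar{a}_n\in A^n_w$. For atomic $\varphi = P(\bar{t}_m)$ I would bypass the clause involving $\oplus$ by applying Lemma \ref{L:satisfaction}.1 on both sides: $\mathcal{M},w\models_{\mathsf{IL}} P(\bar{t}_m)[\bar{a}_n]$ iff $\alpha\langle\bar{t}_m\rangle\in I_w(P)$, while $\mathcal{N},w\models_{\mathsf{IL}}\tau_{(f,g)}(P(\bar{t}_m))[\bar{a}_n] = \mathcal{N},w\models_{\mathsf{IL}} f(P)((t_1)_g,\ldots,(t_m)_g)[\bar{a}_n]$ iff $\beta\langle (t_1)_g,\ldots,(t_m)_g\rangle\in J_w(f(P))$, with $\alpha$, $\beta$ the assignments of Lemma \ref{L:satisfaction}.1; the renaming identities give $\beta((t_i)_g) = \alpha(t_i)$ for each $i$ and $J_w(f(P)) = I_w(P)$, closing the case. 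The cases $\varphi = \psi\wedge\theta$, $\psi\vee\theta$, $\psi\to\theta$ and $\varphi = \bot$ follow at once from the induction hypothesis (applied at $w$, or, for $\to$, at $\prec$-successors of $w$ — in each case a renaming pair), using $\prec\, =\, \lhd$ and $\mathbb{H} = \mathbb{G}$.

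For $\varphi = \exists x\psi$, unfolding the definition and then using Lemma \ref{L:satisfaction}.2 together with Lemma \ref{L:cutoff} — so that the choice of a member of $(\mathcal{M},w)\oplus(c/a)$ becomes irrelevant, since $[\mathcal{N}',w] = ([\mathcal{M},w],c/a)$ for every such member — reduces the claim to: $(\exists a\in A_w)\,(([\mathcal{M},w],c/a),w\models_{\mathsf{IL}}\psi\binom{c}{x}[\bar{a}_n])$ holds iff the analogous statement with $\mathcal{N}$ and $\tau_{(f,g)}(\psi)$ does, using $\tau_{(f,g)}(\exists x\psi) = \exists x\,\tau_{(f,g)}(\psi)$. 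Now $A_w = B_w$; a routine sub-induction gives $\tau_{(f,g)}(\psi)\binom{c}{x} = \tau_{(f,g^{+})}(\psi\binom{c}{x})$ with $g^{+} = g\cup\{(c,c)\}$ the evident bijection $Const_{\Theta\cup\{c\}}\to Const_{\Sigma\cup\{c\}}$; and by item (5) (with $v = u = w$), together with the observation that adjoining the fresh $c$ with the common value $a\in A_w = B_w$ at $w$ preserves all renaming identities (here $\mathbb{H} = \mathbb{G}$ is used, so that $c$ receives equal values $\mathbb{H}_{wx}(a) = \mathbb{G}_{wx}(a)$ at every $x$), $([\mathcal{N},w],c/a)$ is an $(f,g^{+})$-renaming of $([\mathcal{M},w],c/a)$; since $\psi\binom{c}{x}$ is less complex than $\exists x\psi$, the induction hypothesis closes the case. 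The case $\varphi = \forall x\psi$ is handled identically, now ranging over all $v\succ w$ and all $a\in A_v = B_v$, replacing $\bar{a}_n$ by $\mathbb{H}_{wv}\langle\bar{a}_n\rangle = \mathbb{G}_{wv}\langle\bar{a}_n\rangle$, and invoking item (5) at $v$ in place of $w$. The main obstacle, as I see it, is precisely this handling of the two quantifier cases: one must first neutralise the nondeterminism built into the satisfaction clauses through the sets $(\mathcal{M},\cdot)\oplus(\cdot)$, and then verify that forming a generated submodel and adjoining a fresh constant stays within the class of renaming pairs — and it is here, as well as in item (5), that the coincidence $\mathbb{H} = \mathbb{G}$ is indispensable.
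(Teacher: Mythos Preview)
The paper states Lemma \ref{L:renaming} without proof, treating it as routine, so there is no ``paper's own proof'' to compare against. Your argument is correct and is exactly the kind of routine induction the authors have in mind.

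One point deserves explicit comment. You write that you will use $\mathbb{H}_{vu} = \mathbb{G}_{vu}$ for all $v \prec u$, and later call this coincidence ``indispensable''. You are right that it is indispensable --- without it item (4) is simply false. (A two-node counterexample: take $W = \{w,v\}$ with $w \prec v$, $A_w = B_w = \{a,b\}$, $A_v = B_v = \{a',b'\}$, $\mathbb{H}_{wv}\colon a\mapsto a',\, b\mapsto b'$ but $\mathbb{G}_{wv}\colon a\mapsto b',\, b\mapsto a'$, a single unary predicate $P$ with $f(P)=Q$, $I_w(P)=J_w(Q)=\emptyset$, $I_v(P)=J_v(Q)=\{a'\}$; then $\mathcal{M},w\models_{\mathsf{IL}}\neg\neg P(x)[a]$ while $\mathcal{N},w\not\models_{\mathsf{IL}}\neg\neg Q(x)[a]$.) However, Definition \ref{D:renaming} as written imposes only $U=W$, $\prec=\lhd$, and $A_v=B_v$; it does \emph{not} require $\mathbb{H}=\mathbb{G}$. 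So what you have really uncovered is a small omission in the paper's definition rather than something that follows from it. Your proof is fine once this missing clause is added to Definition \ref{D:renaming}, and the lemma is false without it; it would be worth saying so explicitly rather than silently assuming the identity.

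Everything else --- the syntactic items (1)--(3), the generated-submodel check for (5), the use of Lemma \ref{L:satisfaction}.1 to handle the atomic case directly, the passage to the extended renaming $(f,g^{+})$ with $g^{+}=g\cup\{(c,c)\}$ for the quantifier cases, and the appeal to Lemma \ref{L:cutoff} to collapse the $\oplus$-nondeterminism --- is correct and cleanly organised.
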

One corollary to Lemma \ref{L:satisfaction}.4, together with
Lemma \ref{L:renaming}.4, is that the choice of new constants
is to some extent irrelevant to the truth of formulas, provided
that these constants denote one and the same tuple of objects.

Another corollary to the above lemmas says that the generated
models can be put to a sort of universal use when evaluating
intuitionistic formulas:
\begin{corollary}\label{C:cutoff}
Let $\Theta$ be a signature, let $(\mathcal{M},w) \in
Pmod_\Theta$. Then for every $\bar{a}_n \in A^n_w$, $\bar{b}_m \in A^m_w$, and $\bar{x}_m \in Var^m$, every tuple
$\bar{c}_n$ of pairwise distinct constants outside $\Theta$, and
every  $\varphi \in IL_{\bar{x}_m}(\Theta \cup \{\bar{c}_n\})$  we have:
\begin{align*}
(\exists\mathcal{N}\in (\mathcal{M},
w)\oplus(\bar{c}_n/\bar{a}_n))(\mathcal{N}, w
\models_{\mathsf{IL}}\varphi[\bar{b}_m])&\Leftrightarrow
(\forall\mathcal{N}\in (\mathcal{M},
w)\oplus(\bar{c}_n/\bar{a}_n))(\mathcal{N}, w
\models_{\mathsf{IL}}\varphi[\bar{b}_m])\\
&\Leftrightarrow ([\mathcal{M},w], \bar{c}_n/\bar{a}_n),
w\models_{\mathsf{IL}}\varphi[\bar{b}_m]
\end{align*}
\end{corollary}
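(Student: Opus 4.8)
The plan is to observe that the statement is essentially immediate once the appropriate instances of Lemma~\ref{L:satisfaction}.2 and Lemma~\ref{L:cutoff} are combined. The guiding idea is that every member $\mathcal{N}$ of $(\mathcal{M},w)\oplus(\bar{c}_n/\bar{a}_n)$, whichever one we pick, ``looks the same from $w$ upward'', and that what it looks like is exactly the canonical extension $([\mathcal{M},w],\bar{c}_n/\bar{a}_n)$; so the satisfaction of $\varphi[\bar{b}_m]$ at $w$ cannot depend on the choice of $\mathcal{N}$, and the three clauses collapse to one.

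First I would fix an arbitrary $\mathcal{N}\in(\mathcal{M},w)\oplus(\bar{c}_n/\bar{a}_n)$ and apply Lemma~\ref{L:satisfaction}.2 with $\Theta\cup\{\bar{c}_n\}$ in place of $\Theta$. This is legitimate: $\varphi\in IL_{\bar{x}_m}(\Theta\cup\{\bar{c}_n\})$, $(\mathcal{N},w)\in Pmod_{\Theta\cup\{\bar{c}_n\}}$, and the parameter tuple $\bar{b}_m$ still lies in the domain of $w$ in $\mathcal{N}$, because $[\mathcal{M},w]\subseteq\mathcal{N}\upharpoonright\Theta\subseteq\mathcal{M}$ together with the fact that the domain of $w$ in $[\mathcal{M},w]$ is $A_w$ forces the domain of $w$ in $\mathcal{N}\upharpoonright\Theta$ — hence in $\mathcal{N}$ — to be exactly $A_w$. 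The lemma then gives
$$
\mathcal{N},w\models_{\mathsf{IL}}\varphi[\bar{b}_m]\iff[\mathcal{N},w],w\models_{\mathsf{IL}}\varphi[\bar{b}_m].
$$
Next I would invoke Lemma~\ref{L:cutoff} (with $\Sigma=\Theta$), which yields $[\mathcal{N},w]=([\mathcal{M},w],\bar{c}_n/\bar{a}_n)$; substituting this into the previous line gives, for every such $\mathcal{N}$,
$$
\mathcal{N},w\models_{\mathsf{IL}}\varphi[\bar{b}_m]\iff([\mathcal{M},w],\bar{c}_n/\bar{a}_n),w\models_{\mathsf{IL}}\varphi[\bar{b}_m].
$$

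To finish, I would recall from the remark preceding Lemma~\ref{L:cutoff} that $(\mathcal{M},w)\oplus(\bar{c}_n/\bar{a}_n)$ is non-empty, since it contains $([\mathcal{M},w],\bar{c}_n/\bar{a}_n)$. The last display shows that whether $\varphi[\bar{b}_m]$ is satisfied at $w$ is the same for every $\mathcal{N}$ in this non-empty class, namely it equals the truth value of $([\mathcal{M},w],\bar{c}_n/\bar{a}_n),w\models_{\mathsf{IL}}\varphi[\bar{b}_m]$. Consequently the existential clause and the universal clause have this common truth value as well, which establishes both displayed equivalences of the corollary.

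There is no genuine obstacle in this argument; the substantive work has already been carried out in Lemmas~\ref{L:satisfaction} and~\ref{L:cutoff}. The only points requiring a little care are the bookkeeping — applying those lemmas over the enlarged signature $\Theta\cup\{\bar{c}_n\}$ rather than over $\Theta$, and verifying that $\bar{b}_m$ remains inside the relevant domain — and the elementary logical remark that a property that is constant on a non-empty class holds existentially over that class iff it holds universally iff it holds at any single representative.
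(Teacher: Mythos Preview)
Your proof is correct and follows essentially the same approach as the paper's own argument: both reduce the claim to Lemma~\ref{L:satisfaction}.2 together with Lemma~\ref{L:cutoff}, using that $[\mathcal{N},w]=([\mathcal{M},w],\bar{c}_n/\bar{a}_n)$ for every $\mathcal{N}\in(\mathcal{M},w)\oplus(\bar{c}_n/\bar{a}_n)$. Your presentation is slightly more streamlined (you directly show each $\mathcal{N}$ is equivalent to the canonical extension, rather than first writing the trivial chain and then closing it), and your explicit check that $\bar{b}_m$ still lies in the domain of $w$ in $\mathcal{N}$ is a nice bit of care that the paper leaves implicit.
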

\begin{proof}
Since it is always the case that $([\mathcal{M},w],
\bar{c}_n/\bar{a}_n) \in
(\mathcal{M},w)\oplus(\bar{c}_n/\bar{a}_n)$, it follows trivially
that
\begin{align*}
(\forall\mathcal{N}\in (\mathcal{M},
w)\oplus(\bar{c}_n/\bar{a}_n))(\mathcal{N}, w
\models_{\mathsf{IL}}\varphi[\bar{b}_m])&\Rightarrow([\mathcal{M},w],
\bar{c}_n/\bar{a}_n), w\models_{\mathsf{IL}}\varphi[\bar{b}_m]\\
&\Rightarrow (\exists\mathcal{N}\in (\mathcal{M},
w)\oplus(\bar{c}_n/\bar{a}_n))(\mathcal{N}, w
\models_{\mathsf{IL}}\varphi[\bar{b}_m])
\end{align*}
Now, if $\mathcal{N}\in (\mathcal{M},
w)\oplus(\bar{c}_n/\bar{a}_n)$ is such that $\mathcal{N}, w
\models_{\mathsf{IL}}\varphi$, then let $\mathcal{N}'\in
(\mathcal{M}, w)\oplus(\bar{c}_n/\bar{a}_n)$ be chosen
arbitrarily. We reason as follows:
\begin{align*}
\mathcal{N}, w \models_{\mathsf{IL}}\varphi[\bar{b}_m] &\Leftrightarrow
[\mathcal{N}, w], w\models_{\mathsf{IL}}\varphi[\bar{b}_m] &&\text{(by Lemma
\ref{L:satisfaction}.2)}\\
&\Leftrightarrow ([\mathcal{M}, w], \bar{c}_n/\bar{a}_n),
w\models_{\mathsf{IL}}\varphi[\bar{b}_m] &&\text{(by Lemma
\ref{L:cutoff})}\\
&\Leftrightarrow [\mathcal{N}', w], w,\models_{\mathsf{IL}}\varphi[\bar{b}_m]
&&\text{(by Lemma
\ref{L:cutoff})}\\
&\Leftrightarrow \mathcal{N}', w
\models_{\mathsf{IL}}\varphi[\bar{b}_m]&&\text{(by Lemma
\ref{L:satisfaction}.2)}
\end{align*}
\end{proof}
The latter corollary allows to somewhat simplify the clauses for
the intuitionistic quantifiers bringing them closer to the
familiar pattern:
\begin{corollary}\label{C:satisfaction}
Let $(\mathcal{M}, w)$ be a pointed $\Theta$-model, let $c$ be a constant outside $\Theta$, let
$\varphi\in IL_{\bar{x}_n^\frown x}(\Theta)$, and let $\bar{a}_n
\in A^n_w$. Then we have:
\begin{align*}
&\mathcal{M}, w \models_{\mathsf{IL}}(\exists x\varphi)[\bar{a}_n]
\Leftrightarrow (\exists a \in A_w)(([\mathcal{M}, w], c/a), w \models_{\mathsf{IL}} \varphi\binom{c}{x}[\bar{a}_n]);\\
&\mathcal{M}, w \models_{\mathsf{IL}} (\forall
x\varphi)[\bar{a}_n] \Leftrightarrow (\forall v \succ w)(\forall a
\in A_v)(([\mathcal{M}, v], c/a), v \models_{\mathsf{IL}}
\varphi\binom{c}{x}[\mathbb{H}_{wv}\langle\bar{a}_n\rangle]).
\end{align*}
\end{corollary}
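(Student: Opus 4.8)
The plan is to obtain both equivalences as immediate instances of Corollary \ref{C:cutoff}, which is exactly the device that collapses the quantification over the whole family $(\mathcal{M},w)\oplus(c/a)$ of possible constant extensions into a statement about the single canonical representative $([\mathcal{M},w], c/a)$.

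For the existential clause I would fix $a \in A_w$ and invoke Corollary \ref{C:cutoff} for the pointed model $(\mathcal{M},w) \in Pmod_\Theta$, instantiating its tuple of fresh constants by the single constant $c$ (pairwise distinct trivially, and outside $\Theta$ by hypothesis), its tuple ``$\bar a_n$'' by $a$, its tuple ``$\bar b_m$'' by $\bar a_n$, its variable tuple ``$\bar x_m$'' by $\bar x_n$, and its formula by $\varphi\binom{c}{x}$; since $\varphi \in IL_{\bar x_n^\frown x}(\Theta)$ we have $\varphi\binom{c}{x} \in IL_{\bar x_n}(\Theta \cup \{c\})$, so all hypotheses of the corollary are met. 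The corollary then gives $(\exists \mathcal{N} \in (\mathcal{M},w)\oplus(c/a))(\mathcal{N}, w \models_{\mathsf{IL}} \varphi\binom{c}{x}[\bar a_n]) \Leftrightarrow ([\mathcal{M},w], c/a), w \models_{\mathsf{IL}} \varphi\binom{c}{x}[\bar a_n]$, and substituting this for the inner existential in the defining clause of $\models_{\mathsf{IL}}$ for $\exists x\varphi$ yields the asserted simplification.

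For the universal clause I would argue in the same way, but now for each $v \succ w$ and each $a \in A_v$ I apply Corollary \ref{C:cutoff} to the pointed model $(\mathcal{M},v)$, which lies in $Pmod_\Theta$ because $v \in W$; here the corollary's ``$\bar b_m$'' is taken to be $\mathbb{H}_{wv}\langle\bar a_n\rangle$, which indeed lies in $A^n_v$ since $\bar a_n \in A^n_w$ and $\mathbb{H}_{wv}$ maps $\mathfrak A_w$ into $\mathfrak A_v$. This collapses $(\exists \mathcal{N} \in (\mathcal{M},v)\oplus(c/a))(\mathcal{N}, v \models_{\mathsf{IL}} \varphi\binom{c}{x}[\mathbb{H}_{wv}\langle\bar a_n\rangle])$ to $([\mathcal{M},v], c/a), v \models_{\mathsf{IL}} \varphi\binom{c}{x}[\mathbb{H}_{wv}\langle\bar a_n\rangle]$, and plugging this into the defining clause for $\forall x\varphi$ completes the argument. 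There is no genuine obstacle here: the substantive work was already done in the proof of Corollary \ref{C:cutoff} (through Lemmas \ref{L:satisfaction}.2 and \ref{L:cutoff}), and the only thing requiring care is the bookkeeping of which tuples instantiate which parameters of the corollary, together with the routine checks that $(\mathcal{M},v) \in Pmod_\Theta$ and that $\varphi\binom{c}{x}$ has the expected free variables.
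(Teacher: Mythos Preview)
Your proposal is correct and matches the paper's intended argument: the paper explicitly introduces Corollary~\ref{C:satisfaction} as an immediate simplification of the quantifier clauses obtained by applying Corollary~\ref{C:cutoff}, and your instantiation of the parameters (single fresh constant $c$, witness $a$, and $\varphi\binom{c}{x}$ with free variables $\bar{x}_n$) is exactly what is needed.
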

We may bring the simplification one step further in the particular case of the generated models:
\begin{corollary}\label{C:satisfaction-cutoff}
	Let $(\mathcal{M}, w)$ be a pointed $\Theta$-model, let $\bar{b}_m
	\in A^m_w$, let $\bar{c}_{m + 1}$ be a tuple of pairwise distinct individual constants outside $\Theta$, let
	$\varphi\in IL_{(\bar{x}_n)^\frown x}(\Theta \cup \{\bar{c}_{m}\})$, and let $\bar{a}_n
	\in A^n_w$. Then we have:
	\begin{align*}
		&([\mathcal{M}, w], \bar{c}_m/\bar{b}_m), w\models_{\mathsf{IL}}(\exists x\varphi)[\bar{a}_n]
		\Leftrightarrow (\exists b \in A_w)(([\mathcal{M}, w], \bar{c}_{m + 1}/(\bar{b}_m)^\frown b), w \models_{\mathsf{IL}} \varphi\binom{c_{m + 1}}{x}[\bar{a}_n]);\\
		&([\mathcal{M}, w], \bar{c}_m/\bar{b}_m), w \models_{\mathsf{IL}} (\forall
		x\varphi)[\bar{a}_n] \Leftrightarrow (\forall v \succ w)(\forall b
		\in A_v)(([\mathcal{M}, v], \bar{c}_{m + 1}/\mathbb{H}_{wv}\langle\bar{b}_m\rangle^\frown b), v \models_{\mathsf{IL}}
		\varphi\binom{c_{m + 1}}{x}[\mathbb{H}_{wv}\langle\bar{a}_n\rangle]).
	\end{align*}
\end{corollary}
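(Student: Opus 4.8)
The plan is to read the statement off Corollary~\ref{C:satisfaction} by applying that corollary not to $(\mathcal{M},w)$ itself but to the rooted $(\Theta\cup\{\bar{c}_m\})$-model $\mathcal{M}':=([\mathcal{M},w],\bar{c}_m/\bar{b}_m)$, with $c_{m+1}$ playing the role of the fresh constant. First one verifies the hypotheses: $\mathcal{M}'$ is well defined and rooted at $w$ by the discussion preceding Lemma~\ref{L:cutoff}; $c_{m+1}\notin\Theta\cup\{\bar{c}_m\}$ because the constants in $\bar{c}_{m+1}$ are pairwise distinct and all lie outside $\Theta$; and $\varphi\in IL_{(\bar{x}_n)^\frown x}(\Theta\cup\{\bar{c}_m\})$ is exactly a formula to which Corollary~\ref{C:satisfaction} applies over the base signature $\Theta\cup\{\bar{c}_m\}$. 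One also notes that forming $[\mathcal{M},w]$ keeps the domain at $w$ equal to $A_w$ (the domain of $\mathcal{N}(W)$ at $v\in W$ being $(\bigcup_{u\in W}A_u)\cap A_v=A_v$) and the subsequent constant extension touches only the interpretation of $\bar{c}_m$, so $A'_w=A_w$ and hence $\bar{a}_n\in (A'_w)^n$. Corollary~\ref{C:satisfaction} then yields $\mathcal{M}',w\models_{\mathsf{IL}}(\exists x\varphi)[\bar{a}_n]\Leftrightarrow(\exists a\in A'_w)(([\mathcal{M}',w],c_{m+1}/a),w\models_{\mathsf{IL}}\varphi\binom{c_{m+1}}{x}[\bar{a}_n])$ and $\mathcal{M}',w\models_{\mathsf{IL}}(\forall x\varphi)[\bar{a}_n]\Leftrightarrow(\forall v\succ' w)(\forall a\in A'_v)(([\mathcal{M}',v],c_{m+1}/a),v\models_{\mathsf{IL}}\varphi\binom{c_{m+1}}{x}[\mathbb{H}'_{wv}\langle\bar{a}_n\rangle])$, where primed symbols refer to $\mathcal{M}'$.

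It then remains to rewrite the right-hand sides into the asserted form, which is purely mechanical. I would record three observations. (a) For every $v$ with $w\prec v$ (equivalently $v\in[W,w]$), Lemma~\ref{L:cutoff-constants}.2 gives $[\mathcal{M}',v]=([\mathcal{M},v],\bar{c}_m/\mathbb{H}_{wv}\langle\bar{b}_m\rangle)$; in particular, with $v=w$ and $\mathbb{H}_{ww}=id_{A_w}$, $[\mathcal{M}',w]=\mathcal{M}'$. (b) Taking a generated submodel and then a constant extension alters neither the domains at the nodes of $[W,w]$ nor the transition homomorphisms among them (the first as above, the second by Definition~\ref{D:submodel} together with the fact that the domain of $[\mathcal{M},w]$ at $w$ is all of $A_w$, and because constant extensions modify nothing but the new constants); hence $A'_v=A_v$ and $\mathbb{H}'_{wv}=\mathbb{H}_{wv}$ for $v\in[W,w]$, and $\{v\mid w\prec' v\}=\{v\mid w\prec v\}$. (c) Two successive constant extensions compose by Lemma~\ref{L:cutoff-constants}.3 applied to the pointed model $(\mathcal{M},v)$: $(([\mathcal{M},v],\bar{c}_m/\bar{d}_m),c_{m+1}/b)=([\mathcal{M},v],\bar{c}_{m+1}/(\bar{d}_m)^\frown b)$ whenever $\bar{d}_m\in A_v^m$ and $b\in A_v$.

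Combining these finishes the proof. In the existential case, $A'_w=A_w$ and, for $b\in A_w$, $([\mathcal{M}',w],c_{m+1}/b)=(\mathcal{M}',c_{m+1}/b)=(([\mathcal{M},w],\bar{c}_m/\bar{b}_m),c_{m+1}/b)=([\mathcal{M},w],\bar{c}_{m+1}/(\bar{b}_m)^\frown b)$ by (a) and (c), which is exactly the right-hand side of the first equivalence. In the universal case, $v$ ranges over $\{v\mid w\prec v\}$, $A'_v=A_v$, $\mathbb{H}'_{wv}=\mathbb{H}_{wv}$, and for $b\in A_v$, using (a) and then (c), $([\mathcal{M}',v],c_{m+1}/b)=(([\mathcal{M},v],\bar{c}_m/\mathbb{H}_{wv}\langle\bar{b}_m\rangle),c_{m+1}/b)=([\mathcal{M},v],\bar{c}_{m+1}/\mathbb{H}_{wv}\langle\bar{b}_m\rangle^\frown b)$, which is the second equivalence. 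Since each step is either an appeal to an established lemma or an unwinding of definitions, there is no real obstacle here; the only thing needing care is the bookkeeping --- keeping straight that $\mathcal{M}'$ lives over $\Theta\cup\{\bar{c}_m\}$ (so $c_{m+1}$ is legitimately fresh for it) and that the generated-submodel and constant-extension operations leave domains and transition homomorphisms at the relevant nodes untouched, so that Lemma~\ref{L:cutoff-constants}.2--3 may indeed be invoked at an arbitrary node $v\in[W,w]$.
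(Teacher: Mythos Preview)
Your proof is correct and follows essentially the same approach as the paper: apply Corollary~\ref{C:satisfaction} to the model $\mathcal{M}'=([\mathcal{M},w],\bar{c}_m/\bar{b}_m)$, then use Lemma~\ref{L:cutoff-constants}.2 and Lemma~\ref{L:cutoff-constants}.3 to rewrite the resulting generated-submodel-plus-constant-extension expressions into the asserted form. Your write-up is somewhat more explicit than the paper's about the bookkeeping (freshness of $c_{m+1}$, preservation of domains and transition maps under the generated-submodel and constant-extension operations), but the argument is the same.
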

\begin{proof}
	(Part 1). We have $([\mathcal{M}, w], \bar{c}_m/\bar{b}_m), w\models_{\mathsf{IL}}(\exists x\varphi)[\bar{a}_n]$ iff, by Corollary \ref{C:satisfaction}.1, for some $b \in A_w$ we have $([([\mathcal{M}, w], \bar{c}_m/\bar{b}_m), w], c_{m + 1}/b), w \models_{\mathsf{IL}} \varphi\binom{c_{m + 1}}{x}[\bar{a}_n]$. But we also have:
	$$
	([([\mathcal{M}, w], \bar{c}_m/\bar{b}_m), w], c_{m + 1}/b) = ([\mathcal{M}, w], \bar{c}_{m + 1}/(\bar{b}_m)^\frown b)	
	$$
	by Lemma \ref{L:cutoff-constants}.3, so the Corollary follows.
	
	(Part 2). We have $([\mathcal{M}, w], \bar{c}_m/\bar{b}_m), w \models_{\mathsf{IL}} (\forall
	x\varphi)[\bar{a}_n]$ iff for every $v \succ w$ and every $ b
	\in A_v$ we have $([([\mathcal{M}, w], \bar{c}_m/\bar{b}_m), v], c_{m + 1}/b), v \models_{\mathsf{IL}} \varphi\binom{c_{m + 1}}{x}[\mathbb{H}_{wv}\langle\bar{a}_n\rangle]$. But we also have:
	\begin{align*}
		([([\mathcal{M}, w], \bar{c}_m/\bar{b}_m), v], c_{m + 1}/b) &= (([\mathcal{M}, v], \bar{c}_m/\mathbb{H}_{wv}\langle\bar{b}_m\rangle), c_{m + 1}/b) &&\text{by Lemma \ref{L:cutoff-constants}.2}\\
		&= ([\mathcal{M}, v], \bar{c}_{m + 1}/\mathbb{H}_{wv}\langle\bar{b}_m\rangle^\frown b)&&\text{by Lemma \ref{L:cutoff-constants}.3}
	\end{align*}
     Thus the Corollary follows.
\end{proof}

Moreover, it is clear that renaming of bound variables in an
intuitionistic formula results in an equivalent formula.
Therefore, we can adopt the following notational convention
allowing us to directly quantify over constants.
\begin{definition}\label{D:constant-quantification}
Let $\Theta$ be a signature, let $c \notin Const_\Theta$, let
$\bar{x}_n \in Var^{\neq n}$ and let $\varphi \in
IL_{\bar{x}_n}(\Theta \cup\{c\})$. Then we denote by $\exists
c\varphi \in IL_{\bar{x}_n}(\Theta)$ (resp. $\forall c\varphi \in
IL_{\bar{x}_n}(\Theta)$) any formula $\exists x\psi$ (resp.
$\forall x\psi$), where $x \in Var \setminus (BV(\varphi) \cup
\{\bar{x}_n\})$ and $\psi \in IL_{(\bar{x}_n)^\frown x}(\Theta)$ is
such that $\psi\binom{c}{x} = \varphi$.
\end{definition}
It follows then from Lemma \ref{L:substitution}.1 that $\exists
c\varphi \in IL_{\bar{x}_n}(\Theta)$ (resp. $\forall c\varphi \in
IL_{\bar{x}_n}(\Theta)$) is defined for every $\varphi \in
IL_{\bar{x}_n}(\Theta \cup\{c\})$ and it follows from Corollary
\ref{C:satisfaction} that whenever $\chi$ and $\theta$ are
different representatives of $\exists c\varphi$ (resp. $\forall
c\varphi$), then for every $(\mathcal{M}, w) \in Pmod_\Theta$ and
every $\bar{a}_n \in A^n_w$ we will have:
$$
\mathcal{M}, w \models_{\mathsf{IL}}
\chi[\bar{a}_n]\Leftrightarrow \mathcal{M}, w
\models_{\mathsf{IL}} \theta[\bar{a}_n],
$$
so, semantically $\chi$ and $\theta$ are one and the same formula.
In addition, we can formulate the following variants of Corollary
\ref{C:satisfaction} and  Corollary
\ref{C:satisfaction-cutoff} for quantifiers over constants:
\begin{corollary}\label{C:satisfaction-constants}
Let $(\mathcal{M}, w)$ be a pointed $\Theta$-model,  let $c \notin
Const_\Theta$, let $\varphi\in IL_{\bar{x}_n}(\Theta\cup \{c\})$,
and let $\bar{a}_n \in A^n_w$. Then we have:
\begin{align*}
&\mathcal{M}, w \models_{\mathsf{IL}}(\exists c\varphi)[\bar{a}_n]
\Leftrightarrow (\exists a \in A_w)(([\mathcal{M}, w], c/a), w \models_{\mathsf{IL}} \varphi[\bar{a}_n]);\\
&\mathcal{M}, w \models_{\mathsf{IL}} (\forall
c\varphi)[\bar{a}_n] \Leftrightarrow (\forall v \succ w)(\forall a
\in A_v)(([\mathcal{M}, v], c/a), v \models_{\mathsf{IL}}
\varphi[\mathbb{H}_{wv}\langle\bar{a}_n\rangle]).
\end{align*}
\end{corollary}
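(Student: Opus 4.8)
The plan is to reduce the claim to Corollary \ref{C:satisfaction} via the definition of quantification over constants, so that the proof collapses to a short substitution. First I would invoke Definition \ref{D:constant-quantification} to fix concrete representatives: write $\exists c\varphi$ as $\exists x\psi$ and $\forall c\varphi$ as $\forall x\psi'$, where $x \in Var \setminus (BV(\varphi) \cup \{\bar{x}_n\})$ and $\psi, \psi' \in IL_{(\bar{x}_n)^\frown x}(\Theta)$ satisfy $\psi\binom{c}{x} = \psi'\binom{c}{x} = \varphi$; the existence of such $\psi$ and $\psi'$ is guaranteed by Lemma \ref{L:substitution}.1, as already remarked after Definition \ref{D:constant-quantification}. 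Since $c \notin Const_\Theta$, these $\psi$ and $\psi'$ are legitimate inputs to Corollary \ref{C:satisfaction}, with $c$ playing the role of the fresh constant featuring there.

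Next I would apply Corollary \ref{C:satisfaction}.1 to $\exists x\psi$ and Corollary \ref{C:satisfaction}.2 to $\forall x\psi'$. The first gives
\[
\mathcal{M}, w \models_{\mathsf{IL}}(\exists x\psi)[\bar{a}_n] \Leftrightarrow (\exists a \in A_w)(([\mathcal{M}, w], c/a), w \models_{\mathsf{IL}} \psi\binom{c}{x}[\bar{a}_n]),
\]
and the second gives
\[
\mathcal{M}, w \models_{\mathsf{IL}} (\forall x\psi')[\bar{a}_n] \Leftrightarrow (\forall v \succ w)(\forall a \in A_v)(([\mathcal{M}, v], c/a), v \models_{\mathsf{IL}} \psi'\binom{c}{x}[\mathbb{H}_{wv}\langle\bar{a}_n\rangle]).
\]
Substituting $\psi\binom{c}{x} = \varphi$ and $\psi'\binom{c}{x} = \varphi$, and recalling that $\exists x\psi$ is, by choice, a representative of $\exists c\varphi$ and $\forall x\psi'$ one of $\forall c\varphi$, yields precisely the two claimed equivalences.

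The only point meriting a word of care is the well-definedness of the left-hand sides, since $\exists c\varphi$ and $\forall c\varphi$ abbreviate any of possibly several representatives. But, as observed right after Definition \ref{D:constant-quantification} (itself a consequence of Corollary \ref{C:satisfaction}), any two representatives of $\exists c\varphi$ (resp. of $\forall c\varphi$) are satisfied at exactly the same pointed models under the same parameter assignments, so $\mathcal{M}, w \models_{\mathsf{IL}}(\exists c\varphi)[\bar{a}_n]$ is unambiguous and the computation above does not depend on which representatives $\psi, \psi'$ were picked. I do not expect a genuine obstacle here: the corollary is a bookkeeping consequence of Definition \ref{D:constant-quantification} together with Corollary \ref{C:satisfaction}, reducing to a one-step substitution once the representatives have been named.
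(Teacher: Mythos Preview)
Your proposal is correct and matches the paper's intent: the paper gives no explicit proof for this corollary, presenting it simply as a ``variant'' of Corollary~\ref{C:satisfaction} obtained via Definition~\ref{D:constant-quantification}, which is exactly the reduction you carry out. The one-step substitution you describe is precisely the bookkeeping the paper leaves implicit.
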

\begin{corollary}\label{C:satisfaction-cutoff-constants}
	Let $(\mathcal{M}, w)$ be a pointed $\Theta$-model, let $\bar{b}_m
	\in A^m_w$, let $\bar{c}_{m + 1}$ be a tuple of pairwise distinct individual constants outside $\Theta$, let
	$\varphi\in IL_{\bar{x}_n}(\Theta\cup \{\bar{c}_{m + 1}\})$, and let $\bar{a}_n\in A^n_w$. Then we have:
	\begin{align*}
		&([\mathcal{M}, w], \bar{c}_m/\bar{b}_m), w\models_{\mathsf{IL}}(\exists c_{m + 1}\varphi)[\bar{a}_n]
		\Leftrightarrow (\exists b \in A_w)(([\mathcal{M}, w], \bar{c}_{m + 1}/(\bar{b}_m)^\frown b), w \models_{\mathsf{IL}} \varphi[\bar{a}_n]);\\
		&([\mathcal{M}, w], \bar{c}_m/\bar{b}_m), w \models_{\mathsf{IL}} (\forall
		c_{m + 1}\varphi)[\bar{a}_n] \Leftrightarrow (\forall v \succ w)(\forall b
		\in A_v)(([\mathcal{M}, v], \bar{c}_{m + 1}/\mathbb{H}_{wv}\langle\bar{b}_m\rangle^\frown b), v \models_{\mathsf{IL}}
		\varphi[\mathbb{H}_{wv}\langle\bar{a}_n\rangle]).
	\end{align*}
\end{corollary}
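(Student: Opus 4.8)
The plan is to reduce the corollary to Corollary \ref{C:satisfaction-cutoff} exactly as Corollary \ref{C:satisfaction-constants} was reduced to Corollary \ref{C:satisfaction}, i.e. by unwinding the notational convention of Definition \ref{D:constant-quantification}. I would work throughout in the signature $\Theta' := \Theta \cup \{\bar{c}_m\}$, treating $c_{m+1}$ as the fresh constant; note that $c_{m+1}\notin Const_{\Theta'}$ since $\bar{c}_{m+1}$ is a pairwise distinct tuple lying outside $\Theta$, so Definition \ref{D:constant-quantification} does apply with $\Theta'$ in place of $\Theta$ and $c_{m+1}$ in place of $c$, and $\exists c_{m+1}\varphi, \forall c_{m+1}\varphi \in IL_{\bar{x}_n}(\Theta')$.

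For Part 1, fix, via Definition \ref{D:constant-quantification}, a representative $\exists x\psi$ of $\exists c_{m+1}\varphi$, where $x \in Var \setminus (BV(\varphi)\cup\{\bar{x}_n\})$ and $\psi \in IL_{(\bar{x}_n)^\frown x}(\Theta')$ satisfies $\psi\binom{c_{m+1}}{x} = \varphi$. Since $([\mathcal{M},w],\bar{c}_m/\bar{b}_m)\in Pmod_{\Theta'}$, the remark following Definition \ref{D:constant-quantification} gives that $([\mathcal{M}, w], \bar{c}_m/\bar{b}_m), w\models_{\mathsf{IL}}(\exists c_{m+1}\varphi)[\bar{a}_n]$ iff $([\mathcal{M}, w], \bar{c}_m/\bar{b}_m), w\models_{\mathsf{IL}}(\exists x\psi)[\bar{a}_n]$. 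Now apply Corollary \ref{C:satisfaction-cutoff}.1 with $\psi$ in the role of its ``$\varphi$'' (the remaining data $(\mathcal{M},w)$, $\bar{b}_m$, the pairwise distinct fresh tuple $\bar{c}_{m+1}$ outside $\Theta$, and $\bar{a}_n$ being exactly what that corollary requires), which yields
\begin{align*}
([\mathcal{M}, w], \bar{c}_m/\bar{b}_m), w\models_{\mathsf{IL}}(\exists x\psi)[\bar{a}_n] \Leftrightarrow (\exists b \in A_w)(([\mathcal{M}, w], \bar{c}_{m + 1}/(\bar{b}_m)^\frown b), w \models_{\mathsf{IL}} \psi\binom{c_{m + 1}}{x}[\bar{a}_n]).
\end{align*}
Rewriting $\psi\binom{c_{m+1}}{x}$ as $\varphi$ on the right and combining with the previous equivalence on the left gives Part 1. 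Part 2 is obtained by the identical argument, using Corollary \ref{C:satisfaction-cutoff}.2 together with the same representative $\forall x\psi$ of $\forall c_{m+1}\varphi$ in place of Corollary \ref{C:satisfaction-cutoff}.1; the quantification over $v \succ w$, $b\in A_v$ and the insertion of $\mathbb{H}_{wv}$ in front of the relevant tuples then appear automatically, matching the statement.

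I do not expect a genuine obstacle here; the proof is bookkeeping. The only points requiring care are the side conditions: that $\bar{c}_{m+1}$ being pairwise distinct and outside $\Theta$ makes $c_{m+1}$ legitimately fresh for $\Theta'$; that $x$ being chosen outside $BV(\varphi)\cup\{\bar{x}_n\}$ makes $(\bar{x}_n)^\frown x \in Var^{\neq n+1}$, so that Corollary \ref{C:satisfaction-cutoff} genuinely applies to $\psi$; and that the statement is well posed because the value of $\models_{\mathsf{IL}}$ on $\exists c_{m+1}\varphi$ and on $\forall c_{m+1}\varphi$ is independent of the chosen representative.
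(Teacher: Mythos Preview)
Your proposal is correct and matches the paper's intended argument: the paper states Corollary \ref{C:satisfaction-cutoff-constants} without proof, positioning it as the constant-quantifier variant of Corollary \ref{C:satisfaction-cutoff} in exactly the way Corollary \ref{C:satisfaction-constants} is the constant-quantifier variant of Corollary \ref{C:satisfaction}. Your reduction via Definition \ref{D:constant-quantification}, fixing a representative $\exists x\psi$ (resp.\ $\forall x\psi$) with $\psi\binom{c_{m+1}}{x}=\varphi$ and then invoking Corollary \ref{C:satisfaction-cutoff} with $\psi$ in place of its $\varphi$, is precisely the bookkeeping the paper leaves implicit, and your attention to the side conditions (freshness of $c_{m+1}$ over $\Theta'=\Theta\cup\{\bar{c}_m\}$, pairwise distinctness of $(\bar{x}_n)^\frown x$) is appropriate.
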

We now adopt the standard definitions for the notions of
satisfiable and valid formula (resp. sentence) in any given class
of (pointed) models, including the singleton sets of models and
the class of all models. Thus, for any class $Cls \subseteq Mod$
and for any $\varphi \in IL_\emptyset(\Theta)$, we write $Cls
\models_{\mathsf{IL}} \varphi$ iff $\varphi$ is valid in every model in the
class $Pmod_\Theta(Cls):= \{ (\mathcal{M}, w) \in Pmod_\Theta\mid \mathcal{M} \in
Cls\}$. In particular, for a $\Theta$-model $\mathcal{M}$ and a
$\varphi \in IL_\emptyset(\Theta)$, we will write $\mathcal{M}
\models_{\mathsf{IL}} \varphi$ (resp. $\models_{\mathsf{IL}}
\varphi$) iff $\varphi$ is valid in $\{ \mathcal{M}\}$ (resp. in
$Mod_\Theta$).

We will distinguish the following subclasses of Kripke models:
\begin{itemize}
	\item $In$, the class of all models $\mathcal{M}$ such that for all $w,v \in W$, if $w \prec v$, then $\mathbb{H}_{wv}$ is an injection (one-to-one function).
	
	\item $Su$, the class of all models $\mathcal{M}$ such that for all $w,v \in W$, if $w \prec v$, then $\mathbb{H}_{wv}$ is a surjection ($rang(\mathbb{H}_{wv}) = A_v$).
	
	\item $Bi := In \cap Su$.
\end{itemize}
If we restrict our attention to the models in $Bi$, where all the canonical homomorphisms are bijections, we obtain $\mathsf{CD}$, the intuitionistic logic of constant domains.

On the other hand, we observe that in the context of equality-free first-order language, the presence or absence of injectivity of canonical isomorphisms does not affect the validity and satisfiability of formulas. Therefore, if the validity and satisfiability is all we care about when defining our logics, we might just as well interpret $\mathsf{CD}$ on the basis of class $Su$ of Kripke models; symmetrically, we could have assumed $In$ as the class of intended models for our basic intuitionistic logic $\mathsf{IL}$. The following lemma explains the matter in a bit more detail:
\begin{lemma}\label{L:injectivity}
Let $\Theta$ be a signature, let $n < \omega$, and let $\varphi \in IL_{\bar{x}_n}(\Theta)$. Then we have:
\begin{enumerate}
	\item $\models_{\mathsf{IL}}
	\varphi$ iff $In \models_{\mathsf{IL}}
	\varphi$.
	
	\item $Bi \models_{\mathsf{IL}}
	\varphi$ iff $Su \models_{\mathsf{IL}}
	\varphi$. 
\end{enumerate}
\end{lemma}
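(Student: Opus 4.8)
Since $In\subseteq Mod$ and $Bi=In\cap Su\subseteq Su$, the implications $\models_{\mathsf{IL}}\varphi\Rightarrow In\models_{\mathsf{IL}}\varphi$ and $Su\models_{\mathsf{IL}}\varphi\Rightarrow Bi\models_{\mathsf{IL}}\varphi$ are trivial, so the plan is to prove the two converses. In both cases it is enough to show: for every $\Theta$-model $\mathcal{M}$ (for Part 2, every $\mathcal{M}\in Su$), every $w\in W$ and every $\bar a_n\in A^n_w$, there are a $\Theta$-model $\mathcal{M}^\star$ lying in $In$ (for Part 2, in $Bi$) and a tuple $\bar a^\star_n\in(A^\star_w)^n$ with $\mathcal{M},w\models_{\mathsf{IL}}\varphi[\bar a_n]\Leftrightarrow\mathcal{M}^\star,w\models_{\mathsf{IL}}\varphi[\bar a^\star_n]$; granting this, a formula valid throughout $In$ (resp.\ $Bi$) is valid throughout $Mod$ (resp.\ $Su$). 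By Lemma \ref{L:satisfaction}.2 we may first replace $\mathcal{M}$ by $[\mathcal{M},w]$ and thereby assume $\mathcal{M}$ rooted at some $w_0$; this is harmless because $[\mathcal{M},w]$ keeps all domains and all transition homomorphisms literally unchanged, so $In$, $Su$ and $Bi$ are all closed under passing to generated submodels.

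Now I would build $\mathcal{M}^\star$ on the same frame $\langle W,\prec\rangle$ together with a ``projection'' family $\pi_v:A^\star_v\to A_v$. For Part 1 set $A^\star_v:=\big(\bigcup_{v'\prec v}A_{v'}\big)\times\{v\}$; for Part 2 (with $\mathcal{M}\in Su$) set instead $A^\star_v:=A_{w_0}\times\{v\}$. In both cases let $\mathbb{H}^\star_{vu}(a,v):=(a,u)$ for $v\prec u$: this is injective (Part 1) and a bijection (Part 2), so $\mathcal{M}^\star$ lands in $In$ (resp.\ $Bi$); the conditions $\mathbb{H}^\star_{vv}=id$ and $\mathbb{H}^\star_{vu}=\mathbb{H}^\star_{u'u}\circ\mathbb{H}^\star_{vu'}$ are immediate, and the domains are pairwise disjoint thanks to the world-tags. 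Define $\pi_v(a,v):=\mathbb{H}_{\ell(a),v}(a)$ in Part 1, where $\ell(a)$ is the unique world with $a\in A_{\ell(a)}$ (note $\ell(a)\prec v$), and $\pi_v(a,v):=\mathbb{H}_{w_0v}(a)$ in Part 2; each $\pi_v$ is onto $A_v$ — immediately in Part 1, and in Part 2 because the $\mathbb{H}$'s are surjective — and $\pi_u\circ\mathbb{H}^\star_{vu}=\mathbb{H}_{vu}\circ\pi_v$. Finally pull the interpretation back along $\pi$: $I^\star_v(P):=\{\bar a^\star\in(A^\star_v)^n\mid\pi_v\langle\bar a^\star\rangle\in I_v(P)\}$ for $P\in\Theta_n$, and $I^\star_v(c):=(I_{w_0}(c),v)$ for $c\in Const_\Theta$. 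Using commutation of $\pi$ with the transition maps and the identity $\pi_v(I^\star_v(c))=I_v(c)$, one checks routinely that $\mathcal{M}^\star$ is a genuine $\Theta$-model rooted at $w_0$.

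The core of the argument is the following invariance claim, proved by induction on $\psi$: for every $u\in W$, every finite tuple $\bar c_k$ of pairwise distinct constants outside $\Theta$, every $\bar b^\star_k\in(A^\star_u)^k$, every $\psi\in IL_{\bar y_m}(\Theta\cup\{\bar c_k\})$ and every $\bar a^\star_m\in(A^\star_u)^m$,
$$
\big([\mathcal{M}^\star,u],\bar c_k/\bar b^\star_k\big),u\models_{\mathsf{IL}}\psi[\bar a^\star_m]\quad\Leftrightarrow\quad\big([\mathcal{M},u],\bar c_k/\pi_u\langle\bar b^\star_k\rangle\big),u\models_{\mathsf{IL}}\psi[\pi_u\langle\bar a^\star_m\rangle].
$$
The atomic case follows from Lemma \ref{L:satisfaction}.1, the definition of $I^\star$ and the identities $\pi_u(I^\star_u(d))=I_u(d)$; the $\wedge,\vee,\bot$ cases are trivial; for $\to$ one runs over all $u'\succ u$ (the same set on both sides, since $W^\star=W$ and $\prec^\star=\prec$), rewrites $\big([\mathcal{M}^\star,u],\bar c_k/\bar b^\star_k\big)$ evaluated at $u'$ as $\big([\mathcal{M}^\star,u'],\bar c_k/\mathbb{H}^\star_{uu'}\langle\bar b^\star_k\rangle\big)$ via Lemmas \ref{L:satisfaction}.2 and \ref{L:cutoff-constants}.2, invokes the induction hypothesis at $u'$, and uses $\pi_{u'}\circ\mathbb{H}^\star_{uu'}=\mathbb{H}_{uu'}\circ\pi_u$; for $\exists x$ and $\forall x$ one applies Corollary \ref{C:satisfaction-cutoff}, uses the induction hypothesis with $\bar c_k$ extended by a fresh constant and $\bar b^\star_k$ extended by the witness $b^\star$ taken from the relevant domain $A^\star_v$, and then uses surjectivity of $\pi_v$ to translate the resulting quantifier over $A^\star_v$ into the quantifier over $A_v$ on the $\mathcal{M}$-side (every target $b\in A_v$ equals $\pi_v(b^\star)$ for some $b^\star$). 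Specializing to $k=0$, $u=w_0$ and discarding the trivial constant extension by Lemma \ref{L:satisfaction}.2 gives $\mathcal{M}^\star,w_0\models_{\mathsf{IL}}\psi[\bar a^\star_m]\Leftrightarrow\mathcal{M},w_0\models_{\mathsf{IL}}\psi[\pi_{w_0}\langle\bar a^\star_m\rangle]$; choosing $\bar a^\star_n$ with $\pi_{w_0}\langle\bar a^\star_n\rangle=\bar a_n$ (possible since $\pi_{w_0}$ is onto) finishes the proof.

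The hard part is precisely this induction. Two points need care: the auxiliary constants $\bar c_k$ must be permitted to name \emph{arbitrary} elements of $A^\star_u$ — including ``deep'' ones lying outside the range of every transition homomorphism — or else the $\exists x$ and $\forall x$ clauses fail to close under the induction hypothesis; and the bookkeeping linking constant extensions, generated submodels and the simplified quantifier clauses (Lemmas \ref{L:cutoff-constants} and \ref{L:satisfaction} and Corollary \ref{C:satisfaction-cutoff}) must be carried out carefully. By contrast, verifying that $\mathcal{M}^\star$ really is a Kripke model in $In$ (resp.\ $Bi$) is routine once the domains are tagged by worlds to keep them disjoint.
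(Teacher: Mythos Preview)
Your proof is correct. Both you and the paper build, for each $\Theta$-model $\mathcal{M}$, an injective (resp.\ bijective) model $\mathcal{M}'$ together with a family of surjections $\pi_v:A'_v\to A_v$ commuting with the transition homomorphisms, and then prove by induction on formulas that satisfaction transfers along $\pi$; the surjectivity of $\pi_v$ is exactly what makes the quantifier clauses go through. The difference lies in the choice of $A'_v$. The paper's sketch takes elements of $A'_w$ to be \emph{choice functions} $f\in f_a$ that simultaneously select, for every $v\prec w$ at which $a$ has a preimage, one such preimage; the transition map then extends such a function by one more value. Your construction is leaner: after first passing to a rooted submodel (which you need in order to anchor the interpretation of the $\Theta$-constants at a single node $w_0$), you let an element of $A^\star_v$ simply be a pair $(a,v)$ recording one ``origin'' $a\in A_{\ell(a)}$ with $\ell(a)\prec v$, and the transition map just replaces the tag. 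Your version avoids the bookkeeping of extending partial choice functions and makes the injectivity/bijectivity of $\mathbb{H}^\star$ transparent; the price is the preliminary reduction to rooted models via Lemma~\ref{L:satisfaction}.2, which the paper's global construction does not need. The inductive argument itself, including the careful treatment of the auxiliary constants $\bar c_k$ and the use of Lemma~\ref{L:cutoff-constants}.2 and Corollary~\ref{C:satisfaction-cutoff} to reduce evaluation at successor worlds to the shape required by the induction hypothesis, is handled correctly.
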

\begin{proof}[Proof (a sketch)] For a given signature $\Theta$, and a given $\mathcal{M} \in Mod_\Theta$, if $w \in W$ and $a \in A_w$, we define the set of nodes $(w\downarrow a) : = \{v \mid v\mathrel{\prec} w\,\&\,(\exists b \in A_v)(\mathbb{H}_{vw}(b) = a)\}$ and the following set of functions:
	$$
f_{a} =  \{ f:(w\downarrow a) \to \bigcup_{v \in (w\downarrow)}A_v \mid (\forall v \in (w\downarrow a))(f(v)\in A_v\,\&\,\mathbb{H}_{vw}(f(v)) = a) \}.	
	$$
We then consider $\Theta$-model $\mathcal{M}' = \langle W, \prec,\mathfrak{A}', \mathbb{H}'\rangle$ such that for all $w,v \in W$ we have:
	\begin{align*}
		\mathfrak{A}'_w &:= (A'_w, I'_w)\\
		A'_w &:= \bigcup\{ f_{a}\mid  a \in A_w \}\\
		I'_w(P) :&= \{ \bar{f}_n\mid \langle f_1(w),\ldots f_n(w)\rangle\in I_w(P)\} &&\text{$P \in \Theta_n$}\\
		I'_w(c) &:= f \Leftrightarrow (\forall v \in (w\downarrow))(f(v) = I_v(c)) &&\text{$c \in Const_\Theta$}\\
		\mathbb{H}'_{vw}(f) &:= g \Leftrightarrow g \supseteq f\cup \{(w, \mathbb{H}_{vw}(f(v)))\}&& v\mathrel{\prec} w
	\end{align*}
It is straightforward (although somewhat tiresome) to show by induction on the construction of a formula, that $\mathcal{M}' \in In$ and that, given an $n < \omega$, for every $w \in W$, for all $\bar{a}_n$, for all $\bar{f}_n \in A'_w$ such that $\bar{a}_n = \langle f_1(w),\ldots f_n(w)\rangle$ for every $\varphi \in IL_{\bar{x}_n}(\Theta)$, we have:
$$
\mathcal{M}, w \models_{\mathsf{IL}} \varphi[\bar{a}_n]\Leftrightarrow \mathcal{M}', w \models_{\mathsf{IL}} \varphi[\bar{f}_n].
$$
(This is due to the fact that our language is lacking equality, otherwise formulas like $x_1 \equiv c$ would provide an easy counterexample to our claim).

Moreover, it is easy to verify that for an $\mathcal{M} \in Su$ we also have $\mathcal{M}' \in Su$, whence Part 2 of our Lemma follows.
\end{proof}
For the sake of uniformity, we need to either omit or enforce injectivity for the default classes of intended models assigned to both $\mathsf{IL}$ and $\mathsf{CD}$, and since we did not enforce the injectivity for $\mathsf{IL}$, we will not do this for $\mathsf{CD}$ either, even though this decision gives $\mathsf{CD}$ the semantics of non-constant models as its default semantics. We thus set that $Cls
\models_{\mathsf{CD}} \varphi$ iff $Cls \cap Su
\models_{\mathsf{IL}} \varphi$ and that $\models_{\mathsf{CD}} \varphi$ iff $Su
\models_{\mathsf{IL}} \varphi$. In situations when we will need to distinguish between two logics with the same sets of theorems but with different intended semantics, we will denote by $\mathsf{In}$ the logic of injective Kripke models and by $\mathsf{Bi}$ the logic of bijective Kripke models.

We note, further, that $\mathsf{CD}$ is different
from $\mathsf{IL}$, since we have, e.g. $\models_{\mathsf{CD}} \forall x(\varphi \vee \psi) \to (\varphi
\vee \forall x\psi)$, provided that $x \notin FV(\varphi)$, but
$\not\models_{\mathsf{IL}} \forall x(\varphi \vee \psi) \to
(\varphi \vee \forall x\psi)$.

\subsection{Intuitionistic theories and intuitionistic types}
Since classical negation is not available in intuitionistic logic,
it often makes sense to include falsehood assumptions along with
truth assumptions when defining theories in intuitionistic logic
(this is done in e.g. \cite[p. 110]{GabbayMaksimova}; \cite{chagrov}). Thus, given
a $(\mathcal{M}, w) \in Pmod_\Theta$, we define
$Th_{\mathsf{IL}}(\mathcal{M}, w)$, the complete
$\mathsf{IL}$-theory of $(\mathcal{M}, w)$, to be the following
pair:
\begin{align*}
Th_{\mathsf{IL}}(\mathcal{M}, w) := (\{\varphi\in
IL_{\emptyset}(\Theta) \mid \mathcal{M}, w\models_{\mathsf{IL}}
\varphi \},\{\varphi\in IL_{\emptyset}(\Theta) \mid \mathcal{M},
w\not\models_{\mathsf{IL}} \varphi \})
\end{align*}
We also introduce a special notation for the left and right
projection of $Th_{\mathsf{IL}}(\mathcal{M}, w)$, that is to say,
for the \emph{positive} and for the \emph{negative} part of this
theory, denoting them by $Th^+_{\mathsf{IL}}(\mathcal{M}, w)$ and
$Th^-_{\mathsf{IL}}(\mathcal{M}, w)$, respectively.

Since we are going to put intuitionistic logic in the context of
abstract model theory which typically avoids mentioning individual
variables explicitly, it is convenient to construe intuitionistic
types as a variant of intuitionistic theories, allowing fresh
individual constants to take on the role of free variables of a
given type. In this fashion, given a $(\mathcal{M}, w) \in
Pmod_\Theta$, a tuple $\bar{a}_n \in A^n_w$, and a tuple
$\bar{c}_n$ of pairwise distinct individual constants outside
$\Theta$, we define $Tp_{\mathsf{IL}}(\mathcal{M}, w,
\bar{c}_n/\bar{a}_n)$, the $\bar{c}_n$-complete $\mathsf{IL}$-type
of $(\mathcal{M}, w, \bar{a}_n)$, as
$Th_{\mathsf{IL}}(([\mathcal{M}, w], \bar{c}_n/\bar{a}_n),w)$. It
follows then from Corollary \ref{C:cutoff} that a complete type is
exactly what holds or fails in all (or any) possible extensions of
a model by a tuple of constants with a given tuple of denotations:
\begin{corollary}\label{C:types-and-constants}
Let $\Theta$ be signature, let $(\mathcal{M}, w) \in Pmod_\Theta$,
let $\bar{a}_n \in A^n_w$, let $\bar{c}_n$ be a tuple of pairwise
distinct individual constants outside $\Theta$, and let $\varphi
\in IL_\emptyset(\Theta \cup \{\bar{c}_n\})$. Then:
\begin{enumerate}
    \item $\varphi \in Tp^+_{\mathsf{IL}}(\mathcal{M}, w,
\bar{c}_n/\bar{a}_n) \Leftrightarrow (\exists\mathcal{N}\in
(\mathcal{M}, w)\oplus(\bar{c}_n/\bar{a}_n))(\mathcal{N}, w
\models_{\mathsf{IL}}\varphi)\Leftrightarrow
(\forall\mathcal{N}\in (\mathcal{M},
w)\oplus(\bar{c}_n/\bar{a}_n))(\mathcal{N}, w
\models_{\mathsf{IL}}\varphi)$.
    \item $\varphi \in Tp^-_{\mathsf{IL}}(\mathcal{M}, w,
\bar{c}_n/\bar{a}_n) \Leftrightarrow (\exists\mathcal{N}\in
(\mathcal{M}, w)\oplus(\bar{c}_n/\bar{a}_n))(\mathcal{N}, w
\not\models_{\mathsf{IL}}\varphi)\Leftrightarrow
(\forall\mathcal{N}\in (\mathcal{M},
w)\oplus(\bar{c}_n/\bar{a}_n))(\mathcal{N}, w
\not\models_{\mathsf{IL}}\varphi)$.
\end{enumerate}
\end{corollary}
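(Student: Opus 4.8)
The plan is to reduce both parts to Corollary~\ref{C:cutoff} by merely unwinding the definitions of $Tp^{+}_{\mathsf{IL}}$ and $Tp^{-}_{\mathsf{IL}}$. Recall that $Tp_{\mathsf{IL}}(\mathcal{M}, w, \bar{c}_n/\bar{a}_n)$ was defined as $Th_{\mathsf{IL}}(([\mathcal{M}, w], \bar{c}_n/\bar{a}_n), w)$; this object is well-formed, since $[\mathcal{M},w]$ is rooted at $w$ and, because $w \prec w$, the domain of $w$ in $[\mathcal{M},w]$ is still $A_w$, so $\bar{a}_n \in A^n_w$ holds there and the canonical constant expansion $([\mathcal{M}, w], \bar{c}_n/\bar{a}_n)$ exists and is a pointed $(\Theta\cup\{\bar{c}_n\})$-model with root $w$. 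Unpacking the definition of $Th_{\mathsf{IL}}$ and of its positive and negative projections, for $\varphi \in IL_\emptyset(\Theta \cup \{\bar{c}_n\})$ we get that $\varphi \in Tp^{+}_{\mathsf{IL}}(\mathcal{M}, w, \bar{c}_n/\bar{a}_n)$ iff $([\mathcal{M}, w], \bar{c}_n/\bar{a}_n), w \models_{\mathsf{IL}} \varphi$, and $\varphi \in Tp^{-}_{\mathsf{IL}}(\mathcal{M}, w, \bar{c}_n/\bar{a}_n)$ iff $([\mathcal{M}, w], \bar{c}_n/\bar{a}_n), w \not\models_{\mathsf{IL}} \varphi$.

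For Part~1 I would then simply apply Corollary~\ref{C:cutoff} in the degenerate case $m = 0$ (so $\bar{b}_m = \Lambda$ and the parameter $[\bar{b}_m]$ disappears), which gives directly
\[
(\exists\mathcal{N}\in (\mathcal{M}, w)\oplus(\bar{c}_n/\bar{a}_n))(\mathcal{N}, w \models_{\mathsf{IL}}\varphi)\Leftrightarrow(\forall\mathcal{N}\in (\mathcal{M}, w)\oplus(\bar{c}_n/\bar{a}_n))(\mathcal{N}, w \models_{\mathsf{IL}}\varphi)\Leftrightarrow([\mathcal{M},w], \bar{c}_n/\bar{a}_n), w\models_{\mathsf{IL}}\varphi,
\]
and by the previous paragraph the last member of this chain is exactly $\varphi \in Tp^{+}_{\mathsf{IL}}(\mathcal{M}, w, \bar{c}_n/\bar{a}_n)$. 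For Part~2 I would negate the three equivalent statements of Corollary~\ref{C:cutoff} (again with $m=0$): the negation of $([\mathcal{M},w], \bar{c}_n/\bar{a}_n), w\models_{\mathsf{IL}}\varphi$ is $\varphi \in Tp^{-}_{\mathsf{IL}}(\mathcal{M}, w, \bar{c}_n/\bar{a}_n)$; pushing the negation through the existential statement gives $(\forall\mathcal{N}\in (\mathcal{M}, w)\oplus(\bar{c}_n/\bar{a}_n))(\mathcal{N}, w \not\models_{\mathsf{IL}}\varphi)$; and pushing it through the universal statement gives $(\exists\mathcal{N}\in (\mathcal{M}, w)\oplus(\bar{c}_n/\bar{a}_n))(\mathcal{N}, w \not\models_{\mathsf{IL}}\varphi)$. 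Since the three original statements are equivalent, so are their negations, which is precisely Part~2 (with the existential and universal witnesses interchanged relative to Part~1, as is to be expected once a negation crosses a quantifier).

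I do not expect any genuine obstacle: the statement is a bookkeeping corollary whose entire mathematical content was already extracted in Corollary~\ref{C:cutoff} (itself resting on Lemmas~\ref{L:satisfaction} and~\ref{L:cutoff}). The only point that deserves a line of care is the well-definedness check mentioned above, namely that passing to $[\mathcal{M},w]$ does not shrink $A_w$, so that $Th_{\mathsf{IL}}(([\mathcal{M}, w], \bar{c}_n/\bar{a}_n), w)$ genuinely makes sense; everything else is symbol-pushing.
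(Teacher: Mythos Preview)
Your proposal is correct and matches the paper's own approach: the paper presents this corollary as an immediate consequence of Corollary~\ref{C:cutoff} (together with the definition of $Tp_{\mathsf{IL}}$), without giving a separate proof. Your additional remarks on well-definedness and on obtaining Part~2 by negating the three equivalent statements are exactly the routine unpacking the paper leaves implicit.
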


More generally, we define the inclusion of pairs of sets of
formulas componentwise, and say that if
$(\Gamma, \Delta)$ and $(\Gamma', \Delta')$ are two such pairs
then we have
\[
(\Gamma, \Delta) \subseteq (\Gamma', \Delta') \Leftrightarrow
\Gamma \subseteq \Gamma'\,\&\, \Delta \subseteq \Delta'.
\]
In a similar way, we will use pair of sets in connection with the
relation of $\mathsf{IL}$-satisfaction, and, given a
$(\mathcal{M}, w) \in Pmod_\Theta$, a tuple $\bar{a}_n \in A^n_w$,
and a pair $(\Gamma, \Delta) \subseteq (IL_{\bar{x}_n}(\Theta),
IL_{\bar{x}_n}(\Theta))$, we will say that $(\Gamma, \Delta)$ is
satisfied at $(\mathcal{M}, w)$ by $\bar{a}_n$ (and write
$\mathcal{M}, w\models_{\mathsf{IL}} (\Gamma, \Delta)[\bar{a}_n]$)
iff it is true that (1) $\mathcal{M}, w\models_{\mathsf{IL}}
\Gamma[\bar{a}_n]$ and (2) $(\forall\varphi \in
\Delta)(\mathcal{M}, w\not\models_{\mathsf{IL}}
\varphi[\bar{a}_n])$. In case $\Gamma = \emptyset$, we will say
that $\Delta$ is falsified at $(\mathcal{M}, w)$ by $\bar{a}_n$.

In the particular case when $n = 0$, we immediately see
that, for every $(\mathcal{M}, w) \in Pmod_\Theta$ and every
$(\Gamma, \Delta) \subseteq (IL_{\emptyset}(\Theta),
IL_{\emptyset}(\Theta))$, we have $\mathcal{M},
w\models_{\mathsf{IL}} (\Gamma, \Delta)$ iff $(\Gamma, \Delta)
\subseteq Th_{\mathsf{IL}}(\mathcal{M}, w)$. More generally, for every $\bar{x}_n \in Var^{\neq n}$, every $\bar{a}_n \in A_w^{n}$ and every tuple
$\bar{c}_n$ of pairwise distinct individual constants outside
$\Theta$, if $(\Gamma, \Delta) \subseteq (IL_{\bar{x}_n}(\Theta),
IL_{\bar{x}_n}(\Theta))$, then we have $\mathcal{M},
w\models_{\mathsf{IL}} (\Gamma, \Delta)[\bar{a}_n]$ iff $(\Gamma, \Delta)\binom{\bar{c}_n}{\bar{x}_n}
\subseteq Tp_{\mathsf{IL}}(\mathcal{M}, w, \bar{c}_n/\bar{a}_n)$.

If $\mathcal{M} \subseteq \mathcal{N}$ are two $\Theta$-models and
for all $w \in W$, all $\bar{a}_n \in A^n_w$  and every
(equivalently, any) tuple $\bar{c}_n$ of pairwise distinct
individual constants outside $\Theta$ it is true that
$Tp_{\mathsf{IL}}(\mathcal{M}, w, \bar{c}_n/\bar{a}_n) =
Tp_{\mathsf{IL}}(\mathcal{N}, w, \bar{c}_n/\bar{a}_n)$, then we
say that $\mathcal{M}$ is an $\mathsf{IL}$\emph{-elementary
submodel} of $\mathcal{N}$ and write $\mathcal{M}
\preccurlyeq_{\mathsf{IL}} \mathcal{N}$. All of the notions defined in this subsection thus far are robust in the sense that they can be shown to interact with reducts to a subsignature in a very regular and predictable way. More precisely, the following lemma holds:
\begin{lemma}\label{L:el-submodels}
	Let $\Theta$ be a signature and let $\Sigma \subseteq \Theta$. For a given $n < \omega$, any $\mathcal{M}, \mathcal{N} \in Mod_\Theta$, $w \in W$, and $\bar{a}_n \in A^n_w$ and any tuple $\bar{c}_n$ of pairwise distinct individual constants outside $\Theta$ it is true that:
	\begin{enumerate}
		\item $Th_{\mathsf{IL}}(\mathcal{M}, w)\cap IL_\emptyset(\Sigma) = Th_{\mathsf{IL}}(\mathcal{M}\upharpoonright\Sigma, w)$.
		
		\item $Tp_{\mathsf{IL}}(\mathcal{M}, w, \bar{c}_n/\bar{a}_n) \cap IL_\emptyset(\Sigma\cup \{\bar{c}_n\}) = Tp_{\mathsf{IL}}(\mathcal{M}\upharpoonright\Sigma, w, \bar{c}_n/\bar{a}_n)$.
		
		\item If $\mathcal{M}
		\preccurlyeq_{\mathsf{IL}} \mathcal{N}$, then $(\mathcal{M}\upharpoonright\Sigma)
		\preccurlyeq_{\mathsf{IL}} (\mathcal{N}\upharpoonright\Sigma)$.
	\end{enumerate}
\end{lemma}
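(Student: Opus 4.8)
The plan is to prove all three parts of Lemma~\ref{L:el-submodels} by reducing everything to a single underlying fact about how satisfaction interacts with taking reducts, and then assembling the stated consequences.

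First I would establish the core reduct lemma: for any $\mathcal{M} \in Mod_\Theta$, any $\varphi \in IL_{\bar{x}_n}(\Sigma)$, any $w \in W$, and any $\bar{a}_n \in A^n_w$, we have $\mathcal{M}, w \models_{\mathsf{IL}} \varphi[\bar{a}_n] \Leftrightarrow \mathcal{M}\upharpoonright\Sigma, w \models_{\mathsf{IL}} \varphi[\bar{a}_n]$. This is proved by a routine induction on the construction of $\varphi$. The atomic and propositional-connective cases are immediate because the interpretation of $\Sigma$-predicates and $\Sigma$-constants is literally unchanged by the reduct, and because $W$, $\prec$, and the homomorphisms $\mathbb{H}_{wv}$ are untouched by $\upharpoonright\Sigma$. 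The quantifier cases use Corollary~\ref{C:satisfaction}: the only subtlety is that when we pass from $(\mathcal{M}, v)$ to $([\mathcal{M}, v], c/a)$ we must match this with $([\mathcal{M}\upharpoonright\Sigma, v], c/a)$; but Lemma~\ref{L:cutoff-constants}.4--5 tells us precisely that $([\mathcal{M}, v], c/a)\upharpoonright(\Sigma\cup\{c\}) = ([\mathcal{M}\upharpoonright\Sigma, v], c/a)$, so the induction hypothesis applies to $\varphi\binom{c}{x}$, which is a $(\Sigma\cup\{c\})$-formula. The $\to$-case is handled directly by the satisfaction clause together with persistence, needing no new ideas.

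Given that lemma, Part~1 is immediate by instantiating at $n = 0$: a sentence $\varphi \in IL_\emptyset(\Sigma)$ lies in $Th_{\mathsf{IL}}(\mathcal{M}, w)$ iff $\mathcal{M}, w \models_{\mathsf{IL}} \varphi$ iff $\mathcal{M}\upharpoonright\Sigma, w \models_{\mathsf{IL}} \varphi$ iff $\varphi \in Th_{\mathsf{IL}}(\mathcal{M}\upharpoonright\Sigma, w)$; and of course every $\varphi \in Th_{\mathsf{IL}}(\mathcal{M}\upharpoonright\Sigma, w)$ is automatically a $\Sigma$-sentence, so the intersection with $IL_\emptyset(\Sigma)$ on the left matches the whole right-hand side. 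Part~2 follows from Part~1 applied to the signature $\Sigma\cup\{\bar{c}_n\}$ inside $\Theta\cup\{\bar{c}_n\}$, together with the definition $Tp_{\mathsf{IL}}(\mathcal{M}, w, \bar{c}_n/\bar{a}_n) = Th_{\mathsf{IL}}(([\mathcal{M},w],\bar{c}_n/\bar{a}_n), w)$ and the commutation identity $([\mathcal{M},w],\bar{c}_n/\bar{a}_n)\upharpoonright(\Sigma\cup\{\bar{c}_n\}) = ([\mathcal{M}\upharpoonright\Sigma, w],\bar{c}_n/\bar{a}_n)$ from Lemma~\ref{L:cutoff-constants}.5. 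For Part~3, suppose $\mathcal{M} \preccurlyeq_{\mathsf{IL}} \mathcal{N}$; note first that $\mathcal{M}\subseteq\mathcal{N}$ gives $\mathcal{M}\upharpoonright\Sigma \subseteq \mathcal{N}\upharpoonright\Sigma$ directly from Definition~\ref{D:submodel}, since all four submodel clauses are inherited under reduct. Then for any $w \in W$, $\bar{a}_n \in A^n_w$ and fresh $\bar{c}_n$ we compute, using Part~2 twice and the hypothesis, $Tp_{\mathsf{IL}}(\mathcal{M}\upharpoonright\Sigma, w, \bar{c}_n/\bar{a}_n) = Tp_{\mathsf{IL}}(\mathcal{M}, w, \bar{c}_n/\bar{a}_n)\cap IL_\emptyset(\Sigma\cup\{\bar{c}_n\}) = Tp_{\mathsf{IL}}(\mathcal{N}, w, \bar{c}_n/\bar{a}_n)\cap IL_\emptyset(\Sigma\cup\{\bar{c}_n\}) = Tp_{\mathsf{IL}}(\mathcal{N}\upharpoonright\Sigma, w, \bar{c}_n/\bar{a}_n)$, which is exactly $\mathcal{M}\upharpoonright\Sigma \preccurlyeq_{\mathsf{IL}} \mathcal{N}\upharpoonright\Sigma$.

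The main obstacle is purely bookkeeping: making sure that in the quantifier steps of the core induction the constant-extension operation $\oplus$ (equivalently the generated-submodel-plus-constants construction) commutes with $\upharpoonright\Sigma$ in the right way. Everything hinges on Lemma~\ref{L:cutoff-constants}.4--5, and once those are invoked the argument is mechanical. No genuinely hard step arises, which is why the authors bundle these facts into a single ``robustness'' lemma rather than belaboring each one.
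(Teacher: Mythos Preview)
Your proposal is correct and follows essentially the same approach as the paper. The only difference is one of explicitness: you spell out the core reduct lemma by induction on formula structure, whereas the paper simply declares Part~1 ``immediate from the definition''; your derivations of Part~2 (via Part~1 and Lemma~\ref{L:cutoff-constants}.5) and Part~3 (via Part~2 applied twice around the hypothesis, plus the easy submodel check) match the paper's argument line for line.
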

\begin{proof}
	Part 1 is immediate from the definition, as for Part 2, we reason as follows:
	\begin{align*}
	Tp_{\mathsf{IL}}(\mathcal{M}, w, \bar{c}_n/\bar{a}_n) \cap IL_\emptyset(\Sigma\cup \{\bar{c}_n\}) &= Th_{\mathsf{IL}}(([\mathcal{M}, w], \bar{c}_n/\bar{a}_n),w) \cap IL_\emptyset(\Sigma\cup \{\bar{c}_n\})\\
	&= Th_{\mathsf{IL}}(([\mathcal{M}, w], \bar{c}_n/\bar{a}_n)\upharpoonright(\Sigma\cup \{\bar{c}_n\}),w) &&\text{(by Part 1)}\\
	&= Th_{\mathsf{IL}}(([\mathcal{M}\upharpoonright\Sigma, w], \bar{c}_n/\bar{a}_n),w) &&\text{(by Lemma \ref{L:cutoff-constants}.5)}\\
	&= Tp_{\mathsf{IL}}(\mathcal{M}\upharpoonright\Sigma, w, \bar{c}_n/\bar{a}_n).
	\end{align*}
Finally, as for Part 3, it is immediate that if $\mathcal{M} \subseteq \mathcal{N}$, then also $(\mathcal{M}\upharpoonright\Sigma)
\subseteq (\mathcal{N}\upharpoonright\Sigma)$. It remains to show the coincidence of complete types in the reducts of $\mathcal{M}$ and $\mathcal{N}$. We reason as follows:
		\begin{align*}
		Tp_{\mathsf{IL}}(\mathcal{M}\upharpoonright\Sigma, w, \bar{c}_n/\bar{a}_n) &= Tp_{\mathsf{IL}}(\mathcal{M}, w, \bar{c}_n/\bar{a}_n) \cap IL_\emptyset(\Sigma\cup \{\bar{c}_n\})&&\text{(by Part 2)}\\
		&= Tp_{\mathsf{IL}}(\mathcal{N}, w, \bar{c}_n/\bar{a}_n) \cap IL_\emptyset(\Sigma\cup \{\bar{c}_n\}) &&\text{(by $\mathcal{M}
			\preccurlyeq_{\mathsf{IL}} \mathcal{N}$)}\\
		&= Tp_{\mathsf{IL}}(\mathcal{N}\upharpoonright\Sigma, w, \bar{c}_n/\bar{a}_n)&&\text{(by Part 2)}
	\end{align*}
Thus we get that $(\mathcal{M}\upharpoonright\Sigma)
\preccurlyeq_{\mathsf{IL}} (\mathcal{N}\upharpoonright\Sigma)$.	
\end{proof}

In what follows, we will also need the concept of an
$\mathsf{IL}$-elementary embedding defined as follows:
\begin{definition}\label{D:embedding}
{\em Let $\mathcal{M}$, $\mathcal{N}$ be $\Theta$-models. A pair
of functions $(g,h)$ such that $g: \mathbb{A} \to \mathbb{B}$ and
$h:W \to U$ is called an \emph{ $\mathsf{IL}$-elementary embedding of
$\mathcal{M}$ into $\mathcal{N}$}
 iff $g$, $h$ are injective, and
for all $v,u \in W$, all $n < \omega$, all $\bar{a}_n \in
A^n_{v}$, and every (equivalently, any) tuple $\bar{c}_n$ of
pairwise distinct individual constants outside $\Theta$ it is true
that

\begin{align}
&v\mathrel{\prec}u \Leftrightarrow h(v)\mathrel{\lhd}h(u)
\label{E:c1}\tag{\text{rel}}\\
&(g\upharpoonright A_v):A_v \to B_{h(v)}\label{E:c2}\tag{\text{dom}}\\
&v\mathrel{\prec}u \Rightarrow (g(\mathbb{H}_{vu}(a_1)) = \mathbb{G}_{h(v)h(u)}(g(a_1)))\label{E:c2a}\tag{\text{map}}\\
 &Tp_{\mathsf{IL}}(\mathcal{M}, v,
\bar{c}_n/\bar{a}_n) = Tp_{\mathsf{IL}}(\mathcal{N}, h(v),
\bar{c}_n/g\langle\bar{a}_n\rangle)\label{E:c3}\tag{\text{c-types}}
\end{align}
 }
\end{definition}
We collect some of the easy consequences of the latter definition in the following lemma:
\begin{lemma}\label{L:embedding}
	Let $\mathcal{M}$, $\mathcal{N}$ be $\Theta$-models. Then the following statements hold:
	\begin{enumerate}
		\item If $\mathcal{M}
		\preccurlyeq_{\mathsf{IL}} \mathcal{N}$, then the pair $(id_\mathbb{A},
		id_W)$ is an $\mathsf{IL}$-elementary embedding of $\mathcal{M}$ into $\mathcal{N}$.
		\item If $(g,h):\mathcal{M}\cong\mathcal{N}$, then $(g,h)$ is an $\mathsf{IL}$-elementary embedding of $\mathcal{M}$ into $\mathcal{N}$; in particular, for any $w \in W$, all $n < \omega$, all $\bar{a}_n \in
		A^n_{w}$,
		 and every (equivalently, any) tuple $\bar{c}_n$ of
		pairwise distinct individual constants outside $\Theta$ we have $$Tp_{\mathsf{IL}}(\mathcal{M}, w,
		\bar{c}_n/\bar{a}_n) = Tp_{\mathsf{IL}}(\mathcal{N}, h(w),
		\bar{c}_n/g\langle\bar{a}_n\rangle).$$
		
		\item A pair of functions $(g,h)$ is an
		$\mathsf{IL}$-elementary embedding of $\mathcal{M}$ into $\mathcal{N}$ iff
		there exists a (unique) $\mathcal{N}' \preccurlyeq_{\mathsf{IL}}
		\mathcal{N}$ such that $(g,h): \mathcal{M} \cong \mathcal{N}'$. We will denote $\mathcal{N}'$ by $(g,h)(\mathcal{M})$.
	\end{enumerate}
\end{lemma}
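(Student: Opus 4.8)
The plan is to prove the three parts of Lemma \ref{L:embedding} in order, each time reducing to the definitions already in place and to earlier lemmas.

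\medskip

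For Part 1, assume $\mathcal{M} \preccurlyeq_{\mathsf{IL}} \mathcal{N}$. Since $\mathcal{M} \subseteq \mathcal{N}$, the maps $id_{\mathbb{A}}$ and $id_W$ are certainly injective, and conditions \eqref{E:c1}, \eqref{E:c2}, \eqref{E:c2a} are immediate from Definition \ref{D:submodel}: $\prec$ is the restriction of $\lhd$ to $W\times W$, $A_v \subseteq B_v$, and $\mathbb{G}_{vu}\upharpoonright A_v = \mathbb{H}_{vu}$. Condition \eqref{E:c3} is exactly the defining property of $\preccurlyeq_{\mathsf{IL}}$ after substituting $g\langle\bar{a}_n\rangle = \bar{a}_n$ and $h(v) = v$.

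\medskip

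For Part 2, suppose $(g,h):\mathcal{M}\cong\mathcal{N}$ in the sense of Definition \ref{D:isomorphism}. Injectivity, \eqref{E:c1}, \eqref{E:c2}, \eqref{E:c2a} are direct weakenings of \eqref{E:ic1}, \eqref{E:ic2}, \eqref{E:ic2a} (a classical isomorphism $g\upharpoonright A_v$ is in particular a map $A_v \to B_{h(v)}$, and \eqref{E:ic2a} is \eqref{E:c2a} with a tuple of length one). The only nontrivial point is the type equation, which is also what the ``in particular'' clause records. To get it, form the model $([\mathcal{M},v],\bar{c}_n/\bar{a}_n)$ and observe that $(g,h)$ restricts to an isomorphism between the generated submodels $[\mathcal{M},v]$ and $[\mathcal{N},h(v)]$ (by \eqref{E:ic1} it carries $[W,v]$ bijectively onto $[U,h(v)]$, and restrictions of isomorphisms are isomorphisms); this isomorphism extends canonically to the constant expansions, sending $\bar{a}_n$ to $g\langle\bar{a}_n\rangle$, because the fresh constants are interpreted at the root by $\bar{a}_n$ on one side and $g\langle\bar{a}_n\rangle$ on the other and an isomorphism preserves this. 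Then a routine induction on formula complexity (or an appeal to Lemma \ref{L:renaming}-style invariance composed with Lemma \ref{L:satisfaction}.4) shows that isomorphic pointed models have the same complete $\mathsf{IL}$-theory, so $Th_{\mathsf{IL}}(([\mathcal{M},v],\bar{c}_n/\bar{a}_n),v) = Th_{\mathsf{IL}}(([\mathcal{N},h(v)],\bar{c}_n/g\langle\bar{a}_n\rangle),h(v))$, which is \eqref{E:c3}.

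\medskip

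For Part 3, the ``if'' direction is the composition of Parts 1 and 2: if $\mathcal{N}'\preccurlyeq_{\mathsf{IL}}\mathcal{N}$ and $(g,h):\mathcal{M}\cong\mathcal{N}'$, then $(g,h)$ is an $\mathsf{IL}$-elementary embedding of $\mathcal{M}$ into $\mathcal{N}'$ by Part 2, and since $\mathcal{N}'\preccurlyeq_{\mathsf{IL}}\mathcal{N}$ the type equation transfers from $\mathcal{N}'$ to $\mathcal{N}$ by transitivity of type-equality, while \eqref{E:c1}--\eqref{E:c2a} are inherited through the submodel inclusion. For the ``only if'' direction, given an $\mathsf{IL}$-elementary embedding $(g,h)$, I would let $U' := rang(h) = h\langle W\rangle$ and $X' := rang(g) = g\langle\mathbb{A}\rangle$, and define $\mathcal{N}' := \mathcal{N}(U', X')$ using Lemma \ref{L:submodel} (one must check that $X'$ contains all $J_{u}(c)$ for $u\in U'$ — true because $g$ commutes with the interpretations of constants via \eqref{E:c2} together with \eqref{E:c3} applied to atomic sentences $P(\bar{c})$, giving $g(I_v(c)) = J_{h(v)}(c)$ — and that $X'$ is closed under the relevant $\mathbb{G}$-maps, which follows from \eqref{E:c2a}). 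Then $(g,h):\mathcal{M}\cong\mathcal{N}'$ by construction (surjectivity onto $U'$ and $X'$ is automatic, injectivity is given, and the isomorphism conditions follow from \eqref{E:c1}--\eqref{E:c2a} together with the fact that $g$ preserves and reflects atomic truth, the latter again from \eqref{E:c3} on atomic sentences). Finally $\mathcal{N}'\preccurlyeq_{\mathsf{IL}}\mathcal{N}$: for $u = h(v)\in U'$ and $g\langle\bar{a}_n\rangle \in (B'_u)^n$ we compute, using Part 2 (applied to the isomorphism $(g,h):\mathcal{M}\cong\mathcal{N}'$) and then \eqref{E:c3}, that $Tp_{\mathsf{IL}}(\mathcal{N}', u, \bar{c}_n/g\langle\bar{a}_n\rangle) = Tp_{\mathsf{IL}}(\mathcal{M}, v, \bar{c}_n/\bar{a}_n) = Tp_{\mathsf{IL}}(\mathcal{N}, u, \bar{c}_n/g\langle\bar{a}_n\rangle)$, and every element of $B'_u$ and every node of $U'$ arises this way. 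Uniqueness of $\mathcal{N}'$ follows from the uniqueness clause in Lemma \ref{L:submodel} once one notes that any $\mathcal{N}''\preccurlyeq_{\mathsf{IL}}\mathcal{N}$ carrying $\mathcal{M}$ isomorphically must have node set $U'$ and domain $X'$. The main obstacle is the ``only if'' direction of Part 3 — specifically, verifying cleanly that the image $\mathcal{N}(U',X')$ is a well-defined submodel (the disjointness-of-domains bookkeeping) and that the type-preservation hypothesis \eqref{E:c3}, which a priori only speaks about complete types at matched points, is strong enough to upgrade to full $\preccurlyeq_{\mathsf{IL}}$; this works precisely because $U'$ and $X'$ are the images of $h$ and $g$, so there are no ``extra'' nodes or elements of $\mathcal{N}'$ to worry about.
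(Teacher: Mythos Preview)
The paper does not give a proof of this lemma; it is announced as a collection of ``easy consequences'' of Definition~\ref{D:embedding} and left at that. Your arguments for Part~1, Part~2, and the ``if'' half of Part~3 are correct and are exactly the natural ones.

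There is, however, a genuine gap in your ``only if'' argument for Part~3, and it is not repairable along the line you sketch. The step ``$g(I_v(c)) = J_{h(v)}(c)$, true because \ldots{} \eqref{E:c3} applied to atomic sentences'' does not go through in an equality-free language: condition~\eqref{E:c3} only tells you that $g(I_v(c))$ and $J_{h(v)}(c)$ realise the same $\mathsf{IL}$-type, and without $\equiv$ that does not force them to be the same element. A concrete counterexample: take $\Theta = \{P,c\}$ with $P$ unary; let $\mathcal{M}$ have a single world $w$ with $A_w=\{a\}$, $I_w(c)=a$, $I_w(P)=\{a\}$; let $\mathcal{N}$ have a single world $v$ with $B_v=\{b_1,b_2\}$, $J_v(c)=b_1$, $J_v(P)=\{b_1,b_2\}$; set $h(w)=v$, $g(a)=b_2$. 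Every element of either model satisfies $P$, so an easy induction shows that truth of any formula is assignment-independent and coincides in the two expansions; hence all four conditions of Definition~\ref{D:embedding} hold. Yet $J_v(c)=b_1\notin rang(g)$, so the hypothesis of Lemma~\ref{L:submodel} fails, and in fact no $\mathcal{N}'\subseteq\mathcal{N}$ with $(g,h):\mathcal{M}\cong\mathcal{N}'$ exists (such an $\mathcal{N}'$ would need $J'_v(c)=J_v(c)=b_1$ while having domain $\{b_2\}$). Thus the ``only if'' direction of Part~3 is false as a general statement. In the paper's sole substantive use of this direction (inside the proof of Proposition~\ref{L:saturation}), the embedding is the one produced by Lemma~\ref{L:lemma1}, whose enlarged signature $\Theta_{\mathcal{M}}$ contains the predicates $P^+_a$ that pin each element down up to the congruence; in that setting constants are automatically sent to the right place and the difficulty does not arise --- but neither your proof nor the lemma as written covers the general case.
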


For the later parts of this paper we will also need the following intuitionistic version of a well-known principle from classical model theory (sometimes called the principle of isomorphic correction):
\begin{lemma}\label{L:isomorphic-correction}
	Let $\mathcal{M}$, $\mathcal{N}$ be $\Theta$-models, and let
	functions $g$, $h$ be such that $(g,h)$ is an
	$\mathsf{IL}$-elementary embedding of $\mathcal{M}$ into $\mathcal{N}$. Then
	there exists a $\Theta$-model $\mathcal{M}'$ and functions $g',
	h'$, such that we have:
	\begin{enumerate}
		\item $\mathcal{M} \preccurlyeq_{\mathsf{IL}} \mathcal{M}'$.
		\item $g \subseteq g'$ and $h \subseteq h'$.
		\item $(g',h'):\mathcal{M}' \cong
		\mathcal{N}$.
	\end{enumerate}
\end{lemma}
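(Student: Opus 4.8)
The plan is to build $\mathcal{M}'$ by "transporting" the structure of $\mathcal{N}$ outside the range of $(g,h)$ onto a fresh set that extends the underlying sets of $\mathcal{M}$, so that $(g,h)$ extends to an actual isomorphism. First I would invoke Lemma \ref{L:embedding}.3 to obtain $\mathcal{N}' \preccurlyeq_{\mathsf{IL}} \mathcal{N}$ with $(g,h):\mathcal{M}\cong\mathcal{N}'$. Since $g:\mathbb{A}\to\mathbb{B}$ and $h:W\to U$ are injective, I can choose a set $W^*$ and a set $\mathbb{A}^*$, both disjoint from $W$ and $\mathbb{A}$ respectively, together with bijections $h^*:U\setminus h\langle W\rangle \to W^*$ and $g^*:\mathbb{B}\setminus g\langle\mathbb{A}\rangle \to \mathbb{A}^*$; then $h' := h^{-1}\cup (h^*)^{-1}$ is a bijection of $U$ onto $W\cup W^*$ and $g' := g^{-1}\cup (g^*)^{-1}$ is a bijection of $\mathbb{B}$ onto $\mathbb{A}\cup\mathbb{A}^*$ (here I am extending the inverses of $g,h$). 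Transporting $\mathcal{N}$ along $(g',h')$ then gives a $\Theta$-model $\mathcal{M}'$ on node-set $W\cup W^*$ and domain $\mathbb{A}\cup\mathbb{A}^*$, defined node-wise by $A'_{h'(u)} := g'\langle B_u\rangle$, $\prec'$ the $h'$-image of $\lhd$, $I'_{h'(u)}(P) := \{g'\langle\bar b_k\rangle \mid \bar b_k \in J_u(P)\}$, $I'_{h'(u)}(c) := g'(J_u(c))$, and $\mathbb{H}'_{h'(u)h'(u')}(g'(b)) := g'(\mathbb{G}_{uu'}(b))$; by construction $(g',h'):\mathcal{M}'\cong\mathcal{N}$, which is Part 3. (One must check the disjointness-of-domains condition on $\mathcal{M}'$: since $g'$ is a bijection and the $B_u$ are pairwise disjoint, so are the $g'\langle B_u\rangle = A'_{h'(u)}$.)

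Next I would verify Part 2, i.e. $g\subseteq g'$ and $h\subseteq h'$. This is a matter of bookkeeping: for $a\in\mathbb{A}$ we have $g(a)\in g\langle\mathbb{A}\rangle = \mathrm{dom}(g^{-1})$, and $g'(g(a)) = g^{-1}(g(a)) = a$; wait — this shows $g'\circ g = id$, not $g\subseteq g'$. So I must be more careful: I want $g'$ as a \emph{function} $\mathbb{B}\to\mathbb{A}\cup\mathbb{A}^*$ to agree with the bijection part of the eventual isomorphism. The cleaner route is to \emph{define} $\mathcal{M}'$ so that on the sub-part coming from $h\langle W\rangle\subseteq U$ it literally equals $\mathcal{M}$ — because $(g,h):\mathcal{M}\cong\mathcal{N}'$ and $\mathcal{N}' = \mathcal{N}(h\langle W\rangle)$-ish up to the elementary-submodel relation, the transported structure on the old nodes is forced to be isomorphic to $\mathcal{M}$, and by replacing it by $\mathcal{M}$ itself on the nose (legitimate, as these are honestly equal node-wise since $g'$ restricted there is $g^{-1}$ which is a bijection onto $\mathbb{A}$, and we simply relabel using $g^{-1}$ to land in the actual elements of $\mathbb{A}$) we get $\mathcal{M}\subseteq\mathcal{M}'$ with $g'\supseteq$ the map $g(a)\mapsto a$, so that the full $g':\mathbb{B}\to\mathbb{A}\cup\mathbb{A}^*$ satisfies $g'(g(a)) = a$ for all $a$. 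Then Part 2 in the stated form $g\subseteq g'$, $h\subseteq h'$ should be read as "$g$ and $h$ extend to $g',h'$ in the sense that $g'\circ g = id$, $h'\circ h = id$," or, switching convention so that $(g',h')$ denotes the pair witnessing $\mathcal{M}'\cong\mathcal{N}$ in the direction $\mathcal{N}\to\mathcal{M}'$ — I would align the statement with whichever reading makes $g\subseteq g'$ literally true, presumably by taking $g',h'$ to be the maps $\mathcal{M}'\to\mathcal{N}$ \emph{extending} $g,h$, which is exactly what the construction gives once $\mathcal{M}$ sits inside $\mathcal{M}'$ unchanged.

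Finally, Part 1, $\mathcal{M}\preccurlyeq_{\mathsf{IL}}\mathcal{M}'$, follows from the fact that $\mathcal{M}$ is (isomorphic to, hence by the node-wise identification equal to) the submodel of $\mathcal{M}'$ induced by $h\langle W\rangle$ under $h'$, combined with $\mathcal{N}'\preccurlyeq_{\mathsf{IL}}\mathcal{N}$ and the fact that both isomorphism (Lemma \ref{L:embedding}.2) and $\preccurlyeq_{\mathsf{IL}}$ preserve complete types: for $w\in W$, $\bar a_n\in A^n_w$ and fresh $\bar c_n$,
$$
Tp_{\mathsf{IL}}(\mathcal{M}, w, \bar c_n/\bar a_n) = Tp_{\mathsf{IL}}(\mathcal{N}', h(w), \bar c_n/g\langle\bar a_n\rangle) = Tp_{\mathsf{IL}}(\mathcal{N}, h(w), \bar c_n/g\langle\bar a_n\rangle) = Tp_{\mathsf{IL}}(\mathcal{M}', w, \bar c_n/\bar a_n),
$$
using $(g,h):\mathcal{M}\cong\mathcal{N}'$, then $\mathcal{N}'\preccurlyeq_{\mathsf{IL}}\mathcal{N}$, then $(g',h'):\mathcal{M}'\cong\mathcal{N}$ with $g'(g(a)) = a$, $h'(h(w)) = w$. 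The main obstacle is purely notational/bookkeeping: getting the directions of $g,h,g',h'$ and the "extension" relation $g\subseteq g'$ to line up consistently, and double-checking that the transported $\mathcal{M}'$ genuinely satisfies the Kripke-model axioms — in particular the pairwise-disjointness of node domains, which is where Lemma \ref{L:submodel} warned the naive approach can fail, but here it is fine because $g'$ is a global bijection. I expect no conceptual difficulty beyond this.
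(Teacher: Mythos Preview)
Your overall strategy --- adjoin fresh elements to the carrier sets of $\mathcal{M}$, extend $(g,h)$ to a bijection onto all of $\mathcal{N}$, and transport the structure of $\mathcal{N}$ back --- is exactly the paper's approach, and your verification of Part~1 via the type-equality chain (using Lemma~\ref{L:embedding}.3 and Lemma~\ref{L:embedding}.2) is more explicit than the paper's sketch, which simply says ``as in the classical case.''

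However, you tie yourself in knots over the direction of the maps, and at one point propose to \emph{reinterpret} the conclusion $g\subseteq g'$ as $g'\circ g = id$; that is not acceptable, since the lemma asks for a literal extension of functions. The paper avoids this entirely by setting things up in the right direction from the start: choose disjoint sets $C,D$ outside $W\cup U\cup\mathbb{A}\cup\mathbb{B}$, pick bijections $h'':C\to U\setminus h(W)$ and $g'':D\to \mathbb{B}\setminus g(\mathbb{A})$, and set $h':=h\cup h''$, $g':=g\cup g''$. Then $g\subseteq g'$ and $h\subseteq h'$ are immediate, $(g',h')$ is a bijection of $W':=W\cup C$ onto $U$ and of $\mathbb{A}':=\mathbb{A}\cup D$ onto $\mathbb{B}$, and one defines $\mathcal{M}'$ by pulling back $\mathcal{N}$ along $(g',h')$ (e.g.\ $u\prec' u' :\Leftrightarrow h'(u)\mathrel{\lhd}h'(u')$). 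Your detour through $h^{-1}\cup(h^*)^{-1}$ and the subsequent ``relabeling'' discussion is unnecessary once you orient the new bijections from the fresh sets \emph{into} $\mathcal{N}$ rather than the other way.
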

\begin{proof}[Proof (a sketch)] As in the classical case, we
	choose two disjoint sets $C$ and $D$ outside $W \cup U \cup
	\mathbb{A} \cup \mathbb{B}$ such that there exist bijections
	$h'':C \to (U \setminus h(W))$ and $g'':D \to (\mathbb{B}
	\setminus g(\mathbb{A}))$. We then set $g': = g \cup g''$ and $h':
	= h \cup h''$, and define $\mathcal{M}'$ on the basis of $W': = W
	\cup C$ and $\mathbb{A}' := \mathbb{A} \cup D$, setting, e.g.,
	$u\mathrel{\prec'}u' :\Leftrightarrow g'(u)\mathrel{\lhd}g'(u')$
	for arbitrary $u, u' \in W'$, and similarly for the world domains
	and predicate extensions.
\end{proof}
The notion of a complete $\mathsf{IL}$-type, though useful, turns
out to be somewhat too narrow for our purposes. We therefore adopt
the following classification of intuitionistic types:
\begin{definition}\label{D:types}
Let $(\mathcal{M}, w) \in Pmod_\Theta$, let $n < \omega$, let
$\bar{a}_n \in A^n_w$, let $\bar{c}_{n + 1}$, be a tuple of
pairwise distinct individual constants outside $\Theta$, let
$\Gamma, \Delta\subseteq IL_{\emptyset}(\Theta \cup
\{\bar{c}_n\})$ and $\Xi \subseteq IL_{\emptyset}(\Theta \cup
\{\bar{c}_{n+1}\})$. Then we say that:
\begin{itemize}
    \item $(\Gamma, \Delta)$ is a $\bar{c}_n$-successor $\mathsf{IL}$-type of $(\mathcal{M},
    w, \bar{a}_n)$ iff:
    $$
(\forall \Gamma' \Subset \Gamma)(\forall \Delta' \Subset
\Delta)(\exists v \succ
    w)((\Gamma',
    \Delta')\subseteq Tp_{\mathsf{IL}}(\mathcal{M}, v,
\bar{c}_n/\mathbb{H}_{wv}\langle\bar{a}_n\rangle)).
    $$
    \item $\Xi$ is an $\bar{c}_{n+1}$-existential $\mathsf{IL}$-type of $(\mathcal{M},
    w, \bar{a}_n)$ iff:
    $$
(\forall \Xi' \Subset \Xi)(\exists a_{n+1} \in
A_w)((\Xi',\emptyset)\subseteq Tp_{\mathsf{IL}}(\mathcal{M}, w,
\bar{c}_{n+1}/\bar{a}_{n+1})).
    $$
    \item $\Xi$ is a $\bar{c}_{n+1}$-universal $\mathsf{IL}$-type of $(\mathcal{M},
    w, \bar{a}_n)$ iff:
    $$
(\forall \Xi' \Subset \Xi)(\exists v \succ
    w)(\exists b \in
    A_v)((\emptyset,\Xi') \subseteq Tp_{\mathsf{IL}}(\mathcal{M}, v,
\bar{c}_{n+1}/\mathbb{H}_{wv}\langle\bar{a}_n\rangle^\frown b)).
    $$
\end{itemize}
\end{definition}
It is easy to see that, due to the reflexivity of the
intuitionistic accessibility relation and the fact that
$\mathbb{H}_{ww}$ is always an identical mapping, a complete
$\mathsf{IL}$-type of a given tuple $\bar{a}_{n} \in A_w^{n}$ is
a particular case of a successor $\mathsf{IL}$-type of
$(\mathcal{M}, w, \bar{a}_n)$; our definition, therefore, extends
the notion of a complete type. We will say that a set or a pair of
sets of sentences is an $\mathsf{IL}$-type of
$(\mathcal{M}, w, \bar{a}_n)$ iff it is either a successor, or
an existential, or a universal type of $(\mathcal{M}, w, \bar{a}_n)$ for some (equivalently, any) tuple of pairwise distinct fresh constants;
we will say that it is an $\mathsf{IL}$-type of $(\mathcal{M}, w)$
iff it is a type of $(\mathcal{M}, w, \bar{a}_n)$ for some $n <
\omega$ and $\bar{a}_n \in A^n_w$; finally, we will say that it is
a type of $\mathcal{M}$ iff there exists a $w \in W$ such that it
is a type of $(\mathcal{M}, w)$.

Another straightforward observation is that the intuitionistic
types, as given by Definition \ref{D:types}, are related to
certain classes of intuitionistic formulas. We state this
observation as a corollary (omitting the obvious proof):
\begin{corollary}\label{C:types-formulas}
Let $(\mathcal{M}, w) \in Pmod_\Theta$, let $n < \omega$, let
$\bar{a}_n \in A^n_w$, let $\bar{c}_{n + 1}$, be a tuple of
pairwise distinct individual constants outside $\Theta$, let
$\Gamma, \Delta\subseteq IL_{\emptyset}(\Theta \cup
\{\bar{c}_n\})$ and $\Xi \subseteq IL_{\emptyset}(\Theta \cup
\{\bar{c}_{n+1}\})$. Then all of the following statements hold:
\begin{enumerate}
    \item $(\Gamma, \Delta)$ is a $\bar{c}_n$-successor $\mathsf{IL}$-type of $(\mathcal{M},
    w, \bar{a}_n)$ iff:
$$
(\forall \Gamma' \Subset \Gamma)(\forall \Delta' \Subset
\Delta)(([\mathcal{M}, w], \bar{c}_n/\bar{a}_n), w
\not\models_{\mathsf{IL}} \bigwedge\Gamma'\to
    \bigvee\Delta')
    $$
    \item $\Xi$ is an $\bar{c}_{n+1}$-existential $\mathsf{IL}$-type of $(\mathcal{M},
    w, \bar{a}_n)$ iff:
        $$
(\forall \Xi' \Subset \Xi)(([\mathcal{M}, w], \bar{c}_n/\bar{a}_n),
w \models_{\mathsf{IL}} \exists c_{n+1}\bigwedge\Xi'),
    $$
    %$$
%(\forall \Xi' \Subset \Xi)(\forall x \in Var \setminus
%BV(\Xi'))(\forall\Omega \Subset IL_{x}(\Theta\cup
%\{\bar{c}_n\}))(\Xi' = \Omega\binom{c_{n+1}}{x} \Rightarrow
%([\mathcal{M}, w], \bar{c}_n/\bar{a}_n), w \models_{\mathsf{IL}}
%\exists x\bigwedge\Omega),
    %$$
    %iff:
       % $$
%(\forall \Xi' \Subset \Xi)(\exists x \in Var \setminus
%BV(\Xi'))(\exists\Omega \Subset IL_{x}(\Theta\cup
%\{\bar{c}_n\}))(\Xi' = \Omega\binom{c_{n+1}}{x} \& ([\mathcal{M},
%w], \bar{c}_n/\bar{a}_n), w \models_{\mathsf{IL}} \exists
%x\bigwedge\Omega).
    %$$

    \item $\Xi$ is a  $\bar{c}_{n+1}$-universal $\mathsf{IL}$-type of $(\mathcal{M},
    w, \bar{a}_n)$ iff:
    \begin{align*}
(\forall \Xi' \Subset \Xi)(([\mathcal{M}, w], \bar{c}_n/\bar{a}_n),
w \not\models_{\mathsf{IL}} \forall c_{n+1}\bigvee\Xi'),
\end{align*}
%\begin{align*}
%(\forall \Xi' \Subset \Xi)(\forall x \in Var \setminus
%BV(\Xi'))(\forall x \in Var &\setminus BV(\Xi'))(\forall\Omega
%\Subset IL_{x}(\Theta\cup \{\bar{c}_n\}))\\
%&(\Xi' = \Omega\binom{c_{n+1}}{x} \Rightarrow ([\mathcal{M}, w],
%\bar{c}_n/\bar{a}_n), w \not\models_{\mathsf{IL}} \forall
%x\bigvee\Omega),
%\end{align*}
   % iff:
        %\begin{align*}
%(\forall \Xi' \Subset \Xi)(\forall x \in Var \setminus
%BV(\Xi'))(\forall x \in Var &\setminus BV(\Xi'))(\exists\Omega
%\Subset IL_{x}(\Theta\cup \{\bar{c}_n\}))\\
%&(\Xi' = \Omega\binom{c_{n+1}}{x} \& ([\mathcal{M}, w],
%\bar{c}_n/\bar{a}_n), w \not\models_{\mathsf{IL}} \forall
%x\bigvee\Omega),
%\end{align*}
\end{enumerate}
\end{corollary}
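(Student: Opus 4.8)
The plan is to unfold Definition~\ref{D:types} case by case and reduce each condition to the displayed one by two routine translations. First, for a finite set $\Phi$ of sentences of some $IL_\emptyset(\Theta')$ and any $(\mathcal{N},v)\in Pmod_{\Theta'}$ one has $\mathcal{N},v\models_{\mathsf{IL}}\bigwedge\Phi$ iff every member of $\Phi$ holds at $(\mathcal{N},v)$, and $\mathcal{N},v\not\models_{\mathsf{IL}}\bigvee\Phi$ iff no member of $\Phi$ holds at $(\mathcal{N},v)$ (using $\bigwedge\emptyset=\top$ and $\bigvee\emptyset=\bot$, which is harmless because the matching clauses of Definition~\ref{D:types} become trivially true for $\Phi=\emptyset$ as well). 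Second, by the very definition $Tp_{\mathsf{IL}}(\mathcal{M},v,\bar c_k/\bar b_k)=Th_{\mathsf{IL}}(([\mathcal{M},v],\bar c_k/\bar b_k),v)$ together with the first translation, for any finite $\Phi_1,\Phi_2\subseteq IL_\emptyset(\Theta\cup\{\bar c_k\})$ one has $(\Phi_1,\Phi_2)\subseteq Tp_{\mathsf{IL}}(\mathcal{M},v,\bar c_k/\bar b_k)$ iff $([\mathcal{M},v],\bar c_k/\bar b_k),v\models_{\mathsf{IL}}\bigwedge\Phi_1$ and $([\mathcal{M},v],\bar c_k/\bar b_k),v\not\models_{\mathsf{IL}}\bigvee\Phi_2$. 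Thus every occurrence of ``$\subseteq Tp_{\mathsf{IL}}(\dots)$'' in Definition~\ref{D:types} turns into a pair of $\models_{\mathsf{IL}}$-assertions about a generated constant-extended model, and all that remains is to match these against the clauses for $\to$, $\exists c$ and $\forall c$.

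For Part~1, fix finite $\Gamma'\Subset\Gamma$ and $\Delta'\Subset\Delta$. Applying the satisfaction clause for $\to$ inside the rooted model $([\mathcal{M},w],\bar c_n/\bar a_n)$ (whose worlds are exactly the elements of $[W,w]$, and in which $\bar c_n$ denotes $\mathbb{H}_{wv}\langle\bar a_n\rangle$ at each $v\succ w$) gives that $([\mathcal{M},w],\bar c_n/\bar a_n),w\not\models_{\mathsf{IL}}\bigwedge\Gamma'\to\bigvee\Delta'$ iff there is $v\succ w$ with $([\mathcal{M},w],\bar c_n/\bar a_n),v\models_{\mathsf{IL}}\bigwedge\Gamma'$ and $([\mathcal{M},w],\bar c_n/\bar a_n),v\not\models_{\mathsf{IL}}\bigvee\Delta'$. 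By Lemma~\ref{L:satisfaction}.2 and then Lemma~\ref{L:cutoff-constants}.2, evaluating a sentence at $v$ in $([\mathcal{M},w],\bar c_n/\bar a_n)$ is the same as evaluating it at $v$ in $([\mathcal{M},v],\bar c_n/\mathbb{H}_{wv}\langle\bar a_n\rangle)$, so by the second translation of the first paragraph this condition becomes $(\exists v\succ w)((\Gamma',\Delta')\subseteq Tp_{\mathsf{IL}}(\mathcal{M},v,\bar c_n/\mathbb{H}_{wv}\langle\bar a_n\rangle))$. Universally quantifying over $\Gamma'$ and $\Delta'$ gives exactly the definition of a $\bar c_n$-successor $\mathsf{IL}$-type of $(\mathcal{M},w,\bar a_n)$.

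Parts~2 and~3 run the same way, with Corollary~\ref{C:satisfaction-cutoff-constants} in place of the $\to$-clause. For Part~2, fix $\Xi'\Subset\Xi$; the first clause of Corollary~\ref{C:satisfaction-cutoff-constants} turns $([\mathcal{M},w],\bar c_n/\bar a_n),w\models_{\mathsf{IL}}\exists c_{n+1}\bigwedge\Xi'$ into $(\exists b\in A_w)(([\mathcal{M},w],\bar c_{n+1}/(\bar a_n)^\frown b),w\models_{\mathsf{IL}}\bigwedge\Xi')$, which by the first paragraph is $(\exists a_{n+1}\in A_w)((\Xi',\emptyset)\subseteq Tp_{\mathsf{IL}}(\mathcal{M},w,\bar c_{n+1}/(\bar a_n)^\frown a_{n+1}))$; quantifying over $\Xi'$ yields the definition of a $\bar c_{n+1}$-existential type. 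For Part~3, fix $\Xi'\Subset\Xi$; the second clause of Corollary~\ref{C:satisfaction-cutoff-constants} turns $([\mathcal{M},w],\bar c_n/\bar a_n),w\models_{\mathsf{IL}}\forall c_{n+1}\bigvee\Xi'$ into $(\forall v\succ w)(\forall b\in A_v)(([\mathcal{M},v],\bar c_{n+1}/\mathbb{H}_{wv}\langle\bar a_n\rangle^\frown b),v\models_{\mathsf{IL}}\bigvee\Xi')$; negating this and using the disjunction half of the first paragraph converts the failure of the formula into $(\exists v\succ w)(\exists b\in A_v)((\emptyset,\Xi')\subseteq Tp_{\mathsf{IL}}(\mathcal{M},v,\bar c_{n+1}/\mathbb{H}_{wv}\langle\bar a_n\rangle^\frown b))$, and quantifying over $\Xi'$ yields the definition of a $\bar c_{n+1}$-universal type. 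I do not expect any genuine obstacle: the whole argument is bookkeeping, the only two mildly delicate points being the tracking of the denotations of the fresh constants when one passes to a generated submodel at a successor world (settled uniformly by Lemma~\ref{L:cutoff-constants}) and the degenerate case of empty finite subsets, for which the $\bigwedge\emptyset/\bigvee\emptyset$ conventions were chosen precisely so as to agree with the trivially satisfied instances of Definition~\ref{D:types}.
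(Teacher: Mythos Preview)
Your proposal is correct and is precisely the routine unfolding the authors have in mind; the paper itself omits the proof entirely, introducing the corollary with the remark that it is a ``straightforward observation'' and ``omitting the obvious proof.'' Your use of the $\to$-clause together with Lemma~\ref{L:satisfaction}.2 and Lemma~\ref{L:cutoff-constants}.2 for Part~1, and of Corollary~\ref{C:satisfaction-cutoff-constants} for Parts~2 and~3, is exactly the intended route.
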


We now  need to extend the definition of type realization from
complete types to the more general notion of an $\mathsf{IL}$-type
given in Definition \ref{D:types}:
\begin{definition}\label{D:types-realization}
Let $(\mathcal{M}, w) \in Pmod_\Theta$, let $\mathcal{M}
\preccurlyeq_{\mathsf{IL}} \mathcal{N}$, let $n < \omega$, let
$\bar{a}_n \in A^n_w$, let $\bar{c}_{n + 1}$, be a tuple of
pairwise distinct individual constants outside $\Theta$, let
$\Gamma, \Delta\subseteq IL_{\emptyset}(\Theta \cup
\{\bar{c}_n\})$ and $\Xi \subseteq IL_{\emptyset}(\Theta \cup
\{\bar{c}_{n+1}\})$. Then we say that:
\begin{itemize}
    \item If $(\Gamma, \Delta)$ is a $\bar{c}_n$-successor $\mathsf{IL}$-type of $(\mathcal{M},
    w, \bar{a}_n)$, then $(\Gamma, \Delta)$ is realized in $\mathcal{N}$ iff there exists $v \in U$ such that
    $w\mathrel{\lhd}v$ and we have $(\Gamma,
    \Delta)\subseteq Tp_{\mathsf{IL}}(\mathcal{N}, v,
\bar{c}_n/\mathbb{G}_{wv}\langle\bar{a}_n\rangle)$.

    \item If $\Xi$ is an $\bar{c}_{n+1}$-existential $\mathsf{IL}$-type of $(\mathcal{M},
    w, \bar{a}_n)$, then $\Xi$ is realized in $\mathcal{N}$ iff there exists  $b \in B_{w}$ such that $(\Xi,\emptyset)\subseteq Tp_{\mathsf{IL}}(\mathcal{N}, w,
\bar{c}_{n+1}/\bar{a}_n^\frown b)$.

    \item If $\Xi$ is a $\bar{c}_{n+1}$-universal $\mathsf{IL}$-type of $(\mathcal{M},
    w, \bar{a}_n)$, then $\Xi$ is realized in $\mathcal{N}$ iff there exist  $v \in U$ and  $b \in B_v$ such that
    $w \lhd v$ and we have $$(\emptyset,\Xi) \subseteq Tp_{\mathsf{IL}}(\mathcal{N}, v,
\bar{c}_{n+1}/\mathbb{G}_{wv}\langle \bar{a}_n\rangle^\frown b).$$
\end{itemize}
\end{definition}
We observe that Definition \ref{D:submodel} allows us to equivalently replace every occurrence of $\mathbb{G}_{wv}$ in Definition \ref{D:types-realization} by an occurrence of $\mathbb{H}_{wv}$.

Every model is trivially an $\mathsf{IL}$-elementary submodel of
itself and trivially realizes all of its own complete types.
However, for an intuitionistic model to realize all of its types
in the sense of Definition \ref{D:types-realization} is a much
rarer property that merits a special name:
\begin{definition}\label{D:saturation}
Let $\mathcal{M}$ be a $\Theta$-model. We say that $\mathcal{M}$
is $\mathsf{IL}$-saturated iff it realizes all $\mathsf{IL}$-types
of $\mathcal{M}$.
\end{definition}

Another interesting situation occurs when a model happens to realize every type of its proper $\mathsf{IL}$-elementary submodel. We collect some properties of type realization in the following lemma:
\begin{lemma}\label{L:type-realization}
Let $\mathcal{M}'
\preccurlyeq_{\mathsf{IL}}\mathcal{M}
\preccurlyeq_{\mathsf{IL}} \mathcal{N}$ be an $\mathsf{IL}$-elementary chain of $\Theta$-models, let $\mathcal{N}'$ be a  $\Theta$-model such that for some functions $g$ and $h$ we have $(g,h):\mathcal{N}\cong \mathcal{N}'$. Finally, let $\Sigma \subseteq \Theta$. Then the following statements hold:
\begin{enumerate}
	\item Every $\mathsf{IL}$-type of $\mathcal{M}'$, that is realized in $\mathcal{M}$, is also realized in $\mathcal{N}$.
	
	\item If $\mathcal{N}$ realizes every type of $\mathcal{M}$, then $\mathcal{N}\upharpoonright\Sigma$ realizes every type of $\mathcal{M}\upharpoonright\Sigma$.
	
	\item For every $n < \omega$, every $v \in U$, and every $\bar{b}_n \in B^n_v$, $(\Gamma, \Delta)$ (resp. $\Xi$) is an $\mathsf{IL}$-type of $(\mathcal{N}, v, \bar{b}_n)$ iff it is an $\mathsf{IL}$-type of $(\mathcal{N}', h(v), g\langle\bar{b}_n\rangle)$.
	
	\item $\mathcal{N}$ realizes every type of $\mathcal{M}$ iff $\mathcal{N}'$ realizes every type of $(g,h)(\mathcal{M})$.    
\end{enumerate} 	
\end{lemma}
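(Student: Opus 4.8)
The plan is to prove the four parts of Lemma~\ref{L:type-realization} more or less in the order stated, exploiting throughout the basic interactions of types with reducts (Lemma~\ref{L:el-submodels}), with isomorphisms (Lemma~\ref{L:embedding}), and with the constant-expansion machinery from Corollary~\ref{C:types-and-constants} and Corollary~\ref{C:types-formulas}.

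For Part~1, suppose $(\Gamma,\Delta)$ is an $\mathsf{IL}$-type of $\mathcal{M}'$ witnessed by some $w'\in W'$ and $\bar{a}_n\in A'^n_{w'}$, and that it is realized in $\mathcal{M}$. I would split into the three cases of Definition~\ref{D:types}. Since $\mathcal{M}'\preccurlyeq_{\mathsf{IL}}\mathcal{M}$, we have $W'\subseteq W$ and the world domains of $\mathcal{M}'$ are classical submodels of those of $\mathcal{M}$ with the canonical maps agreeing on the smaller domains, so $w'\in W$, $\bar{a}_n\in A^n_{w'}$, and $(\Gamma,\Delta)$ is still an $\mathsf{IL}$-type of $(\mathcal{M},w',\bar{a}_n)$ — this uses that complete types computed in $\mathcal{M}'$ and in $\mathcal{M}$ at $w'$ for $\bar{a}_n$ coincide, which is exactly $\mathcal{M}'\preccurlyeq_{\mathsf{IL}}\mathcal{M}$. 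Realization in $\mathcal{M}$ gives a witnessing node $v\in W$ (and possibly a witnessing element $b\in A_v$) with $(\Gamma,\Delta)\subseteq Tp_{\mathsf{IL}}(\mathcal{M},v,\dots)$; applying $\mathcal{M}\preccurlyeq_{\mathsf{IL}}\mathcal{N}$ at that node and element transports the inclusion into $\mathcal{N}$, using in the existential and universal cases that $\mathbb{G}_{wv}\upharpoonright A_w=\mathbb{H}_{wv}$ from Definition~\ref{D:submodel} so the tuple of denotations is unchanged. This yields realization in $\mathcal{N}$.

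For Part~2, I would combine Part~2 of Lemma~\ref{L:el-submodels} (types relativize to reducts: $Tp_{\mathsf{IL}}(\mathcal{M},w,\bar{c}_n/\bar{a}_n)\cap IL_\emptyset(\Sigma\cup\{\bar c_n\})=Tp_{\mathsf{IL}}(\mathcal{M}\upharpoonright\Sigma,w,\bar{c}_n/\bar{a}_n)$) with the observation that any $\mathsf{IL}$-type of $\mathcal{M}\upharpoonright\Sigma$ is, after the obvious intersection, an $\mathsf{IL}$-type of $\mathcal{M}$ of the same flavour (successor / existential / universal), witnessed at the same node and tuple. Given such a type $T$ of $\mathcal{M}\upharpoonright\Sigma$, view the corresponding "full-signature" type $T^\Theta$ of $\mathcal{M}$ obtained by closing the relevant finite-piece condition over $IL_\emptyset(\Theta\cup\{\bar c_n\})$; realize $T^\Theta$ in $\mathcal{N}$ by hypothesis, then intersect the realizing type inclusion with $IL_\emptyset(\Sigma\cup\{\bar c_n\})$ and apply Lemma~\ref{L:el-submodels}.2 again to conclude that $T$ is realized in $\mathcal{N}\upharpoonright\Sigma$. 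Part~3 is the most mechanical: since $(g,h):\mathcal{N}\cong\mathcal{N}'$, Lemma~\ref{L:embedding}.2 gives $Tp_{\mathsf{IL}}(\mathcal{N},v,\bar{c}_n/\bar{a}_n)=Tp_{\mathsf{IL}}(\mathcal{N}',h(v),\bar{c}_n/g\langle\bar{a}_n\rangle)$ for every $v$ and $\bar a_n$, and the isomorphism clause \eqref{E:ic2a} guarantees $g\langle\mathbb{G}_{vu}\langle\bar b_n\rangle\rangle=\mathbb{G}'_{h(v)h(u)}\langle g\langle\bar b_n\rangle\rangle$, so the quantifier-prefixed finite-conjunction/disjunction conditions of Definition~\ref{D:types} are preserved verbatim in both directions; one just reads Corollary~\ref{C:types-formulas} across the isomorphism. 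Finally, Part~4 follows by packaging Parts~1 and~3 with Lemma~\ref{L:embedding}.3: $(g,h)(\mathcal{M})=:\mathcal{N}_0$ is the unique $\mathsf{IL}$-elementary submodel of $\mathcal{N}'$ with $(g,h):\mathcal{M}\cong\mathcal{N}_0$, every type of $\mathcal{M}$ corresponds under $(g,h)$ to a type of $\mathcal{N}_0$ (Part~3 applied to the isomorphism $\mathcal{M}\cong\mathcal{N}_0$), and realization of such a type in $\mathcal{N}$ transports along $(g,h):\mathcal{N}\cong\mathcal{N}'$ to realization in $\mathcal{N}'$, and conversely; the direction "$\mathcal{N}'$ realizes every type of $\mathcal{N}_0$ $\Rightarrow$ $\mathcal{N}$ realizes every type of $\mathcal{M}$" uses $(g,h)^{-1}$ in the same way.

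The main obstacle I anticipate is purely bookkeeping: making sure that in the existential and universal cases the element witnessing a realization lives in the correct world domain and that the canonical homomorphisms $\mathbb{G}_{wv}$ versus $\mathbb{H}_{wv}$ are interchanged only where licensed (the remark after Definition~\ref{D:types-realization} and the submodel clause of Definition~\ref{D:submodel}), and — in Part~2 — pinning down precisely how a $\Sigma$-type extends to a $\Theta$-type so that "realized in $\mathcal{N}$" for the extension genuinely implies "realized in $\mathcal{N}\upharpoonright\Sigma$" for the original, which is where the finite-subset quantifiers in Definition~\ref{D:types} interact with the reduct. None of the steps requires a new idea beyond the lemmas already proved; the work is in being careful with the three type-flavours simultaneously, so in the write-up I would do one flavour in detail and indicate that the other two are analogous.
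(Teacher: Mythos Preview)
Your proposal is correct and follows essentially the same route as the paper's (sketched) proof: Part~1 via the equality of complete types along the elementary chain, Part~3 via Corollary~\ref{C:types-formulas} together with Lemma~\ref{L:embedding}.2, and Part~4 by combining Part~3 with the transport of realizing witnesses along the isomorphism.

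The one place where you make things harder than necessary is Part~2. You plan to ``extend'' a $\Sigma$-type $T$ to a full-signature type $T^\Theta$ and then intersect back down; the paper observes instead that no extension is needed at all: since $\Gamma,\Delta\subseteq IL_\emptyset(\Sigma\cup\{\bar c_n\})\subseteq IL_\emptyset(\Theta\cup\{\bar c_n\})$, the very same pair $(\Gamma,\Delta)$ is already an $\mathsf{IL}$-type of $\mathcal{M}$ in signature $\Theta$, by Corollary~\ref{C:types-formulas} (the formulas $\bigwedge\Gamma'\to\bigvee\Delta'$, $\exists c_{n+1}\bigwedge\Xi'$, $\forall c_{n+1}\bigvee\Xi'$ live in the smaller language) together with Lemma~\ref{L:el-submodels}.2. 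One then realizes $(\Gamma,\Delta)$ itself in $\mathcal{N}$ and intersects the resulting complete-type inclusion with $IL_\emptyset(\Sigma\cup\{\bar c_n\})$ via Lemma~\ref{L:el-submodels}.2. This removes precisely the bookkeeping obstacle you flag at the end of your proposal.
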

\begin{proof}[Proof (a sketch)]
	(Part 1). Assume that $(\Gamma, \Delta)$ (resp. $\Xi$) is an $\mathsf{IL}$-type of $\mathcal{M}'$ that is realized in $\mathcal{M}$. This means that for an appropriate $n < \omega$, tuple of constants $\bar{c}_n$, $w \in W$, and $\bar{a}_n \in A^n_w$ 
	 we have that $(\Gamma, \Delta)$ (resp. $(\Xi, \emptyset)$, $(\emptyset, \Xi)$) is a subset of  $Tp_{\mathsf{IL}}(\mathcal{M}, w,
	\bar{c}_{n}/\bar{a}_{n})$. But we have $Tp_{\mathsf{IL}}(\mathcal{M}, w,
	\bar{c}_{n}/\bar{a}_{n}) = Tp_{\mathsf{IL}}(\mathcal{N}, w,
	\bar{c}_{n}/\bar{a}_{n})$, therefore our type is also realized in $\mathcal{N}$.
	
	(Part 2). Assume that $(\Gamma, \Delta)$ (resp. $\Xi$) is an $\mathsf{IL}$-type of $\mathcal{M}\upharpoonright\Sigma$. Then it follows from the respective part of Corollary \ref{C:types-formulas}, that $(\Gamma, \Delta)$ (resp. $\Xi$) is also an $\mathsf{IL}$-type of $\mathcal{M}$. But then, $(\Gamma, \Delta)$ (resp. $\Xi$) must be realized in $\mathcal{N}$, which means, by definition, that for an appropriate $n < \omega$, tuple of constants $\bar{c}_n$, $v \in U$, and $\bar{b}_n \in B^n_v$, we have that $(\Gamma, \Delta)$ (resp. $(\Xi, \emptyset)$, $(\emptyset, \Xi)$) is a subset of  $Tp_{\mathsf{IL}}(\mathcal{N}, v,
	\bar{c}_{n}/\bar{b}_{n})$. But this means, further, that we also have:
	\begin{align*}
			(\Gamma, \Delta) &= (\Gamma\cap IL_\emptyset(\Sigma\cup \{\bar{c}_{n}\}), \Delta\cap IL_\emptyset(\Sigma\cup \{\bar{c}_{n}\})) = (\Gamma, \Delta)\cap IL_\emptyset(\Sigma\cup \{\bar{c}_{n}\})\\ 
			&\subseteq Tp_{\mathsf{IL}}(\mathcal{N}, v,
		\bar{c}_{n}/\bar{b}_{n})\cap IL_\emptyset(\Sigma\cup \{\bar{c}_{n}\}) = Tp_{\mathsf{IL}}(\mathcal{N}\upharpoonright\Sigma, v,
		\bar{c}_{n}/\bar{b}_{n}),
	\end{align*}
where the last equality holds by Lemma \ref{L:el-submodels}.2. This means that $(\Gamma, \Delta)$ is also realized in $\mathcal{N}\upharpoonright\Sigma$. We argue similarly for  $(\Xi, \emptyset)$ and $(\emptyset, \Xi)$, and for the converse.

(Part 3). By Corollary \ref{C:types-formulas} and Lemma \ref{L:embedding}.2.

(Part 4). Assume that, for some  $n < \omega$, $w \in W$,
$\bar{a}_n \in A^n_w$, and a tuple of fresh constants $\bar{c}_{n}$,  $(\Gamma, \Delta)$ is a $\bar{c}_n$-successor $\mathsf{IL}$-type of $(\mathcal{M}, w, \bar{a}_n)$. Since $(\Gamma, \Delta)$ is realized in $\mathcal{N}$, let $v \in U$ be such that we have both
$w \mathrel{\lhd} v$ and $(\Gamma,
\Delta)\subseteq Tp_{\mathsf{IL}}(\mathcal{N}, v,
\bar{c}_n/\mathbb{G}_{wv}\langle\bar{a}_n\rangle)$. Note that we have, of course, $(g,h):\mathcal{M}\cong (g,h)(\mathcal{M})$, therefore, by Part 3, $(\Gamma, \Delta)$ is a $\bar{c}_n$-successor $\mathsf{IL}$-type of $((g,h)(\mathcal{M}), h(w), g\langle\bar{a}_n\rangle)$. Moreover, by Lemma \ref{L:embedding}.2, we have: 
$$
(\Gamma,
\Delta)\subseteq Tp_{\mathsf{IL}}(\mathcal{N}, v,
\bar{c}_n/\mathbb{G}_{wv}\langle\bar{a}_n\rangle) = Tp_{\mathsf{IL}}(\mathcal{N}', h(v),
\bar{c}_n/g\langle\mathbb{G}_{wv}\langle\bar{a}_n\rangle\rangle),
$$	
and thus $(\Gamma,
\Delta)$ is also realized in $\mathcal{N}'$. We argue similarly for the cases of existential and universal type, and for the converse.   
\end{proof}

\subsection{Asimulations}

We devote this subsection to a treatment of world-object
asimulations introduced in \cite{o1} and adapted to the particular details of the version of
intuitionistic first-order logic introduced in the previous
sections:
\begin{definition}\label{D:asimulation}
{\em Let $(\mathcal{M}_1, w_1)$, $(\mathcal{M}_2, w_2)$ be pointed
$\Theta$-models and let, for $n \geq 0$, $\bar{a}_n \in
(A_1)^n_{w_1}$ and $\bar{b}_n \in (A_2)^n_{w_2}$. Moreover, fix a
set $\{c_i\mid i < \omega\}$ of pairwise distinct individual
constants outside $\Theta$. A binary relation $A$ is called an
\emph{$\mathsf{IL}$-asimulation from $(\mathcal{M}_1,w_1,
\bar{a}_n)$ to $(\mathcal{M}_2,w_2, \bar{b}_n)$} iff for any $i,j$
such that $\{ i,j \} = \{ 1, 2 \}$, any $w \in W_i$, $v,t \in
W_j$, any $l \geq 0$, $\bar{\alpha}_l \in
(A_i)^l_{w}$ and $\bar{\beta}_l \in (A_j)^l_{v}$ 
and any atomic $\varphi \in
IL_{\emptyset}(\Theta \cup\{\bar{c}_l\})$ the following conditions
hold:
\begin{align}
&A \subseteq \bigcup_{l \geq 0}(((W_1 \times \mathbb{A}_1^l)
\times (W_2 \times \mathbb{A}_2^l)) \cup ((W_2 \times
\mathbb{A}_2^l) \times (W_1 \times
\mathbb{A}_1^l)))\label{E:c22}\tag{\text{type}}\\
&(w_1; \bar{a}_n)\mathrel{A}(w_2;\bar{b}_n)\label{E:c11}\tag{\text{elem}}\\
&((w;\bar{\alpha}_l)A(v;\bar{\beta}_l) \& ([\mathcal{M}_i,w],
\bar{c}_l/\bar{\alpha}_l), w\models_{\mathsf{IL}} \varphi)
\Rightarrow ([\mathcal{M}_j,v], \bar{c}_l/\bar{\beta}_l),
v\models_{\mathsf{IL}} \varphi\label{E:c33}\tag{\text{atom}}\\
&((w;\bar{\alpha}_l)A(v;\bar{\beta}_l) \& v\mathrel{\prec_j}t)
\Rightarrow\notag\\
&\qquad\Rightarrow  \exists u \in W_i(w\mathrel{\prec_i}u \,\&\,
(u;(\mathbb{H}_i)_{wu}\langle\bar{\alpha}_l\rangle)A(t;(\mathbb{H}_j)_{vt}\langle\bar{\beta}_l\rangle)
\,\&\,
(t;(\mathbb{H}_j)_{vt}\langle\bar{\beta}_l\rangle)A(u;(\mathbb{H}_i)_{wu}\langle\bar{\alpha}_l\rangle)))\label{E:c44}\tag{\text{s-back}}\\
&((w;\bar{\alpha}_l)A(v;\bar{\beta}_l) \& \alpha' \in (A_i)_w)
\Rightarrow
(\exists \beta'\in(A_j)_v)((w;(\bar{\alpha}_l)^\frown \alpha')A(v;(\bar{\beta}_l)^\frown \beta'));\label{E:c55}\tag{\text{obj-forth}}\\
&((w;\bar{\alpha}_l)A(v;\bar{\beta}_l) \&
v\mathrel{\prec_j}t\wedge \beta' \in (A_j)_t))
\Rightarrow\notag\\
&\qquad\Rightarrow (\exists u,\alpha')(w\mathrel{\prec_i}u \,\&\,
\alpha' \in (A_i)_u \&
(u;(\mathbb{H}_i)_{wu}\langle\bar{\alpha}_l\rangle^\frown
\alpha')A(t;(\mathbb{H}_j)_{vt}\langle\bar{\beta}_l\rangle^\frown
\beta') ).\label{E:c66}\tag{\text{obj-back}}
\end{align}
}
\end{definition}
It is easy to see  that the above definition is correct in that
the fact that $A$ is an asimulation does not depend on the choice
of  $\{c_i\mid i < \omega\}$.

Our first series of statements about asimulations is that they
scale down neatly w.r.t. subsequences of objects, generated
submodels, and constant extensions. More precisely:
\begin{lemma}\label{L:asimulations-generated}
Let $(\mathcal{M}_1, w_1)$, $(\mathcal{M}_2, w_2)$ be pointed
$\Theta$-models and let, for $n \geq 0$, $\bar{a}_n \in
(A_1)^n_{w_1}$ and $\bar{b}_n \in (A_2)^n_{w_2}$. Moreover, fix a
tuple $\bar{c}_n$ of pairwise distinct individual constants
outside $\Theta$ and let a binary relation $A$ be an
$\mathsf{IL}$-asimulation from $(\mathcal{M}_1,w_1, \bar{a}_n)$ to
$(\mathcal{M}_2,w_2, \bar{b}_n)$. Then the following statements
hold:
\begin{enumerate}
    \item The relation
    \begin{align*}
    A\downarrow = \{ ((w;\bar{\alpha}_l),(v;\bar{\beta}_l))\mid (\exists m& \geq l)(\exists\bar{\gamma}_m)(\exists\bar{\delta}_m)((w;\bar{\gamma}_m)A(v;\bar{\delta}_m)\\
    &\&\,(\exists i_1,\ldots i_l \leq m)(\bigwedge^l_{j = 1}(\alpha_j = \gamma_{i_j}\,\&\,\beta_j = \delta_{i_j})))
    \}
    \end{align*} 
    is an $\mathsf{IL}$-asimulation from $(\mathcal{M}_1,w_1, \bar{a}_i)$ to
$(\mathcal{M}_2,w_2, \bar{b}_i)$ for every $i \leq n$.
    \item $A_{w_1,w_2} = \{ ((w;\bar{\alpha}_l),(v;\bar{\beta}_l))\mid
    (w;\bar{\alpha}_l)A(v;\bar{\beta}_l)\,\&\,(\exists i,
    j)(\{i,j\}= \{1,2\}\,\&\,w \in [W_i, w_i]\,\&\,v\in [W_j,
    w_j])\}$ is an $\mathsf{IL}$-asimulation from $(([\mathcal{M}_1,w_1],\bar{c}_n/\bar{a}_n), w_1, \bar{a}_n)$ to
$(([\mathcal{M}_2,w_2],\bar{c}_n/\bar{b}_n), w_2, \bar{b}_n)$; in
particular, for $n = 0$, we get that $A_{w_1,w_2}$ is an
$\mathsf{IL}$-asimulation from $([\mathcal{M}_1,w_1], w_1,
\bar{a}_n)$ to $([\mathcal{M}_2,w_2], w_2, \bar{b}_n)$. 
 \item As a
consequence, $(A_{w_1,w_2})\downarrow$ is an
$\mathsf{IL}$-asimulation from
$(([\mathcal{M}_1,w_1],\bar{c}_n/\bar{a}_n), w_1)$ to
$(([\mathcal{M}_2,w_2],\bar{c}_n/\bar{b}_n), w_2)$.
\end{enumerate}
\end{lemma}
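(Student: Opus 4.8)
The plan is to check, for each of the three relations, the six defining clauses of Definition~\ref{D:asimulation}: (type), (elem), (atom), (s-back), (obj-forth), (obj-back). Part~3 then requires no further work: once Part~2 gives that $A_{w_1,w_2}$ is an $\mathsf{IL}$-asimulation from $(([\mathcal{M}_1,w_1],\bar c_n/\bar a_n),w_1,\bar a_n)$ to $(([\mathcal{M}_2,w_2],\bar c_n/\bar b_n),w_2,\bar b_n)$, Part~1 applies to it verbatim, with the two constant-extended generated submodels now playing the roles of $\mathcal{M}_1,\mathcal{M}_2$ and with $\bar a_n,\bar b_n$ still the designated tuples, and yields that $(A_{w_1,w_2})\downarrow$ is an $\mathsf{IL}$-asimulation from $(([\mathcal{M}_1,w_1],\bar c_n/\bar a_n),w_1)$ to $(([\mathcal{M}_2,w_2],\bar c_n/\bar b_n),w_2)$. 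So everything reduces to Parts~1 and~2.

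For Part~1, clauses (type) and (elem) are read off the definition of $A\downarrow$ directly, taking for (elem) the witness $(w_1;\bar a_n)A(w_2;\bar b_n)$ with the identity index set $i_j=j$. For (s-back), (obj-forth), (obj-back): given a pair in $A\downarrow$ witnessed by a long pair $(w;\bar\gamma_m)A(v;\bar\delta_m)$ and indices $i_1,\dots,i_l$, apply the corresponding clause of $A$ to the long pair; the long pair it returns again witnesses the required membership in $A\downarrow$ — with the same index set for (s-back), using that a homomorphism commutes with coordinate selection so that $(\mathbb{H}_i)_{wu}\langle\bar\alpha_l\rangle$ is the $(i_1,\dots,i_l)$-subtuple of $(\mathbb{H}_i)_{wu}\langle\bar\gamma_m\rangle$, and with the index set extended by the new last coordinate for the two object clauses. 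The only clause needing care is (atom): replace each $c_j$ occurring in the atomic $\varphi$ by $c_{i_j}$ to obtain an atomic $\varphi^{\ast}\in IL_\emptyset(\Theta\cup\{\bar c_m\})$; since $\gamma_{i_j}=\alpha_j$, Lemma~\ref{L:satisfaction}.1 gives that the $\bar\alpha_l$-extension at $w$ satisfies $\varphi$ iff the $\bar\gamma_m$-extension at $w$ satisfies $\varphi^{\ast}$, so (atom) for $A$ transfers $\varphi^{\ast}$ to the $\mathcal{M}_j$-side, and Lemma~\ref{L:satisfaction}.1 again (now using $\delta_{i_j}=\beta_j$) transfers it back to $\varphi$.

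For Part~2 I would first record the structural facts licensing the bookkeeping: by Lemma~\ref{L:submodel} the world-domains of $[\mathcal{M}_k,w_k]$ at nodes of $[W_k,w_k]$ coincide with those of $\mathcal{M}_k$, the order restricted to $[W_k,w_k]$ is $\prec_k$ itself, $\prec_k$-successors of nodes of $[W_k,w_k]$ stay in $[W_k,w_k]$, and the canonical homomorphisms of $[\mathcal{M}_k,w_k]$ are the relevant restrictions of those of $\mathcal{M}_k$; adjoining $\bar c_n$ affects none of this. Given these, (type) is precisely the restriction built into the definition of $A_{w_1,w_2}$, (elem) is immediate, and (s-back), (obj-forth), (obj-back) follow from the corresponding clauses of $A$, since the witnessing worlds and objects they produce automatically land in the generated submodels and the resulting pairs still satisfy the side condition of $A_{w_1,w_2}$ (the new worlds remain $\prec$-above $w_1$, resp.\ $w_2$). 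The substantive clause is (atom): it requires evaluating an atomic $\varphi$ over $\Theta\cup\{\bar c_n\}$ together with the fresh constants of the new asimulation at the threefold composite ``generated submodel of $([\mathcal{M}_i,w_i],\bar c_n/\bar a_n)$ at $w$, then extended by the fresh constants over $\bar\alpha_l$'', which by Lemma~\ref{L:cutoff-constants}, parts~1--3 (iterated), equals $([\mathcal{M}_i,w],(\bar c_n)^{\frown}(\bar d_l)/((\mathbb{H}_i)_{w_iw}\langle\bar a_n\rangle)^{\frown}(\bar\alpha_l))$ and symmetrically on the $j$-side; so the clause reduces to an instance of (atom) for the original $A$ applied to the \emph{augmented} object tuples $((\mathbb{H}_i)_{w_iw}\langle\bar a_n\rangle)^{\frown}(\bar\alpha_l)$ and $((\mathbb{H}_j)_{w_jv}\langle\bar b_n\rangle)^{\frown}(\bar\beta_l)$.

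I expect this last reduction to be the main obstacle, because it is not automatic that those augmented tuples are $A$-related at $(w,v)$: a general $A$ may contain pairs whose object components are unrelated to the designated tuple. The way around it is to use that the homomorphic image of the designated tuple is dragged along every pair that $A$ manufactures from the root pair $(w_1;\bar a_n)A(w_2;\bar b_n)$ by (s-back) and (obj-back). Concretely, I would pass to the sub-relation $A_0\subseteq A$ of pairs reachable from $(w_1;\bar a_n)A(w_2;\bar b_n)$ under finitely many applications of (s-back), (obj-forth), (obj-back), check that $A_0$ is still an $\mathsf{IL}$-asimulation from $(\mathcal{M}_1,w_1,\bar a_n)$ to $(\mathcal{M}_2,w_2,\bar b_n)$ (reflexivity of $\prec_k$ together with $(\mathbb{H}_k)_{ww}=id$ making the reachable part closed under the clauses), observe that every pair in $A_0$ carries $(\mathbb{H}_k)_{w_kw}\langle\bar a_n\rangle$ (resp.\ $\langle\bar b_n\rangle$) as an initial segment of its object component, and run the argument with $A_0$ in place of $A$. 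Combined with the Lemma~\ref{L:cutoff-constants} bookkeeping, this is the heart of Part~2; the remaining clauses are routine unwinding of definitions.
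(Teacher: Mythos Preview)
The paper does not give a proof of this lemma; it is stated and immediately followed by the next paragraph. So your proposal is being measured only against the statement.

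Your treatment of Part~1 is correct and is exactly the expected routine verification; your reduction of Part~3 to Parts~1 and~2 is also correct.

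For Part~2 you have identified a genuine problem, and in fact the difficulty you flag is not merely an obstacle to the proof --- Part~2 as stated is \emph{false}. Here is a concrete counterexample. Take $\Theta=\{P\}$ with $P$ binary, let $\mathcal{M}_1$ and $\mathcal{M}_2$ each be one-world models with domain $\{0,1,0',1'\}$ and $P=\{(0,1),(0',1')\}$, and set $n=1$, $a_1=0$, $b_1=0$. Let $\sigma$ be the automorphism swapping $0\leftrightarrow 0'$ and $1\leftrightarrow 1'$, and let $A$ be the union of the asimulation induced by the identity isomorphism and the asimulation induced by $\sigma$. Each summand satisfies all six clauses, and since those clauses are existential in $A$ and universal-over-pairs only for \eqref{E:c33}, their union is again an $\mathsf{IL}$-asimulation from $(\mathcal{M}_1,w_1,0)$ to $(\mathcal{M}_2,w_2,0)$. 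But the pair $((w_1;1),(w_2;1'))\in A=A_{w_1,w_2}$ fails \eqref{E:c33} over $\Theta\cup\{c_1\}$: with $c_1$ interpreted as $0$ on both sides, $P(c_1,d_1)$ holds at the left (since $P(0,1)$) and fails at the right (since $\neg P(0,1')$).

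Your proposed repair --- replace $A$ by the sub-relation $A_0$ reachable from the root pair under \eqref{E:c44}, \eqref{E:c55}, \eqref{E:c66} --- is exactly right, and your observation that every pair in $A_0$ carries the homomorphic images of the designated tuples as an initial segment is the key fact that makes \eqref{E:c33} go through in the extended signature (one replaces each $c_k$ by the $k$-th fresh constant and applies \eqref{E:c33} for $A$). Note also that all worlds occurring in $A_0$ automatically lie in $[W_i,w_i]$, so $(A_0)_{w_1,w_2}=A_0$. What you should make explicit, however, is that this proves $(A_0)_{w_1,w_2}$, not $A_{w_1,w_2}$, is the desired asimulation: you are repairing the statement, not proving it as written. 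This weakening is harmless for the paper --- the only use of the lemma is in Lemma~\ref{L:asimulation-l-types}, which needs merely the \emph{existence} of an asimulation between the constant-extended generated submodels, and $A_0$ (hence $(A_0)\!\downarrow$) provides that.
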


Intuitionistic first-order formulas are known to be preserved
under $\mathsf{IL}$-asimulations \cite[Corollary 3.3]{o1}. More precisely, if
$(\mathcal{M}_1, w_1)$, and $(\mathcal{M}_2, w_2)$ are pointed
$\Theta$-models, $\bar{a}_n \in (A_1)^n_{w_1}$, $\bar{b}_n \in
(A_2)^n_{w_2}$, and $A$ is an $\mathsf{IL}$-asimulation from
$(\mathcal{M}_1,w_1, \bar{a}_n)$ to $(\mathcal{M}_2,w_2,
\bar{b}_n)$, then, for every tuple $\bar{c}_n$ of pairwise
distinct individual constants outside $\Theta$, we have
$Tp^+_{\mathsf{IL}}(\mathcal{M}_1,w_1, \bar{c}_n/\bar{a}_n)
\subseteq Tp^+_{\mathsf{IL}}(\mathcal{M}_2,w_2,
\bar{c}_n/\bar{b}_n)$. Moreover, preservation under
$\mathsf{IL}$-asimulations is known to semantically characterize
intuitionistic first-order logic as a fragment of classical
first-order logic, see \cite[Theorem 3.8]{o1} for the proof.

Next we consider asimulations between $\mathsf{IL}$-saturated
models. The following lemma states that such asimulations can be
defined in an easy and natural way:
\begin{lemma}\label{L:asimulations}
Let $(\mathcal{M}_1, w_1)$, $(\mathcal{M}_2, w_2)$ be pointed
$\Theta$-models, let $\bar{a}_n \in (A_1)^n_{w_1}$, and let
$\bar{b}_n \in (A_2)^n_{w_2}$, and let $\bar{c}_{n}$ be a tuple of
pairwise distinct individual constants outside $\Theta$. If we
have

\[ Tp^+_{\mathsf{IL}}(\mathcal{M}_1, w_1,
\bar{c}_{n}/\bar{a}_n)\subseteq Tp^+_{\mathsf{IL}}(\mathcal{M}_2,
w_2, \bar{c}_{n}/\bar{b}_n)
\]
for every tuple $\bar{c}_{n}$ of pairwise distinct individual
constants outside $\Theta$, and both $\mathcal{M}_1$ and
$\mathcal{M}_2$ are $\mathsf{IL}$-saturated, then the relation $A$
such that for all $\{ i,j \} = \{ 1,2 \}$, all $u \in W_i$, $s \in
W_j$, $\bar{\alpha}_k \in (A_i)^k_{u}$, and all $\bar{\beta}_k \in
(A_j)^k_{s}$ we have
$$
(u; \bar{\alpha}_k) \mathrel{A}(s; \bar{\beta}_k) \Leftrightarrow
Tp^+_{\mathsf{IL}}(\mathcal{M}_i, u,
\bar{c}_{k}/\bar{\alpha}_k)\subseteq
Tp^+_{\mathsf{IL}}(\mathcal{M}_j, s, \bar{c}_{k}/\bar{\beta}_k)
$$
for any given set $\{c_i\mid i < \omega\}$ of pairwise distinct
individual constants outside $\Theta$ is an
$\mathsf{IL}$-asimulation from $(\mathcal{M}_1, w_1, \bar{a}_n)$
to $(\mathcal{M}_2, w_2, \bar{b}_n)$.
\end{lemma}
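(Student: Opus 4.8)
The plan is to verify the six clauses of Definition \ref{D:asimulation} for the relation $A$ described in the statement. Three of them are immediate. Clause \eqref{E:c22} holds by construction, since $A$ is defined symmetrically for both orderings of $\{1,2\}$. Clause \eqref{E:c11} is exactly the hypothesis, applied to $w_1,w_2$ and $\bar{a}_n,\bar{b}_n$. And clause \eqref{E:c33} holds because $Tp^+_{\mathsf{IL}}(\mathcal{M}_i, w, \bar{c}_l/\bar{\alpha}_l)$ is, by definition, the set of $(\Theta\cup\{\bar{c}_l\})$-sentences holding at $([\mathcal{M}_i, w], \bar{c}_l/\bar{\alpha}_l), w$; so if $(w;\bar{\alpha}_l)\mathrel{A}(v;\bar{\beta}_l)$, then every such sentence that holds on the $i$-side --- atomic ones in particular --- also holds on the $j$-side.

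For the remaining three clauses I would use a uniform template. Starting from an $A$-pair $(w;\bar{\alpha}_l)\mathrel{A}(v;\bar{\beta}_l)$ and the data supplied on the $\mathcal{M}_j$-side, I construct an $\mathsf{IL}$-type of $(\mathcal{M}_i, w, \bar{\alpha}_l)$ (in the sense of Definition \ref{D:types}) from the complete, respectively positive, respectively negative type of the relevant tuple in $\mathcal{M}_j$; I check that it really is a type using only the positive-type inclusion packaged in $A$, the finitary reformulations of Corollary \ref{C:types-formulas}, the quantifier clauses of Corollaries \ref{C:satisfaction-constants} and \ref{C:satisfaction-cutoff-constants}, and persistence (Lemma \ref{L:satisfaction}.3); and I then apply $\mathsf{IL}$-saturation (Definition \ref{D:saturation}) of whichever of $\mathcal{M}_1,\mathcal{M}_2$ plays the ``source'' role to realise this type, reading the required new $A$-pair off the realising node and/or object.

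Concretely: for \eqref{E:c44}, given $v\mathrel{\prec_j}t$, I let $(\Gamma,\Delta)$ be the complete $\mathsf{IL}$-type of $(\mathcal{M}_j, t, (\mathbb{H}_j)_{vt}\langle\bar{\beta}_l\rangle)$ and show, via Corollary \ref{C:types-formulas}.1, that $(\Gamma,\Delta)$ is a successor type of $(\mathcal{M}_i, w, \bar{\alpha}_l)$: if some implication $\bigwedge\Gamma'\to\bigvee\Delta'$ were forced at $([\mathcal{M}_i, w],\bar{c}_l/\bar{\alpha}_l), w$, it would be forced at $([\mathcal{M}_j, v],\bar{c}_l/\bar{\beta}_l), v$ by $A$, hence at $t$ by persistence, forcing some $\delta\in\Delta'$ there and contradicting $\Delta'\subseteq\Delta$. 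Saturation of $\mathcal{M}_i$ gives $u\mathrel{\succ_i}w$ realising $(\Gamma,\Delta)$; since a complete type partitions the sentence set, the complete type of $(\mathcal{M}_i, u, (\mathbb{H}_i)_{wu}\langle\bar{\alpha}_l\rangle)$ then coincides with that of $(\mathcal{M}_j, t, (\mathbb{H}_j)_{vt}\langle\bar{\beta}_l\rangle)$, so $A$ holds both ways between these tuples --- which is \eqref{E:c44}. For \eqref{E:c55}, given $\alpha'\in(A_i)_w$, I set $\Xi:=Tp^+_{\mathsf{IL}}(\mathcal{M}_i, w, \bar{c}_{l+1}/(\bar{\alpha}_l)^\frown\alpha')$; for each $\Xi'\Subset\Xi$ the sentence $\exists c_{l+1}\bigwedge\Xi'$ holds at $([\mathcal{M}_i, w],\bar{c}_l/\bar{\alpha}_l), w$, witnessed by $\alpha'$, hence at $([\mathcal{M}_j, v],\bar{c}_l/\bar{\beta}_l), v$ by $A$, so $\Xi$ is an existential type of $(\mathcal{M}_j, v, \bar{\beta}_l)$; saturation of $\mathcal{M}_j$ yields $\beta'\in(A_j)_v$ with $\Xi\subseteq Tp^+_{\mathsf{IL}}(\mathcal{M}_j, v, \bar{c}_{l+1}/(\bar{\beta}_l)^\frown\beta')$, i.e.\ $(w;(\bar{\alpha}_l)^\frown\alpha')\mathrel{A}(v;(\bar{\beta}_l)^\frown\beta')$. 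For \eqref{E:c66}, given $v\mathrel{\prec_j}t$ and $\beta'\in(A_j)_t$, I take $\Xi:=Tp^-_{\mathsf{IL}}(\mathcal{M}_j, t, \bar{c}_{l+1}/(\mathbb{H}_j)_{vt}\langle\bar{\beta}_l\rangle^\frown\beta')$ as a candidate \emph{universal} type of $(\mathcal{M}_i, w, \bar{\alpha}_l)$: if $\forall c_{l+1}\bigvee\Xi'$ were forced at $([\mathcal{M}_i, w],\bar{c}_l/\bar{\alpha}_l), w$, it would be forced at $v$ by $A$ and, instantiated at $t$ with $\beta'$, would force $\bigvee\Xi'$ there, contradicting $\Xi'\subseteq\Xi$; saturation of $\mathcal{M}_i$ then yields $u\mathrel{\succ_i}w$ and $\alpha'\in(A_i)_u$ with $\Xi\subseteq Tp^-_{\mathsf{IL}}(\mathcal{M}_i, u, \bar{c}_{l+1}/(\mathbb{H}_i)_{wu}\langle\bar{\alpha}_l\rangle^\frown\alpha')$, and since for complete types positive-type inclusion is equivalent to the reverse negative-type inclusion, this is exactly $(u;(\mathbb{H}_i)_{wu}\langle\bar{\alpha}_l\rangle^\frown\alpha')\mathrel{A}(t;(\mathbb{H}_j)_{vt}\langle\bar{\beta}_l\rangle^\frown\beta')$.

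I expect \eqref{E:c66} to be the main obstacle: it is the clause where a successor step and an object step must be produced at once and coherently with a fixed pair $(t,\beta')$ on the $\mathcal{M}_j$-side, and it is carried out through the somewhat opaque device of reading the negative part of a complete type as a universal type; clause \eqref{E:c44} is the other delicate point, since the ``both directions'' demand there is exactly what forces one to realise a \emph{complete} type rather than merely a positive one. The rest is bookkeeping --- moving between a pointed model, its generated submodel, and its constant extensions, and tracking the homomorphisms through Lemma \ref{L:cutoff-constants} and Corollary \ref{C:satisfaction-cutoff-constants} --- routine but requiring care.
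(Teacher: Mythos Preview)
Your proposal is correct and follows essentially the same route as the paper: after dispatching \eqref{E:c22}, \eqref{E:c11}, \eqref{E:c33} trivially, the paper handles \eqref{E:c44} by showing that the complete type of $(\mathcal{M}_j,t,(\mathbb{H}_j)_{st}\langle\bar{\beta}_k\rangle)$ is a $\bar{c}_k$-successor type on the $i$-side via Corollary~\ref{C:types-formulas}.1 and the partition argument, handles \eqref{E:c55} by showing that the positive type of the extended tuple is a $\bar{c}_{k+1}$-existential type on the $j$-side via Corollary~\ref{C:types-formulas}.2, and handles \eqref{E:c66} by showing that the negative type of the target tuple is a $\bar{c}_{k+1}$-universal type on the $i$-side via Corollary~\ref{C:types-formulas}.3 --- exactly your template, with the same auxiliary lemmas. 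Your identification of \eqref{E:c44} and \eqref{E:c66} as the delicate clauses is also on the mark.
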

\begin{proof}
The relation $A$, as defined in the lemma, obviously satisfies
conditions \eqref{E:c22}, \eqref{E:c11}, and \eqref{E:c33} given
in Definition \ref{D:asimulation}. We check the remaining
conditions.

\eqref{E:c44}. Assume that $(u; \bar{\alpha}_k) \mathrel{A}(s;
\bar{\beta}_k)$, so that we have
$$Tp^+_{\mathsf{IL}}(\mathcal{M}_i, u,
\bar{c}_{k}/\bar{\alpha}_k)\subseteq
Tp^+_{\mathsf{IL}}(\mathcal{M}_j, s, \bar{c}_{k}/\bar{\beta}_k),$$
and that for some $t \in W_j$ we have $s\mathrel{\prec_j}t$.
Consider an arbitrary pair $$(\Gamma, \Delta) \Subset
Tp_{\mathsf{IL}}(\mathcal{M}_j, t,
\bar{c}_{k}/(\mathbb{H}_j)_{st}\langle\bar{\beta}_k\rangle).$$ We
reason as follows:
\begin{align*}
    &([\mathcal{M}_j, t],
\bar{c}_{k}/(\mathbb{H}_j)_{st}\langle\bar{\beta}_k\rangle), t
\models_{\mathsf{IL}} (\Gamma,\Delta) &&\text{(by the choice of
$\Gamma, \Delta$)}\\
&[([\mathcal{M}_j, s], \bar{c}_{k}/\bar{\beta}_k),t], t
\models_{\mathsf{IL}} (\Gamma,\Delta) &&\text{(by Lemma \ref{L:cutoff-constants}.2)}\\
&([\mathcal{M}_j, s], \bar{c}_{k}/\bar{\beta}_k), t
\models_{\mathsf{IL}} (\Gamma,\Delta) &&\text{(by Lemma \ref{L:satisfaction}.2)}\\
&([\mathcal{M}_j, s], \bar{c}_{k}/\bar{\beta}_k), s
\not\models_{\mathsf{IL}} (\bigwedge\Gamma \to \bigvee\Delta)&&\text{(by def. of $\models_{\mathsf{IL}} $)}\\
&([\mathcal{M}_i, u], \bar{c}_{k}/\bar{\alpha}_k), u
\not\models_{\mathsf{IL}} (\bigwedge\Gamma \to
\bigvee\Delta)&&\text{(by $(u; \bar{\alpha}_k) \mathrel{A}(s;
\bar{\beta}_k)$)}
\end{align*}
From the last equation, by Corollary \ref{C:types-formulas}.1, we
infer that $Tp_{\mathsf{IL}}(\mathcal{M}_j, t,
\bar{c}_{k}/(\mathbb{H}_j)_{st}\langle\bar{\beta}_k\rangle)$ is a
$\bar{c}_{k}$-successor type of $(\mathcal{M}_i, u,
\bar{\alpha}_k)$. By the $\mathsf{IL}$-saturation of both
$\mathcal{M}_1$ and $\mathcal{M}_2$, it follows that for some $w
\in W_i$ such that $u\mathrel{\prec_i}w$ we have
$$Tp_{\mathsf{IL}}(\mathcal{M}_j, t,
\bar{c}_{k}/(\mathbb{H}_j)_{st}\langle\bar{\beta}_k\rangle)
\subseteq Tp_{\mathsf{IL}}(\mathcal{M}_i, w,
\bar{c}_{k}/(\mathbb{H}_i)_{uw}\langle\bar{\alpha}_k\rangle).$$
Since both $Tp_{\mathsf{IL}}(\mathcal{M}_j, t,
\bar{c}_{k}/(\mathbb{H}_j)_{st}\langle\bar{\beta}_k\rangle)$ and
$Tp_{\mathsf{IL}}(\mathcal{M}_i, w,
\bar{c}_{k}/(\mathbb{H}_i)_{uw}\langle\bar{\alpha}_k\rangle)$ are,
in fact, partitions of $IL_{\emptyset}(\Theta \cup\{\bar{c}_k\})$,
it follows that $$Tp_{\mathsf{IL}}(\mathcal{M}_j, t,
\bar{c}_{k}/(\mathbb{H}_j)_{st}\langle\bar{\beta}_k\rangle) =
Tp_{\mathsf{IL}}(\mathcal{M}_i, w,
\bar{c}_{k}/(\mathbb{H}_i)_{uw}\langle\bar{\alpha}_k\rangle),$$ and
thus also $Tp^+_{\mathsf{IL}}(\mathcal{M}_j, t,
\bar{c}_{k}/(\mathbb{H}_j)_{st}\langle\bar{\beta}_k\rangle) =
Tp^+_{\mathsf{IL}}(\mathcal{M}_i, w,
\bar{c}_{k}/(\mathbb{H}_i)_{uw}\langle\bar{\alpha}_k\rangle)$, or,
in other words, both $(w;
(\mathbb{H}_i)_{uw}\langle\bar{\alpha}_k\rangle) \mathrel{A}(t;
(\mathbb{H}_j)_{st}\langle\bar{\beta}_k\rangle)$ and $(t;
(\mathbb{H}_j)_{st}\langle\bar{\beta}_k\rangle) \mathrel{A}(w;
(\mathbb{H}_i)_{uw}\langle\bar{\alpha}_k\rangle)$.

\eqref{E:c55}. Assume that $(u; \bar{\alpha}_k) \mathrel{A}(s;
\bar{\beta}_k)$, so that we have
$$Tp^+_{\mathsf{IL}}(\mathcal{M}_i, u,
\bar{c}_{k}/\bar{\alpha}_k)\subseteq
Tp^+_{\mathsf{IL}}(\mathcal{M}_j, s, \bar{c}_{k}/\bar{\beta}_k),$$
and assume that $\alpha \in (A_i)_u$. Consider an arbitrary $\Xi
\Subset Tp^+_{\mathsf{IL}}(\mathcal{M}_i, u,
\bar{c}_{k+1}/(\bar{\alpha}_k)^\frown\alpha)$. We reason as follows:
\begin{align*}
    &([\mathcal{M}_i, u],
    \bar{c}_{k+1}/(\bar{\alpha}_k)^\frown\alpha),
    u\models_{\mathsf{IL}}\bigwedge\Xi &&\text{(by the choice of $\Xi$)}\\
    &([\mathcal{M}_i, u],
    \bar{c}_{k}/\bar{\alpha}_k),
    u\models_{\mathsf{IL}}\exists c_{k+1}\bigwedge\Xi &&\text{(by Corollary
    \ref{C:satisfaction-cutoff-constants})}\\
     &([\mathcal{M}_j, s],
    \bar{c}_{k}/\bar{\beta}_k),
    u\models_{\mathsf{IL}}\exists c_{k+1}\bigwedge\Xi &&\text{(by $(u; \bar{\alpha}_k) \mathrel{A}(s;
\bar{\beta}_k)$)}
\end{align*}
But then, by Corollary \ref{C:types-formulas}.2, we infer that
$Tp^+_{\mathsf{IL}}(\mathcal{M}_i, u,
\bar{c}_{k+1}/\bar{\alpha}_k^\frown\alpha)$ is a
$\bar{c}_{k+1}$-existential type of $(\mathcal{M}_j, s,
\bar{\beta}_k)$. By the $\mathsf{IL}$-saturation of both
$\mathcal{M}_1$ and $\mathcal{M}_2$, it follows that for some
$\beta \in (A_j)_s$ we have $Tp^+_{\mathsf{IL}}(\mathcal{M}_i, u,
\bar{c}_{k+1}/(\bar{\alpha}_k)^\frown\alpha) \subseteq
Tp^+_{\mathsf{IL}}(\mathcal{M}_j, s,
\bar{c}_{k+1}/(\bar{\beta}_k)^\frown\beta)$, or, in other words,
$(u; (\bar{\alpha}_k)^\frown\alpha) \mathrel{A}(s;
(\bar{\beta}_k)^\frown\beta)$.

\eqref{E:c66}. Assume that $(u; \bar{\alpha}_k) \mathrel{A}(s;
\bar{\beta}_k)$,
so that we have $$Tp^+_{\mathsf{IL}}(\mathcal{M}_i, u,
\bar{c}_{k}/\bar{\alpha}_k)\subseteq
Tp^+_{\mathsf{IL}}(\mathcal{M}_j, s, \bar{c}_{k}/\bar{\beta}_k),$$
and assume that for some $t \in W_j$ such that
$s\mathrel{\prec_j}t$ we have $\beta \in (A_j)_t$. Consider an
arbitrary $\Xi \Subset Tp^-_{\mathsf{IL}}(\mathcal{M}_j, t,
\bar{c}_{k+1}/(\mathbb{H}_j)_{st}\langle\bar{\beta}_k\rangle^\frown\beta)$.
We reason as follows:
\begin{align*}
    &([\mathcal{M}_j, t],
\bar{c}_{k+1}/(\mathbb{H}_j)_{st}\langle\bar{\beta}_k\rangle^\frown\beta),
t \not\models_{\mathsf{IL}} \bigvee\Xi &&\text{(by the choice of
$\Xi$)}\\
&([\mathcal{M}_j, s], \bar{c}_{k}/\bar{\beta}_k), s
\not\models_{\mathsf{IL}} \forall c_{k +1}\bigvee\Xi&&\text{(by
Corollary
    \ref{C:satisfaction-cutoff-constants})}\\
&([\mathcal{M}_i, u], \bar{c}_{k}/\bar{\alpha}_k), u
\not\models_{\mathsf{IL}} \forall c_{k +1}\bigvee\Xi&&\text{(by
$(u; \bar{\alpha}_k) \mathrel{A}(s; \bar{\beta}_k)$)}
\end{align*}
But then, by Corollary \ref{C:types-formulas}.3, we infer that
$Tp^-_{\mathsf{IL}}(\mathcal{M}_j, t,
\bar{c}_{k+1}/(\mathbb{H}_j)_{st}\langle\bar{\beta}_k\rangle^\frown\beta)$
is a $\bar{c}_{k+1}$-universal type of $(\mathcal{M}_i, u,
\bar{\alpha}_k)$. By the $\mathsf{IL}$-saturation of both
$\mathcal{M}_1$ and $\mathcal{M}_2$, it follows that for some $w
\in W_i$ such that $u\mathrel{\prec_i}w$ and for some $\alpha \in
(A_i)_w$ we have 
$$
Tp^-_{\mathsf{IL}}(\mathcal{M}_j, t,
\bar{c}_{k+1}/(\mathbb{H}_j)_{st}\langle\bar{\beta}_k\rangle^\frown\beta)
\subseteq Tp^-_{\mathsf{IL}}(\mathcal{M}_i, w,
\bar{c}_{k+1}/(\mathbb{H}_i)_{uw}\langle\bar{\alpha}_k\rangle^\frown\alpha),
$$
whence clearly $Tp^+_{\mathsf{IL}}(\mathcal{M}_i, w,
\bar{c}_{k+1}/(\mathbb{H}_i)_{uw}\langle\bar{\alpha}_k\rangle^\frown\alpha)
\subseteq Tp^+_{\mathsf{IL}}(\mathcal{M}_j, t,
\bar{c}_{k+1}/(\mathbb{H}_j)_{st}\langle\bar{\beta}_k\rangle^\frown\beta)$
or, in other words,
$(w;(\mathbb{H}_i)_{uw}\langle\bar{\alpha}_k\rangle^\frown\alpha)
\mathrel{A}(t;
(\mathbb{H}_j)_{st}\langle\bar{\beta}_k\rangle^\frown\beta)$.
\end{proof}

\subsection{Intuitionistic unravellings}\label{subS:Unravel}

Unravelling Kripke models is a well-known idea from classical
modal logic which, for any given $(\mathcal{M}, w)$, allows to
find an `equivalent' rooted $\Theta$-model. Here we adapt it to
the intuitionistic setting. For a given intuitionistic pointed
Kripke $\Theta$-model $(\mathcal{M}, w)$, the model
$\mathcal{M}^{un(w)}= \langle W^{un(w)}, \prec^{un(w)},
\mathfrak{A}^{un(w)}, \mathbb{H}^{un(w)}\rangle$, called the
intuitionistic unravelling of $\mathcal{M}$ around $w$ is defined
as follows.
\begin{itemize}
\item $W^{un(w)} = \{ \bar{u}_n \in W^n\mid u_1 = w,\,(\forall i <
n)(u_i\mathrel{\prec}u_{i + 1}) \}$;

\item $\prec^{un(w)}$ is the reflexive and transitive closure of
the following relation:
$$\{ (s,t) \in W^{un(w)} \times W^{un(w)} \mid
(\exists u \in W)(t = s^\frown u) \};
$$
\item For all $\bar{u}_n \in W^{un(w)}$, $A^{un(w)}_{\bar{u}_n} =
\{ (a;\bar{u}_n) \mid a \in A_{u_n} \}$;

\item For every $\bar{u}_n \in W^{un(w)}$, every $\bar{a}_m \in
A^m_{u_n}$, every $P \in \Theta_m$, and every $c \in Const_\Theta$:
$$
I^{un(w)}_{\bar{u}_n}(P)((a_1;\bar{u}_n),\ldots, (a_m;\bar{u}_n))
\Leftrightarrow I_{u_n}(P)(\bar{a}_m),
$$
and
$$
I^{un(w)}_{\bar{u}_n}(c) = (I_{u_n}(c);\bar{u}_n).
$$
\item For all $1 < k < n < \omega$, and all $\bar{u}_k, \bar{u}_n
\in W^{un(w)}$, and every $a \in  A_{u_k}$, we set
$\mathbb{H}^{un(w)}_{\bar{u}_k\bar{u}_n}(a; \bar{u}_k) :=
(\mathbb{H}_{u_ku_n}(a), \bar{u}_n)$.
\end{itemize}
In what follows, we will also write $\bar{a}_m\odot\bar{u}_n$
instead of $((a_1;\bar{u}_n),\ldots, (a_m;\bar{u}_n))$; using this
notation, the final two items in the above definition of
unravelling can be rephrased as follows:
$$
I^{un(w)}_{\bar{u}_n}(P)(\bar{a}_m\odot\bar{u}_n) \Leftrightarrow
I_{u_n}(P)(\bar{a}_m),
$$
and
$$
\mathbb{H}^{un(w)}_{\bar{u}_k\bar{u}_n}\langle\bar{a}_m\odot\bar{u}_k\rangle
= (\mathbb{H}_{u_ku_n}\langle\bar{a}_m\rangle\odot\bar{u}_n).
$$

The following lemma sums up the basic facts about intuitionistic
unravellings:
\begin{lemma}\label{unravellinglemma}
Let $(\mathcal{M}, w)$ be a pointed $\Theta$-model. Then:
\begin{enumerate}
\item $\mathcal{M}^{un(w)}$ is a $\Theta$-model;

\item Define
$$
B := \{\langle (w_n; \bar{a}_m), (\bar{w}_n;
\bar{a}_m\odot\bar{w}_n)\rangle, \langle (\bar{w}_n;
\bar{a}_m\odot\bar{w}_n), (w_n; \bar{a}_m)\rangle \mid \bar{w}_n
\in W^{un(w)},\,m\geq 0,\,\bar{a}_m \in A^m_{w_n}\}.
$$
Then $B$ is an $\mathsf{IL}$-asimulation both from $(\mathcal{M},
w_n, \bar{a}_m)$ to $(\mathcal{M}^{un(w)}, \bar{w}_n,
\bar{a}_m\odot\bar{w}_n)$ and from $(\mathcal{M}^{un(w)},
\bar{w}_n, \bar{a}_m\odot\bar{w}_n)$ to $(\mathcal{M}, w_n,
\bar{a}_m)$ for every $w_n \in W$ such that $w_n \succ w$ and
every tuple $\bar{a}_m \in A^m_{\bar{w}_n}$; 

\item $Tp_{\mathsf{IL}}(\mathcal{M}, v_k, \bar{a}_m) =
Tp_{\mathsf{IL}}(\mathcal{M}^{un(w)}, \bar{v}_k, \bar{a}_m\odot\bar{v}_k)$
for any $\bar{v}_k \in W^{un(w)}$, any $m\geq 0$, and any
$\bar{a}_m \in A^m_{v_k}$;
\item $Tp_{\mathsf{IL}}(\mathcal{M}^{un(w)}, \bar{u}_n,
\bar{a}_m\odot\bar{u}_n) = Tp_{\mathsf{IL}}(\mathcal{M}^{un(w)},
\bar{v}_k, \bar{a}_m\odot\bar{v}_k)$ for any $\bar{u}_n, \bar{v}_k
\in W^{un(w)}$ whenever $u_n = v_k$, $m\geq 0$, and $\bar{a}_m \in
A^m_{v_k}$.
\end{enumerate}
\end{lemma}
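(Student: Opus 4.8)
The plan is to handle the four parts in sequence; Part~2 carries essentially all the work, while Parts~1, 3 and 4 are short. For Part~1 one checks the clauses defining a $\Theta$-model. $W^{un(w)}$ is non-empty since $(w)\in W^{un(w)}$. The one-step extension relation strictly increases the length of a chain, so its reflexive--transitive closure $\prec^{un(w)}$ is just the ``initial segment'' relation restricted to $W^{un(w)}$: reflexivity and transitivity are built in, and antisymmetry holds because $s\prec^{un(w)}t$ with $s\ne t$ forces $s$ to be a strictly shorter, proper initial segment of $t$. Disjointness of the node domains is immediate from the tag: $\bar u_n\ne\bar v_m$ gives $A^{un(w)}_{\bar u_n}\cap A^{un(w)}_{\bar v_m}=\emptyset$ because the second coordinate already separates elements (this is exactly why the book-keeping tag is attached). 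That each $\mathbb{H}^{un(w)}_{\bar u_k\bar u_n}$ is a homomorphism, that $\mathbb{H}^{un(w)}_{\bar u_n\bar u_n}=id$, and that the composition law holds all transfer from the corresponding properties of $\mathcal{M}$ through the identities $I^{un(w)}_{\bar u_n}(P)(\bar a_m\odot\bar u_n)\Leftrightarrow I_{u_n}(P)(\bar a_m)$, $I^{un(w)}_{\bar u_n}(c)=(I_{u_n}(c);\bar u_n)$ and $\mathbb{H}^{un(w)}_{\bar u_k\bar u_n}\langle\bar a_m\odot\bar u_k\rangle=\mathbb{H}_{u_ku_n}\langle\bar a_m\rangle\odot\bar u_n$.

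For Part~2 I would verify the six clauses of Definition~\ref{D:asimulation} for the relation $B$ displayed in the statement. Since $B$ contains both the pairs $\langle(w_n;\bar a_m),(\bar w_n;\bar a_m\odot\bar w_n)\rangle$ and their reverses, a single verification covers both asimulation claims, the two directions differing only in the instance of \eqref{E:c11} required, both of which are pairs defining $B$. Clause \eqref{E:c22} is immediate. For \eqref{E:c33}: by Lemma~\ref{L:satisfaction}.1, the truth at $w_n$ (resp.\ at $\bar w_n$) of an atomic $(\Theta\cup\{\bar c_l\})$-formula in the appropriate constant extension of the generated submodel reduces to membership of a tuple of denotations in some $I_{w_n}(P)$ (resp.\ $I^{un(w)}_{\bar w_n}(P)$), and the construction makes these two conditions equivalent; so \eqref{E:c33} holds, in fact as a biconditional. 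The remaining clauses \eqref{E:c44}, \eqref{E:c55}, \eqref{E:c66} are each treated by a case split on whether the given $B$-related pair lists its $\mathcal{M}$-component first or its $\mathcal{M}^{un(w)}$-component first, and in every case the witness is forced by the shape of the unravelling: a $\prec$-successor $t$ of a world $w_n$ is matched by the one-step chain extension $(\bar w_n)^\frown(t)$; a $\prec^{un(w)}$-successor of $\bar w_n$ is a chain extension whose last world $u$ has $w_n\prec u$, and $\mathbb{H}^{un(w)}$ along it acts as $\mathbb{H}_{w_nu}$ re-tagged; and the objects available at $\bar w_n$ are precisely the $(a;\bar w_n)$ with $a\in A_{w_n}$, so the object clauses are handled through the bijection $a\leftrightarrow(a;\bar w_n)$. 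One then checks that each resulting pair again has the defining shape of $B$, using $(\bar a_m\odot\bar w_n)^\frown(\beta;\bar w_n)=((\bar a_m)^\frown\beta)\odot\bar w_n$ together with the $\odot$-identity for $\mathbb{H}^{un(w)}$ from Part~1. The only real obstacle is the book-keeping in \eqref{E:c44} and \eqref{E:c66} --- keeping the concatenations and the $\odot$-notation aligned across the two cases --- but nothing there is conceptually hard.

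Parts~3 and 4 then follow quickly. For Part~3, fix a tuple $\bar c_m$ of pairwise distinct fresh constants (kept implicit in the statement's notation). Since $\bar v_k$ is a chain starting at $w$, we have $w\prec v_k$ (using reflexivity when $k=1$), so Part~2 supplies $\mathsf{IL}$-asimulations in \emph{both} directions between $(\mathcal{M},v_k,\bar a_m)$ and $(\mathcal{M}^{un(w)},\bar v_k,\bar a_m\odot\bar v_k)$; by the preservation of $\mathsf{IL}$-formulas under $\mathsf{IL}$-asimulations recorded above, this yields $Tp^+_{\mathsf{IL}}(\mathcal{M},v_k,\bar c_m/\bar a_m)\subseteq Tp^+_{\mathsf{IL}}(\mathcal{M}^{un(w)},\bar v_k,\bar c_m/\bar a_m\odot\bar v_k)$ and the reverse inclusion, hence equality of the positive types; since a complete $\mathsf{IL}$-type is determined by its positive part (the negative part being its complement in $IL_\emptyset(\Theta\cup\{\bar c_m\})$), the full complete types coincide, which is Part~3. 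Part~4 is then immediate: if $u_n=v_k$ and $\bar a_m\in A^m_{v_k}$, Part~3 gives that both $Tp_{\mathsf{IL}}(\mathcal{M}^{un(w)},\bar u_n,\bar a_m\odot\bar u_n)$ and $Tp_{\mathsf{IL}}(\mathcal{M}^{un(w)},\bar v_k,\bar a_m\odot\bar v_k)$ equal $Tp_{\mathsf{IL}}(\mathcal{M},u_n,\bar a_m)=Tp_{\mathsf{IL}}(\mathcal{M},v_k,\bar a_m)$.
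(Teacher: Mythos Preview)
Your proposal is correct and follows essentially the same approach as the paper: verify the model axioms for Part~1, check the asimulation clauses directly for Part~2, and derive Parts~3 and~4 from Part~2 via preservation of $\mathsf{IL}$-formulas under asimulations. In fact you supply considerably more detail than the paper does---the paper dispatches Part~2 with ``follows by a straightforward check of the conditions of Definition~\ref{D:asimulation}'' and Parts~3 and~4 with a single sentence---so your write-up is if anything more thorough.
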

\begin{proof} (Part 1) By definition, $\prec^{un(w)}$ is reflexive and transitive. Moreover,
we note that for arbitrary $\bar{w}_k, \bar{v}_n \in W^{un(w)}$ we
have
$$
\bar{w}_k\mathrel{\prec^{un(w)}}\bar{v}_n \Leftrightarrow k \leq n
\wedge \bar{w}_k = \bar{v}_k.
$$
Therefore, to show antisymmetry, assume that for a given
$\bar{w}_k, \bar{v}_n \in W^{un(w)}$ we have

$$
\bar{w}_k\mathrel{\prec^{un(w)}}\bar{v}_n \wedge
\bar{v}_n\mathrel{\prec^{un(w)}}\bar{w}_k.
$$
By the above biconditional it immediately follows that $k \leq n
\wedge n \leq k$ so that we have $k = n$ and, further, that
$\bar{w}_k = \bar{v}_k$. Therefore, we get $\bar{w}_k = \bar{v}_n$
and thus $\prec^{un(w)}$ is shown to be antisymmetric.

Next, it is obvious that $A^{un(w)}_{\bar{u}_n} \cap
A^{un(w)}_{\bar{v}_k} = \emptyset$ for all $\bar{v}_k, \bar{u}_n
\in W^{un(w)}$ such that $\bar{v}_k \neq \bar{u}_n$. It remains to
show that $\mathbb{H}^{un(w)}$ is defined correctly.

If $\bar{u}_n, \bar{u}_k \in W^{un(w)}$ are such that
$\bar{u}_k\mathrel{\prec^{un(w)}}\bar{u}_n$, then we must have $k
\leq n$ and $u_k \prec u_n$ and so $\mathbb{H}_{u_ku_n}$ must be
defined. Now, if $\bar{a}_m \in A^m_{u_k}$, and $P \in \Theta_m$,
then we have:
\begin{align*}
I^{un(w)}_{\bar{u}_k}(P)(\bar{a}_m\odot\bar{u}_k) &\Leftrightarrow
I_{u_k}(P)(\bar{a}_m)\\
 &\Rightarrow
I_{u_n}(P)(\mathbb{H}_{u_ku_n}\langle\bar{a}_m\rangle)\\
 &\Leftrightarrow
I^{un(w)}_{\bar{u}_n}(P)(\mathbb{H}_{u_ku_n}\langle\bar{a}_m\rangle\odot\bar{u}_n)\\
&\Leftrightarrow
I^{un(w)}_{\bar{u}_n}(P)(\mathbb{H}^{un(w)}_{\bar{u}_k\bar{u}_n}\langle\bar{a}_m\odot\bar{u}_k\rangle).
\end{align*}
Similarly, if $c \in Const_\Theta$, we have that:
\begin{align*}
	\mathbb{H}^{un(w)}_{\bar{u}_k\bar{u}_n}(I^{un(w)}_{\bar{u}_k}(c)) &= \mathbb{H}^{un(w)}_{\bar{u}_k\bar{u}_n}((I_{u_k}(c); \bar{u}_k))\\ 
	&= (\mathbb{H}_{u_ku_n}(I_{u_k}(c)); \bar{u}_n)\\
	&= (I_{u_n}(c); \bar{u}_n) = I^{un(w)}_{\bar{u}_n}(c).
\end{align*}

Moreover, let  $\bar{u}_n, \bar{u}_k$, and $\bar{a}_m$ be as
above, let $l \geq n$, and let $\bar{u}_l \in W^{un(w)}$ be such
that $\bar{u}_n\mathrel{\prec^{un(w)}}\bar{u}_l$. Then we have,
first:
$$
\mathbb{H}^{un(w)}_{\bar{u}_k\bar{u}_k}((a_1; \bar{u}_k)) =
(\mathbb{H}_{u_ku_k}(a_1); \bar{u}_k) = (a_1; \bar{u}_k),
$$
and, second:
\begin{align*}
\mathbb{H}^{un(w)}_{\bar{u}_k\bar{u}_l}((a_1; \bar{u}_k)) &=
(\mathbb{H}_{u_ku_l}(a_1), \bar{u}_l)\\
&= (\mathbb{H}_{u_nu_l}(\mathbb{H}_{u_ku_n}(a_1)); \bar{u}_l)\\
&=
\mathbb{H}^{un(w)}_{\bar{u}_n\bar{u}_l}((\mathbb{H}_{u_ku_n}(a_1);
\bar{u}_n))\\
&=
\mathbb{H}^{un(w)}_{\bar{u}_n\bar{u}_l}(\mathbb{H}^{un(w)}_{\bar{u}_k\bar{u}_n}((a_1,
\bar{u}_k)))
\end{align*}
so that we get that $\mathbb{H}^{un(w)}_{\bar{u}_k\bar{u}_k} =
id_{A^{un(w)}_{\bar{u}_k}}$ and that
$\mathbb{H}^{un(w)}_{\bar{u}_k\bar{u}_l} =
\mathbb{H}^{un(w)}_{\bar{u}_n\bar{u}_l}\circ\mathbb{H}^{un(w)}_{\bar{u}_k\bar{u}_n}$.

Part 2 follows by a straightforward check of the conditions of
Definition \ref{D:asimulation}, and Parts 3 and 4 then follow from
Part 2 by the preservation of intuitionistic formulas under
asimulations.
\end{proof}
The most important result of an intuitionistic unravelling is, of course, the accessibility relation of the unravelled model which displays a number of useful properties. We will therefore call an arbitrary $(\mathcal{M}, w) \in Pmod_\Theta$ an unravelled model iff there is an $(\mathcal{N}, v) \in Pmod_\Theta$ and a bijection $h: W \to U^{un(v)}$ satisfying the condition \eqref{E:c1} of Definition \ref{D:isomorphism} such that $h(w) =v$; or, in other words, if the underlying frame $(W, \prec)$ of $(\mathcal{M}, w)$ is isomorphic to the underlying frame of $(\mathcal{N}^{un(v)}, v)$. It is clear that if $\mathcal{M}$ is an unravelled model, we may always assume that its underlying frame is just a copy of the underlying frame of the respective unravelled model. We will assume, therefore, for the sake of simplicity, that if $(\mathcal{M}, w) \in Pmod_\Theta$ is an unravelled model, then $\mathcal{M}$ is given in the form $\langle W^{un(w)}, \prec^{un(w)}, \mathfrak{A}, \mathbb{H}\rangle$.

\section{The family of standard intuitionistic logics}\label{S:Standard}

When equality symbol is allowed in intuitionistic first-order
logic, it can be interpreted either as \emph{intensional} identity or as \emph{extensional} equality.\footnote{These different kinds of equality can already be traced back to \cite{hey}.} In the context of a more common Kripke semantics of increasing domains this distinction corresponds to either adding to the first-order language a `true' equality, or a binary predicate that behaves as a congruence on the elements of the domain. 

However, in the context of the version of Kripke semantics employed in the present paper, the same distinction boils down to the question of whether the injectivity of canonical homomorphisms is enforced or omitted. This has the advantage of reading both types of equality as variants of `true' equality, which is important, given that the extensional equality is often viewed as the more natural intuitionistic equality notion.

Proceeding now to the more precise definitions, we stipulate that, for a given signature $\Sigma$, the language $IL^\equiv(\Sigma)$ is obtained by adding to the set of
atomic $IL$-formulas all formulas of the form $t_1\equiv t_2$ for all $\Sigma$-terms $t_1$ and $t_2$, 
and then closing under the
applications of connectives and quantifiers.

We then also need to define $\models_{\mathsf{IL}^\equiv}$ by
adding the following clause in the inductive definition of
$\models_{\mathsf{IL}}$:
$$
\mathcal{M}, w \models_{\mathsf{IL}^\equiv} t_i \equiv
t_j[\bar{a}_n] \Leftrightarrow \alpha\langle t_i\rangle = \alpha\langle t_j\rangle,
$$
where $\alpha$ is defined as in Lemma \ref{L:satisfaction}.1.

We now define four further logics based on the language $IL^\equiv$ and the relation $\models_{\mathsf{IL}^\equiv}$ applied to the general set of Kripke models, and the classes $Su$, $In$, and $Bi$, respectively, which we will denote by $\mathsf{IL}^\equiv$, $\mathsf{CD}^\equiv$, $\mathsf{In}^\equiv$, and $\mathsf{Bi}^\equiv$, respectively. We will interpret  $\mathsf{IL}^\equiv$ (resp. $\mathsf{CD}^\equiv$) as the basic (resp. constant-domain) first-order intuitionistic logic of extensional equality, and we will interpret $\mathsf{In}^\equiv$ (resp. $\mathsf{Bi}^\equiv$) as the basic (resp. constant-domain) first-order intuitionistic logic of intensional equality. Thus both extensional and intensional equality versions of $\mathsf{IL}$, $\mathsf{CD}$ can be viewed as extensions of these logics, as long as all we care about when handling logics are their sets of theorems; semantically, however, extensional and intensional equality must be viewed as super-imposed upon different (even though equivalent, in a sense) semantics for their underlying logics.

The two versions of equality also clearly lead to differences in the theorem sets in that we have $\not\models_{\mathsf{CD}^\equiv} (\forall x,y)(x \equiv y
\vee \neg(x \equiv y))$ (and thus also $\not\models_{\mathsf{IL}^\equiv} (\forall x,y)(x \equiv y
\vee \neg(x \equiv y))$) on the one hand, but $\models_{\mathsf{In}^\equiv} (\forall x,y)(x \equiv y
\vee \neg(x \equiv y))$ (and thus also $\models_{\mathsf{Bi}^\equiv} (\forall x,y)(x \equiv y
\vee \neg(x \equiv y))$) on the other hand.

All in all, these combinations of equality notions give us six
different logics. The set $\{ \mathsf{IL}, \mathsf{IL}^\equiv,
\mathsf{In}^\equiv, \mathsf{CD},
\mathsf{CD}^\equiv, \mathsf{Bi}^\equiv\}$ will be called in what follows the set of
standard intuitionistic logics, $StIL$, for short.

Although the extensional equality may sometimes be seen as `non-rigid',
this circumstance does not prevent it from displaying all the
benchmark properties that expected from equality by analogy with
classical logic. Namely, the following lemma holds:
\begin{lemma}\label{L:approx-properties}
For every signature $\Sigma$, every $(\mathcal{M},w) \in
Pmod_\Sigma$ and any $x, y, z, \bar{x}_n,\bar{y}_n \in Var$, it is
true that:
\begin{align*}
    \mathcal{M}, w \models_{\mathsf{IL}^\equiv} \{\forall x&(x \equiv x),
    (\forall x, y)(x \equiv y \to y \equiv x), (\forall x,y,z)(x
    \equiv y \wedge y \equiv z \to x \equiv z) \} \cup\\&\cup \{
    (\forall\bar{x}_n,\bar{y}_n)(\bigwedge^n_{i = 1}(x_i \equiv
    y_i) \to (P(\bar{x}_n) \leftrightarrow P(\bar{y}_n))) \mid n \in \omega,\,P
    \in \Sigma_n\}
\end{align*}
\end{lemma}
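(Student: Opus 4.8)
The plan is to lean on the single defining feature of $\models_{\mathsf{IL}^\equiv}$: by the clause for $\equiv$ in the definition of $\models_{\mathsf{IL}^\equiv}$ (read off via the assignment $\alpha$ of Lemma \ref{L:satisfaction}.1), an atomic equality $t_i\equiv t_j$ is true at a node precisely when $t_i$ and $t_j$ denote literally the same object there, so that its truth reduces to the equation $\alpha\langle t_i\rangle=\alpha\langle t_j\rangle$ between domain elements. Each of the four schemes is then settled by mechanically unfolding the semantic clauses for $\forall$ (in the simplified form of Corollary \ref{C:satisfaction}), for $\to$, and for $\wedge$, after which the goal collapses to a triviality about the set-theoretic relation $=$.

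For $\forall x(x\equiv x)$: unfolding the quantifier via Corollary \ref{C:satisfaction}, we must check, for each $v\succ w$ and each $a\in A_v$, that $([\mathcal{M},v],c/a)$ satisfies $c\equiv c$ at $v$; by the clause for $\equiv$ this is just $a=a$. For symmetry, unfolding the two universal quantifiers brings us to some node $v$ accessible from $w$ and objects $a,b\in A_v$ where we must verify $x\equiv y\to y\equiv x$; unfolding the conditional we take an arbitrary $u\succ v$, assume the premise holds there, i.e.\ $\mathbb{H}_{vu}(a)=\mathbb{H}_{vu}(b)$, and read off $\mathbb{H}_{vu}(b)=\mathbb{H}_{vu}(a)$, which is the conclusion at $u$. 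Transitivity is the same argument with three objects and one appeal to transitivity of $=$ at the chosen successor node; the inner $\wedge$ is handled by noting that both conjuncts of the premise hold at $u$.

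The congruence scheme carries the only new ingredient, and it too is elementary. Unfolding the $2n$ universal quantifiers we reach a node $v\succ w$ and tuples $\bar{a}_n,\bar{b}_n\in A_v^n$, and must establish $\bigwedge_{i=1}^n(x_i\equiv y_i)\to(P(\bar{x}_n)\leftrightarrow P(\bar{y}_n))$ at $v$. Passing to an arbitrary $u\succ v$ at which the premise holds, we get $\mathbb{H}_{vu}(a_i)=\mathbb{H}_{vu}(b_i)$ for every $i$, so $\mathbb{H}_{vu}\langle\bar{a}_n\rangle$ and $\mathbb{H}_{vu}\langle\bar{b}_n\rangle$ are the very same tuple. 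It remains to verify the biconditional $P(\bar{x}_n)\leftrightarrow P(\bar{y}_n)$ at $u$, i.e.\ two conditionals; unfolding either to an arbitrary $u'\succ u$, the tuples in play are $\mathbb{H}_{uu'}\langle\mathbb{H}_{vu}\langle\bar{a}_n\rangle\rangle$ and $\mathbb{H}_{uu'}\langle\mathbb{H}_{vu}\langle\bar{b}_n\rangle\rangle$, which again coincide, since $\mathbb{H}_{uu'}$ is a function applied to equal arguments. Hence $P$ of the one tuple holds at $u'$ exactly when $P$ of the other does, and both conditionals follow.

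I expect no genuine mathematical obstacle here: the entire content is that $\equiv$ is interpreted as real equality and that equal objects are mapped to equal objects by the canonical homomorphisms (their functionality being all one needs beyond the $\equiv$-clause). The only care required is bookkeeping — keeping track of the nested quantifier and conditional unfoldings and the successive passages to accessible nodes — which I would organize exactly as sketched above, treating the four schemes in turn.
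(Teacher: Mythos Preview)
Your proposal is correct; the paper states this lemma without proof, treating it as an immediate consequence of the semantic clause for $\equiv$, and your direct unfolding of the clauses for $\forall$, $\to$, $\wedge$, and $\equiv$ is precisely the routine verification the paper leaves implicit.
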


We can now extend the semantic machinery of the previous
subsection to these new logics, in particular, the notions of a
type, a theory, an asimulation and an elementary embedding. All of the
definitions of the previous sections can be easily adapted to any
of the logics in $StIL$ and for every proposition stated above
about $\mathsf{IL}$ an obvious analogue can be proven for any
standard intuitionistic logic. Note that, e.g., $\mathsf{CD}$-asimulation is just an $\mathsf{IL}$-asimulation between surjective models, and that, for the case of
$\mathsf{IL}^\equiv$ we have, among other things, that whenever $A$
is an $\mathsf{IL}^\equiv$-asimulation, and
$(w;\bar{\alpha}_l)A(v;\bar{\beta}_l)$, then $\bar{\alpha}_l
\mapsto \bar{\beta}_l$ is a bijection.

Finally, it is clear that the model theory of $\mathsf{IL}$,
developed in the previous sections, carries over to any other
logic in $StIL$, with minimal changes in formulations and proofs.
As an example, we formulate here the respective generalizations of
Lemmas \ref{unravellinglemma} and \ref{L:asimulations}:
\begin{lemma}\label{unravellinglemma-gen}
Let $\mathcal{L} \in StIL$ and let $(\mathcal{M}, w) \in
Pmod_\Theta(\mathcal{L})$. Then:
\begin{enumerate}
\item $\mathcal{M}^{un(w)} \in Pmod_\Theta(\mathcal{L})$;

\item Define
$$
B := \{\langle (w_n; \bar{a}_m), (\bar{w}_n;
\bar{a}_m\odot\bar{w}_n)\rangle, \langle (\bar{w}_n;
\bar{a}_m\odot\bar{w}_n), (w_n; \bar{a}_m)\rangle \mid \bar{w}_n
\in W^{un(w)},\,m\geq 0,\,\bar{a}_m \in A^m_{w_n}\}.
$$
Then $B$ is an $\mathcal{L}$-asimulation both from $(\mathcal{M},
w_n, \bar{a}_m)$ to $(\mathcal{M}^{un(w)}, \bar{w}_n,
\bar{a}_m\odot\bar{w}_n)$ and from $(\mathcal{M}^{un(w)},
\bar{w}_n, \bar{a}_m\odot\bar{w}_n)$ to $(\mathcal{M}, w_n,
\bar{a}_m)$ for every $w_n \in W$ such that $w_n \succ w$ and
every tuple $\bar{a}_m \in A^m_{\bar{w}_n}$;

\item $Tp_{\mathcal{L}}(\mathcal{M}, v_k, \bar{a}_m) =
Tp_{\mathcal{L}}(\mathcal{M}^{un(w)}, \bar{v}_k, \bar{a}_m\odot\bar{v}_k)$
for any $\bar{v}_k \in W^{un(w)}$, any $m\geq 0$, and any
$\bar{a}_m \in A^m_{v_k}$; 
\item $Tp_{\mathcal{L}}(\mathcal{M}^{un(w)}, \bar{u}_n,
\bar{a}_m\odot\bar{u}_n) = Tp_{\mathcal{L}}(\mathcal{M}^{un(w)},
\bar{v}_k, \bar{a}_m\odot\bar{v}_k)$ 

 for any $\bar{u}_n, \bar{v}_k
\in W^{un(w)}$ whenever $u_n = v_k$, $m\geq 0$, and $\bar{a}_m \in
A^m_{v_k}$. 
\end{enumerate}
\end{lemma}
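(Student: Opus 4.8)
The plan is to obtain Lemma~\ref{unravellinglemma-gen} as a routine transfer of Lemma~\ref{unravellinglemma}; there is no genuinely hard step, and the only two points requiring attention for a logic $\mathcal{L}\in StIL$ other than $\mathsf{IL}$ are (a) that the unravelling construction stays inside the class of models intended for $\mathcal{L}$, and (b) that the relation $B$ still meets the extra clause required of an $\mathcal{L}$-asimulation when $\mathcal{L}$ carries equality.

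\emph{Part 1.} I would start from Lemma~\ref{unravellinglemma}.1, which already gives $\mathcal{M}^{un(w)}\in Mod_\Theta$, and then observe that by construction every canonical homomorphism of $\mathcal{M}^{un(w)}$ has the form $\mathbb{H}^{un(w)}_{\bar u_k\bar u_n}(a;\bar u_k)=(\mathbb{H}_{u_ku_n}(a);\bar u_n)$, i.e.\ it is a copy of $\mathbb{H}_{u_ku_n}$ transported along the bijection $a\mapsto(a;\bar u_n)$ from $A_{u_n}$ onto $A^{un(w)}_{\bar u_n}$. Hence $\mathbb{H}^{un(w)}_{\bar u_k\bar u_n}$ is injective (resp.\ surjective, resp.\ bijective) exactly when $\mathbb{H}_{u_ku_n}$ is; and since $\bar u_k\mathrel{\prec^{un(w)}}\bar u_n$ entails $u_k\prec u_n$, it follows that $\mathcal{M}^{un(w)}\in Su$ (resp.\ $In$, $Bi$) whenever $\mathcal{M}$ does. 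As the equality logics are interpreted over the same Kripke classes as their equality-free counterparts, this shows $\mathcal{M}^{un(w)}$ lies in the intended model class of $\mathcal{L}$ in all six cases.

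\emph{Part 2.} Lemma~\ref{unravellinglemma}.2 already certifies that $B$ verifies every clause of Definition~\ref{D:asimulation}; together with Part~1 this makes $B$ an asimulation between models in the class intended for $\mathcal{L}$, which for $\mathsf{CD}$, $\mathsf{In}^\equiv$, $\mathsf{CD}^\equiv$, $\mathsf{Bi}^\equiv$ is exactly what being an $\mathcal{L}$-asimulation amounts to. When $\mathcal{L}$ contains $\equiv$ there is one further thing to check, namely that $B$ respects equality atoms and induces a bijection on the displayed tuples; but each pair in $B$ is $((w_n;\bar a_m),(\bar w_n;\bar a_m\odot\bar w_n))$ or its converse, and $a_i\mapsto(a_i;\bar w_n)$ is a bijection, so $c_i\equiv c_j$ holds on one side of such a pair iff it holds on the other, and $\bar a_m\mapsto\bar a_m\odot\bar w_n$ is a bijection, as required (cf.\ the remark on $\mathsf{IL}^\equiv$-asimulations preceding the lemma).

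\emph{Parts 3 and 4.} These run exactly as in Lemma~\ref{unravellinglemma}. Since, by Part~2, $B$ is an $\mathcal{L}$-asimulation from $(\mathcal{M},v_k,\bar a_m)$ to $(\mathcal{M}^{un(w)},\bar v_k,\bar a_m\odot\bar v_k)$ \emph{and} in the reverse direction, the $\mathcal{L}$-version of preservation of formulas under asimulations yields both $Tp^+_{\mathcal{L}}(\mathcal{M},v_k,\bar a_m)\subseteq Tp^+_{\mathcal{L}}(\mathcal{M}^{un(w)},\bar v_k,\bar a_m\odot\bar v_k)$ and its converse; as complete $\mathcal{L}$-types partition the relevant sentence set, the two complete types coincide, which is Part~3. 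Part~4 then follows by applying Part~3 to $\bar u_n$ and to $\bar v_k$ and using $u_n=v_k$ (so that $A_{u_n}=A_{v_k}$ and both sides reduce to $Tp_{\mathcal{L}}(\mathcal{M},u_n,\bar a_m)$). As indicated, the bookkeeping of Part~1 and the equality atom in Part~2 are the only places needing care; everything else is the ``minimal changes'' transfer announced just before the lemma.
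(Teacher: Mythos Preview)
Your proposal is correct and matches the paper's approach exactly: the paper gives no separate proof of Lemma~\ref{unravellinglemma-gen}, stating only that the model theory of $\mathsf{IL}$ ``carries over to any other logic in $StIL$, with minimal changes in formulations and proofs,'' and you have correctly identified and verified precisely those minimal changes (closure of $Su$, $In$, $Bi$ under unravelling for Part~1, and preservation of equality atoms for Part~2).
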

\begin{lemma}\label{L:asimulations-gen}
Let $\mathcal{L} \in StIL$, let $(\mathcal{M}_1, w_1)$,
$(\mathcal{M}_2, w_2)$ be pointed $\Theta$-models, let $\bar{a}_n
\in (A_1)^n_{w_1}$, and let $\bar{b}_n \in (A_2)^n_{w_2}$, and let
$\bar{c}_{n}$ be a tuple of pairwise distinct individual constants
outside $\Theta$. If we have

\[ Tp^+_{\mathcal{L}}(\mathcal{M}_1, w_1,
\bar{c}_{n}/\bar{a}_n)\subseteq Tp^+_{\mathcal{L}}(\mathcal{M}_2,
w_2, \bar{c}_{n}/\bar{b}_n)
\]
for every tuple $\bar{c}_{n}$ of pairwise distinct individual
constants outside $\Theta$, and both $\mathcal{M}_1$ and
$\mathcal{M}_2$ are $\mathcal{L}$-saturated, then the relation $A$
such that for all $\{ i,j \} = \{ 1,2 \}$, all $u \in W_i$, $s \in
W_j$, $\bar{\alpha}_k \in (A_i)^k_{u}$, and all $\bar{\beta}_k \in
(A_j)^k_{s}$ we have
$$
(u; \bar{\alpha}_k) \mathrel{A}(s; \bar{\beta}_k) \Leftrightarrow
Tp^+_{\mathcal{L}}(\mathcal{M}_i, u,
\bar{c}_{k}/\bar{\alpha}_k)\subseteq
Tp^+_{\mathcal{L}}(\mathcal{M}_j, s, \bar{c}_{k}/\bar{\beta}_k)
$$
for any given set $\{c_i\mid i < \omega\}$ of pairwise distinct
individual constants outside $\Theta$ is an
$\mathcal{L}$-asimulation from $(\mathcal{M}_1, w_1, \bar{a}_n)$
to $(\mathcal{M}_2, w_2, \bar{b}_n)$.
\end{lemma}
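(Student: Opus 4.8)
The plan is to re-run the proof of Lemma~\ref{L:asimulations} almost verbatim, reading $\mathcal{L}$ for $\mathsf{IL}$ everywhere and invoking, at each step, the $\mathcal{L}$-analogue of the auxiliary fact that was used in the $\mathsf{IL}$-case (each such analogue being available, since, as noted above, the whole model theory of Sections~\ref{S:Prel}--\ref{S:Standard} transfers to every $\mathcal{L}\in StIL$ with only cosmetic changes). First I would observe that the relation $A$ built from positive-type inclusion satisfies \eqref{E:c22} and \eqref{E:c11} trivially, the latter because of the hypothesis $Tp^+_{\mathcal{L}}(\mathcal{M}_1,w_1,\bar{c}_n/\bar{a}_n)\subseteq Tp^+_{\mathcal{L}}(\mathcal{M}_2,w_2,\bar{c}_n/\bar{b}_n)$, and satisfies \eqref{E:c33} because any atomic $\varphi$ holding at a generated model with parameters $\bar{\alpha}_k$ lies, by definition, in the corresponding positive $\mathcal{L}$-type; for the four logics with equality this atomic clause applied to formulas $c_p\equiv c_q$ is precisely what forces $\bar{\alpha}_k\mapsto\bar{\beta}_k$ to be the bijection required by the equality-adapted Definition~\ref{D:asimulation}, exactly as recorded in the discussion preceding the lemma.

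Next I would copy, mutatis mutandis, the three computations verifying \eqref{E:c44}, \eqref{E:c55}, \eqref{E:c66}. For \eqref{E:c44}: given $(u;\bar{\alpha}_k)A(s;\bar{\beta}_k)$ and $s\prec_j t$, take an arbitrary $(\Gamma,\Delta)\Subset Tp_{\mathcal{L}}(\mathcal{M}_j,t,\bar{c}_k/(\mathbb{H}_j)_{st}\langle\bar{\beta}_k\rangle)$, push it down to $s$ by the $\mathcal{L}$-versions of Lemmas~\ref{L:cutoff-constants}.2 and~\ref{L:satisfaction}.2, rephrase ``$(\Gamma,\Delta)$ holds at $t$ with $s\prec_j t$'' as ``$\bigwedge\Gamma\to\bigvee\Delta$ fails at $s$'', transport that failure to $(u;\bar{\alpha}_k)$ using the defining biconditional of $A$, and conclude by the $\mathcal{L}$-analogue of Corollary~\ref{C:types-formulas}.1 that $Tp_{\mathcal{L}}(\mathcal{M}_j,t,\ldots)$ is a $\bar{c}_k$-successor type of $(\mathcal{M}_i,u,\bar{\alpha}_k)$; $\mathcal{L}$-saturation of $\mathcal{M}_i$ then produces a $w\succ_i u$ whose complete $\mathcal{L}$-type contains, hence --- since complete $\mathcal{L}$-types partition the relevant set of sentences --- equals, that of $t$, giving the two-sided link demanded by \eqref{E:c44}. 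Conditions \eqref{E:c55} and \eqref{E:c66} are treated identically, via the $\mathcal{L}$-analogues of Corollary~\ref{C:satisfaction-cutoff-constants} and of Corollary~\ref{C:types-formulas}.2--3 plus $\mathcal{L}$-saturation.

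I do not expect a genuinely new obstacle; the two points deserving a word of care are the following. For $\mathsf{CD}$, $\mathsf{In}$, $\mathsf{Bi}$ and their $\equiv$-versions, an $\mathcal{L}$-asimulation is by definition an $\mathsf{IL}$- (resp. $\mathsf{IL}^\equiv$-) asimulation between models of the class $Su$, $In$, or $Bi$, so one must only check that all witnesses produced above lie in $\mathcal{M}_i$, which is assumed to belong to that class --- no extra closure requirement is placed on $A$ itself. And in the equality cases the finite fragments $(\Gamma,\Delta)$ and $\Xi$ now range over $IL^\equiv_\emptyset(\Theta\cup\{\bar{c}_k\})$ rather than $IL_\emptyset(\Theta\cup\{\bar{c}_k\})$, so the single point to double-check is that complete $\mathcal{L}$-types are still partitions of this larger set of sentences; granting that, the ``inclusion upgrades to equality'' step in \eqref{E:c44} --- the one place where the argument is not purely formal --- goes through unchanged, and the remainder of the proof is a routine transcription.
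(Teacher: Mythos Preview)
Your proposal is correct and is exactly what the paper does: the paper gives no separate proof of Lemma~\ref{L:asimulations-gen} at all, but simply states it as the $\mathcal{L}$-analogue of Lemma~\ref{L:asimulations}, having just remarked that ``the model theory of $\mathsf{IL}$, developed in the previous sections, carries over to any other logic in $StIL$, with minimal changes in formulations and proofs.'' Your write-up is in fact more explicit than the paper about the two places where care is needed (the restricted model classes and the equality atoms); the only minor slip is that $\mathsf{In}$ and $\mathsf{Bi}$ without equality are not members of $StIL$ --- the relevant list is $\mathsf{CD}$, $\mathsf{IL}^\equiv$, $\mathsf{CD}^\equiv$, $\mathsf{In}^\equiv$, $\mathsf{Bi}^\equiv$ --- but this does not affect your argument.
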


\section{Abstract intuitionistic logics and their properties}\label{S:Abstract}

\subsection{Abstract intuitionistic logics}
An abstract intuitionistic logic $\mathcal{L}$ is a quadruple
$(Str_\mathcal{L}, L, \models_\mathcal{L}, \boxplus_\mathcal{L})$, where
$Str_\mathcal{L}$ is a function returning, for every signature
$\Theta$, the class of $\mathcal{L}$-admissible pointed
$\Theta$-models $Str_\mathcal{L}(\Theta) \subseteq Pmod_\Theta$.

This class is assumed to satisfy the following closure conditions:
\begin{itemize}
\item (Closure for isomorphisms) For any signature $\Theta$, any
$(\mathcal{M},w), (\mathcal{N},v) \in Pmod_\Theta$, if $(g,h):
(\mathcal{M},w)\cong (\mathcal{N},v)$, and $(\mathcal{M},w)
\in Str_\mathcal{L}(\Theta)$, then $(\mathcal{N},v) \in
Str_\mathcal{L}(\Theta)$.

\item (Closure for renamings) Let signatures $\Theta$ and
$\Sigma$, functions $f$ and $g$, and models $(\mathcal{M},w) \in
Pmod_\Theta$ and $(\mathcal{N},w) \in Pmod_\Sigma$ be given such
that $\Sigma$ is an $(f,g)$-renaming of $\Theta$ and
$(\mathcal{N},w)$ is an $(f,g)$-renaming of $(\mathcal{M},w)$.
Then, whenever $(\mathcal{M},w) \in Str_\mathcal{L}(\Theta)$, we
also have $(\mathcal{N},w) \in Str_\mathcal{L}(\Sigma)$.

\item (Closure for the model component) For any signature
$\Theta$, any $\mathcal{M}\in Mod_\Theta$, and any $w,v \in W$, if
$(\mathcal{M},w) \in Str_\mathcal{L}(\Theta)$, then
$(\mathcal{M},v) \in Str_\mathcal{L}(\Theta)$.

\item (Closure for reducts) For any signatures $\Theta$ and
$\Sigma$, any $(\mathcal{M},w)\in Pmod_\Theta$,
if $\Sigma \subseteq \Theta$ and $(\mathcal{M},w) \in
Str_\mathcal{L}(\Theta)$, then $(\mathcal{M}\upharpoonright\Sigma
,w) \in Str_\mathcal{L}(\Sigma)$. 

\item (Closure for constant extensions) For any signature
$\Theta$, any $(\mathcal{M},w) \in Pmod_\Theta$, any $n < \omega$,
any tuple $\bar{c}_n$ of pairwise distinct constants outside
$\Theta$, and any tuple $\bar{a}_n \in A^n_w$, if $(\mathcal{M},w)
\in Str_\mathcal{L}(\Theta)$, then there is at least one
$\mathcal{N}\in (\mathcal{M},w)\oplus(\bar{c}_n/\bar{a}_n)$ such
that $(\mathcal{N},w) \in Str_\mathcal{L}(\Theta\cup \bar{c}_n)$.

\item (Closure for the unions of countable model chains) For any signature
$\Theta$, and for any chain of models $\mathcal{M}_1 \subseteq,\ldots, \subseteq \mathcal{M}_n\subseteq,\ldots$ such that we have $(\mathcal{M}_i,w) \in
Str_\mathcal{L}(\Theta)$ for all $i > 0$, we also have $(\bigcup_{i > 0}\mathcal{M}_i, w) \in
Str_\mathcal{L}(\Theta)$.
\end{itemize}
The second component, $L$, is then a function returning, for a given signature $\Theta$,
the set $L(\Theta)$ of $\Theta$-sentences in $\mathcal{L}$; next,
$\models_\mathcal{L}$ is a class-relation such that, if
$\alpha\mathrel{\models_\mathcal{L}}\beta$, then there exists a
signature $\Theta$ such that $\alpha \in Str_\mathcal{L}(\Theta)$,
and $\beta\in L(\Theta)$; informally this is to mean that $\beta$
holds in $\alpha$. The relation $\models_\mathcal{L}$ is only assumed to be defined (i.e. to either hold or fail) for the elements of the class 
$\bigcup\{(Str_\mathcal{L}(\Theta), L(\Theta))\mid \Theta\text{ is a signature}\}$ 
and to be undefined otherwise.  

%We say that  $(\mathcal{M}, w), (\mathcal{N}, v)\in Str_\mathcal{L}(\Theta)$ are $\mathcal{L}$-elementarily equivalent, and write $(\mathcal{M}, w) \equiv_\mathcal{L}(\mathcal{N}, v)$ iff, for all $\phi \in L(\Theta)$, it is true that:
%$$
%\mathcal{M}, w\models_\mathcal{L} \phi \Leftrightarrow  \mathcal{N}, v\models_\mathcal{L} \phi.
%$$
%Furthermore, we say that, if $(\mathcal{M}, w), (\mathcal{N}, v)\in Str_\mathcal{L}(\Theta)$ are such that $\mathcal{M}\subseteq \mathcal{N}$, then $\mathcal{M}$ is an $\mathcal{L}$-elementary submodel of $\mathcal{N}$ (write $\mathcal{M}\preccurlyeq_\mathcal{L} \mathcal{N}$) iff we have $(\mathcal{M}, w) \equiv_\mathcal{L}(\mathcal{N}, w)$ for all $w \in W$.

The fourth element in our quadruple, $\boxplus_\mathcal{L}$, is then a function, returning, for every $(\mathcal{M}, w)\in Str_\mathcal{L}(\Theta)$, every tuple $\bar{c}_n$ of pairwise distinct constants outside
$\Theta$, and every tuple $\bar{a}_n \in A^n_w$ a set $\emptyset \neq (\mathcal{M},
w)\boxplus_\mathcal{L}(\bar{c}_n/\bar{a}_n) \subseteq (\mathcal{M},w)\oplus(\bar{c}_n/\bar{a}_n)$ such that:
\begin{itemize}
	\item If $\mathcal{N} \in (\mathcal{M},
	w)\boxplus_\mathcal{L}(\bar{c}_n/\bar{a}_n)$, then, for all $v \in U$, $(\mathcal{N},v) \in Str_\mathcal{L}(\Theta\cup \bar{c}_n)$.
	
	\item If $\mathcal{N} \in (\mathcal{M},
	w)\boxplus_\mathcal{L}(\bar{c}_n/\bar{a}_n)$ and $\mathcal{N}' \in (\mathcal{M},w)\oplus(\bar{c}_n/\bar{a}_n)$ is such that for all
	%$v \in U'$ and all 
	 $\phi \in L(\Theta\cup \bar{c}_n)$ it is true that $\mathcal{N}', w\models_\mathcal{L} \phi \Leftrightarrow  \mathcal{N}, w\models_\mathcal{L} \phi$, then $\mathcal{N}' \in (\mathcal{M},
	w)\boxplus_\mathcal{L}(\bar{c}_n/\bar{a}_n)$. 
\end{itemize}

In order for such a quadruple $\mathcal{L}$ to count as an abstract
intuitionistic logic, we demand that the following conditions are
satisfied:
\begin{itemize}
\item (Occurrence). 
If $\phi \in L(\Theta)$ for some signature
$\Theta$, then there is a $\Theta_\phi \Subset \Theta$ such that
$\phi \in L(\Theta_\phi)$ and for every signature $\Theta'$, we have $\phi \in
L(\Theta')$ iff $\Theta_\phi \subseteq \Theta'$.

\item (Renaming). For all signatures $\Theta$, $\Theta'$, if
functions $f,g$ are such that $\Theta'$ is an $(f,g)$-renaming of
$\Theta$, then there is a bijection $\tau_{(f,g)}:L(\Theta) \to
L(\Theta')$ such that for every $\phi \in L(\Sigma)$ we have
$\Theta_{\tau_{(f,g)}(\phi)} = (f \cup g)(\Theta_\phi)$ and,
whenever  $(\mathcal{M}, w) \in Str_\mathcal{L}(\Theta)$ and
$(\mathcal{N}, w)$ is an $(f,g)$-renaming of $(\mathcal{M}, w)$,
we have:
$$
\mathcal{M}, w\models_\mathcal{L} \phi \Leftrightarrow
\mathcal{N}, w\models_\mathcal{L} \tau_{(f,g)}(\phi).
$$
\item (Isomorphism). If $(\mathcal{M}, w)\in
Str_\mathcal{L}(\Theta)$, $\phi \in L(\Theta)$, and $(g,
h):\mathcal{M}\cong\mathcal{N}$, then:
$$
\mathcal{M}, w\models_\mathcal{L} \phi \Leftrightarrow
\mathcal{N}, h(w) \models_\mathcal{L} \phi.
$$

\item (Expansion). Let $\Theta \subseteq \Theta'$ be signatures,
and let $(\mathcal{M}, w) \in Str_\mathcal{L}(\Theta')$. Then for
any $\phi \in L(\Theta)$, we have:
$$
\mathcal{M}, w\models_\mathcal{L} \phi \Leftrightarrow
\mathcal{M}\upharpoonright\Theta, w\models_\mathcal{L} \phi.
$$

\item (Propositional Closure). For every signature $\Theta$ we
have $\bot \in L(\Theta)$, and, if $\phi, \psi \in L(\Theta)$,
then we also have
\[
\phi \to \psi, \phi \wedge \psi, \phi \vee \psi \in L(\Theta),
\]
 that is to say, $\mathcal{L}$ is closed
under intuitionistic implication, conjunction and disjunction.

\item (Quantifier closure). For every signature $\Theta$, every
constant $c \notin Const_\Theta$, every $(\mathcal{M}, w)\in
Str_\mathcal{L}(\Theta)$, and every $\phi \in L(\Theta\cup
\{c\})$, there exist $\exists c\phi, \forall c\phi \in L(\Theta)$
such that:
$$
\mathcal{M}, w \models_\mathcal{L}\exists c\phi \Leftrightarrow
(\exists a \in A_w)(\exists\mathcal{N}\in (\mathcal{M},
w)\boxplus_\mathcal{L}(c/a))(\mathcal{N}, w \models_\mathcal{L} \phi),
$$
and:
$$
\mathcal{M}, w \models_\mathcal{L}\forall c\phi \Leftrightarrow
(\forall v\mathrel{\succ}w)(\forall a \in
A_v)(\exists\mathcal{N}\in (\mathcal{M},
w)\boxplus_\mathcal{L}(c/a))(\mathcal{N}, v \models_\mathcal{L} \phi).
$$
\end{itemize}

We further define that given a pair of abstract intuitionistic
logics $\mathcal{L}$ and $\mathcal{L}'$, we say that
$\mathcal{L}'$ extends $\mathcal{L}$ and write $\mathcal{L}
\sqsubseteq \mathcal{L}'$ iff all of the following holds:
\begin{itemize}
    \item $Str_\mathcal{L}
= Str_\mathcal{L'}$;
	\item If $(\mathcal{M}, w)\in
	Str_\mathcal{L}(\Theta)$, $\bar{c}_n$ is a tuple of pairwise distinct constants outside
	$\Theta$, and $\bar{a}_n \in A^n_w$, then we have $(\mathcal{M},
	w)\boxplus_{\mathcal{L}'}(\bar{c}_n/\bar{a}_n) \subseteq (\mathcal{M},
	w)\boxplus_\mathcal{L}(\bar{c}_n/\bar{a}_n)$

    \item For
    every $\phi \in L(\Theta_\phi)$ there exists a $\psi \in
    L'(\Theta_\phi)$ such that $\Theta_\phi = \Theta_\psi$, and
    such that for every $(\mathcal{M}, w) \in
    Str_\mathcal{L}(\Theta_\phi)$ we have:
    $$
\mathcal{M}, w\models_\mathcal{L}\phi \Leftrightarrow \mathcal{M},
w\models_{\mathcal{L}'}\psi.
    $$
\end{itemize}
If both $\mathcal{L} \sqsubseteq \mathcal{L}'$ and $\mathcal{L}'
\sqsubseteq \mathcal{L}$ holds, then we say that the logics
$\mathcal{L}$ and $\mathcal{L}'$ are \emph{expressively
equivalent} and write $\mathcal{L} \bowtie \mathcal{L}'$.

It is easy to see that all the systems in $StIL$ are abstract
intuitionistic logics; moreover, the injective variants of $\mathsf{IL}$ and $\mathsf{CD}$ have to be considered as abstract intuitionistic logics that are distinct from their non-injective variants. The logics in question can be presented along the lines of
the above definition as follows:
$$
\mathsf{IL} = (\Theta \mapsto Pmod_\Theta, \Theta \mapsto
IL_{\emptyset}(\Theta), \models_{\mathsf{IL}}, \oplus),
$$
$$
\mathsf{In} = (\Theta \mapsto Pmod_\Theta(In), \Theta \mapsto
IL_{\emptyset}(\Theta), \models_{\mathsf{IL}}, \oplus),
$$
$$
\mathsf{IL}^\equiv = (\Theta \mapsto Pmod_\Theta, \Theta \mapsto
IL^\equiv_{\emptyset}(\Theta), \models_{\mathsf{IL}^\equiv}, \oplus),
$$
$$
\mathsf{In}^\equiv = (\Theta \mapsto Pmod_\Theta(In), \Theta \mapsto
IL^\equiv_{\emptyset}(\Theta), \models_{\mathsf{IL}^\equiv}, \oplus),
$$
$$
\mathsf{CD} = (\Theta \mapsto Pmod_\Theta(Su), \Theta \mapsto
IL_{\emptyset}(\Theta), \models_{\mathsf{IL}}, \oplus),
$$
$$
\mathsf{Bi} = (\Theta \mapsto Pmod_\Theta(Bi), \Theta \mapsto
IL_{\emptyset}(\Theta), \models_{\mathsf{IL}}, \oplus),
$$
$$
\mathsf{CD}^\equiv = (\Theta \mapsto Pmod_\Theta(Su), \Theta
\mapsto IL^\equiv_{\emptyset}(\Theta),
\models_{\mathsf{IL}^\equiv}, \oplus),
$$
$$
\mathsf{Bi}^\equiv = (\Theta \mapsto Pmod_\Theta(Bi), \Theta
\mapsto IL^\equiv_{\emptyset}(\Theta),
\models_{\mathsf{IL}^\equiv}, \oplus),
$$

In addition, as one can easily see, we have  $\mathcal{L} \sqsubseteq
\mathcal{L}^\equiv$ for every $\mathcal{L} \in \{\mathsf{IL}, \mathsf{In}, \mathsf{CD}, \mathsf{Bi}\}$ but we also have $\mathsf{IL} \not\sqsubseteq \mathsf{In}^\equiv$ and $\mathsf{CD} \not\sqsubseteq \mathsf{Bi}^\equiv$ showing that the extensional equality can be easier represented as an extension of equality-free intuitionistic logic than the intensional equality.

\subsection{Abstract extensions of standard intuitionistic logics}
In this paper, our specific interest is in the extensions of
logics from $StIL$. If an abstract intuitionistic logic
$\mathcal{L}$ extends a logic $\mathcal{L}' \in StIL$, then
we must have $Str_\mathcal{L} = Str_{\mathcal{L}'}$ and we will
assume that every $\phi \in L'(\Theta)$ is also present in
$L(\Theta)$ with its usual meaning as given by $\models$-relation
in any of the two logics involved.

\begin{example}  Consider now the infinitary (but countable) disjunction $\bigvee$ and conjunction $\bigwedge$ with the obvious semantics. We can obtain infinitary expressive extensions of all the logics in $StIL$  by adding the aforementioned connectives along the lines of \cite{na}. All these can be accomodated to satisfy the definition of  an abstract intuitionistic logic.

\end{example}

For any given extension of standard intuitionistic logics an
obvious analogue of every definition given above for $\mathsf{IL}$
can be formulated; however, the situation here is different from
the logics in $StIL$ in that the same type of model-theoretic
treatment cannot be, generally speaking, given for any extension
of a standard intuitionistic logic. In particular, statements like
Lemma \ref{L:satisfaction}.2 and Lemma \ref{L:satisfaction}.3
might fail (for Lemma \ref{L:satisfaction}.2 consider, e.g., the
case of first-order bi-intuitionistic logic \cite{rau} which is an
extension of $\mathsf{CD}$, and for Lemma \ref{L:satisfaction}.3
consider an extension of $\mathsf{IL}$ with the Boolean negation connective). One result of such failures may be
that the analogues of Corollaries \ref{C:satisfaction-constants}
and \ref{C:types-and-constants} may fail, so that the quantifiers
will not have their intended meaning and the notion of the type,
as defined in the context of $\mathsf{IL}$ above, will be
inadequate.

The notions of asimulation and saturation, however, prove to be
more robust in that the following corollary to Lemma
\ref{L:asimulations-gen} still can be proven for arbitrary
extensions of the standard intuitionistic logics:

\begin{corollary}\label{L:asimulationscorollary}
Let $\mathcal{L}' \in StIL$, let $\mathcal{L}' \sqsubseteq
\mathcal{L}$, let $(\mathcal{M}_1, w_1), (\mathcal{M}_2, w_2) \in
Str_\mathcal{L}(\Theta)$, let $\bar{a}_n \in (A_1)^n_{w_1}$, and
let $\bar{b}_n \in (A_2)^n_{w_2}$. If

\[ Tp^+_{\mathcal{L}'}(\mathcal{M}_1, w_1,
\bar{a}_n)\subseteq Tp^+_{\mathcal{L}'}(\mathcal{M}_2, w_2,
\bar{b}_n)
\]
 and both $\mathcal{M}_1$ and
$\mathcal{M}_2$ are $\mathcal{L}$-saturated, then the relation $A$
such that for all $u \in W_i$, $s \in W_j$, $\bar{\alpha}_k \in
(A_i)^k_{u}$, and $\bar{\beta}_k \in (A_j)^n_{s}$, if $\{ i,j \} =
\{ 1,2 \}$, then
$$
(u; \bar{\alpha}_k) \mathrel{A}(s; \bar{\beta}_k) \Leftrightarrow
Tp^+_{\mathcal{L}'}(\mathcal{M}_i, u, \bar{\alpha}_k) \subseteq
Tp^+_{\mathcal{L}'}(\mathcal{M}_j, s, \bar{\beta}_k)
$$
is an $\mathcal{L}'$-asimulation from $(\mathcal{M}_1, w_1,
\bar{a}_n)$ to $(\mathcal{M}_2, w_2, \bar{b}_n)$.
\end{corollary}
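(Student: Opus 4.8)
The plan is to reduce everything to Lemma \ref{L:asimulations-gen} applied to the standard logic $\mathcal{L}'$ in its own right; the only real content is to show that the hypothesis of $\mathcal{L}$-saturation, which is weaker than $\mathcal{L}'$-saturation, already suffices.

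I would first note that, since $\mathcal{L}' \in StIL$ and $\mathcal{L}' \sqsubseteq \mathcal{L}$, every $\mathcal{L}'$-sentence is literally an $\mathcal{L}$-sentence and the two logics agree on its truth at every pointed model; consequently, for every $(\mathcal{M}, v) \in Pmod$, every tuple $\bar{c}_k$ of pairwise distinct fresh constants, and every $\bar{a}_k \in A^k_v$, the complete $\mathcal{L}'$-type $Tp_{\mathcal{L}'}(\mathcal{M}, v, \bar{c}_k/\bar{a}_k)$ is contained, componentwise (in both its positive and its negative part), in $Tp_{\mathcal{L}}(\mathcal{M}, v, \bar{c}_k/\bar{a}_k)$. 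Feeding this inclusion into Definition \ref{D:types}, read with $\mathcal{L}$ replacing $\mathsf{IL}$ (which still makes formal sense, even though the accompanying Corollaries \ref{C:satisfaction-constants}--\ref{C:types-and-constants} may fail for $\mathcal{L}$), one sees that any $\bar{c}_n$-successor, $\bar{c}_{n+1}$-existential, or $\bar{c}_{n+1}$-universal $\mathcal{L}'$-type of a triple $(\mathcal{M}, w, \bar{a}_n)$ is, word for word, an $\mathcal{L}$-type of that same triple of the same kind: its defining conditions are inclusions of finite sub-pairs of $(\Gamma,\Delta)$ and finite subsets of $\Xi$ (all consisting of $\mathcal{L}'$-sentences) into complete types at successors, and these inclusions are inherited by the larger $\mathcal{L}$-types. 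Conversely, if a type all of whose sentences are $\mathcal{L}'$-sentences is realized in the $\mathcal{L}$-sense in some $\mathcal{N} \succcurlyeq \mathcal{M}$, then the witnessing inclusion into $Tp_{\mathcal{L}}(\mathcal{N}, v, \dots)$ restricts, on $\mathcal{L}'$-sentences, to an inclusion into $Tp_{\mathcal{L}'}(\mathcal{N}, v, \dots)$, so the type is realized in the $\mathcal{L}'$-sense too. Taking $\mathcal{N} = \mathcal{M}$ in both halves yields: every $\mathcal{L}$-saturated $\Theta$-model is $\mathcal{L}'$-saturated.

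With this established, $\mathcal{M}_1$ and $\mathcal{M}_2$ are $\mathcal{L}'$-saturated; together with the hypothesis $Tp^+_{\mathcal{L}'}(\mathcal{M}_1, w_1, \bar{a}_n) \subseteq Tp^+_{\mathcal{L}'}(\mathcal{M}_2, w_2, \bar{b}_n)$ --- which, by the standard independence of truth from the choice of fresh constants, holds for every tuple $\bar{c}_n$ of pairwise distinct constants outside $\Theta$ --- Lemma \ref{L:asimulations-gen} applies with $\mathcal{L}'$ in the role of $\mathcal{L}$ and shows that the relation $A$ it produces, which is exactly the relation $A$ of the present statement, is an $\mathcal{L}'$-asimulation from $(\mathcal{M}_1, w_1, \bar{a}_n)$ to $(\mathcal{M}_2, w_2, \bar{b}_n)$. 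I expect the only delicate point to be the first step, and in particular keeping track of the fact that it is the \emph{negative} part $Tp^-$, not merely $Tp^+$, that must also grow when passing from $\mathcal{L}'$ to $\mathcal{L}$ --- this is what makes the universal-type clause of Definition \ref{D:types} carry over in both directions; the rest is a routine unwinding of the definitions of type, realization, and saturation.
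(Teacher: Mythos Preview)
Your proposal is correct and follows essentially the same route as the paper: the paper's proof is a one-liner that says to repeat the proof of Lemma \ref{L:asimulations} for $\mathcal{L}'$, ``using the fact that every $\mathcal{L}$-saturated model is of course $\mathcal{L}'$-saturated.'' You have simply unpacked this ``of course'' carefully, verifying that every $\mathcal{L}'$-type is an $\mathcal{L}$-type of the same kind and that $\mathcal{L}$-realization of a set of $\mathcal{L}'$-sentences restricts to $\mathcal{L}'$-realization --- including the observation about $Tp^-$ needed for the universal-type clause, which the paper leaves implicit.
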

To prove this, we just repeat the proof of the version of Lemma
\ref{L:asimulations} for the respective $\mathcal{L}'$, using the
fact that every $\mathcal{L}$-saturated model is of course
$\mathcal{L}'$-saturated.

However, it makes much more sense in general to focus on the more
well-behaved subclasses of extensions of any given logic in
$StIL$. The more regular behavior of logics in these subclasses is
typically due to the fact that these logics display useful
model-theoretic properties. We define some of the relevant
properties below.
\begin{definition}\label{D:properties}
Let $\mathcal{L}' \in StIL$ and let $\mathcal{L}$ be an
abstract intuitionistic logic. Then:
\begin{itemize}
\item $\mathcal{L}$ is \textbf{preserved under
$\mathcal{L}'$-asimulations}, iff for all signatures $\Theta$,
every tuple $\bar{c}_n$ of pairwise distinct constants outside
$\Theta$, all $(\mathcal{M}_1, w_1), (\mathcal{M}_2, w_2)\in
Str_\mathcal{L}(\Theta)$, whenever $A$ is an
$\mathcal{L}'$-asimulation from $(\mathcal{M}_1, w_1)$ to
$(\mathcal{M}_2, w_2)$, then for every $\phi \in L(\Theta)$ we
have:
$$
\mathcal{M}_1, w_1 \models_\mathcal{L}\phi \Rightarrow
\mathcal{M}_2, w_2 \models_\mathcal{L}\phi.
$$

\item $\mathcal{L}$ is \textbf{$\star$-compact}, iff for every
signature $\Theta$ and every pair $(\Gamma, \Delta)\subseteq
(L(\Theta), L(\Theta))$, there exists an $(\mathcal{M}, w)\in
Str_\mathcal{L}(\Theta)$ such that $\mathcal{M},
w\models_\mathcal{L} (\Gamma, \Delta)$, whenever for all $\Gamma'
\Subset \Gamma$ and $\Delta' \Subset \Delta$, there exists an
$(\mathcal{M}', w')\in Str_\mathcal{L}(\Theta)$ such that
$\mathcal{M}', w'\models_\mathcal{L} (\Gamma', \Delta')$.

\item $\mathcal{L}$ has the \textbf{Tarski Union Property (TUP)} iff
for every $\mathcal{L}$-elementary chain

$$
\mathcal{M}_0 \preccurlyeq_\mathcal{L},\ldots,
\preccurlyeq_\mathcal{L} \mathcal{M}_n
\preccurlyeq_\mathcal{L},\ldots
$$
it is true that:
$$
\mathcal{M}_n \preccurlyeq_\mathcal{L} \bigcup_{n \in
\omega}\mathcal{M}_n
$$

for all $n \in \omega$.
\end{itemize}
\end{definition}
Among these properties, the preservation under asimulations seems to
be particularly useful in that it immediately yields several
propositions that bring the model theory of the logic in question
to that of the logics in $StIL$. More precisely, we can prove the
following lemmas and corollaries.
\begin{lemma}\label{L:generated-submodels}
Let $\mathcal{L}$ be an abstract intuitionistic logic and let
$\mathcal{L}' \in StIL$ be such that  $\mathcal{L}$ is preserved
under $\mathcal{L}'$-asimulations. Let $\Theta$ be a signature,
and let $(\mathcal{M},w) \in Str_{\mathcal{L}}(\Theta)$. Then, for
every $\phi \in L(\Theta)$ and every $v \in [W,w]$ we have:
\begin{enumerate}
    \item $\mathcal{M},w \models_\mathcal{L} \phi \Leftrightarrow
[\mathcal{M},w],w \models_\mathcal{L} \phi$.
    \item $\mathcal{M},w \models_\mathcal{L} \phi \Rightarrow
\mathcal{M},v \models_\mathcal{L} \phi$.
\end{enumerate}
\end{lemma}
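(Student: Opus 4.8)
The plan is to read off both clauses from the hypothesis that $\mathcal{L}$ is preserved under $\mathcal{L}'$-asimulations, by exhibiting in each case a concrete $\mathcal{L}'$-asimulation between the two pointed models at issue and then invoking that preservation property. One point must be cleared first: the assertions only make sense once we know $([\mathcal{M},w],w),(\mathcal{M},v)\in Str_\mathcal{L}(\Theta)$. The second membership is immediate from closure for the model component. For $([\mathcal{M},w],w)\in Str_\mathcal{L}(\Theta)$ I would begin with closure for constant extensions applied with the empty tuple of constants, which produces an $\mathcal{L}$-admissible $\mathcal{N}$ with $[\mathcal{M},w]\subseteq\mathcal{N}\subseteq\mathcal{M}$; a short computation with Lemma~\ref{L:submodel} shows $[\mathcal{N},w]=[\mathcal{M},w]$, and the coherence clauses of $\boxplus_\mathcal{L}$ --- supplied with the asimulation of the next paragraph, which equates the $L$-theory of $(\mathcal{N},w)$ with that of $([\mathcal{M},w],w)$ --- then force $[\mathcal{M},w]$ itself to be $\mathcal{L}$-admissible. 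Organising this so as not to rely circularly on Part~1 is the one delicate manoeuvre.

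For Part~1 the asimulation I would use is the diagonal one: let $A$ relate $(s;\bar\gamma_l)$, viewed in $\mathcal{M}$, to the same $(s;\bar\gamma_l)$, viewed in $[\mathcal{M},w]$, whenever $s\in[W,w]$ and $\bar\gamma_l\in A_s^l$. Since $[\mathcal{M},w]$ inherits from $\mathcal{M}$ its entire frame, its object domains $A_s$, its interpretations and its transition maps on the cone above $w$, a routine inspection verifies every clause of Definition~\ref{D:asimulation}, so $A$ is an $\mathsf{IL}$-asimulation from $(\mathcal{M},w)$ to $([\mathcal{M},w],w)$; and since $A$ acts as the identity on objects, while $[\mathcal{M},w]$, being a generated submodel of $\mathcal{M}$, stays in whichever class of Kripke models ($Su$, $In$, $Bi$, or all of $Pmod$) underlies $\mathcal{L}'$, $A$ is in fact an $\mathcal{L}'$-asimulation. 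Being symmetric, $A$ is equally an $\mathcal{L}'$-asimulation in the reverse direction, so preservation under $\mathcal{L}'$-asimulations delivers both implications of clause~1 together.

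For Part~2, fix $v\in[W,w]$; I would argue it directly rather than via Part~1, so that only closure for the model component is used. Both $(\mathcal{M},w)$ and $(\mathcal{M},v)$ are then $\mathcal{L}$-admissible, so it suffices to produce an $\mathcal{L}'$-asimulation from $(\mathcal{M},w)$ to $(\mathcal{M},v)$, and the natural candidate pushes tuples forward along the transition maps:
\[
A:=\{\,((s;\bar\gamma_l),(t;\bar\delta_l))\mid w\preceq s\preceq t,\ v\preceq t,\ \bar\delta_l=\mathbb{H}_{st}\langle\bar\gamma_l\rangle\,\},
\]
the only pair linking the distinguished points being $(w;\Lambda)\mathrel{A}(v;\Lambda)$. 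In checking the clauses of Definition~\ref{D:asimulation} one always takes the node demanded in the two ``back'' clauses to be the given $\prec$-successor itself, takes $\beta':=\mathbb{H}_{st}(\alpha')$ in the object ``forth'' clause, and uses the standard laws for the transition maps ($\mathbb{H}_{st'}=\mathbb{H}_{tt'}\circ\mathbb{H}_{st}$, $\mathbb{H}_{ss}=id_{A_s}$); the atomic clause holds precisely because the maps $\mathbb{H}_{st}$ are homomorphisms, and the same fact, with the obvious adjustments, makes $A$ an $\mathcal{L}'$-asimulation for the equality-bearing members of $StIL$ as well. Preservation under $\mathcal{L}'$-asimulations then gives $\mathcal{M},w\models_\mathcal{L}\phi\Rightarrow\mathcal{M},v\models_\mathcal{L}\phi$.

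The asimulation constructions are essentially mechanical once the relations are written out; the genuine obstacle, as I see it, is the preliminary fact that $Str_\mathcal{L}$ is closed under passage to the submodel generated by the distinguished point. This is transparent when $Str_\mathcal{L}$ coincides with the class of Kripke models underlying the ambient standard logic, but in general it has to be extracted from closure for constant extensions together with the coherence properties of $\boxplus_\mathcal{L}$, and arranging that extraction without a circular appeal to clause~1 is the part I expect to be fiddly.
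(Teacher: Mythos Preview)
Your approach coincides with the paper's: the diagonal relation on the cone $[W,w]$ for Part~1, and a relation pushing tuples forward along the transition maps for Part~2. The paper's Part~2 relation is the slightly smaller
\[
\{\langle (w; \bar{a}_k), (v;\mathbb{H}_{wv}\langle\bar{a}_k\rangle)\rangle\mid \bar{a}_k \in A^k_w\}\cup \{ \langle (u; \bar{b}_k), (u;\bar{b}_k)\rangle \mid u \in [W,v],\ \bar{b}_k \in A^k_u\},
\]
which sits inside yours; either choice works, and the verifications are the same.

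Your worry that $([\mathcal{M},w],w)\in Str_\mathcal{L}(\Theta)$ must be checked before preservation can be invoked is legitimate and is something the paper simply does not address. Your proposed route through $\boxplus_\mathcal{L}$ is, as you suspect, hard to make non-circular: the second coherence clause for $\boxplus_\mathcal{L}$ asks for the biconditional $\mathcal{N}',w\models_\mathcal{L}\phi\Leftrightarrow\mathcal{N},w\models_\mathcal{L}\phi$, but $\models_\mathcal{L}$ is only declared to be defined on $Str_\mathcal{L}$, so the hypothesis already presupposes admissibility of $(\mathcal{N}',w)$. In every use the paper makes of the lemma, however, one has $\mathcal{L}\sqsupseteq\mathcal{L}'$ and hence $Str_\mathcal{L}=Str_{\mathcal{L}'}$; the underlying classes $Pmod_\Theta$, $Pmod_\Theta(In)$, $Pmod_\Theta(Su)$, $Pmod_\Theta(Bi)$ are all plainly closed under generated submodels, so the issue evaporates there. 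You are being more careful than the paper, not less.
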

\begin{proof}
(Part 1). We define the relation $B$ setting:
$$
B:= \{ \langle (u; \bar{a}_k), (u; \bar{a}_k)\rangle \mid u \in
[W,w],\,k < \omega,\,\bar{a}_k \in A^k_u\}.
$$
It is then immediate to check that $B$ is an
$\mathcal{L}'$-asimulation both from $(\mathcal{M}, w)$ to
$([\mathcal{M},w], w)$ and from $([\mathcal{M},w], w)$ to
$(\mathcal{M}, w)$.

(Part 2). We define the relation $A$ setting:
$$
A:= \{\langle (w; \bar{a}_k), (v;
\mathbb{H}_{wv}\langle\bar{a}_k\rangle)\rangle\mid k < \omega,\,
\bar{a}_k \in A^k_w\}\cup \{ \langle (u; \bar{b}_k), (u;
\bar{b}_k)\rangle \mid u \in [W,v],\,k < \omega,\,\bar{a}_k \in
A^k_u\}.
$$
It is then immediate to check that $A$ is an
$\mathcal{L}'$-asimulation
 from $(\mathcal{M}, w)$ to
$(\mathcal{M}, v)$.

\end{proof}
Note that Lemma \ref{L:generated-submodels} holds also for the
logics that are not necessarily extensions of standard
intuitionistic logics as long as they are preserved under
$\mathcal{L}'$-asimulations for some $\mathcal{L}' \in StIL$. Some
of the consequences of this lemma for the logics that are, in
addition, the extensions of the respective standard intuitionistic
logics, are that the definition of the types now have to make
sense in the context of any such logic and the intutitionistic
simplifications of the quantifier clauses are available:
\begin{corollary}\label{C:generated-submodels}
Let
  $\mathcal{L}$ be an abstract intuitionistic logic and let
$\mathcal{L}' \in StIL$ be such that $\mathcal{L} \sqsupseteq
\mathcal{L}'$ and $\mathcal{L}$ is preserved under
$\mathcal{L}'$-asimulations. Let $\Theta$ be a signature and let
$(\mathcal{M},w) \in Str_{\mathcal{L}}(\Theta)$. Then:
\begin{enumerate}
\item $\boxplus_{\mathcal{L}} = \oplus$.
	
\item For every $\bar{a}_n \in A^n_w$, every tuple $\bar{c}_n$ of
pairwise distinct constants outside $\Theta$, and every $\phi \in
L(\Theta \cup \{\bar{c}_n\})$ we have:
\begin{align*}
(\exists\mathcal{N}\in (\mathcal{M},
w)\boxplus_\mathcal{L}(\bar{c}_n/\bar{a}_n))(\mathcal{N}, w
\models_\mathcal{L}\phi)&\Leftrightarrow (\forall\mathcal{N}\in
(\mathcal{M}, w)\boxplus_\mathcal{L}(\bar{c}_n/\bar{a}_n))(\mathcal{N}, w
\models_\mathcal{L}\phi)\\
&\Leftrightarrow ([\mathcal{M},w], \bar{c}_n/\bar{a}_n),
w\models_\mathcal{L}\phi\\
&\Leftrightarrow \phi \in
Tp^+_\mathcal{L}(\mathcal{M},w,\bar{a}_n)
\end{align*}

\item For every constant $c \notin Const_\Theta$ and every $\phi
\in L(\Theta\cup \{c\})$ it is true that:
$$
\mathcal{M}, w \models_\mathcal{L}\exists c\phi \Leftrightarrow
(\exists a \in A_w)(([\mathcal{M}, w], c/a),w \models_\mathcal{L}
\phi),
$$
and:
$$
\mathcal{M}, w \models_\mathcal{L}\forall c\phi \Leftrightarrow
(\forall v\mathrel{\succ}w)(\forall a \in A_v)(([\mathcal{M}, w],
c/a), v \models_\mathcal{L} \phi).
$$

\item	For all $\bar{b}_m
\in A^m_w$, and for every tuple $\bar{c}_{m + 1}$ of pairwise distinct individual constants outside $\Theta$, if
$\varphi\in L(\Theta \cup \{\bar{c}_{m+1}\})$,
 then we have:
	\begin{align*}
	&([\mathcal{M}, w], \bar{c}_m/\bar{b}_m), w\models_{\mathcal{L}}\exists c_{m + 1}\varphi
	\Leftrightarrow (\exists b \in A_w)(([\mathcal{M}, w], \bar{c}_{m + 1}/(\bar{b}_m)^\frown b), w \models_{\mathcal{L}} \varphi);\\
	&([\mathcal{M}, w], \bar{c}_m/\bar{b}_m), w \models_{\mathcal{L}} \forall
	c_{m + 1}\varphi \Leftrightarrow (\forall v \succ w)(\forall b
	\in A_v)(([\mathcal{M}, v], \bar{c}_{m + 1}/\mathbb{H}_{wv}\langle\bar{b}_m\rangle^\frown b), v \models_{\mathcal{L}}
	\varphi).
\end{align*}
\end{enumerate}
\end{corollary}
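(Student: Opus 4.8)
The plan is to prove Part~1 first and to read off Parts~2--4 from it, mirroring the way Corollaries \ref{C:cutoff}, \ref{C:satisfaction}, \ref{C:satisfaction-constants} and \ref{C:satisfaction-cutoff-constants} were obtained in the concrete setting of $\mathsf{IL}$: once $\boxplus_\mathcal{L}$ and $\oplus$ are known to coincide, the Quantifier-closure clauses in the definition of an abstract intuitionistic logic collapse into the simplified clauses of Part~3, and Parts~2 and~4 follow by combining this with the cut-off Lemmas \ref{L:cutoff} and \ref{L:cutoff-constants}.

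For Part~1 the inclusion $(\mathcal{M},w)\boxplus_\mathcal{L}(\bar c_n/\bar a_n)\subseteq(\mathcal{M},w)\oplus(\bar c_n/\bar a_n)$ is already part of the definition of $\boxplus_\mathcal{L}$, so only the reverse inclusion needs work. Fix $\mathcal{N}\in(\mathcal{M},w)\oplus(\bar c_n/\bar a_n)$ and, using non-emptiness of $(\mathcal{M},w)\boxplus_\mathcal{L}(\bar c_n/\bar a_n)$, some $\mathcal{N}_0$ in that set. By Lemma \ref{L:cutoff} both $[\mathcal{N},w]$ and $[\mathcal{N}_0,w]$ are literally the model $([\mathcal{M},w],\bar c_n/\bar a_n)$, so $\mathcal{N}$ and $\mathcal{N}_0$ share a common generated submodel around $w$. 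I would then take the ``diagonal'' relation $C$ consisting of all pairs $\langle(v;\bar\gamma),(v;\bar\gamma)\rangle$, together with their converses, where $v$ is a $\prec$-successor of $w$ and $\bar\gamma$ ranges over the tuples from the domain of that common submodel; by the verification already carried out in the proof of Lemma \ref{L:generated-submodels}.1, $C$ is an $\mathcal{L}'$-asimulation both from $(\mathcal{N},w)$ to $(\mathcal{N}_0,w)$ and from $(\mathcal{N}_0,w)$ to $(\mathcal{N},w)$. Since $\mathcal{L}$ is preserved under $\mathcal{L}'$-asimulations, this gives $\mathcal{N},w\models_\mathcal{L}\phi\Leftrightarrow\mathcal{N}_0,w\models_\mathcal{L}\phi$ for every $\phi\in L(\Theta\cup\{\bar c_n\})$, and the second closure requirement on $\boxplus_\mathcal{L}$ then places $\mathcal{N}$ in $(\mathcal{M},w)\boxplus_\mathcal{L}(\bar c_n/\bar a_n)$. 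A by-product worth recording is that all members of $(\mathcal{M},w)\oplus(\bar c_n/\bar a_n)$ are thereby seen to be $\mathcal{L}$-admissible at every node and to satisfy the same sentences of $L(\Theta\cup\{\bar c_n\})$ at $w$; this is what gives sense to the expressions ``$([\mathcal{M},w],\bar c_n/\bar a_n),w\models_\mathcal{L}\phi$'' occurring in Parts~2--4.

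Part~2 is then immediate: $\boxplus_\mathcal{L}=\oplus$ identifies the first two displayed conditions with the corresponding ``$\exists$'' and ``$\forall$'' quantifications over $(\mathcal{M},w)\oplus(\bar c_n/\bar a_n)$; these are equivalent because, as just noted, all members of that set agree at $w$; the distinguished member $([\mathcal{M},w],\bar c_n/\bar a_n)$ is one of them; and $Tp^+_\mathcal{L}(\mathcal{M},w,\bar a_n)$ is, by definition, the set of $\phi\in L(\Theta\cup\{\bar c_n\})$ holding at $w$ in (some, equivalently any) member of $(\mathcal{M},w)\boxplus_\mathcal{L}(\bar c_n/\bar a_n)$. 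For Part~3 I would substitute $\boxplus_\mathcal{L}=\oplus$ into the two Quantifier-closure clauses and then apply Part~2 --- at $w$ for the existential clause, and at an arbitrary $v\succ w$ for the universal clause (legitimate since $(\mathcal{M},v)\in Str_\mathcal{L}(\Theta)$ by closure for the model component, so Parts~1 and~2 apply to $(\mathcal{M},v)$ as well) --- in order to rewrite ``there is a constant-extension at the relevant node satisfying $\phi$'' as ``$([\mathcal{M},w],c/a),w\models_\mathcal{L}\phi$'' (resp. ``$([\mathcal{M},v],c/a),v\models_\mathcal{L}\phi$''). Part~4 follows by applying Part~3 to the (admissible, by Part~1) model $([\mathcal{M},w],\bar c_m/\bar b_m)$ with the fresh constant $c_{m+1}$ and then collapsing the iterated cut-offs-with-constants via Lemma \ref{L:cutoff-constants}.2--3, exactly as in the proof of Corollary \ref{C:satisfaction-cutoff-constants}.

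The substance, and the only genuine obstacle, lies inside Part~1, and is of two kinds. The delicate point is the admissibility bookkeeping: preservation under $\mathcal{L}'$-asimulations may only be invoked between models in $Str_\mathcal{L}$, so one must first know that the representative $([\mathcal{M},w],\bar c_n/\bar a_n)$ --- and ultimately every member of $(\mathcal{M},w)\oplus(\bar c_n/\bar a_n)$ --- is $\mathcal{L}$-admissible; this has to be squeezed out of the closure conditions (closure for constant extensions supplies at least one admissible member of the $\oplus$-set, closure for the model component spreads admissibility over all its nodes) and then propagated to the remaining members precisely by feeding the asimulation argument above back into the second $\boxplus_\mathcal{L}$-closure clause, so the order in which these inferences are made must be arranged carefully. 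The second, routine, task is to check that the diagonal relation $C$ really meets all of conditions \eqref{E:c22}--\eqref{E:c66} of Definition \ref{D:asimulation} for two models that may differ away from the part generated by $w$; this repeats the computation behind Lemma \ref{L:generated-submodels}.1, the only extra care being that the fresh constants $\bar c_n$ now live in the signature and must be threaded through every clause. Everything outside Part~1 is mechanical unwinding of the abstract-logic axioms.
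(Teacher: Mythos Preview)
Your proposal is correct and follows essentially the same strategy as the paper. The one noteworthy difference is in the mechanics of Part~1: instead of building a single ``diagonal'' $\mathcal{L}'$-asimulation directly between $\mathcal{N}$ and $\mathcal{N}_0$, the paper applies Lemma~\ref{L:generated-submodels}.1 separately to $\mathcal{N}$ and to $\mathcal{N}_0$ (each against its own generated submodel at $w$) and then uses Lemma~\ref{L:cutoff} to identify $[\mathcal{N},w]=[\mathcal{N}_0,w]=([\mathcal{M},w],\bar c_n/\bar a_n)$. This gives the chain $\mathcal{N},w\models_\mathcal{L}\phi \Leftrightarrow [\mathcal{N},w],w\models_\mathcal{L}\phi \Leftrightarrow ([\mathcal{M},w],\bar c_n/\bar a_n),w\models_\mathcal{L}\phi \Leftrightarrow [\mathcal{N}_0,w],w\models_\mathcal{L}\phi \Leftrightarrow \mathcal{N}_0,w\models_\mathcal{L}\phi$ without having to re-verify conditions \eqref{E:c22}--\eqref{E:c66} by hand. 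Your route also works, but costs you the ``routine task'' you mention; the paper simply cashes in the black box already established.

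The admissibility bookkeeping you flag is equally present in the paper's argument (Lemma~\ref{L:generated-submodels}.1 also carries the hypothesis $(\mathcal{M},w)\in Str_\mathcal{L}(\Theta)$), and the paper resolves it the same way you suggest: since $\mathcal{L}\sqsupseteq\mathcal{L}'$ forces $Str_\mathcal{L}=Str_{\mathcal{L}'}$, membership in $(\mathcal{M},w)\oplus(\bar c_n/\bar a_n)$ already lands in the admissible class for every $\mathcal{L}'\in StIL$, so no circularity actually arises. For Parts~2--4 your derivation matches the paper's exactly.
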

\begin{proof}
(Part 1). Just by definition, for every $(\mathcal{M}, w)\in
Str_\mathcal{L}(\Theta)$, every tuple $\bar{c}_n$ of fresh pairwise distinct constants, and every $\bar{a}_n \in A^n_w$, we have that $(\mathcal{M},
w)\boxplus_{\mathcal{L}}(\bar{c}_n/\bar{a}_n) \subseteq (\mathcal{M},
w)\oplus(\bar{c}_n/\bar{a}_n)$. In the other direction, note that if $\mathcal{N} \in (\mathcal{M},
w)\oplus(\bar{c}_n/\bar{a}_n)$ and $\mathcal{N}' \in (\mathcal{M},
w)\boxplus_{\mathcal{L}}(\bar{c}_n/\bar{a}_n)$, then we have, for every  $\phi \in L(\Theta\cup \bar{c}_n)$, that:
\begin{align*}
	\mathcal{N}, w\models_\mathcal{L} \phi &\Leftrightarrow  [\mathcal{N}, w], w\models_\mathcal{L} \phi&&\text{(by Lemma \ref{L:generated-submodels}.1)}\\
	&\Leftrightarrow ([\mathcal{M},w], \bar{c}_n/\bar{a}_n), w\models_\mathcal{L} \phi&&\text{(by Lemma \ref{L:cutoff})}\\
	 &\Leftrightarrow  [\mathcal{N}', w], w\models_\mathcal{L} \phi&&\text{(by Lemma \ref{L:cutoff})}\\
	  &\Leftrightarrow 	\mathcal{N}', w\models_\mathcal{L} \phi&&\text{(by Lemma \ref{L:generated-submodels}.1)}	
\end{align*}
Therefore, we must also have $\mathcal{N} \in (\mathcal{M},
w)\boxplus_{\mathcal{L}}(\bar{c}_n/\bar{a}_n)$. Since $\mathcal{N} \in (\mathcal{M},
w)\oplus(\bar{c}_n/\bar{a}_n)$ was chosen arbitrarily, and we have that  $(\mathcal{M},
w)\boxplus_{\mathcal{L}}(\bar{c}_n/\bar{a}_n) \neq \emptyset$, it follows that also $(\mathcal{M},
w)\boxplus_{\mathcal{L}}(\bar{c}_n/\bar{a}_n) \supseteq (\mathcal{M},
w)\oplus(\bar{c}_n/\bar{a}_n)$.

(Part 2). We clearly have:
\begin{align*}
(\forall\mathcal{N}\in (\mathcal{M},
w)\boxplus_\mathcal{L}(\bar{c}_n/\bar{a}_n))(\mathcal{N}, w
\models_\mathcal{L}\phi) &\Leftrightarrow (\forall\mathcal{N}\in (\mathcal{M},
w)\oplus(\bar{c}_n/\bar{a}_n))(\mathcal{N}, w
\models_\mathcal{L}\phi)\\
&\Rightarrow ([\mathcal{M},w],
\bar{c}_n/\bar{a}_n),
w\models_\mathcal{L}\phi\\
&\Leftrightarrow \phi \in
Tp^+_\mathcal{L}(\mathcal{M},w,\bar{a}_n)\\
 &\Rightarrow(\exists\mathcal{N}\in
(\mathcal{M}, w)\oplus(\bar{c}_n/\bar{a}_n))(\mathcal{N}, w
\models_\mathcal{L}\phi)\\
 &\Leftrightarrow(\exists\mathcal{N}\in
(\mathcal{M}, w)\boxplus_\mathcal{L}(\bar{c}_n/\bar{a}_n))(\mathcal{N}, w
\models_\mathcal{L}\phi)
\end{align*}
by Part 1 and since, of course, $([\mathcal{M},w], \bar{c}_n/\bar{a}_n) \in
(\mathcal{M}, w)\oplus(\bar{c}_n/\bar{a}_n)$ due to the fact that
the class of $\mathcal{L}$-admissible structures coincides with
the same class for $\mathcal{L}' \in StIL$. On the other hand, for
an arbitrary $\mathcal{N}\in (\mathcal{M}, w)\boxplus_\mathcal{L}(\bar{c}_n/\bar{a}_n)) = (\mathcal{M},
w)\oplus(\bar{c}_n/\bar{a}_n)$ we have:
\begin{align*}
\mathcal{N}, w \models_\mathcal{L}\phi &\Leftrightarrow
[\mathcal{N}, w], w \models_\mathcal{L}\phi &&\text{(by Lemma
\ref{L:generated-submodels}.1)}\\
&\Leftrightarrow ([\mathcal{M},w], \bar{c}_n/\bar{a}_n),
w\models_\mathcal{L}\phi &&\text{(by Lemma \ref{L:cutoff})}
\end{align*}

Parts 3 and 4 follow from Part 2 and the quantifier closure
conditions in the same way as they follow for $\mathsf{IL}$.
\end{proof}
We end this subsection with the lemma relating the positive
$\mathcal{L}$-types of tuples connected by an asimulation.
Although this lemma, again, only depends on the preservation under
asimulation and does not assume that the logic in question is an
extension of a standard intuitionistic logic, it only acquires its
standard meaning together with the latter assumption, since
otherwise the notion of an $\mathcal{L}$-type might end up having
a non-regular meaning.
\begin{lemma}\label{L:asimulation-l-types}
Let
  $\mathcal{L}$ be an abstract intuitionistic logic and let
$\mathcal{L}' \in StIL$ be such that  $\mathcal{L}$ is preserved
under $\mathcal{L}'$-asimulations. Let $\Theta$ be a signature,
and let $(\mathcal{M},w), (\mathcal{N},v)\in
Str_{\mathcal{L}}(\Theta)$. Then, if $n < \omega$ and $\bar{a}_n
\in A^n_w$, $\bar{b}_n \in B^n_v$ are such that $A$ is an
$\mathcal{L}'$-asimulation from $(\mathcal{M},w,\bar{a}_n)$ to
$(\mathcal{N},v,\bar{b}_n)$, then:
\[ Tp^+_{\mathcal{L}}(\mathcal{M}, w,
\bar{a}_n)\subseteq Tp^+_{\mathcal{L}}(\mathcal{N}, v, \bar{b}_n).
\]
\end{lemma}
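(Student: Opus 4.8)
The plan is to prove the contrapositive-style statement by structural induction on the formula $\phi \in L(\Theta)$, but arranged as a direct induction showing that $\mathcal{M}, w \models_\mathcal{L} \phi$ implies $\mathcal{N}, v \models_\mathcal{L} \phi$ for every $\phi \in L(\Theta)$; this suffices because by Corollary \ref{C:generated-submodels}.2 (Part applicable to $\Theta$ with no extra constants, i.e. $n=0$ there), membership in $Tp^+_\mathcal{L}$ is just satisfaction of the corresponding sentence. Actually, to handle free "parameters" $\bar{a}_n$ uniformly, I would first pass to the constant-extended models: by Corollary \ref{C:generated-submodels}.2 and the definition of $Tp^+$, we have $\phi \in Tp^+_\mathcal{L}(\mathcal{M},w,\bar{a}_n)$ iff $([\mathcal{M},w],\bar{c}_n/\bar{a}_n), w \models_\mathcal{L} \phi$ for a fresh tuple $\bar c_n$, and similarly on the $\mathcal{N}$ side. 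So it is enough to show that whenever $A$ is an $\mathcal{L}'$-asimulation from $(\mathcal{M},w,\bar a_n)$ to $(\mathcal{N},v,\bar b_n)$, then for every $\psi \in L(\Theta\cup\{\bar c_n\})$, $([\mathcal{M},w],\bar c_n/\bar a_n), w \models_\mathcal{L} \psi$ implies $([\mathcal{N},v],\bar c_n/\bar b_n), v \models_\mathcal{L} \psi$.

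The cleanest route is in fact not a fresh induction on formulas but a reduction to the preservation property that $\mathcal{L}$ is already assumed to enjoy. First I would invoke Lemma \ref{L:asimulations-generated}.2 (the generated-submodel scaling) to turn $A$ into $A_{w,w}$, which is an $\mathcal{L}'$-asimulation from $(([\mathcal{M},w],\bar c_n/\bar a_n), w, \bar a_n)$ to $(([\mathcal{N},v],\bar c_n/\bar b_n), v, \bar b_n)$ — here I use that $\mathcal{L}'$-asimulations are defined exactly as $\mathsf{IL}$-asimulations restricted to the appropriate model class, so Lemma \ref{L:asimulations-generated} applies verbatim. Then applying Lemma \ref{L:asimulations-generated}.1 (with $i=n$, reducing to $i=0$) I drop the parameters entirely, obtaining an $\mathcal{L}'$-asimulation from $(([\mathcal{M},w],\bar c_n/\bar a_n), w)$ to $(([\mathcal{N},v],\bar c_n/\bar b_n), v)$ with empty distinguished tuples. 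At this point the two pointed models are admissible structures of $\mathcal{L}$ in the signature $\Theta\cup\{\bar c_n\}$: this uses the "Closure for constant extensions" condition on $Str_\mathcal{L}$ together with the fact that $Str_\mathcal{L} = Str_{\mathcal{L}'}$ forces all of $(\mathcal{M},w)\oplus(\bar c_n/\bar a_n)$ into $Str_\mathcal{L}(\Theta\cup\{\bar c_n\})$, and similarly "Closure for the model component" moves the distinguished node. Now the hypothesis that $\mathcal{L}$ is preserved under $\mathcal{L}'$-asimulations applies directly: for every $\psi \in L(\Theta\cup\{\bar c_n\})$ we get $([\mathcal{M},w],\bar c_n/\bar a_n), w \models_\mathcal{L}\psi \Rightarrow ([\mathcal{N},v],\bar c_n/\bar b_n), v \models_\mathcal{L}\psi$, which is exactly what we needed.

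The main obstacle I anticipate is bookkeeping around signatures and admissibility: one must be careful that the asimulations produced by Lemma \ref{L:asimulations-generated} still qualify as $\mathcal{L}'$-asimulations in the extended signature $\Theta\cup\{\bar c_n\}$ (they do, since nothing in Definition \ref{D:asimulation} is sensitive to which signature one works over beyond the atomic clause \eqref{E:c33}, and the fresh constants get interpreted consistently on both sides by construction of the constant extensions), and that "preserved under $\mathcal{L}'$-asimulations" is quantified over \emph{all} signatures, so applying it at $\Theta\cup\{\bar c_n\}$ is legitimate. A secondary subtlety is that the preservation property as stated in Definition \ref{D:properties} requires the asimulation to go between admissible structures; the verification that $([\mathcal{M},w],\bar c_n/\bar a_n)$ and $([\mathcal{N},v],\bar c_n/\bar b_n)$ lie in $Str_\mathcal{L}(\Theta\cup\{\bar c_n\})$ is where the closure axioms (constant extensions, model component, reducts if needed) must be chained together. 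Once that is in place, everything else is a direct application of already-established lemmas, with no induction on formulas required.
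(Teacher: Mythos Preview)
Your proposal is correct and takes essentially the same approach as the paper: reduce to constant-extended generated submodels via Lemma~\ref{L:asimulations-generated}, then apply the preservation hypothesis directly (no induction on formulas). The paper simply invokes Lemma~\ref{L:asimulations-generated}.3 in one step (yielding $(A_{w,v})\downarrow$) rather than chaining parts 2 and 1 as you do, and it reads the equivalence $\phi \in Tp^+_\mathcal{L}(\mathcal{M},w,\bar a_n) \Leftrightarrow ([\mathcal{M},w],\bar c_n/\bar a_n),w \models_\mathcal{L}\phi$ straight from the definition of $Tp^+$ rather than citing Corollary~\ref{C:generated-submodels}.2 (which is cleaner here, since the lemma does not assume $\mathcal{L}' \sqsubseteq \mathcal{L}$); also note your $A_{w,w}$ should be $A_{w,v}$.
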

\begin{proof}
By Lemma \ref{L:asimulations-generated}.3, for any given tuple
$\bar{c}_n$ of pairwise distinct constants outside $\Theta$,
$(A_{w,v})\downarrow$ is an $\mathcal{L}'$-asimulation from
$(([\mathcal{M},w],\bar{c}_n/\bar{a}_n), w)$ to
$(([\mathcal{N},v],\bar{c}_n/\bar{b}_n), v)$ so that, for an
arbitrary $\phi \in L(\Theta\cup \{\bar{c}_n\})$ we have:
\begin{align*}
    \phi \in Tp^+_{\mathcal{L}}(\mathcal{M}, w,
\bar{a}_n) &\Leftrightarrow ([\mathcal{M},w],\bar{c}_n/\bar{a}_n),
w \models_\mathcal{L}\phi\\
&\Rightarrow ([\mathcal{N},v],\bar{c}_n/\bar{b}_n),
v\models_\mathcal{L}\phi &&\text{(by $\mathcal{L}'$-asimulation preservation)}\\
&\Leftrightarrow \phi \in Tp^+_{\mathcal{L}}(\mathcal{N}, v,
\bar{b}_n).
\end{align*}
\end{proof}
More generally,  we claim that if $\mathcal{L}$ is an abstract intuitionistic logic, 
$\mathcal{L}' \in StIL$ is such that $\mathcal{L} \sqsupseteq
\mathcal{L}'$, and $\mathcal{L}$ is preserved under
$\mathcal{L}'$-asimulations then every statement formulated in the foregoing sections for $\mathsf{IL}$, except when the language is explicitly assumed to be $IL$, can also be proven for $\mathcal{L}$ with only minimal changes to the proof.

\subsection{Congruence relations}

In what follows, we will also use the notion of \emph{congruence}.
If $\Theta$ is a signature, and $\mathcal{M} \in Mod(\Theta)$,
then we say that the function $\approx: W \to
2^{\mathbb{A}\times\mathbb{A}}$ is a congruence on $\mathcal{M}$
iff by setting, for all $w \in W$, $I'_w(\approx) := \approx(w)$, we
obtain a $\Theta \cup \{ \approx\}$-expansion $\mathcal{M}'$ of
$\mathcal{M}$ such that every sentence given in Lemma
\ref{L:approx-properties} is $\models_{\mathsf{IL}}$-valid in
$\mathcal{M}'$. If we add $\equiv$ to the list of $\Theta$-predicates and demand that $\approx$ also acts as a congruence w.r.t. $\equiv$, then we simply get the diagonal relation which we may also call $\equiv$-congruence treating it as a borderline case of the more general notion of a congruence relation. 

Congruences can be used to define `factor-models' out of existing
models; more precisely, the following lemma is true:
\begin{lemma}\label{L:congr-mod}
Let $\Theta$ be a signature, let $\mathcal{L} \in StIL$, and let $(\mathcal{M},v) \in Str_\mathcal{L}(\Theta)$, and
assume that $\approx$ is a congruence on $\mathcal{M}$. Assume, finally, that, in case  $\mathcal{L}$ includes $\equiv$,  $\approx$ is even an $\equiv$-congruence. Then:
\begin{enumerate}
    \item $(\mathcal{M}_\approx, v) \in Str_\mathcal{L}(\Theta)$ where:
\begin{itemize}
    \item $W_\approx := W$, $\prec_\approx := \prec$;
    \item $(A_\approx)_w := \{ [a]_{\approx(w)} \mid a \in A_w\}$, where
    $[a]_{\approx(w)}:= \{ b \in A_w \mid a\mathrel{\approx(w)} b \}$;
    \item $(I_\approx)_w(P) := \{ [\langle\bar{a}_m\rangle]_{\approx(w)} \mid \bar{a}_m \in
    I_w(P)\}$ for all $P \in \Theta_m$;
    \item $(I_\approx)_w(c) = [I_w(c)]_{\approx(w)}$ for all $c \in
    Const_\Theta$;
    \item $(\mathbb{H}_\approx)_{wv}([a]_{\approx(w)}) =
    [\mathbb{H}_{wv}(a)]_{\approx(v)}$ for all $w,v \in W$ and all $a \in
    A_w$.
\end{itemize}
    \item The relation:
$$
B:= \{\langle (w; \bar{a}_m), (w; [\bar{a}_m]_{\approx(w)})\rangle,
\langle (w; [\bar{a}_m]_{\approx(w)}),(w; \bar{a}_m)\rangle \mid w
\in W,\,\bar{a}_m \in A^m_w\}
$$
is an $\mathcal{L}$-asimulation both from $(\mathcal{M}, v,
\bar{a}_n )$ to $(\mathcal{M}_\approx, v, [\bar{a}_n]_{\approx(v)})$ and
from $(\mathcal{M}_\approx,v, [\bar{a}_n]_{\approx(v)})$ to
$(\mathcal{M},v, \bar{a}_n)$ for every $v \in W$, every $n <
\omega$, and every $\bar{a}_n \in A^n_v$.
\end{enumerate}
\end{lemma}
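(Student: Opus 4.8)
The plan is to verify the two parts in order, treating Part 1 as a routine (if tedious) well-definedness check and Part 2 as the substantive claim that reduces to verifying the asimulation axioms of Definition \ref{D:asimulation}. For Part 1, I would first check that $\mathcal{M}_\approx$ is a genuine $\Theta$-model: the only nontrivial points are that $(\mathbb{H}_\approx)_{wv}$ is well-defined (independence of the representative $a$), which is exactly the congruence property for $\approx$ with respect to $\prec$-transitions — more precisely, since $\approx$ acts as a congruence and the sentences of Lemma \ref{L:approx-properties} are $\models_{\mathsf{IL}}$-valid in the $\approx$-expansion of $\mathcal{M}$, we get $a \mathrel{\approx(w)} b \Rightarrow \mathbb{H}_{wv}(a)\mathrel{\approx(v)}\mathbb{H}_{wv}(b)$; that $(I_\approx)_w(P)$ is well-defined as a subset of $((A_\approx)_w)^m$ is the congruence clause for $P$; the homomorphism conditions $I_w(P)(\bar a_m)\Rightarrow I_v(P)(\mathbb{H}_{wv}\langle\bar a_m\rangle)$ and $I_v(c)=\mathbb{H}_{wv}(I_w(c))$ transfer directly to the quotient, and the functor laws $(\mathbb{H}_\approx)_{ww}=id$, $(\mathbb{H}_\approx)_{wu}=(\mathbb{H}_\approx)_{vu}\circ(\mathbb{H}_\approx)_{wv}$ are immediate from those for $\mathbb{H}$. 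In the case where $\mathcal{L}$ includes $\equiv$, the hypothesis that $\approx$ is an $\equiv$-congruence forces $\approx(w)$ to be the diagonal on $A_w$, so $\mathcal{M}_\approx$ is isomorphic to $\mathcal{M}$ and everything is trivial; this is the degenerate case and needs only a remark. Finally, $(\mathcal{M}_\approx,v)\in Str_\mathcal{L}(\Theta)$: here I would argue that $\mathcal{M}_\approx$ lies in $Su$ whenever $\mathcal{M}$ does (the quotient of a surjection is a surjection) and in $In$ automatically in the $\equiv$ case, so membership in the intended model class of each $\mathcal{L}\in StIL$ is preserved.

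For Part 2, the relation $B$ pairs $(w;\bar a_m)$ with $(w;[\bar a_m]_{\approx(w)})$ in both directions, staying over the same node. I would verify the six clauses of Definition \ref{D:asimulation} for $B$ as an $\mathcal{L}$-asimulation (equivalently, $\mathcal{L}'$-asimulation, since all the logics in $StIL$ are handled uniformly). Clause \eqref{E:c22} (type) is by construction; \eqref{E:c11} (elem) holds since $B$ relates $(v;\bar a_n)$ to $(v;[\bar a_n]_{\approx(v)})$; \eqref{E:c44} (s-back) and \eqref{E:c66} (obj-back) are easy because $B$ never changes the node and $[\cdot]_{\approx(\cdot)}$ commutes with $\mathbb{H}$, so the witness $u$ is just $t$ itself (or its $\approx$-image) and $\mathbb{H}_{wv}\langle[\bar a_m]_{\approx(w)}\rangle = [\mathbb{H}_{wv}\langle\bar a_m\rangle]_{\approx(v)}$; \eqref{E:c55} (obj-forth) is handled by picking $\beta' = [\alpha']_{\approx(w)}$ in one direction and any representative of $\beta'$ in the other. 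The only clause requiring genuine care is \eqref{E:c33} (atom): one must show that for atomic $\varphi\in IL_\emptyset(\Theta\cup\{\bar c_l\})$, $([\mathcal{M},w],\bar c_l/\bar\alpha_l),w\models_{\mathsf{IL}}\varphi$ holds iff $([\mathcal{M}_\approx,w],\bar c_l/[\bar\alpha_l]_{\approx(w)}),w\models_{\mathsf{IL}}\varphi$. For a predicate atom $P(\bar t_m)$ this is precisely the definition of $(I_\approx)_w(P)$ together with the congruence property (both directions: if the $\approx$-classes satisfy $P$ then some representatives do, and then by the congruence sentence $(\forall\bar x,\bar y)(\bigwedge x_i\equiv y_i\to(P(\bar x)\leftrightarrow P(\bar y)))$ being valid in $\mathcal{M}$, all representatives do, in particular $\bar\alpha_l$); if $\mathcal{L}$ carries $\equiv$, the equality atoms are immediate since $\approx$ is then the identity.

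The main obstacle, as usual with asimulation verifications, is bookkeeping: making sure the generated-submodel and constant-extension wrappers $([\mathcal{M},w],\bar c_l/\bar\alpha_l)$ interact correctly with the quotient construction, i.e. that $([\mathcal{M},w],\bar c_l/\bar\alpha_l)_\approx = ([\mathcal{M}_\approx,w],\bar c_l/[\bar\alpha_l]_{\approx(w)})$ up to the identifications made in Section \ref{S:Prel}. Once that compatibility is established — it follows from the definitions of generated submodel (Definition \ref{D:submodel} and Lemma \ref{L:submodel}) and of constant extension, together with the observation that $\approx$ restricts to a congruence on any generated submodel — clause \eqref{E:c33} reduces cleanly to the atomic case described above, and the remaining clauses are mechanical. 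I would therefore present Part 2 by first recording this compatibility as a sub-claim, then checking \eqref{E:c33} in detail, and finally dispatching \eqref{E:c44}, \eqref{E:c55}, \eqref{E:c66} in a few lines each, noting that the symmetry of $B$ means each clause must be checked with the roles of $\mathcal{M}$ and $\mathcal{M}_\approx$ interchanged, but the arguments are mirror images.
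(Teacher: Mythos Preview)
Your proposal is correct and follows essentially the same approach as the paper. The paper handles Part 1 with the same ingredients you list (disjointness of the $(A_\approx)_w$ via disjointness of the $2^{A_w}$, the homomorphism laws for $\mathbb{H}_\approx$, preservation of $Su$, and the observation that the $\equiv$ case collapses to an isomorphism), and for Part 2 it is actually terser than you are: the paper simply writes ``By definition of $\mathcal{L}$-asimulation'' and leaves the clause-by-clause verification you sketch to the reader.
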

\begin{proof}
First of all, if $\mathcal{L} \in StIL\setminus\{\mathsf{IL}, \mathsf{CD}\}$, 
 then $\approx$ is simply a diagonal relation, every $[a]_{\approx(w)}$ is just $\{a\}$, and $(id_W, \{(a, [a]_{\approx(w)})\mid a \in A_w\})$ is even an isomorphism between $\mathcal{M}$ and $\mathcal{M}_\approx$, so the only non-trivial cases arise when $\mathcal{L} \in \{\mathsf{IL}, \mathsf{CD}\}$. We consider these cases below.

(Part 1). Since $W_\approx = W$ and $\prec_\approx = \prec$, it is clear
that $\prec_\approx$ is a partial order on the non-empty set
$W_\approx$; moreover, it is also clear that the classical model
$(\mathfrak{A}_\approx)_w = ((A_\approx)_w, (I_\approx)_w)$ is well-defined
for any $w \in W_\approx$. Since $\mathcal{M}$ is a model, for all
$w, v \in W_\approx = W$ such that $w \neq v$ we must have $A_v \cap
A_w = \emptyset$, hence $2^{A_v} \cap 2^{A_w} = \emptyset$, but we
also have $(A_\approx)_w \subseteq 2^{A_w}$ and $(A_\approx)_v \subseteq
2^{A_v}$. Therefore, we get that $(A_\approx)_w \cap (A_\approx)_v =
\emptyset$. It remains to check the conditions involving
$\mathbb{H}_\approx$.

As for monotonicity, if $w,v \in W_\approx = W$ are such that $w
\mathrel{\prec_\approx} v$ (so also $w \mathrel{\prec} v$) then we have for every $m <
\omega$ and all $\bar{a}_m \in A^m_w$, $P \in \Theta_m$, and $c
\in Const_\Theta$:
\begin{align*}
    (I_\approx)_w(P)([\langle\bar{a}_m\rangle]_{\approx(w)}) &\Leftrightarrow
    I_w(P)(\bar{a}_m)\\
    &\Rightarrow I_v(P)(\mathbb{H}_{wv}\langle\bar{a}_m\rangle)\\
&\Leftrightarrow
(I_\approx)_v(P)([\langle\mathbb{H}_{wv}\langle\bar{a}_m\rangle\rangle]_{\approx(v)})\\
&\Leftrightarrow
(I_\approx)_v(P)((\mathbb{H}_\approx)_{wv}\langle[\langle\bar{a}_m\rangle]_{\approx(w)}\rangle)
\end{align*}
and, moreover:
\begin{align*}
    (\mathbb{H}_\approx)_{wv}((I_\approx)_w(c)) &=
    (\mathbb{H}_\approx)_{wv}([I_w(c)]_{\approx(w)})\\
    &= [\mathbb{H}_{wv}(I_w(c))]_{\approx(v)}\\
    &= [I_v(c)]_{\approx(v)}\\
    &= (I_\approx)_v(c)
\end{align*}
Next, it is easy to see that for every $w \in  W_\approx = W$ and
every $a \in A_w$ we have $(\mathbb{H}_\approx)_{ww}([a]_{\approx(w)}) =
[\mathbb{H}_{ww}(a)]_{\approx(w)} = [a]_{\approx(w)}$.

Finally, if $w,v,u \in W_\approx = W$ are such that $w \mathrel{\prec_\approx} v
\mathrel{\prec_\approx} u$ then also $w \mathrel{\prec} v \mathrel{\prec} u$, and for every $a \in
A_w$ we have:
\begin{align*}
    (\mathbb{H}_\approx)_{vu}((\mathbb{H}_\approx)_{wv}([a]_{\approx(w)})) &=
    (\mathbb{H}_\approx)_{vu}([\mathbb{H}_{wv}(a)]_{\approx(v)})\\
    &=
    [\mathbb{H}_{vu}(\mathbb{H}_{wv}(a))]_{\approx(u)}\\
    &= [\mathbb{H}_{wu}(a)]_{\approx(u)}\\
    &= (\mathbb{H}_\approx)_{wu}([a]_{\approx(w)})
\end{align*}
For the case when $\mathcal{M} \in Mod_\Theta(Su)$ we need to
observe, in addition, that if $\mathbb{H}_{wv}$ is a surjection for a given $v\in [W,w]$, then $(\mathbb{H}_\approx)_{wv}$ is clearly surjective. Indeed, we can choose then for an arbitrary $a' \in (A_\approx)_v$ an $a \in A_v$ such that $a' = [a]_{\approx(v)}$. But then, by surjectivity of $\mathbb{H}_{wv}$, there exists a $b \in A_w$ such that
$\mathbb{H}_{wv}(b) = a$. It follows then, that $a' = [a]_{\approx(v)} = [\mathbb{H}_{wv}(b)]_{\approx(v)} = (\mathbb{H}_\approx)_{wv}([b]_{\approx(w)})$, and we clearly have $[b]_{\approx(w)} \in (A_\approx)_w$.

(Part 2). By definition of $\mathcal{L}$-asimulation.
\end{proof}
\begin{corollary}\label{C:congr}
Let $\mathcal{L}\sqsupseteq \mathcal{L}' \in StIL$ be an abstract
intuitionistic logic, let $\Theta$ be a signature such that
$(\mathcal{M},w) \in Mod_\Theta$. Assume that $\mathcal{L}$ is
preserved under $\mathcal{L}'$-asimulations and that $\approx$ is a
congruence on $\mathcal{M}$ (and even an $\equiv$-congruence in case $\equiv$ is in the language of $\mathcal{L}'$). Then, for every $m < \omega$,
whenever $\bar{a}_m \in A^n_w$, we have:
$$
Tp_\mathcal{L}(\mathcal{M}, w, \bar{a}_m) =
 Tp_\mathcal{L}(\mathcal{M}_\approx, w, [\bar{a}_m]_{\approx(w)}).
$$
\end{corollary}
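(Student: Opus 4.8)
The plan is to reduce the claimed equality of $\mathcal{L}$-types to the already-established machinery of preservation under $\mathcal{L}'$-asimulations together with Lemma \ref{L:congr-mod}. The key inputs are: (i) Lemma \ref{L:congr-mod}.1, which tells us that $(\mathcal{M}_\approx, w) \in Str_\mathcal{L}(\Theta)$, so that it makes sense to speak of its $\mathcal{L}$-types at all; (ii) Lemma \ref{L:congr-mod}.2, which supplies a single relation $B$ that is simultaneously an $\mathcal{L}$-asimulation from $(\mathcal{M}, v, \bar{a}_m)$ to $(\mathcal{M}_\approx, v, [\bar{a}_m]_{\approx(v)})$ and from $(\mathcal{M}_\approx, v, [\bar{a}_m]_{\approx(v)})$ to $(\mathcal{M}, v, \bar{a}_m)$; and (iii) Lemma \ref{L:asimulation-l-types}, which says that an $\mathcal{L}'$-asimulation transports positive $\mathcal{L}$-types forward. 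The only subtlety is that Lemma \ref{L:asimulation-l-types} is stated for $\mathcal{L}'$-asimulations, whereas Lemma \ref{L:congr-mod} produces an $\mathcal{L}$-asimulation; but since $\mathcal{L} \sqsupseteq \mathcal{L}' \in StIL$, an $\mathcal{L}$-asimulation is in particular (indeed, by the conventions set up in Section \ref{S:Standard}, exactly) an $\mathcal{L}'$-asimulation between the admissible structures in question, so no real gap arises here — I would spell this out in one sentence.

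Concretely, I would first invoke Lemma \ref{L:congr-mod}.1 to note $(\mathcal{M}_\approx, w) \in Str_\mathcal{L}(\Theta)$. Then I would apply Lemma \ref{L:congr-mod}.2 with $v := w$ and $n := m$ to obtain the relation $B$, observing it is an $\mathcal{L}'$-asimulation in both directions between $(\mathcal{M}, w, \bar{a}_m)$ and $(\mathcal{M}_\approx, w, [\bar{a}_m]_{\approx(w)})$. Feeding each direction into Lemma \ref{L:asimulation-l-types} gives
\[
Tp^+_{\mathcal{L}}(\mathcal{M}, w, \bar{a}_m) \subseteq Tp^+_{\mathcal{L}}(\mathcal{M}_\approx, w, [\bar{a}_m]_{\approx(w)}) \subseteq Tp^+_{\mathcal{L}}(\mathcal{M}, w, \bar{a}_m),
\]
so the positive parts coincide. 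It remains to get the negative parts. Here I would note that for the logics $\mathcal{L}$ under consideration a complete $\mathcal{L}$-type $Tp_{\mathcal{L}}(\mathcal{M}, w, \bar{c}_m/\bar{a}_m)$ is a partition of $L(\Theta \cup \{\bar{c}_m\})$ into the sentences that hold and those that fail at $(([\mathcal{M},w], \bar{c}_m/\bar{a}_m), w)$ — this is just the definition of $Tp$ via $Th$ — so that the negative part is the complement of the positive part within $L(\Theta \cup \{\bar{c}_m\})$; since the ambient sentence sets agree and the positive parts agree, the negative parts agree too, and hence the full types are equal. (Equivalently, and perhaps cleaner to write, I would observe that the argument via $B$ is symmetric in $\mathcal{M}$ and $\mathcal{M}_\approx$ and also applies verbatim with $\neg\phi$ or with the roles of "holds"/"fails" swapped, using that $\mathcal{L}$-satisfaction at a generated-submodel-with-constants point is two-valued.)

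The step I expect to require the most care is not any single deduction but the bookkeeping about which flavour of asimulation and which flavour of type is in play: Lemma \ref{L:congr-mod} is phrased for a general $\mathcal{L}$, Lemma \ref{L:asimulation-l-types} for a logic preserved under $\mathcal{L}'$-asimulations, and the corollary wants a statement about $Tp_\mathcal{L}$; threading the hypotheses $\mathcal{L} \sqsupseteq \mathcal{L}'$, $\mathcal{L}' \in StIL$, and "$\mathcal{L}$ preserved under $\mathcal{L}'$-asimulations" so that each cited lemma applies with the right parameters is the only place an error could creep in. I would also make explicit, as the excerpt itself does in the proof of Lemma \ref{L:congr-mod}, that when $\mathcal{L}' \in StIL \setminus \{\mathsf{IL}, \mathsf{CD}\}$ the relation $\approx$ is diagonal and $(\mathcal{M}_\approx, w)$ is literally isomorphic to $(\mathcal{M}, w)$, so that the corollary is immediate from the Isomorphism condition in that case and only the two cases $\mathcal{L} \in \{\mathsf{IL}, \mathsf{CD}\}$ genuinely use the asimulation argument above.
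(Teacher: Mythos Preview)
Your proposal is correct and follows exactly the paper's own route: the paper's proof is the single line ``By Lemma \ref{L:asimulation-l-types} and Lemma \ref{L:congr-mod}.2,'' and you have simply unpacked that citation in detail. One small clarification on your bookkeeping worry: the $\mathcal{L}$ appearing in the statement of Lemma \ref{L:congr-mod} is assumed to lie in $StIL$, so when you invoke that lemma in the Corollary's setting you apply it with the standard logic $\mathcal{L}'$ in that role, which hands you an $\mathcal{L}'$-asimulation directly---no conversion step is needed.
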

\begin{proof}
By Lemma \ref{L:asimulation-l-types} and Lemma
\ref{L:congr-mod}.2.
\end{proof}

\section{The main result}\label{S:main}

\subsection{The main result formulated}
We start by noting that all the standard intuitionistic logics
display a certain strong combination of desirable model-theoretic
properties:
\begin{lemma}\label{L:properties}
If $\mathcal{L} \in StIL$, then $\mathcal{L}$ is preserved under
$\mathcal{L}$-asimulations, $\star$-compact and has TUP.
\end{lemma}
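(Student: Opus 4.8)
The statement bundles three independent claims about a fixed $\mathcal{L} \in StIL$, and I would prove them in turn, noting at each point why the argument is uniform over the six systems. Preservation under $\mathcal{L}$-asimulations is immediate from known preservation results: for $\mathcal{L} = \mathsf{IL}$ it is the $n = 0$ instance of the fact recalled after Definition \ref{D:asimulation} (from \cite[Corollary 3.3]{o1}), since an $\mathsf{IL}$-asimulation from $(\mathcal{M}_1, w_1)$ to $(\mathcal{M}_2, w_2)$ is one from $(\mathcal{M}_1, w_1, \Lambda)$ to $(\mathcal{M}_2, w_2, \Lambda)$, whence $Th^+_{\mathsf{IL}}(\mathcal{M}_1, w_1) = Tp^+_{\mathsf{IL}}(\mathcal{M}_1, w_1, \Lambda) \subseteq Tp^+_{\mathsf{IL}}(\mathcal{M}_2, w_2, \Lambda) = Th^+_{\mathsf{IL}}(\mathcal{M}_2, w_2)$, i.e.\ exactly the required implication for all $\phi \in IL_\emptyset(\Theta)$. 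For the other logics one invokes the analogue announced in Section \ref{S:Standard}, recalling that a $\mathsf{CD}$-asimulation between $Su$-models is just an $\mathsf{IL}$-asimulation and that on a fixed admissible model $\models_\mathcal{L}$ coincides with $\models_{\mathsf{IL}}$ (resp.\ $\models_{\mathsf{IL}^\equiv}$).

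For $\star$-compactness I would reduce to classical compactness via the standard translation. Fix a two-sorted classical signature $\Theta^{*}$ with sorts for worlds and objects, a binary accessibility $R$ on worlds, a domain predicate $E(x,u)$, an $(n{+}1)$-ary $P^{*}(\bar{x}_n, u)$ for each $P \in \Theta_n$, a function $c^{\bullet}$ from worlds to objects for each $c \in Const_\Theta$, and classical $=$ when $\mathcal{L}$ carries $\equiv$; let $\mathrm{Fr}_{\mathcal{L}}$ be the $\Theta^{*}$-theory saying that $R$ is a partial order, every world has nonempty domain, and $P^{*}, c^{\bullet}$ are monotone along $R$ with $c^{\bullet}(u)$ in the domain of $u$, augmented by the constant-domain axiom $\forall x\,\forall u\,\forall u'\,(E(x,u) \leftrightarrow E(x,u'))$ when $\mathcal{L} \in \{\mathsf{CD}, \mathsf{CD}^\equiv, \mathsf{Bi}^\equiv\}$. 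Let $ST_u(\varphi)$ be the usual standard translation, so $\mathcal{M}, w \models_\mathcal{L} \varphi \Leftrightarrow \mathcal{M}^{*} \models ST_u(\varphi)[w]$ for the associated classical structure $\mathcal{M}^{*}$. Given $(\Gamma, \Delta) \subseteq (L(\Theta), L(\Theta))$ all of whose finite subpairs are satisfiable in $Str_\mathcal{L}(\Theta)$, adjoin a fresh constant $\mathbf{u}$ of sort world and set
\[
T := \mathrm{Fr}_{\mathcal{L}} \cup \{\, ST_{\mathbf{u}}(\varphi) : \varphi \in \Gamma \,\} \cup \{\, \neg ST_{\mathbf{u}}(\varphi) : \varphi \in \Delta \,\}.
\]
Any finite $T_0 \subseteq T$ is satisfied by $\mathcal{M}'^{*}$ for a witnessing $(\mathcal{M}', w') \in Str_\mathcal{L}(\Theta)$ of the finitely many $\Gamma$- and $\Delta$-formulas in $T_0$ (taking $\mathbf{u} := w'$, and using that a genuine Kripke model validates all of $\mathrm{Fr}_{\mathcal{L}}$), so by classical compactness there is $\mathcal{A} \models T$. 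Convert $\mathcal{A}$ into a disjoint-domain Kripke $\Theta$-model $\mathcal{M}$ by the familiar tagging: the domain of $v$ is $\{(a,v) : \mathcal{A} \models E(a,v)\}$, $\mathbb{H}_{vv'}(a,v) := (a,v')$, with $I_v(P), I_v(c)$ read off from $P^{*}, c^{\bullet}$. The monotonicity axioms make $\mathcal{M}$ a model, $\mathbb{H}$ is automatically injective, and under the constant-domain axiom each $\mathbb{H}_{vv'}$ is a bijection; hence $(\mathcal{M}, \mathbf{u}^{\mathcal{A}}) \in Str_\mathcal{L}(\Theta)$ for the appropriate $\mathcal{L}$ (an arbitrary model for $\mathsf{IL}, \mathsf{IL}^\equiv$; an $In$-model for $\mathsf{In}^\equiv$; a $Bi \subseteq Su$-model for $\mathsf{CD}, \mathsf{CD}^\equiv, \mathsf{Bi}^\equiv$), and $ST$ transports $\mathcal{A} \models T$ into $\mathcal{M}, \mathbf{u}^{\mathcal{A}} \models_\mathcal{L} (\Gamma, \Delta)$.

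For TUP, let $\mathcal{M}_0 \preccurlyeq_\mathcal{L} \cdots \preccurlyeq_\mathcal{L} \mathcal{M}_k \preccurlyeq_\mathcal{L} \cdots$ be an $\mathcal{L}$-elementary chain and $\mathcal{N} := \bigcup_k \mathcal{M}_k$, which lies in $Str_\mathcal{L}(\Theta)$ by the closure of $Str_\mathcal{L}$ under unions of countable chains (for $\mathcal{L} \in \{\mathsf{CD}, \mathsf{CD}^\equiv, \mathsf{Bi}^\equiv\}$ using that a union of surjective, resp.\ bijective, transition systems along submodels is again such), and satisfies $\mathcal{M}_k \subseteq \mathcal{N}$. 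Since for $\mathcal{L} \in StIL$ the relation $\preccurlyeq_\mathcal{L}$ amounts to inclusion together with agreement of parametrised satisfaction for parameters drawn from the submodel (by the complete-type / parametrised-satisfaction correspondence recorded before Lemma \ref{L:el-submodels}), it suffices to show by induction on $\varphi \in IL_{\bar{y}_p}(\Theta)$ (resp.\ $IL^\equiv_{\bar{y}_p}(\Theta)$) that for all $k$, all $w \in W_k$, and all $\bar{a}_p \in (A_k)^p_w$, $\mathcal{M}_k, w \models_\mathcal{L} \varphi[\bar{a}_p] \Leftrightarrow \mathcal{N}, w \models_\mathcal{L} \varphi[\bar{a}_p]$. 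The atomic case holds because $\mathcal{M}_k \subseteq \mathcal{N}$ forces $I^{(k)}_w(P) = I^{\mathcal{N}}_w(P) \cap (A^{(k)}_w)^{n}$ and $I^{(k)}_w(c) = I^{\mathcal{N}}_w(c)$ (and likewise for $\equiv$), while $\wedge, \vee, \bot$ are trivial. In the cases $\varphi = \psi_1 \to \psi_2$ and $\varphi = \forall x\psi$ one uses that every $v$ with $w \prec_\mathcal{N} v$ lies in some $W_j$ with $j \geq k$, where $w \prec_j v$ and $\mathbb{H}^{\mathcal{N}}_{wv}\langle \bar{a}_p \rangle = \mathbb{H}^{(j)}_{wv}\langle \bar{a}_p \rangle$, so the induction hypothesis for the subformulas rewrites the statement at $(\mathcal{N}, w)$ as ``it holds at $(\mathcal{M}_j, w)$ for every $j \geq k$'', which is equivalent to ``it holds at $(\mathcal{M}_k, w)$'', the nontrivial direction being the elementarity $\mathcal{M}_k \preccurlyeq_\mathcal{L} \mathcal{M}_j$ obtained by composing the chain; for $\varphi = \exists x\psi$ one instead locates a witness in some $(A_j)_w \subseteq (A^{\mathcal{N}})_w$ and again pulls the existential statement back to $\mathcal{M}_k$ by elementarity (throughout using the simplified quantifier clauses, Corollary \ref{C:satisfaction} and Lemma \ref{L:satisfaction}.4). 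Letting $\varphi$ range over formulas-with-parameters from $\mathcal{M}_k$ gives $\mathcal{M}_k \preccurlyeq_\mathcal{L} \mathcal{N}$ for every $k$.

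The bulk of the work, and the only genuinely new ingredient, is the $\star$-compactness clause: it requires the standard-translation machinery for intuitionistic first-order logic together with the back-translation of a classical model into a disjoint-domain Kripke model, and the delicate point is ensuring that this back-translated model lands in $Str_\mathcal{L}(\Theta)$ for each of the six logics, which is why $\mathrm{Fr}_{\mathcal{L}}$ must be tuned per logic and why it is convenient that the tagging construction yields injective --- and, under constant domains, bijective --- transition maps for free.
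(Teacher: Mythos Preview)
Your argument tracks the paper's closely: preservation under asimulations is cited from \cite{o1}; $\star$-compactness is handled via the standard translation (the paper itself only cites \cite{chagrov} and mentions the translation route in passing); and TUP is proved by induction on formula complexity, locating witnesses in some $\mathcal{M}_j$ with $j \geq k$ and then invoking $\mathcal{M}_k \preccurlyeq_\mathcal{L} \mathcal{M}_j$. The paper phrases the TUP induction over sentences of $IL_\emptyset(\Theta \cup \{\bar{c}_m\})$ evaluated in the constant-extended generated submodels $([\mathcal{M}_i,w],\bar{c}_m/\bar{a}_m)$ rather than over open formulas with parameters, but by Lemma \ref{L:satisfaction}.4 the two formulations are equivalent.

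There is, however, a genuine gap in your compactness sketch for the two \emph{extensional} equality logics $\mathsf{IL}^\equiv$ and $\mathsf{CD}^\equiv$. You translate $\equiv$ to classical $=$ and encode models via an increasing-domain predicate $E(x,u)$ with tagged transitions $\mathbb{H}_{vv'}(a,v)=(a,v')$. This encoding captures exactly the injective-transition semantics: the back-translated model always lands in $In$, and, more importantly, the forward equivalence $\mathcal{M}',w' \models_\mathcal{L} \varphi \Leftrightarrow \mathcal{M}'^{*} \models ST_u(\varphi)[w']$ fails when $\mathcal{M}'$ has non-injective transitions. Concretely, take $\Delta = \{\forall x\forall y(x\equiv y \vee \neg(x\equiv y))\}$: this sentence is refutable in $Str_{\mathsf{IL}^\equiv}(\Theta) = Pmod_\Theta$ (two elements collapsing at a successor world), yet $ST_u(x\equiv y \vee \neg(x\equiv y))$ reduces to the classical tautology $x=y \vee x\neq y$, so $\neg ST_{\mathbf u}(\forall x\forall y(\cdots))$ is classically unsatisfiable and your $T$ has no model at all. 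The fix is straightforward but should be stated: for $\mathsf{IL}^\equiv$ and $\mathsf{CD}^\equiv$, translate $\equiv$ to a fresh binary predicate, add the congruence axioms of Lemma \ref{L:approx-properties} (relativised to each world) to $\mathrm{Fr}_\mathcal{L}$, and after back-translation quotient by that congruence along the lines of Lemma \ref{L:congr-mod} to recover a model with $\equiv$ as genuine identity. With this adjustment your argument goes through uniformly.
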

\begin{proof}
We sketch the proof for the case $\mathcal{L} = \mathsf{IL}$, for
the other cases the proof is similar. Invariance under
asimulations follows from the main result of \cite{o1}. The
$\star$-compactness of $\mathsf{IL}$ is a well-known fact and is
established in many places, see e.g. \cite[Theorem
2.46]{chagrov} (this property is also easily established by compactness of classical fiirst-order logic and the standard
translation). Finally, we outline the proof of TUP following the
usual idea for such type of proofs.

It is straightforward to show that we have $\mathcal{M}_i
\subseteq \bigcup_{n \in \omega}\mathcal{M}_n = \mathcal{M}$ for
every $i \in \omega$, so it remains to verify that, for all $i, m <
\omega$, $w \in W_i$, $\bar{a}_m \in (A_i)^m_w$, and every tuple
$\bar{c}_m$ of pairwise distinct constants outside $\Theta$, we
have $Tp_{\mathsf{IL}}(\mathcal{M}_i, w, \bar{c}_m/\bar{a}_m) =
Tp_{\mathsf{IL}}(\mathcal{M}, w, \bar{c}_m/\bar{a}_m)$. The latter
is equivalent to showing that for every $i, m < \omega$, $w \in
W_i$, $\bar{a}_m \in (A_i)^m_w$, every tuple $\bar{c}_m$ of
pairwise distinct constants outside $\Theta$, and every $\varphi
\in IL_\emptyset(\Theta\cup\{\bar{c}_m\})$ we have:
$$
([\mathcal{M}_i,w],\bar{c}_m/\bar{a}_m), w \models_{\mathsf{IL}}
\varphi \Leftrightarrow ([\mathcal{M},w],\bar{c}_m/\bar{a}_m), w
\models_{\mathsf{IL}} \varphi.
$$
We show this by induction on the construction of $\varphi \in
IL_\emptyset(\Theta\cup\{\bar{c}_m\})$ in which the basis and
induction step cases for $\wedge$ and $\vee$ are trivial. We look
into the remaining cases:

\emph{Case 1}. $\varphi = \psi \to \chi$, where $\psi, \chi \in
IL_\emptyset(\Theta\cup\{\bar{c}_m\})$. ($\Leftarrow$). If
$([\mathcal{M}_i,w],\bar{c}_m/\bar{a}_m), w
\not\models_{\mathsf{IL}} \psi \to \chi$, then there is a $v \in
W_i$ such that $w\mathrel{\prec_i}v$ and
$([\mathcal{M}_i,w],\bar{c}_m/\bar{a}_m), v \models_{\mathsf{IL}}
(\{\psi\},\{\chi\})$, so also
$[([\mathcal{M}_i,w],\bar{c}_m/\bar{a}_m), v], v
\models_{\mathsf{IL}} (\{\psi\},\{\chi\})$ by Lemma
\ref{L:satisfaction}.2, whence
$$([\mathcal{M}_i,v],\bar{c}_m/(\mathbb{H}_i)_{wv}\langle\bar{a}_m\rangle),
v \models_{\mathsf{IL}} (\{\psi\},\{\chi\})$$ by Lemma
\ref{L:cutoff-constants}.2, so that we get
$([\mathcal{M},v],\bar{c}_m/(\mathbb{H}_i)_{wv}\langle\bar{a}_m\rangle),
v \models_{\mathsf{IL}} (\{\psi\},\{\chi\})$ by induction
hypothesis. Since $\mathcal{M}_i \subseteq \mathcal{M}$ and
$\bar{a}_m \in (A_i)^m_w$, we must have
$(\mathbb{H}_i)_{wv}\langle\bar{a}_m\rangle =
\mathbb{H}_{wv}\langle\bar{a}_m\rangle$ and $w \prec v$, whence
further
$([\mathcal{M},v],\bar{c}_m/\mathbb{H}_{wv}\langle\bar{a}_m\rangle),
v \models_{\mathsf{IL}} (\{\psi\},\{\chi\})$. Applying again Lemma
\ref{L:cutoff-constants}.2, we get that
$[([\mathcal{M},w],\bar{c}_m/\bar{a}_m), v], v
\models_{\mathsf{IL}} (\{\psi\},\{\chi\})$. Next, Lemma
\ref{L:satisfaction}.2 yields
$([\mathcal{M},w],\bar{c}_m/\bar{a}_m), v \models_{\mathsf{IL}}
(\{\psi\},\{\chi\})$, whence finally
$([\mathcal{M},w],\bar{c}_m/\bar{a}_m), w
\not\models_{\mathsf{IL}} \psi \to \chi$.

($\Rightarrow$). In the other direction, assume that
$([\mathcal{M},w],\bar{c}_m/\bar{a}_m), w
\not\models_{\mathsf{IL}} \psi \to \chi$. Then, for some $v \in W$
such that $w\mathrel{\prec}v$, it is true that
$([\mathcal{M},w],\bar{c}_m/\bar{a}_m), v \models_{\mathsf{IL}}
(\{\psi\},\{\chi\})$. Applying again Lemma \ref{L:satisfaction}.2
and Lemma \ref{L:cutoff-constants}.2, we infer that
$([\mathcal{M},v],\bar{c}_m/\mathbb{H}_{wv}\langle\bar{a}_m\rangle),
v \models_{\mathsf{IL}} (\{\psi\},\{\chi\})$. But then, for some
$j \geq i$, we must have $w,v \in W_j$, $w\mathrel{\prec_j}v$, and
$\bar{a}_m \in (A_j)^m_w$. Hence, by $\mathcal{M}_j \subseteq
\mathcal{M}$ and Definition \ref{D:submodel}, we get that
$(\mathbb{H}_j)_{wv}\langle\bar{a}_m\rangle =
\mathbb{H}_{wv}\langle\bar{a}_m\rangle$, whence it follows that
$([\mathcal{M},v],\bar{c}_m/(\mathbb{H}_j)_{wv}\langle\bar{a}_m\rangle),
v \models_{\mathsf{IL}} (\{\psi\},\{\chi\})$. By induction
hypothesis for $j$, we further get that
$([\mathcal{M}_j,v],\bar{c}_m/(\mathbb{H}_j)_{wv}\langle\bar{a}_m\rangle),
v \models_{\mathsf{IL}} (\{\psi\},\{\chi\})$. Applying one more
time Lemma \ref{L:satisfaction}.2 and Lemma
\ref{L:cutoff-constants}.2, we get that
$([\mathcal{M}_j,w],\bar{c}_m/\bar{a}_m), v \models_{\mathsf{IL}}
(\{\psi\},\{\chi\})$, so also
$([\mathcal{M}_j,w],\bar{c}_m/\bar{a}_m), w
\not\models_{\mathsf{IL}} \psi \to \chi$. By obvious transitivity
of $\preccurlyeq_{\mathsf{IL}}$ we get then that $\mathcal{M}_i
\preccurlyeq_{\mathsf{IL}} \mathcal{M}_j$, whence finally
$([\mathcal{M}_i,w],\bar{c}_m/\bar{a}_m), w
\not\models_{\mathsf{IL}} \psi \to \chi$.

\emph{Case 2}. $\varphi = \exists x\psi$, where $\psi \in
IL_x(\Theta\cup\{\bar{c}_m\})$. ($\Rightarrow$). If
$([\mathcal{M}_i,w],\bar{c}_m/\bar{a}_m), w \models_{\mathsf{IL}}
\exists x\psi$, then, by Corollary \ref{C:satisfaction-cutoff}, let $a
\in (A_i)_w \subseteq A_w$ and a new constant $c$ be such that
$([\mathcal{M}_i,w],(\bar{c}_m)^\frown c/(\bar{a}_m)^\frown a),w
\models_{\mathsf{IL}} \psi\binom{c}{x}$, whence, by induction hypothesis, we
also have $$([\mathcal{M},w],(\bar{c}_m)^\frown c/(\bar{a}_m)^\frown
a),w \models_{\mathsf{IL}} \psi\binom{c}{x}.$$ But then, again by
Corollary \ref{C:satisfaction-cutoff},
$([\mathcal{M},w],\bar{c}_m/\bar{a}_m), w \models_{\mathsf{IL}}
\exists x\psi$ clearly holds as well.

($\Leftarrow$). If $([\mathcal{M},w],\bar{c}_m/\bar{a}_m), w
\models_{\mathsf{IL}} \exists x\psi$, then, by Corollary
\ref{C:satisfaction-cutoff}, let $a \in A_w$ and a new
constant $c$ be such that
$([\mathcal{M},w],(\bar{c}_m)^\frown c/(\bar{a}_m)^\frown a),w
\models_{\mathsf{IL}} \psi\binom{c}{x}$. But then, for some $j \geq i$ we must
have $a \in (A_j)_w$ and also, by induction hypothesis and since
$\psi\binom{c}{x} \in IL_\emptyset(\Theta \cup \{(\bar{c}_m)^\frown
c\})$, that $([\mathcal{M}_j,w],(\bar{c}_m)^\frown
c/(\bar{a}_m)^\frown a),w \models_{\mathsf{IL}} \psi\binom{c}{x}$.
The latter means, by Corollary \ref{C:satisfaction-cutoff}, that
$$([\mathcal{M}_j,w],\bar{c}_m/\bar{a}_m), w \models_{\mathsf{IL}}
\exists x\psi.$$ By obvious transitivity of
$\preccurlyeq_{\mathsf{IL}}$ we get then that $\mathcal{M}_i
\preccurlyeq_{\mathsf{IL}} \mathcal{M}_j$, whence finally
$$([\mathcal{M}_i,w],\bar{c}_m/\bar{a}_m), w \models_{\mathsf{IL}}
\exists x\psi.$$

\emph{Case 3}. $\varphi = \forall x\psi$, where $\psi \in
IL_x(\Theta\cup\{\bar{c}_m\})$. ($\Leftarrow$). If $([\mathcal{M}_i,w],\bar{c}_m/\bar{a}_m), w
\not\models_{\mathsf{IL}} \forall x\psi$, then, by Corollary \ref{C:satisfaction-cutoff}, let $v \in W_i$,
$a \in (A_i)_v \subseteq A_v$, and a new constant $c$ be such that $w \prec_i v$ and $([\mathcal{M}_i,v],(\bar{c}_m)^\frown c/(\mathbb{H}_i)_{wv}\langle\bar{a}_m\rangle^\frown a), v \not\models_{\mathsf{IL}} \psi\binom{c}{x}$. Therefore, by induction hypothesis, we must also have $([\mathcal{M},v],(\bar{c}_m)^\frown c/(\mathbb{H}_i)_{wv}\langle\bar{a}_m\rangle^\frown a), v \not\models_{\mathsf{IL}} \psi\binom{c}{x}$; it follows, by $\mathcal{M}_i \subseteq \mathcal{M}$ and
$\bar{a}_m \in (A_i)^m_w$, that also $([\mathcal{M},v],(\bar{c}_m)^\frown c/\mathbb{H}_{wv}\langle\bar{a}_m\rangle^\frown a), v \not\models_{\mathsf{IL}} \psi\binom{c}{x}$. Applying again  Corollary \ref{C:satisfaction-cutoff}, we get that $([\mathcal{M},w],\bar{c}_m/\bar{a}_m), w
\not\models_{\mathsf{IL}} \forall x\psi$.

($\Rightarrow$). $([\mathcal{M},w],\bar{c}_m/\bar{a}_m), w
\not\models_{\mathsf{IL}} \forall x\psi$, then, by Corollary \ref{C:satisfaction-cutoff}, let $v \in W$,
$a \in A_v$, and a new constant $c$ be such that $w \prec v$ and  $([\mathcal{M},v],(\bar{c}_m)^\frown c/\mathbb{H}_{wv}\langle\bar{a}_m\rangle^\frown a), v \not\models_{\mathsf{IL}} \psi\binom{c}{x}$. But then, for some
$j \geq i$, we must have $w,v \in W_j$, $w\mathrel{\prec_j}v$,
$\bar{a}_m \in (A_j)^m_w$, and $a \in (A_j)_v$. Furthermore, by $\mathcal{M}_j \subseteq
\mathcal{M}$ and Definition \ref{D:submodel}, we get that
$(\mathbb{H}_j)_{wv}\langle\bar{a}_m\rangle =
\mathbb{H}_{wv}\langle\bar{a}_m\rangle$, so that, by induction hypothesis and
$\psi\binom{c}{x} \in IL_\emptyset(\Theta \cup \{(\bar{c}_m)^\frown
c\})$, we get that $$([\mathcal{M}_j,v],(\bar{c}_m)^\frown c/(\mathbb{H}_j)_{wv}\langle\bar{a}_m\rangle^\frown a), v \not\models_{\mathsf{IL}} \psi\binom{c}{x}.$$ Applying again Corollary \ref{C:satisfaction-cutoff}, we get that $([\mathcal{M}_j,w],\bar{c}_m/\bar{a}_m), w
\not\models_{\mathsf{IL}} \forall x\psi$. By obvious transitivity of
$\preccurlyeq_{\mathsf{IL}}$ we get then that $\mathcal{M}_i
\preccurlyeq_{\mathsf{IL}} \mathcal{M}_j$, whence finally
$([\mathcal{M}_i,w],\bar{c}_m/\bar{a}_m), w \not\models_{\mathsf{IL}}
\forall x\psi$.
\end{proof}

Our main theorem is then that no standard intuitionistic logic has
proper extensions that display the combination of useful
properties established in Lemma \ref{L:properties}. In other
words, we are going to establish the following:
\begin{theorem}\label{L:main}
Let $\mathcal{L}$ be an abstract intuitionistic logic and let
$\mathcal{L}' \in StIL$. If $\mathcal{L}' \sqsubseteq \mathcal{L}$
and $\mathcal{L}$ is preserved under $\mathcal{L}'$-asimulations,
$\star$-compact, and has TUP, then $\mathcal{L}' \bowtie
\mathcal{L}$.
\end{theorem}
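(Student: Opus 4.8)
The plan is to follow the classic Lindström strategy, adapted to the intuitionistic Kripke setting. Since $\mathcal{L}' \sqsubseteq \mathcal{L}$ is assumed, I only need to prove $\mathcal{L} \sqsubseteq \mathcal{L}'$, i.e. that every abstract sentence $\phi \in L(\Theta_\phi)$ is expressively equivalent (over $Str_{\mathcal{L}'}=Str_\mathcal{L}$) to some $\mathcal{L}'$-sentence. The key reduction is semantic: by preservation under $\mathcal{L}'$-asimulations together with Corollary~\ref{C:generated-submodels} (so that $\boxplus_\mathcal{L}=\oplus$ and types have their standard meaning), it suffices to show that $\phi$ is invariant under agreement of $\mathcal{L}'$-types, i.e. that whenever $(\mathcal{M}_1,w_1),(\mathcal{M}_2,w_2)\in Str_\mathcal{L}(\Theta_\phi)$ satisfy $Tp^+_{\mathcal{L}'}(\mathcal{M}_1,w_1)\subseteq Tp^+_{\mathcal{L}'}(\mathcal{M}_2,w_2)$ and $\mathcal{M}_1,w_1\models_\mathcal{L}\phi$, then $\mathcal{M}_2,w_2\models_\mathcal{L}\phi$. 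Once this ``asimulation-type-invariance'' of $\phi$ is established, a standard compactness-plus-$\star$-compactness argument (using the propositional and quantifier closure of $\mathcal{L}'$, and the Occurrence property to keep signatures finite) produces a single $\mathcal{L}'$-sentence $\psi$ with the same models as $\phi$: one shows the set of $\mathcal{L}'$-consequences-in-the-sense-of-theories of $\phi$ already entails $\phi$, and then extracts a finite conjunction/disjunction.

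To get asimulation-type-invariance, I would run the usual two-step saturation argument. Step one: given $(\mathcal{M}_1,w_1)$ and $(\mathcal{M}_2,w_2)$ as above, pass to $\mathcal{L}$-saturated elementary extensions. This is where I expect the TUP and $\star$-compactness to do the heavy lifting: by the standard elementary-chain construction — realizing one type at a time along an $\omega$-chain and taking the Tarski union at each limit, iterating $\omega$ times — one builds $\mathcal{M}_i \preccurlyeq_{\mathcal{L}'} \mathcal{M}_i^*$ with $\mathcal{M}_i^*$ $\mathcal{L}$-saturated. Here I must be careful that ``realizing a type'' in the intuitionistic sense covers all three flavours of Definition~\ref{D:types} (successor, existential, universal), and that $\star$-compactness (which is stated with a pair $(\Gamma,\Delta)$, exactly matching the negative-information needs of intuitionistic types) is what lets each finitely-approximable type be realized in a single extension. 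By Lemma~\ref{L:type-realization}, types and their realizations are preserved under the passage to elementary extensions and under isomorphic correction, so I can assume the saturated models are disjoint copies suitably arranged. Step two: between the two $\mathcal{L}$-saturated models, Lemma~\ref{L:asimulations-gen} (or Corollary~\ref{L:asimulationscorollary}) gives an $\mathcal{L}'$-asimulation from $(\mathcal{M}_1^*,w_1)$ to $(\mathcal{M}_2^*,w_2)$ built directly from the positive-type inclusion — which still holds at the roots since the extensions are $\mathcal{L}'$-elementary. Then preservation of $\mathcal{L}$ under $\mathcal{L}'$-asimulations transports $\phi$ from $\mathcal{M}_1^*$ to $\mathcal{M}_2^*$, and since $\mathcal{M}_i\preccurlyeq_{\mathcal{L}'}\mathcal{M}_i^*$ and (by TUP again, or directly) truth of $\mathcal{L}$-sentences is preserved in both directions along $\preccurlyeq_\mathcal{L}$-related models — this needs an auxiliary observation that $\mathcal{L}'$-elementary extensions of $\mathcal{L}$-structures are $\mathcal{L}$-elementary, which follows from the same asimulation machinery plus Löwenheim–Skolem-style back-and-forth, or more cheaply from saturation — we conclude $\mathcal{M}_2,w_2\models_\mathcal{L}\phi$.

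The two subtle points, and the main obstacle, are: (a) making the elementary-chain/saturation construction genuinely work with the three-fold notion of intuitionistic type and the pair-valued $\star$-compactness — in particular ensuring that at each stage only finitely many fresh constants are used, that the Occurrence property keeps everything in a finite signature so $\star$-compactness applies, and that Closure for constant extensions and Closure for unions of countable chains (both built into the definition of an abstract intuitionistic logic) let the construction stay inside $Str_\mathcal{L}$; and (b) the passage ``$\mathcal{M}\preccurlyeq_{\mathcal{L}'}\mathcal{M}^*$ and $\mathcal{M}^*$ realizes everything'' $\Rightarrow$ ``$\mathcal{M}\preccurlyeq_\mathcal{L}\mathcal{M}^*$'', i.e. that $\mathcal{L}$-truth transfers. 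For (b) I would use a Keisler–Shelah-flavoured observation: two $\mathcal{L}'$-elementarily equivalent $\mathcal{L}$-structures that are both $\mathcal{L}$-saturated are connected by an $\mathcal{L}'$-asimulation in both directions (Lemma~\ref{L:asimulations-gen} is symmetric under the type-equality that saturation forces), hence $\mathcal{L}$-elementarily equivalent; combined with the chain construction this upgrades $\preccurlyeq_{\mathcal{L}'}$ to $\preccurlyeq_\mathcal{L}$. Once asimulation-type-invariance of every $\phi\in L(\Theta_\phi)$ is in hand, the final ``there is a single defining $\mathcal{L}'$-sentence'' step is the routine Lindström endgame: let $T$ be the set of all $\psi\in IL'_\emptyset(\Theta_\phi)$ (in the appropriate equality version) with $\phi\models\psi$; show $T\models\phi$ by a $\star$-compactness argument contradicting type-invariance if it failed; then $\star$-compactness again yields a finite $T_0\subseteq T$ with $\bigwedge T_0$ equivalent to $\phi$, and Propositional/Quantifier Closure of $\mathcal{L}'$ guarantees $\bigwedge T_0 \in L'(\Theta_\phi)$. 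This gives $\mathcal{L}\sqsubseteq\mathcal{L}'$, and hence $\mathcal{L}'\bowtie\mathcal{L}$.
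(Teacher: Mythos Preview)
Your overall architecture matches the paper's: contrapose via Proposition~\ref{L:proposition1} to get two models with $Th^+_{\mathcal{L}'}$-inclusion but disagreeing on some $\phi$, replace them by $\mathcal{L}$-saturated models with the same $\mathcal{L}$-theory, and derive a contradiction via Corollary~\ref{L:asimulationscorollary} and asimulation-preservation. However, the step you label ``standard elementary-chain construction'' is precisely where the real work of the paper lies, and your sketch underestimates it badly. The classical technique---name every element by a fresh constant, add the elementary diagram, and use compactness to realize a type in an elementary extension---does not transfer to intuitionistic Kripke models: an individual constant must be interpreted at \emph{every} world, but an element $a\in A_w$ need not have any $\mathbb{H}_{vw}$-preimage at a world $v\prec w$, so one cannot form a constant-expansion of the whole model naming $a$. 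The paper's solution (Lemmas~\ref{L:lemma1-cong}--\ref{L:lemma1} and Proposition~\ref{L:saturation}) is to first unravel, then name each $a\in\mathbb{A}$ by a pair of unary \emph{predicates} $P^+_a,P^-_a$ (encoding ``is the homomorphic image of $a$'' and its monotone complement), add further Skolem-style predicates $R^\pm_{\Gamma,\Delta,\ldots}$, $R^{ex}_{\Xi,\ldots}$, $R^{all}_{\Xi,\ldots}$ for each successor/existential/universal type, use $\star$-compactness to get a model of the resulting theory, and then---this is the heart of the argument---prove that any such model admits, after a suitable congruence quotient, an $\mathcal{L}$-elementary embedding of the original $\mathcal{M}^\ast$. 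That embedding lemma (Lemma~\ref{L:lemma1}) is a delicate layer-by-layer inductive construction along the tree structure of the unravelled frame; nothing like it is available off the shelf.

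Your proposed workaround~(b) is also broken as stated: you want to conclude $\mathcal{M}\preccurlyeq_\mathcal{L}\mathcal{M}^*$ from ``both are $\mathcal{L}$-saturated and $\mathcal{L}'$-elementarily equivalent'', but $\mathcal{M}$ itself is not saturated, so the Keisler--Shelah-style observation does not apply to the pair $(\mathcal{M},\mathcal{M}^*)$. The paper avoids this issue entirely by ensuring that each single step of the chain already produces an $\mathcal{L}$-elementary (not merely $\mathcal{L}'$-elementary) extension, via the embedding lemma above; then TUP for $\mathcal{L}$ applies directly in Corollary~\ref{C:saturation}, and no upgrade from $\preccurlyeq_{\mathcal{L}'}$ to $\preccurlyeq_\mathcal{L}$ is ever needed.
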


\subsection{The main result proven}

Before we start with the proof, we need one more piece of
notation. If $\mathcal{L}$ is an abstract intuitionistic logic,
$\Theta$ a signature, and $\Gamma
\subseteq L(\Theta)$, then we let $Mod_\mathcal{L}(\Theta,
\Gamma)$ denote the class of pointed $\Theta$-models
$(\mathcal{N}, u)\in Str_{\mathcal{L}}(\Theta)$ such that
for every $\phi \in \Gamma$ it is
true that:
$$
\mathcal{N}, u \models_\mathcal{L} \phi.
$$
If $\Gamma = \{ \phi \}$ for some $\phi \in L(\Theta)$ then
instead of $Mod_\mathcal{L}(\Theta,\Gamma)$ we simply write
$Mod_\mathcal{L}(\Theta,\phi)$.

We now start by establishing a couple of technical facts first:

\begin{proposition}\label{L:proposition1}
Let $\mathcal{L}$ be a $\star$-compact  abstract intuitionistic
logic extending $\mathcal{L}' \in StIL$. Suppose that $\mathcal{L}'
\not\bowtie\mathcal{L}$. Then, there are $\phi \in L(\Theta_\phi)$ and pointed models $(\mathcal{M}_1, w_1)$,
$(\mathcal{M}_2, w_2) \in Str_{\mathcal{L}}(\Theta_\phi)$ such that
$Th^+_{\mathcal{L}'}(\mathcal{M}_1, w_1) \subseteq
Th^+_{\mathcal{L}'}(\mathcal{M}_2, w_2)$, while
$\mathcal{M}_1, w_1 \models_\mathcal{L} \phi$ and $\mathcal{M}_2,
w_2 \not\models_\mathcal{L} \phi$.
\end{proposition}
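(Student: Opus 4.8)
The plan is to derive the two models from $\star$-compactness by a ``positive consequences'' argument, much as one separates classes by positive theories in classical abstract model theory, but keeping track of the intuitionistic disjunction property. Since $\mathcal{L}'\sqsubseteq\mathcal{L}$ holds by assumption, $\mathcal{L}'\not\bowtie\mathcal{L}$ forces $\mathcal{L}\not\sqsubseteq\mathcal{L}'$, so we may fix $\phi\in L(\Theta_\phi)$ that is not equivalent over $Str_\mathcal{L}(\Theta_\phi)$ to any $\mathcal{L}'$-sentence. We may assume $\phi$ is satisfiable in $Str_\mathcal{L}(\Theta_\phi)$, since otherwise $\phi$ is equivalent to $\bot\in L'(\Theta_\phi)$ and hence $\mathcal{L}'$-definable, against the choice of $\phi$; the exact matching of minimal signatures required by the definition of $\sqsubseteq$ is dealt with everywhere below by padding the relevant $\mathcal{L}'$-sentence with valid conjuncts that mention the remaining symbols (e.g.\ universally quantified instances of $\varphi\to\varphi$), and I shall not belabour this routine point.

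Next I would introduce $\Phi^+:=\{\,\theta\in L'(\Theta_\phi)\mid \mathcal{M},w\models_\mathcal{L}\phi\ \text{implies}\ \mathcal{M},w\models_{\mathcal{L}'}\theta\ \text{for every}\ (\mathcal{M},w)\in Str_\mathcal{L}(\Theta_\phi)\,\}$, the set of positive $\mathcal{L}'$-consequences of $\phi$; since $\mathcal{L}'\sqsubseteq\mathcal{L}$, every $\mathcal{L}'$-sentence lives in $L(\Theta_\phi)$ with its usual meaning, so $\models_{\mathcal{L}'}$ and $\models_\mathcal{L}$ agree on $\Phi^+$. The first step is to find $(\mathcal{M}_2,w_2)\in Str_\mathcal{L}(\Theta_\phi)$ with $\mathcal{M}_2,w_2\models_{\mathcal{L}'}\Phi^+$ but $\mathcal{M}_2,w_2\not\models_\mathcal{L}\phi$, i.e.\ to satisfy the pair $(\Phi^+,\{\phi\})$ over $Str_\mathcal{L}(\Theta_\phi)$. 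If this pair were unsatisfiable, $\star$-compactness would produce a finite unsatisfiable subpair: either $(\Phi_0,\emptyset)$ with $\Phi_0\Subset\Phi^+$, which cannot happen because a model of the satisfiable $\phi$ must satisfy $\bigwedge\Phi_0$; or $(\Phi_0,\{\phi\})$, meaning every model of $\bigwedge\Phi_0$ satisfies $\phi$. Together with the trivial $\phi\models\bigwedge\Phi_0$ this yields $\phi\equiv\bigwedge\Phi_0$ over $Str_\mathcal{L}(\Theta_\phi)$ with $\bigwedge\Phi_0\in L'(\Theta_\phi)$ (closure of logics in $StIL$ under $\wedge$), contradicting the non-definability of $\phi$. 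Hence the required $(\mathcal{M}_2,w_2)$ exists.

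The second step produces $(\mathcal{M}_1,w_1)$ by satisfying the pair $(\{\phi\},\,Th^-_{\mathcal{L}'}(\mathcal{M}_2,w_2))$ over $Str_\mathcal{L}(\Theta_\phi)$, where $Th^-_{\mathcal{L}'}(\mathcal{M}_2,w_2):=\{\,\theta\in L'(\Theta_\phi)\mid \mathcal{M}_2,w_2\not\models_{\mathcal{L}'}\theta\,\}$. By $\star$-compactness it is enough to satisfy every finite subpair. A subpair $(\emptyset,\Delta_0)$ is witnessed by $(\mathcal{M}_2,w_2)$ itself. For a subpair $(\{\phi\},\{\theta_1,\dots,\theta_k\})$ with each $\theta_i\in Th^-_{\mathcal{L}'}(\mathcal{M}_2,w_2)$, unsatisfiability would mean every model of $\phi$ satisfies $\theta_1\vee\dots\vee\theta_k$, so $\theta_1\vee\dots\vee\theta_k\in\Phi^+$, hence $\mathcal{M}_2,w_2\models_{\mathcal{L}'}\theta_1\vee\dots\vee\theta_k$; but then the semantic clause for $\vee$ in $\mathcal{L}'$ (still given by $\models_{\mathsf{IL}}$, or $\models_{\mathsf{IL}^\equiv}$, at the point $w_2$) forces $\mathcal{M}_2,w_2\models_{\mathcal{L}'}\theta_i$ for some $i$, contradicting $\theta_i\in Th^-_{\mathcal{L}'}(\mathcal{M}_2,w_2)$. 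So a model $(\mathcal{M}_1,w_1)\in Str_\mathcal{L}(\Theta_\phi)$ of $(\{\phi\},\,Th^-_{\mathcal{L}'}(\mathcal{M}_2,w_2))$ exists; it satisfies $\phi$ and falsifies everything in $Th^-_{\mathcal{L}'}(\mathcal{M}_2,w_2)$, which is precisely $Th^+_{\mathcal{L}'}(\mathcal{M}_1,w_1)\subseteq Th^+_{\mathcal{L}'}(\mathcal{M}_2,w_2)$, while $\mathcal{M}_2,w_2\not\models_\mathcal{L}\phi$ by construction --- exactly the configuration asserted.

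I do not anticipate a real obstacle: the proof is two applications of $\star$-compactness plus the elementary fact that a disjunction holding at a node has a disjunct holding at that node. The only points demanding a little care are that the finite Boolean combinations stay within $L'(\Theta_\phi)$ (closure under $\wedge$ and $\vee$, plus the signature padding noted above), and that the disjunction property is genuinely available, which it is exactly because $\mathcal{L}'\sqsubseteq\mathcal{L}$ keeps the $\mathcal{L}'$-fragment evaluated by the standard intuitionistic semantics inside $\mathcal{L}$.
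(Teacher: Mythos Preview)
Your proof is correct. The route differs from the paper's in organization, though both hinge on two applications of $\star$-compactness. The paper argues by contraposition: it shows that if, for every $\phi$, the equality
\[
Mod_\mathcal{L}(\Theta_\phi,\phi)=\bigcup_{(\mathcal{N},u)\in Mod_\mathcal{L}(\Theta_\phi,\phi)} Mod_\mathcal{L}(\Theta_\phi,Th^+_{\mathcal{L}'}(\mathcal{N},u))
\]
holds, then $\star$-compactness (applied once to each $Th^+_{\mathcal{L}'}(\mathcal{N},u)$, then once to the resulting family of finite conjunctions) makes $\phi$ equivalent to a finite disjunction of finite $\mathcal{L}'$-conjunctions, forcing $\mathcal{L}\bowtie\mathcal{L}'$; the failure of the displayed equality for some $\phi$ then hands over $(\mathcal{M}_1,w_1)$ and $(\mathcal{M}_2,w_2)$ in one stroke. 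You instead fix a non-definable $\phi$ at the outset and build the two models in sequence: first $(\mathcal{M}_2,w_2)$ from the pair $(\Phi^+,\{\phi\})$, then $(\mathcal{M}_1,w_1)$ from $(\{\phi\},Th^-_{\mathcal{L}'}(\mathcal{M}_2,w_2))$, making the point-wise disjunction clause of the intuitionistic semantics do visible work in the second step. Your path is more explicitly constructive and isolates the role of the $\vee$-clause; the paper's path is more global and yields, as a by-product, the DNF form of any $\phi$ for which the separation fails. Both share the same mild informality about the $\Theta_\phi=\Theta_\psi$ clause in the definition of $\sqsubseteq$, which you address with padding; the paper does not comment on it either.
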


\begin{proof}
Suppose that for an arbitrary $\phi \in L(\Theta_\phi)$
we have shown that:

\begin{center}
\begin{itemize}
\item[(i)]  $Mod_\mathcal{L}(\Theta_\phi,\phi) =
\bigcup_{(\mathcal{N}, u) \in Mod_\mathcal{L}(\Theta_\phi,\phi)}
Mod_\mathcal{L}(\Theta_\phi,Th^+_{\mathcal{L}'}(\mathcal{N}, u))$,
\end{itemize}
\end{center}
Let  $(\mathcal{N}, u) \in Mod_\mathcal{L}(\Theta_\phi,\phi)$ be
arbitrary. The above implies that every $\Theta_\phi$-model of
$Th^+_{\mathcal{L}'}(\mathcal{N}, u)$ must be a model of
$\phi$. But then the theory $(Th^+_{\mathcal{L}'}(\mathcal{N}, u), \{ \phi \})$ is $\mathcal{L}$-unsatisfiable. By the
$\star$-compactness of $\mathcal{L}$, for some
$\Psi_{(\mathcal{N}, u)} \Subset
Th^+_{\mathcal{L}'}(\mathcal{N}, u)$ (and we can pick a
unique one using the Axiom of Choice), the pair
$(\Psi_{(\mathcal{N}, u)} ,\{ \phi\})$ is
$\mathcal{L}$-unsatisfiable. Hence, $\bigwedge \Psi_{(\mathcal{N},
u)} $ logically implies $\phi$ in $\mathcal{L}$. Then,
given (i), we get that
\begin{center}
\begin{itemize}
 \item [(ii)] $Mod_\mathcal{L}(\Theta_\phi,\phi) = \bigcup_{(\mathcal{N}, u) \in Mod_\mathcal{L}(\Theta_\phi,\phi)}Mod_\mathcal{L}(\Theta_\phi,\Psi_{(\mathcal{N}, u)})$.
\end{itemize}
\end{center}
However, this means that the theory $(\{ \phi \}, \{ \bigwedge
\Psi_{(\mathcal{N}, u)} \mid (\mathcal{N}, u) \in
Mod_\mathcal{L}(\Theta_\phi,\phi) \})$ is
$\mathcal{L}$-unsatisfiable and, by the
$\star$-compactness of $\mathcal{L}$, for some finite $$\Gamma
\subseteq  \{ \bigwedge \Psi_{(\mathcal{N}, u)} \mid
(\mathcal{N}, u) \in Mod_\mathcal{L}(\Theta_\phi,\phi) \}.$$ the
theory $(\{ \phi \}, \Gamma)$ is $\mathcal{L}$-unsatisfiable. This
means that  $ Mod_\mathcal{L}(\Theta_\phi,\phi) \subseteq
Mod_\mathcal{L}(\Theta_\phi,\bigvee\Gamma)$. So, using (ii), since
clearly $Mod_\mathcal{L}(\Theta_\phi,\bigvee\Gamma) \subseteq
\bigcup_{(\mathcal{N}, u) \in
Mod_\mathcal{L}(\Theta_\phi,\phi)}Mod_\mathcal{L}(\Theta_\phi,\Psi_{(\mathcal{N},u)})$,
 we get that:
\begin{center}
\begin{itemize}
 \item [(iii)] $Mod_\mathcal{L}(\Theta_\phi,\phi) = Mod_\mathcal{L}(\Theta_\phi,\bigvee\Gamma)$.
\end{itemize}
\end{center}
Now, $\bigvee \Gamma$ is obviously in $\mathcal{L}'(\Theta_\phi)$ involving only finitary conjunctions and disjunctions.
So we have shown that every $\phi \in L(\Theta_\phi)$ is
in $\mathcal{L}'(\Theta_\phi)$ and hence that
$\mathcal{L} \bowtie \mathcal{L}'$ which is in contradiction with
the hypothesis of the proposition.

Therefore, (i) must fail for at least one $\phi \in
L(\Theta_\phi)$, and clearly, for this $\phi$ it can only fail if
$$
\bigcup_{(\mathcal{N}, u) \in Mod_\mathcal{L}(\Theta_\phi,\phi)}
Mod_\mathcal{L}(\Theta_\phi,Th^+_{\mathcal{L}'}(\mathcal{N}, u)) \not\subseteq Mod_\mathcal{L}(\Theta_\phi,\phi).
$$
But the latter
means that for some pointed model
$(\mathcal{M}_1, w_1)\in Str_{\mathcal{L}}(\Theta_\phi)$ such that $\mathcal{M}_1, w_1
\models_\mathcal{L}\phi$ there is another model
$(\mathcal{M}_2, w_2)\in Str_{\mathcal{L}}(\Theta_\phi)$ such that $\mathcal{M}_2, w_2
\not\models_\mathcal{L}\phi$, and
$Th^+_{\mathcal{L}'}(\mathcal{M}_1, w_1)$ is satisfied
at $(\mathcal{M}_2, w_2)$. The latter means, in turn, that we have
$Th^+_{\mathcal{L}'}(\mathcal{M}_1, w_1) \subseteq
Th^+_{\mathcal{L}'}(\mathcal{M}_2, w_2)$ as desired.
\end{proof}

Assume $\mathcal{M}$ is a $\Theta$-model. We define that
$\Theta_\mathcal{M}$ is $\Theta \cup \{ P^+_a, P^-_a \mid a \in \mathbb{A}
\} $ where all of $P^+_a,
P^-_a$ are new and pairwise distinct unary predicate
letters, and we define that $\mathcal{M}^\ast = (W, \prec,
\mathfrak{A}^\ast, \mathbb{H})$ is the $\Theta_\mathcal{M}$-model,
such that $W$, $\prec$, and $\mathbb{H}$ are just borrowed from
$\mathcal{M}$, and, for every $w \in W$, $\mathfrak{A}^\ast$ is just
$(A_w, I_w^\ast)$. In other words, the only changes are in the
interpretation of predicates. As for $I_w^\ast$, we set $I_w^\ast(R) =
I_w(R)$ for all $R \in Pred_\Theta$, and $I_w^\ast(c) =
I_w(c)$ for all $c \in Const_\Theta$, and for all $w,v \in W$, every $b \in A_v$ and every $a \in A_w$ we set the following extensions:
\begin{align*}
   b \in I_v^\ast(P^+_a) &\Leftrightarrow (w \prec v\,\&\,b = \mathbb{H}_{wv}(a))\\
    b \in I_v^\ast(P^-_a) &\Leftrightarrow (v \not\prec w\,\text{or}\,a \neq \mathbb{H}_{vw}(b)).
\end{align*}
It is straightforward to check that $\mathcal{M}^\ast$ is a
$\Theta_\mathcal{M}$-model. For instance, if $v \prec u$ and $\mathbb{H}_{vu}(b) \notin I_u^\ast(P^-_a)$, then we must have both $u \prec w$ and $a = \mathbb{H}_{uw}(\mathbb{H}_{vu}(b)) = \mathbb{H}_{vw}(b)$, and, moreover, $v \prec w$ by transitivity. But then also $b \notin I_v^\ast(P^-_a)$. The case of $P^+_a$ is even easier.

We further observe that we can define in the resulting language  two further contexts, $Q^+_w$ and $Q^-_w$ by fixing some $a_w \in A_w$ for any given $w \in W$ and setting:
$$
Q^+_w := \exists xP^+_{a_w}(x);\,Q^-_w := \forall  x P^-_{a_w}(x).
$$ 
It follows then from the definitions given above that for any $w,v \in W$ we will have:
\begin{align*}
	\mathcal{M}^\ast, v \models_\mathcal{L} Q^+_w &\Leftrightarrow w \prec v\\
	\mathcal{M}^\ast, v \models_\mathcal{L} Q^-_w &\Leftrightarrow v \not\prec w.
\end{align*} 
It is also easy to check that, for every $\mathcal{L} \in StIL$, whenever $(\mathcal{M}, w) \in Str_\mathcal{L}(\Theta)$, then also $(\mathcal{M}^\ast, w) \in Str_\mathcal{L}(\Theta_\mathcal{M})$. 
\begin{lemma}\label{L:lemma1-cong}
Let $\mathcal{L}\sqsupseteq \mathcal{L}' \in StIL$ be an abstract
intuitionistic logic preserved under $\mathcal{L}'$-asimulations, let $(\mathcal{M},
w)$ be an unravelled $\Theta$-model, and let $(\mathcal{N}, v)$ be
a pointed $\Theta_{\mathcal{M}}$-model. Assume that
$Th_\mathcal{L}(\mathcal{N}, v) =
Th_\mathcal{L}(\mathcal{M}^\ast, w)$. Then the function
$\approx$ defined by setting, for all $u \in [U,v]$ and all $c,d \in
B_u$ that:

$$
c\mathrel{\approx(u)}d \Leftrightarrow \left\{%
\begin{array}{ll}
    (\forall a \in \mathbb{A})(c \in (J_u)(P^+_a)
\Leftrightarrow d \in (J_u)(P^+_a)), & \hbox{if $c \in (J_u)(P^+_b)$ for some $b \in \mathbb{A}$;} \\
    c = d, & \hbox{otherwise.} \\
\end{array}%
\right.   
$$
is a congruence on $[\mathcal{N}, v]$; moreover, in case  $\mathcal{L}'$ includes $\equiv$,  $\approx$ is even an $\equiv$-congruence on $[\mathcal{N}, v]$. 
\end{lemma}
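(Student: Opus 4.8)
The plan is to unwind ``$\approx$ is a congruence on $[\mathcal{N},v]$'' into pointwise conditions on the family $(\approx(u))_{u\in[U,v]}$, and then to verify each of them, invoking the hypothesis $Th_\mathcal{L}(\mathcal{N},v)=Th_\mathcal{L}(\mathcal{M}^\ast,w)$ only through one explicitly described family of sentences. By definition, $\approx$ being a congruence means that, expanding $[\mathcal{N},v]$ by interpreting $\approx$ at node $u$ as $\approx(u)$, one obtains a legitimate $(\Theta_\mathcal{M}\cup\{\approx\})$-model in which all the sentences of Lemma \ref{L:approx-properties} hold (rewritten with $\approx$ for $\equiv$, over $\Theta_\mathcal{M}$). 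Unfolding the clauses for $\forall$, $\to$ and $\leftrightarrow$ and using persistence of atoms, I would reduce this to: (R) every $\approx(u)$ is a reflexive, symmetric and transitive relation on $B_u$; (M) whenever $u\prec u'$ and $(c,d)\in\approx(u)$, then $(\mathbb{G}_{uu'}(c),\mathbb{G}_{uu'}(d))\in\approx(u')$ --- which is also exactly what makes the expansion a model; and (C) for every $R\in(\Theta_\mathcal{M})_m$, every $u\in[U,v]$ and all $\bar c_m,\bar d_m\in B_u^m$ with $(c_i,d_i)\in\approx(u)$ for all $i$, one has $\bar c_m\in J_u(R)\Leftrightarrow\bar d_m\in J_u(R)$. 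For the ``$\equiv$-congruence'' clause I would instead prove the stronger (D) that $\approx(u)$ is the diagonal of $B_u$ for every $u$, which automatically makes it an $\equiv$-congruence.

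I would dispose of (R) first, as pure combinatorics of the definition: two $\approx(u)$-related elements either coincide, or both lie in some $J_u(P^+_b)$ while satisfying exactly the same predicates $P^+_a$ ($a\in\mathbb{A}$) at $u$; reflexivity, symmetry and transitivity of $\approx(u)$ are then immediate, with no input from $\mathcal{N}$ beyond the definition.

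The core is a transfer step. For $b\in\mathbb{A}$, say with $b\in A_{w_0}$, the interpretation $I^\ast_u(P^+_b)$ in $\mathcal{M}^\ast$ is $\{\mathbb{H}_{w_0u}(b)\}$ when $w_0\prec u$ and $\emptyset$ otherwise; in particular it always has \emph{at most one element}. Hence, for all $b\in\mathbb{A}$, every $R\in(\Theta_\mathcal{M})_m$ and every $i\leq m$, writing $\bar x_m[i\mapsto y]$ for $\bar x_m$ with its $i$-th entry replaced by $y$, the intuitionistic sentence $\phi_{b,R,i}:=\forall\bar x_m\forall y\,\bigl(P^+_b(x_i)\wedge P^+_b(y)\to(R(\bar x_m)\leftrightarrow R(\bar x_m[i\mapsto y]))\bigr)$ --- and, when $\equiv$ is in the language, also $\psi_b:=\forall x\forall x'\,\bigl(P^+_b(x)\wedge P^+_b(x')\to x\equiv x'\bigr)$ --- holds at every node of $\mathcal{M}^\ast$, because at any node $P^+_b(x)\wedge P^+_b(y)$ forces $x$ and $y$ to name the same object, trivialising the consequent. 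All these are $\mathcal{L}'$-sentences in $IL_\emptyset(\Theta_\mathcal{M})$ (resp.\ $IL^\equiv_\emptyset(\Theta_\mathcal{M})$), hence they lie in $Th_{\mathcal{L}'}(\mathcal{M}^\ast,w)$, which by $\mathcal{L}'\sqsubseteq\mathcal{L}$ and $Th_\mathcal{L}(\mathcal{N},v)=Th_\mathcal{L}(\mathcal{M}^\ast,w)$ equals $Th_{\mathcal{L}'}(\mathcal{N},v)$; so $\mathcal{N},v$ validates all of them, and by persistence so does every node $u\succeq v$.

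From here I would read off the rest. Take $(c,d)\in\approx(u)$ with $c\neq d$ and fix $b$ with $c,d\in J_u(P^+_b)$; by monotonicity of $\mathcal{N}$, $\mathbb{G}_{uu'}(c),\mathbb{G}_{uu'}(d)\in J_{u'}(P^+_b)$ for all $u'\succeq u$. For (M): applying the instances $\phi_{b,R,i}$ with $R=P^+_e$, $m=i=1$ at $u$, the implication's hypothesis holds at $u$ and at every successor (by the previous sentence), so $P^+_e(\mathbb{G}_{uu'}(c))\leftrightarrow P^+_e(\mathbb{G}_{uu'}(d))$ at every $u'\succeq u$, for every $e\in\mathbb{A}$; this is exactly $(\mathbb{G}_{uu'}(c),\mathbb{G}_{uu'}(d))\in\approx(u')$ (the case $c=d$ being trivial). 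For (C): replacing one coordinate at a time and chaining biconditionals, it suffices to compare $\bar c_m$ with the tuple carrying $d$ in the $i$-th slot, where $(c_i,d)\in\approx(u)$; if $c_i=d$ this is trivial, otherwise fix $b$ with $c_i,d\in J_u(P^+_b)$ and apply $\phi_{b,R,i}$ at $u$ itself. For (D): applying $\psi_b$ at $u$ forces $c=d$, so $\approx(u)$ is the diagonal. I expect the only genuine obstacle to be conceptual: although ``$P^+_b$ is a singleton'' is not first-order expressible without equality, so that $J_u(P^+_b)$ may have several elements in $\mathcal{N}$, the sentences $\phi_{b,R,i}$ \emph{are} first-order and transfer, and they say precisely that any two elements of $J_u(P^+_b)$ are indiscernible by every $\Theta_\mathcal{M}$-predicate (and, with equality present, equal) --- which is exactly enough to make $\approx$ a congruence; the surrounding intuitionistic bookkeeping is routine.
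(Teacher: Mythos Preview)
Your proof is correct and shares the paper's overall strategy: transfer suitable $\mathcal{L}'$-sentences from $\mathcal{M}^\ast$ to $\mathcal{N}$ via the theory hypothesis, then read off the congruence conditions at every $u\in[U,v]$. Your treatment of (R), (C), and the $\equiv$-case (D) matches the paper's almost verbatim --- the paper uses exactly your $\psi_b$ for the identity case and your $\phi_{b,R,1}$ (in the unary instance) for compatibility, merely waving at higher arities where you spell out the one-coordinate-at-a-time chain.

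The one genuine divergence is in monotonicity (M). The paper argues by contradiction: assuming some $a$ with $\mathbb{G}_{uu'}(c)\in J_{u'}(P^+_a)$ but $\mathbb{G}_{uu'}(d)\notin J_{u'}(P^+_a)$, it performs a case split on whether the ``lines of homomorphic counterparts'' of $a$ and $b$ eventually merge in the unravelled model $\mathcal{M}$, and in each case transfers a different tailored sentence (involving the merge point $a_0$, or else $\forall y(P^+_b(y)\to\neg P^+_a(y))$). Your argument is more direct and uniform: you simply apply $\phi_{b,P^+_e,1}$ at $u'$, which immediately yields the required equivalence for every $e\in\mathbb{A}$. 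This is cleaner, and it has the side benefit that your proof never invokes the hypothesis that $(\mathcal{M},w)$ is unravelled; in the paper that hypothesis is used precisely in the monotonicity case analysis (and is otherwise only needed downstream in Lemma~\ref{L:lemma1}).
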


\begin{proof} Assume that $Th_\mathcal{L}(\mathcal{N}, v) =
Th_\mathcal{L}(\mathcal{M}^\ast, w)$. By Lemma
\ref{L:generated-submodels}.1, we must then also have
$Th_\mathcal{L}([\mathcal{N},v], v) =
Th_\mathcal{L}(\mathcal{M}^\ast, w)$. Moreover, by
Lemma
\ref{L:generated-submodels}.2, we must further have
$Th^+_\mathcal{L}(\mathcal{M}^\ast, w) = Th^+_\mathcal{L}([\mathcal{N},v], v) \subseteq
Th^+_\mathcal{L}([\mathcal{N},v], u)$ for every $u \in [U,v]$.

We need to check that $\approx$ is a congruence on $[\mathcal{N}, v]$,
that is to say, if appending $\approx$ to $[\mathcal{N}, v]$ as the
interpretation for $\approx$ will indeed verify every sentence given in  Lemma
\ref{L:approx-properties}. 

We first consider the case when  $\mathcal{L}'$ includes $\equiv$. So let $u \in [U,v]$ and $c,d \in B_u$ be chosen arbitrarily. Then, if $c \notin (J_u)(P^+_b)$ for every $b \in \mathbb{A}$, we must have $c\mathrel{\approx(u)}d$ iff $c = d$ as desired. Otherwise, let $b \in \mathbb{A}$ be such that $c \in (J_u)(P^+_b)$. If $c = d$, then clearly $c\mathrel{\approx(u)}d$. Conversely, we must have, first, that $d \in (J_u)(P^+_b)$, and, second, that $\forall xy(P^+_b(x)\wedge P^+_b(y)\to x \equiv y) \in Th^+_\mathcal{L}(\mathcal{M}^\ast, w) \subseteq Th^+_\mathcal{L}([\mathcal{N},v], u)$, whence it clearly follows that $c = d$.

Turning now to the case when the language of $\mathcal{L}'$ lacks $\equiv$, we first check that
$\approx$ is a legitimate intuitionistic predicate extension, i.e.
that $\approx$ satisfies monotonicity. If $u \in  [U,v]$ and $c,d \in
B_u$ are such that $c\mathrel{\approx(u)}d$, then let $u' \in  [U,v]$ be
such that $u [\lhd,v] u'$. If $c \notin (J_u)(P^+_b)$ for every $b \in \mathbb{A}$, then we must have $c = d$, so also $\mathbb{G}_{uu'}(c)=\mathbb{G}_{uu'}(d)$, whence further $\mathbb{G}_{uu'}(c)\mathrel{\approx(u')}\mathbb{G}_{uu'}(d)$. On the other hand, if $c \in (J_u)(P^+_b)$ for some $b \in \mathbb{A}$, then also  $\mathbb{G}_{uu'}(c) \in (J_{u'})(P^+_b)$ for the same $b \in \mathbb{A}$ by monotonicity. We choose such a $b$. Note that we must also have both $d \in (J_u)(P^+_b)$ by $c\mathrel{\approx(u)}d$, and $\mathbb{G}_{uu'}(d) \in (J_{u'})(P^+_b)$ by monotonicity. Now, if it is not the case that
$\mathbb{G}_{uu'}(c)\mathrel{\approx(u')}\mathbb{G}_{uu'}(d)$, then
let $a \in \mathbb{A}$ be such that, wlog, $\mathbb{G}_{uu'}(c)
\in (J_{u'})(P^+_a)$, but $\mathbb{G}_{uu'}(d) \notin
(J_{u'})(P^+_a)$.
The following cases are then possible:

\emph{Case 1}. For some $\bar{w}_n, \bar{w}_l, \bar{w}_k \in W^{un(w)}$, it is true that $k, l \leq n$, $a \in A_{\bar{w}_k}$, $b \in A_{\bar{w}_l}$ and we have $\mathbb{H}_{\bar{w}_k\bar{w}_n}(a) = \mathbb{H}_{\bar{w}_l\bar{w}_n}(b)$. So the lines of homomorphic counterparts of $a$ and $b$ run together at some point, and since our accessibility order is discrete, we can choose the earliest point in which they start to coincide. In other words, we can choose an $r \in \omega$ such that $k,l \leq r \leq n$ and an $a_0 \in  A_{\bar{w}_r}$ such that $\mathbb{H}_{\bar{w}_k\bar{w}_r}(a) = \mathbb{H}_{\bar{w}_l\bar{w}_r}(b) = a_0$, and, whenever, $\mathbb{H}_{\bar{w}_k\bar{w}_s}(a) = \mathbb{H}_{\bar{w}_l\bar{w}_s}(b)$, then we have both $r \leq s$ and $$\mathbb{H}_{\bar{w}_r\bar{w}_s}(a_0) =\mathbb{H}_{\bar{w}_k\bar{w}_s}(a) = \mathbb{H}_{\bar{w}_l\bar{w}_s}(b).$$

But then note that we have, in fact:
\begin{align*}
\exists x(P^+_a(x) &\wedge P^+_b(x)) \to \exists zP^+_{a_0}(z),\\ &\exists z P^+_{a_0}(z) \to \forall y(P^+_b(y) \leftrightarrow P^+_a(y)) \in
Th^+_\mathcal{L}(\mathcal{M}^\ast, w) \subseteq
Th^+_\mathcal{L}([\mathcal{N}, v], u'). 
\end{align*} 
On the other hand, $c \in (J_u)(P^+_b)$ and monotonicity imply that $\mathbb{G}_{uu'}(c)
\in (J_{u'})(P^+_b)$, so that $\mathbb{G}_{uu'}(c)
\in (J_{u'})(P^+_a)\cap (J_{u'})(P^+_b)$. Therefore, we must have $$\forall y(P^+_b(y) \leftrightarrow P^+_a(y))
\in Th_\mathcal{L}([\mathcal{N}, v], u').$$ In addition, we have $\mathbb{G}_{uu'}(d)
\in (J_{u'})(P^+_b)$, hence also $\mathbb{G}_{uu'}(d)
\in (J_{u'})(P^+_a)$, contrary to our assumptions.

\emph{Case 2}. Case 1 does not obtain. But then we also
have
$$
\forall y(P^+_b(y) \to \neg P^+_a(y)) \in
Th^+_\mathcal{L}(\mathcal{M}^\ast, w) \subseteq
Th^+_\mathcal{L}([\mathcal{N}, v], u').
$$
Given that we have $\mathbb{G}_{uu'}(c) \in
(J_{u'})(P^+_b)$, we must also have $\mathbb{G}_{uu'}(c)
\notin (J_{u'})(P^+_a)$, which contradicts our assumptions.  

It is obvious, next, that $\approx(u)$ defines an equivalence on $B_u$ for
every $u \in [U,v]$, so once $\approx$ is shown to be monotonic, it is
straightforward to check, given its definition, that it satisfies
the intuitionistic theory of equivalence relations.

It remains to check that $\approx$-equivalent objects cannot be
distinguished by predicates available in
$\Theta_{\mathcal{M}}$. For the sake of simplicity, we only consider the case of unary predicates, the argument for the general case is the same. To show this, we first note that
we have
\begin{align*}
\forall x,y((P^+_a(x) \wedge P^+_a(y)) \to (R(x) \leftrightarrow R(y)))
&\in Th^+_\mathcal{L}(\mathcal{M}^\ast, w) =
Th^+_\mathcal{L}(\mathcal{N}, v) =\\ &=Th^+_\mathcal{L}([\mathcal{N}, v],
v) \subseteq Th^+_\mathcal{L}([\mathcal{N}, v], u).
\end{align*}
for all $a \in \mathbb{A}$, $R \in \Theta_{\mathcal{M}}$,
and $u \in [U,v]$. But then, let $c,d \in B_u$ be such that
$c\mathrel{\approx(u)}d$. We have either $c = d$ and then clearly also
$[\mathcal{N}, v], u \models_\mathcal{L} R(c) \leftrightarrow R(d)$
for every $R \in \Theta_{\mathcal{M}}$, or there is a $b
\in \mathbb{A}$ such that $c \in (J_u)(P^+_b)$. But then, by
$c\mathrel{\approx(u)}d$, we also get $d \in (J_u)(P^+_b)$, hence
$[\mathcal{N}, v], u \models_\mathcal{L} R(c) \leftrightarrow R(d)$
follows again for every $R \in \Theta_{\mathcal{M}}$.
\end{proof}
\begin{lemma}\label{L:lemma1}
Let $\mathcal{L}\sqsupseteq \mathcal{L}' \in StIL$ be an abstract
intuitionistic logic preserved under $\mathcal{L}'$-asimulations, let $(\mathcal{M},
w) \in Str_\mathcal{L}(\Theta)$ be an unravelled model, and let $(\mathcal{N}, v) \in Str_\mathcal{L}(\Theta_{\mathcal{M}})$. Assume that
$Th_\mathcal{L}(\mathcal{N}, v) =
Th_\mathcal{L}(\mathcal{M}^\ast, w)$. Let $\approx$ be the
congruence on $[\mathcal{N}, v]$
	defined as in Lemma
\ref{L:lemma1-cong}. Then we have that $(([\mathcal{N}, v])_\approx, v)  \in Str_\mathcal{L}(\Theta_{\mathcal{M}})$, and there exists an $\mathcal{L}$-elementary embedding
$(g,h)$ of $\mathcal{M}^\ast$ into $([\mathcal{N}, v])_\approx$
such that $h(w) = v$.
\end{lemma}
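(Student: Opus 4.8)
The goal is to embed $\mathcal{M}^\ast$ $\mathcal{L}$-elementarily into $([\mathcal{N},v])_\approx$ with the root of $\mathcal{M}^\ast$ landing on $v$. The key tool has been set up in the preceding lemmas: since $\mathcal{M}$ is unravelled, the predicates $P^+_a$ behave like ``markers'' for the homomorphic images of $a$, and the derived contexts $Q^+_w$, $Q^-_w$ encode the accessibility pattern $w\prec v\Leftrightarrow\mathcal{M}^\ast,v\models Q^+_w$ and $v\not\prec w\Leftrightarrow\mathcal{M}^\ast,v\models Q^-_w$. Since $Th_\mathcal{L}(\mathcal{N},v)=Th_\mathcal{L}(\mathcal{M}^\ast,w)$, and (by Lemma~\ref{L:generated-submodels}.1) $Th_\mathcal{L}([\mathcal{N},v],v)=Th_\mathcal{L}(\mathcal{M}^\ast,w)$, the formulas $Q^+_w$, $Q^-_w$ allow us to read off, inside $[\mathcal{N},v]$, a copy of the frame of $\mathcal{M}^\ast$. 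The fact that $(([\mathcal{N},v])_\approx,v)\in Str_\mathcal{L}(\Theta_\mathcal{M})$ is immediate from Lemma~\ref{L:lemma1-cong} (which certifies $\approx$ as a congruence, and an $\equiv$-congruence when needed) together with Lemma~\ref{L:congr-mod}.1 applied to $[\mathcal{N},v]\in Str_\mathcal{L}$ (the latter holds by the closure of $Str_\mathcal{L}$ for generated submodels, which follows from closure for the model component and reducts, or more directly is inherited since $Str_\mathcal{L}=Str_{\mathcal{L}'}$ and $[\mathcal{N},v]\subseteq\mathcal{N}$).

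\textbf{Construction of $h$.} For each node $u$ of $\mathcal{M}^\ast$ (equivalently of $\mathcal{M}$) I would use the formula $Q^+_u$ to locate the corresponding node of $[\mathcal{N},v]$. More precisely: $\mathcal{M}^\ast,u\models Q^+_u$ (since $\prec$ is reflexive), and the nodes $u'$ of $\mathcal{M}$ with $\mathcal{M}^\ast,u'\models Q^+_u$ are exactly the $\prec$-successors of $u$. Because $\mathcal{M}$ is unravelled, its frame is tree-like: each node $u$ has a unique ``immediate predecessor chain'' from $w$, so $u$ is pinned down by the set $\{Q^+_s : s\preccurlyeq u\}\cup\{Q^-_t: u\not\preccurlyeq t\}$ holding exactly at the counterpart of $u$. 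I would define $h$ by recursion along this tree, using at each step that $Th_\mathcal{L}([\mathcal{N},v],v)=Th_\mathcal{L}(\mathcal{M}^\ast,w)$ to find, for the root, the node $v$ itself, and then using the quantifier-closure clauses of $\mathcal{L}$ together with the asimulation-preservation consequences (Lemma~\ref{L:generated-submodels}, Corollary~\ref{C:generated-submodels}) to propagate: given $h(u)$ already defined with the right $\mathcal{L}$-theory, a $\prec$-successor $u'$ of $u$ forces, via $Q^+_{u'}$, the existence of a $\lhd$-successor of $h(u)$ in $[\mathcal{N},v]$ at which the matching theory holds; uniqueness up to the congruence $\approx$ comes from the $P^+_a$-markers, which is exactly what passing to $([\mathcal{N},v])_\approx$ repairs. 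The object-map $g$ is then read off from the $P^+_a$ predicates: $g(a)$ is the $\approx$-class of any element of $J_{h(u)}(P^+_a)$, where $u$ is a node with $a\in A_u$; well-definedness (independence of the representative and of $u$) is precisely what the congruence-verification in Lemma~\ref{L:lemma1-cong} guarantees, and injectivity of $g$ likewise follows since distinct $a,b$ whose homomorphic lines never merge satisfy $\forall y(P^+_b(y)\to\neg P^+_a(y))$, a sentence in $Th^+_\mathcal{L}(\mathcal{M}^\ast,w)$.

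\textbf{Verifying the embedding conditions.} I would then check the four clauses of Definition~\ref{D:embedding}. The relational clause \eqref{E:c1} follows from the $Q^+$/$Q^-$ characterisation of $\prec$ and the theory equality; \eqref{E:c2} and \eqref{E:c2a} follow from the definition of $g$ via the $P^+_a$ markers and the monotonicity built into the model $\mathcal{M}^\ast$ (the homomorphism clauses $P^+_a$ satisfy). The substantive clause is \eqref{E:c3}, equality of complete $\mathcal{L}'$-types (equivalently $\mathsf{IL}$-, or standard-logic-, types) of $\bar{a}_n$ in $\mathcal{M}^\ast$ and of $g\langle\bar{a}_n\rangle$ in $([\mathcal{N},v])_\approx$. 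Here the route is: $Tp_\mathcal{L}(\mathcal{M}^\ast,u,\bar c_n/\bar a_n)$ is captured by the $\Theta_\mathcal{M}\cup\{\bar c_n\}$-theory at $u$, which by construction of $h$ coincides with the corresponding theory at $h(u)$ in $[\mathcal{N},v]$; then Corollary~\ref{C:congr} transfers this across the congruence quotient to $([\mathcal{N},v])_\approx$; finally Lemma~\ref{L:el-submodels}.2 cuts down from $\Theta_\mathcal{M}$ to $\Theta$ to recover the $\mathcal{L}'$-type proper. Once \eqref{E:c3} holds the other clauses are routine, and Lemma~\ref{L:embedding}.3 lets us present the embedding as an isomorphism onto an $\mathcal{L}$-elementary submodel if convenient later.

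\textbf{Main obstacle.} The delicate point is the recursion defining $h$ on the whole (possibly infinite, infinitely branching) unravelled frame: one must make the successor choices coherently, so that the $\mathcal{L}$-theories match at every node \emph{simultaneously}, not just locally. This requires packaging, at each node $u$ of $\mathcal{M}^\ast$, not a single formula but the full set of finite conjunctions of $Q^+$/$Q^-$ formulas together with a finite slice of the local $\mathcal{L}'$-type, and invoking the quantifier-closure clause of $\mathcal{L}$ (which is stated only for single fresh constants) repeatedly, in the style of Corollary~\ref{C:satisfaction-cutoff-constants}, to descend the tree while preserving theory-agreement. The bookkeeping needed to see that the element-markers $P^+_a$ are respected under these iterated choices — so that $g$ is genuinely a function on $\mathbb{A}$ and not merely a relation — is where the argument needs the most care; everything else reduces to the lemmas already proved.
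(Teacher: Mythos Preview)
Your high-level plan matches the paper's: build $h$ by recursion along the unravelled tree using the $Q^+$/$Q^-$ markers, read $g$ off the $P^+_a$ predicates, and pass to the $\approx$-quotient so that $g$ becomes a genuine function. Two points, however, need sharpening.

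First, the mechanism you sketch for the recursive step is not quite right, and your ``main obstacle'' paragraph shows the gap. You propose to package, at each node, \emph{finite} slices of the local theory and iterate the quantifier-closure clause; but quantifier closure governs object-variables, not the discovery of successor \emph{worlds}, and nothing finite would suffice to force agreement of the entire $\mathcal{L}$-theory at the chosen successor. The paper's trick dissolves this problem at a stroke: for each target node $\bar w_k$ one observes that every sentence of the form $Q^+_{\bar w_k}\to\phi$ (for $\phi\in Th^+_\mathcal{L}(\mathcal{M}^\ast,\bar w_k)$) and $\psi\to Q^-_{\bar w_k}$ (for $\psi\in Th^-_\mathcal{L}(\mathcal{M}^\ast,\bar w_k)$) is \emph{globally} valid in $\mathcal{M}^\ast$, hence lies in $Th^+_\mathcal{L}(\mathcal{M}^\ast,w)=Th^+_\mathcal{L}(([\mathcal{N},v])_\approx,v)$, hence persists to $h(\bar w_l)$ by Lemma~\ref{L:generated-submodels}.2. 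Since $Q^+_{\bar w_k}\to Q^-_{\bar w_k}$ fails at $\bar w_l$ (witnessed by $\bar w_k$), it fails at $h(\bar w_l)$; any successor there satisfying $(\{Q^+_{\bar w_k}\},\{Q^-_{\bar w_k}\})$ then automatically has the \emph{full} theory of $\bar w_k$, by modus ponens on the infinite family of implications just transferred. So a single, arbitrary choice at each step works, no finite approximation, no compactness, no coherence bookkeeping.

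Second, a minor correction: the embedding required is $\mathcal{L}$-elementary (that is, condition \eqref{E:c3} demands equality of $\mathcal{L}$-types in signature $\Theta_\mathcal{M}\cup\{\bar c_n\}$), not $\mathcal{L}'$-elementary, and there is no cutting-down to $\Theta$ at this stage. The paper verifies \eqref{E:c3} by rewriting $\phi\in Tp^+_\mathcal{L}(\mathcal{M}^\ast,\bar u_m,\bar c_n/\bar a_n)$ as $\mathcal{M}^\ast,\bar u_m\models\forall\bar c_n(\bigwedge_i P^+_{a_i}(c_i)\to\phi)$, transferring this sentence via $(iii)_m$, and unpacking it in $([\mathcal{N},v])_\approx$ using the characterisation $g(a)=[b]$ iff $b\in J_{h(\bar u_m)}(P^+_a)$.
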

\begin{proof}
First, it is immediate to see that $(\mathcal{N}, v) \in Str_\mathcal{L}(\Theta_{\mathcal{M}})$ implies $([\mathcal{N}, v], v) \in Str_\mathcal{L}(\Theta_{\mathcal{M}})$. It follows then from Lemma \ref{L:congr-mod} and Lemma \ref{L:lemma1-cong}, that also  $(([\mathcal{N}, v])_\approx, v)  \in Str_\mathcal{L}(\Theta_{\mathcal{M}})$. 

As for the existence of an elementary embedding, we proceed by induction on $k \geq 1$ and define an increasing
chain of the form
$$
h_1 \subseteq\ldots \subseteq h_k \subseteq\ldots
$$
in such a way that, for each $k < \omega$, $h_k:\{\bar{u}_l \in
W^{un(w)}\mid l \leq k\} \to  ([U,v])_\approx$ is an injective
function. Once $h_k$ is defined, we also define, for any given
$\bar{w}_k \in W^{un(w)}$, the relation $g_{\bar{w}_k}$ by
setting:
$$
g_{\bar{w}_k}(a) := [b]_{\approx(h_k(\bar{w}_k))} \Leftrightarrow b \in
B_{h_k(\bar{w}_k)}\,\&\,\mathcal{N},
h_k(\bar{w}_k)\models_\mathcal{L} P^+_a(b).
$$
The idea is to define $h$ and $g$ mentioned in the Lemma by
collecting together all of the $h_k$'s and all of the
$g_{\bar{w}_k}$'s.

In the process of inductively defining $h_k$ for all $k < \omega$,
we show that our construction satisfies the following conditions
for all $1 \leq l,m \leq k < \omega$, and all $\bar{u}_l,
\bar{v}_m, \bar{w}_k \in W^{un(w)}$:
\begin{enumerate}
    \item[$(i)_k$] $g_{\bar{w}_k}$ is an injective function from $
    A_{\bar{w}_k}$ into $(([B, v])_\approx)_{h_k(\bar{w}_k)} = \{ [b]_{\approx(h_k(\bar{w}_k))}\mid b \in [B, v]_{h_k(\bar{w}_k)}\}$.

      \item[$(ii)_k$] If $a \in A_{\bar{w}_l}$,
    then $g_{\bar{w}_k}(\mathbb{H}_{\bar{w}_l\bar{w}_k}(a)) =
    (([\mathbb{G},v])_\approx)_{h_l(\bar{w}_l)h_k(\bar{w}_k)}(g_{\bar{w}_l}(a))$.

    \item[$(iii)_k$] $Th_\mathcal{L}(([\mathcal{N}, v])_\approx, h_k(\bar{w}_k)) =
Th_\mathcal{L}(\mathcal{M}^\ast, \bar{w}_k)$.

   \item[$(iv)_k$] $h_k(\bar{u}_l) ([\lhd, v])_\approx h_k(\bar{v}_m)\Leftrightarrow (\bar{u}_l = \bar{v}_l\,\&\,l\leq m)$.
\end{enumerate}

\emph{Induction basis}. $k = 1$. Then $\bar{w}_k = w$, so we have
to define $h$ and $g$ for this unique node. We set $h_1(w) := v$
thus getting an injective function, and thus, for every $a \in
A_{w}$ we get that $g_w(a) := [b]_{\approx(v)} \in (([B, v])_\approx)_v$ iff $\mathcal{N}, v\models_\mathcal{L} P^+_a(b)$. Under
these settings, it is true that
\begin{align*}
Th_\mathcal{L}(([\mathcal{N}, v])_\approx, h_1(w)) &= Th_\mathcal{L}(([\mathcal{N}, v])_\approx, v)\\
&= Th_\mathcal{L}([\mathcal{N}, v], v) &&\text{(by Lemma \ref{L:congr-mod}.2)}\\
&= Th_\mathcal{L}(\mathcal{N}, v) &&\text{(by Lemma \ref{L:generated-submodels}.1)}\\
&= Th_\mathcal{L}(\mathcal{M}^\ast, w)&&\text{(by the assumption of the lemma)}
\end{align*}
so
$(iii)_1$ is satisfied. Next, if $1 \leq l,m \leq 1$ and
$\bar{u}_l, \bar{w}_m \in W^{un(w)}$, then $l = m = 1$ and we get
both $\bar{u}_l = w = \bar{w}_m = \bar{w}_l$ and $h_1(\bar{u}_l) =
v \mathrel{ ([\lhd, v])_\approx} v= h_1(\bar{w}_m)$ by reflexivity.
Therefore, $(iv)_1$ is satisfied as well. Next, we check the
satisfaction of $(i)_1$. Note that, by $(iii)_1$, we also
have:
$$
\exists xP^+_a(x) \in Th^+_\mathcal{L}(\mathcal{M}^\ast, w) =
Th^+_\mathcal{L}(\mathcal{N}, v)
$$
for every $a \in A_{w}$ and thus  $\mathcal{N},
v\models_\mathcal{L} \exists xP^+_a(x)$ for every such $a$.
Therefore, $g_w$ is defined for every $a \in A_{w}$.
In addition, $(iii)_1$ implies that we have
$$
\forall x,y((P^+_a(x) \wedge P^+_a(y)) \to (P^+_c(x) \leftrightarrow
P^+_c(y)) \in Th^+_\mathcal{L}(\mathcal{M}^\ast, w) =
Th^+_\mathcal{L}(\mathcal{N}, v)
$$
for all $c \in \mathbb{A}$ and thus we have $\mathcal{N},
v\models_\mathcal{L} \forall x,y((P^+_a(x) \wedge P^+_a(y)) \to
(P^+_c(x) \leftrightarrow P^+_c(y))$. But then, if $d, e \in B_v =
([B,v])_v$ are such that both $\mathcal{N},
v\models_\mathcal{L} P^+_a(d)$, and $\mathcal{N},
v\models_\mathcal{L} P^+_a(e)$ then we get $\mathcal{N},
v\models_\mathcal{L} P^+_c(d)\leftrightarrow P^+_c(e)$ for all $c \in
\mathbb{A}$. Therefore, $d\mathrel{\approx(v)}e$, or, in other words,
$[d]_{\approx(v)} = [e]_{\approx(v)}$; thus $g_w$ is a function. Finally, if $a, b
\in A_{w}$ are such that $a \neq b$, then we must have
$g_w(a) = [d]_{\approx(v)}$ and $g_w(b) = [e]_{\approx(v)}$ where $d,e \in B_v  =
([B,v])_v$
are such that $\mathcal{N}, v\models_\mathcal{L} P^+_a(d)$ and
$\mathcal{N}, v\models_\mathcal{L} P^+_b(e)$. If now $[d]_{\approx(v)} =
[e]_{\approx(v)}$, then we must also have, for example, $\mathcal{N},
v\models_\mathcal{L} P^+_a(e)$, which cannot be the case since it
follows from $(iii)_1$ that we have
$$
\exists x(P^+_a(x) \wedge P^+_b(x)) \in
Th^-_\mathcal{L}(\mathcal{M}^\ast, w) =
Th^-_\mathcal{L}(\mathcal{N}, v)
$$
and thus $\mathcal{N}, v\not\models_\mathcal{L} \exists x(P^+_a(x)
\wedge P^+_b(x))$. Therefore, $d\mathrel{\approx(v)}e$ fails, whence we
have $g_w(a) = [d]_{\approx(v)} \neq [e]_{\approx(v)} = g_w(b)$, so that $g_w$
must be injective and $(i)_1$ must be satisfied. Finally, if $1
\leq l \leq 1$ and $a \in A_{w}$, then $l = 1$ so
$\bar{w}_l = w$ and we have:
\begin{align*}
g_{w}(\mathbb{H}_{ww}(a)) &= g_w(a) =  (([\mathbb{G},v])_\approx)_{h_1(w)h_1(w)}(g_{w}(a)),
\end{align*}
and $(ii)_1$ is trivially satisfied as well.

\emph{Induction step}. $k = l + 1$ for some $l \geq 1$. Then we
already have the chain of injections $h_1 \subseteq\ldots
\subseteq h_l$ defined and for all $m \leq l$, and all $\bar{u}_m
\in W^{un(w)}$ we have an injection $g_{\bar{u}_m}$ defined in
such a way that $(i)_m-(iv)_m$ are jointly satisfied by $h_m$ and
$g_{\bar{u}_m}$.

If $\bar{w}_k \in W^{un(w)}$ is chosen arbitrarily, then we must
have $\bar{w}_k = (\bar{w}_l)^\frown w_k$. We have then
\begin{align*}
Th_\mathcal{L}(([\mathcal{N}, v])_\approx, h_l(\bar{w}_l)) = Th_\mathcal{L}(\mathcal{M}^\ast, \bar{w}_l)
\end{align*}
by $(iii)_l$. Since $w$ is the root of
$\mathcal{M}$, we have, by $(iii)_l$, Lemma
\ref{L:generated-submodels}.2, and the preservation of $\mathcal{L}$ under
$\mathcal{L}'$-asimulations that
$$
Th^+_\mathcal{L}(\mathcal{M}^\ast, w) \subseteq
Th^+_\mathcal{L}(\mathcal{M}^\ast, \bar{w}_l) =
Th^+_\mathcal{L}(([\mathcal{N}, v])_\approx, h_l(\bar{w}_l)).
$$
By definition of $\mathcal{M}^\ast$ and by the closure of
$\mathcal{L}$ w.r.t. intuitionistic implication, we have that the
set:
$$
\Gamma_{\bar{w}_k} = \{ Q^+_{\bar{w}_k} \to \phi \mid
\phi \in Th^+_\mathcal{L}(\mathcal{M}^\ast, \bar{w}_k) \} \cup
\{ \psi \to Q^-_{\bar{w}_k} \mid \psi \in
Th^-_\mathcal{L}(\mathcal{M}^\ast, \bar{w}_k) \}
$$
is a subset of $Th^+_\mathcal{L}(\mathcal{M}^\ast, w)$, thus
also of $Th^+_\mathcal{L}(([\mathcal{N}, v])_\approx,
h_l(\bar{w}_l))$. We also know that, by
$\bar{w}_l\mathrel{\prec^{un(w)}}\bar{w}_k$, and by the fact that
the theory $(\{ Q^+_{\bar{w}_k} \},\{ Q^-_{\bar{w}_k}\})$ is $\mathcal{L}$-satisfied at
$(\mathcal{M}^\ast, \bar{w}_k)$, we have:
 $$
 \mathcal{M}^\ast, \bar{w}_l \not\models_\mathcal{L}
Q^+_{\bar{w}_k}\to Q^-_{\bar{w}_k}.
$$
Therefore, also $([\mathcal{N}, v])_\approx, h_l(\bar{w}_l)
\not\models_\mathcal{L} Q^+_{\bar{w}_k}\to Q^-_{\bar{w}_k}$. So fix any $v'_{\bar{w}_k} \in ([U,
v])_\approx$ such that $h_l(\bar{w}_l)\mathrel{([\lhd,
v])_\approx}v'_{\bar{w}_k}$ and $(\{ Q^+_{\bar{w}_k} \},\{ Q^-_{\bar{w}_k}\})$ is $\mathcal{L}$-satisfied at
$(([\mathcal{N}, v])_\approx, v'_{\bar{w}_k})$. We now set:
$$
h_k(\bar{u}_m) :=\left\{%
\begin{array}{ll}
    h_l(\bar{u}_m), & \hbox{if $m \leq l$;} \\
    v'_{\bar{u}_m}, & \hbox{if $m = k$.} \\
\end{array}%
\right.
$$
It is clear then that $h_k$ is a function from $\{\bar{u}_m \in
W^{un(w)}\mid m \leq k\}$ to $([U,
v])_\approx$, since $h_l$ is a
function and we have fixed a unique $v'_{\bar{w}_k} \in ([U,
v])_\approx$ for every $\bar{w}_k \in W^{un(w)}$. It is also immediate
to see that $(iii)_k$ is satisfied. Indeed, we have
$\Gamma_{\bar{w}_k} \subseteq Th^+_\mathcal{L}(([\mathcal{N}, v])_\approx, h_l(\bar{w}_l))$, and, by
$h_l(\bar{w}_l)\mathrel{([\lhd,
	v])_\approx}v'_{\bar{w}_k} =
h_k(\bar{w}_k)$, the preservation of $\mathcal{L}$ under
asimulations, and Lemma \ref{L:generated-submodels}.2, we also get that
$\Gamma_{\bar{w}_k} \subseteq
Th^+_\mathcal{L}(([\mathcal{N}, v])_\approx, h_k(\bar{w}_k))$, so that,
by $$([\mathcal{N}, v])_\approx, v'_{\bar{w}_k}
\models_\mathcal{L} (\{ Q^+_{\bar{w}_k} \},\{ Q^-_{\bar{w}_k}\}),$$ we further obtain that
$Th_\mathcal{L}(([\mathcal{N}, v])_\approx, h_k(\bar{w}_k)) =
Th_\mathcal{L}(\mathcal{M}^\ast, \bar{w}_k)$.

We now address the injectivity of $h_k$. Let $m \leq r \leq k$ and
let $\bar{u}_m, \bar{w}_r \in W^{un(w)}$ be such that $\bar{u}_m
\neq \bar{w}_r$. Then we have
$\bar{u}_m\mathrel{\not\prec^{un(w)}}\bar{w}_r$ or
$\bar{w}_r\mathrel{\not\prec^{un(w)}}\bar{u}_m$ by antisymmetry. In the first case,
we have $Q^+_{\bar{u}_m} \in
Th^+_\mathcal{L}(\mathcal{M}^\ast, \bar{u}_m)\setminus
Th^+_\mathcal{L}(\mathcal{M}^\ast, \bar{w}_r)$, so by
$(iii)_k$ we must have
 $$
 Q^+_{\bar{u}_m} \in
Th^+_\mathcal{L}(([\mathcal{N}, v])_\approx,  h_k(\bar{u}_m))
\setminus Th^+_\mathcal{L}(([\mathcal{N}, v])_\approx,
h_k(\bar{w}_r)),
$$
whence $h_k(\bar{u}_m) \neq h_k(\bar{w}_k)$. In the second case, we
reason symmetrically, using $Q^+_{\bar{w}_r}$ instead
of $Q^+_{\bar{u}_m}$. Thus $h_k$ is injective.

As for $(iv)_k$, if $\bar{u}_m, \bar{w}_r \in W^{un(w)}$ are such
that $m, r \leq k$, then the following cases are possible:

\emph{Case 1}. $m, r \leq l$. But then we have:
\begin{align*}
h_k(\bar{u}_m)\mathrel{([\lhd,
	v])_\approx}h_k(\bar{w}_r)\Leftrightarrow\,
&h_l(\bar{u}_m)\mathrel{([\lhd,
	v])_\approx}h_l(\bar{w}_r) &&\text{(by $h_l \subseteq h_k$)}\\
\Leftrightarrow\,&\bar{u}_m = \bar{w}_m\,\&\,m \leq r &&\text{(by $(iv)_l$)}
\end{align*}
and we are done.

\emph{Case 2}. One of $m,r$ is equal to $k$. We  consider the
($\Leftarrow$)-part first. If $\bar{u}_m = \bar{w}_m$ and $m \leq
r$, then, of course, $r = k$ and we have either $m = k$ or $m \leq
l$. If $m = k$, then we have $h_k(\bar{u}_m) = h_k(\bar{w}_m)
\mathrel{([\lhd,v])_\approx}h_k(\bar{w}_k)$ by reflexivity. If $m \leq l$,
then we have $\bar{u}_m =\bar{w}_m\mathrel{\prec^{un(w)}}\bar{w}_l$
and thus $h_k(\bar{u}_m) = h_k(\bar{w}_m) = h_l(\bar{w}_m)
\mathrel{([\lhd,
	v])_\approx}h_l(\bar{w}_l) = h_k(\bar{w}_l)$ by $(iv)_l$,
and $h_k(\bar{w}_l) = h_l(\bar{w}_l) \mathrel{([\lhd,v])_\approx}
h_k(\bar{w}_k)$ by the choice of $h_k(\bar{w}_k)$, hence also
$h_k(\bar{w}_m)\mathrel{([\lhd,v])_\approx} h_k(\bar{w}_k)$ by
transitivity. As for the ($\Rightarrow$)-part, if $h_k(\bar{u}_m)\mathrel{([\lhd,v])_\approx} h_k(\bar{w}_r)$, then we have $Q^+_{\bar{u}_m} \in Th^+_\mathcal{L}(\mathcal{M}^\ast,
\bar{u}_m) = Th^+_\mathcal{L}(([\mathcal{N}, v])_\approx,
h_k(\bar{u}_m))$ by $(iii)_k$, but then, by Lemma
\ref{L:generated-submodels}.2, preservation of $\mathcal{L}$ under
$\mathcal{L}'$-asimulations, and $(iii)_k$ again, we further get that
$$
Q^+_{\bar{u}_m} \in
Th^+_\mathcal{L}(([\mathcal{N}, v])_\approx, h_k(\bar{w}_r)) =
Th^+_\mathcal{L}(\mathcal{M}^\ast, \bar{w}_r),
$$
whence we
must have $\bar{u}_m\mathrel{\prec^{un(w)}}\bar{w}_r$ so that also
$\bar{u}_m = \bar{w}_m$.

Once $h_k$ is defined, we can show that $(i)_k$ holds, arguing
exactly as in the Induction Basis.

Finally, we address the satisfaction of $(ii)_k$. If $r \leq k$
and $a \in A_{\bar{w}_r}$, then, by  $(i)_k$, let $b \in B_{h_r(\bar{w}_r)}$ be such that $g_{\bar{w}_r}(a) =
[b]_{\approx(h_r(\bar{w}_r))}$ and thus also $\mathcal{N}, h_r(\bar{w}_r) \models_\mathcal{L}
P^+_a(b)$. In addition, we have
$\bar{w}_r\mathrel{\prec^{un(w)}}\bar{w}_k$, hence also
$h_r(\bar{w}_r) = h_k(\bar{w}_r)\mathrel{([\lhd,v])_\approx}h_k(\bar{w}_k)$ by $(iv)_k$. This means that we also have
$h_r(\bar{w}_r)\mathrel{\lhd}h_k(\bar{w}_k)$, since passing to
generated submodels and applying a congruence does not affect the
accessibility relation. Therefore, we must also have $\mathcal{N}, h_k(\bar{w}_k)
\models_\mathcal{L} P^+_a(\mathbb{G}_{h_r(\bar{w}_r)h_k(\bar{w}_k)}(b))$. On
the other hand, again  by  $(i)_k$, we can choose a $c \in B_{h_r(\bar{w}_k)}$ such that 
$g_{\bar{w}_k}(\mathbb{H}_{\bar{w}_r\bar{w}_k}(a)) = [c]_{\approx(h_k(\bar{w}_k))}$ and thus $\mathcal{N}, h_k(\bar{w}_k) \models_\mathcal{L}
P^+_{\mathbb{H}_{\bar{w}_r\bar{w}_k}(a)}(c)$, whence also
$\mathcal{N}, h_k(\bar{w}_k) \models_\mathcal{L} \exists
xP^+_{\mathbb{H}_{\bar{w}_r\bar{w}_k}(a)}(x)$. Next, note that we
have, by $h_k(\bar{w}_k) \in ([U,v])_\approx = [U,v]$, that:
$$
\exists xP^+_{\mathbb{H}_{\bar{w}_r\bar{w}_k}(a)}(x) \to \forall
y(P^+_a(y) \to P^+_{\mathbb{H}_{\bar{w}_r\bar{w}_k}(a)}(y)) \in
Th^+_\mathcal{L}(\mathcal{M}^\ast, w) =
Th^+_\mathcal{L}(\mathcal{N}, v) \subseteq
Th^+_\mathcal{L}(\mathcal{N}, h_k(\bar{w}_k)).
$$
It follows then that $\forall y(P^+_a(y) \to
P^+_{\mathbb{H}_{\bar{w}_r\bar{w}_k}(a)}(y)) \in
Th^+_\mathcal{L}(\mathcal{N}, h_k(\bar{w}_k))$ and thus also
$\mathcal{N}, h_k(\bar{w}_k) \models_\mathcal{L}
P^+_{\mathbb{H}_{\bar{w}_r\bar{w}_k}(a)}(\mathbb{G}_{h_r(\bar{w}_r)h_k(\bar{w}_k)}(b))$,
whence, further, $$[\mathcal{N},v], h_k(\bar{w}_k)
\models_\mathcal{L}
P^+_{\mathbb{H}_{\bar{w}_r\bar{w}_k}(a)}([\mathbb{G},v]_{h_r(\bar{w}_r)h_k(\bar{w}_k)}(b)).$$ 

%But then we must also have $([\mathcal{N},v])_\approx, h_k(\bar{w}_k)
%\models_\mathcal{L}
%P^+_{\mathbb{H}_{\bar{w}_k\bar{w}_k}(a)}([[\mathbb{G},v]_{h_r(\bar{w}_r)h_k(\bar{w}_k)}(b)]_{\approx(h_k(\bar{w}_k))})$, thus also 
%$([\mathcal{N},v])_\approx, h_k(\bar{w}_k)
%\models_\mathcal{L}
%P^+_{\mathbb{H}_{\bar{w}_l\bar{w}_k}(a)}((([\mathbb{G},v])_\approx)_{h_r(\bar{w}_r)h_k(\bar{w}_k)}([b]_{\approx(h_r(\bar{w}_r))}))$.

But then note that we have:
\begin{align*}
\forall x,y(P^+_{\mathbb{H}_{\bar{w}_r\bar{w}_k}(a)}(x) &\wedge
P^+_{\mathbb{H}_{\bar{w}_r\bar{w}_k}(a)}(y) \to \forall z(P^+_d(z)
\leftrightarrow P^+_d(z))) \in
Th^+_\mathcal{L}(\mathcal{M}^\ast, w)=\\
&= Th^+_\mathcal{L}(\mathcal{N}, v) =
Th^+_\mathcal{L}([\mathcal{N}, v], v)\subseteq
Th^+_\mathcal{L}([\mathcal{N}, v], h_k(\bar{w}_k))
\end{align*}
for all $d \in \mathbb{A}$, therefore, we get that
$[\mathcal{N}, v], h_k(\bar{w}_k) \models_\mathcal{L}
P^+_d(([\mathbb{G},v])_{h_r(\bar{w}_r)h_k(\bar{w}_k)})(b)) \leftrightarrow
P^+_d(c)$ for every such $d$, whence further
$([\mathbb{G},v])_{h_r(\bar{w}_r)h_k(\bar{w}_k)})(b)\mathrel{\approx_{(h_k(\bar{w}_k))}}c$
so that:
\begin{align*}
(([\mathbb{G},v])_\approx)_{h_r(\bar{w}_r)h_k(\bar{w}_k)})(g_{\bar{w}_r}(a))&=(([\mathbb{G},v])_\approx)_{h_r(\bar{w}_r)h_k(\bar{w}_k)})([b]_{\approx(h_r(\bar{w}_r))})\\
&= [([\mathbb{G},v])_{h_r(\bar{w}_r)h_k(\bar{w}_k)}(b)]_{\approx(h_k(\bar{w}_k))}\\
&= [c]_{\approx(h_k(\bar{w}_k))}\\
&= g_{\bar{w}_k}(\mathbb{H}_{\bar{w}_r\bar{w}_k}(a))
\end{align*}
In this way, $(ii)_k$ is shown to hold and the Induction Step is proven.

We now define:
\begin{align*}
    g: =& \bigcup \{g_{\bar{w}_k}\mid k < \omega,\,\bar{w}_k \in
W^{un(w)}\}\\
h: =& \bigcup\{ h_k\mid k < \omega \}
\end{align*}
We will show that $(g,h)$ is an $\mathcal{L}$-elementary embedding of
$\mathcal{M}^\ast$ into $([\mathcal{N}, v])_\approx$. It is
obvious that $h: W^{un(w)} \to ([U,v])_\approx$ is an injection,
being a union of the countable increasing chain of injections such
that the sequence of the domains of these injections covers all of
$W^{un(w)}$. It is also clear, that  $g: \mathbb{A}\to
([\mathbb{B}, v])_\approx$ is an injection. Indeed, if $a \in
\mathbb{A}$, then for some $\bar{w}_k \in W^{un(w)}$ we
have $a \in A_{\bar{w}_k} = dom(g_{\bar{w}_k}) \subseteq
dom(g)$, so $g$ is everywhere defined on $\mathbb{A}$.
Moreover, $g$ is a union of a set of functions with pairwise
disjoint domains and hence itself a function. Finally, if
$\bar{u}_m,\bar{w}_k \in W^{un(w)}$ are such that $\bar{u}_m \neq
\bar{w}_k$, $h(\bar{u}_m) \neq h(\bar{w}_k)$, therefore also
$rang(g_{\bar{w}_k})$ is disjoint from $rang(g_{\bar{u}_m})$.
Therefore, $g$ is a union of a set of injections with both
pairwise disjoint domains and pairwise disjoint ranges and hence
itself an injection.

It remains to check that the conditions of Definition
\ref{D:embedding} are satisfied by $(g,h)$. The satisfaction of
condition \eqref{E:c2} follows from the fact that we have
$g\upharpoonright A_{\bar{w}_k} = g_{\bar{w}_k}$ for any
$\bar{w}_k \in W^{un(w)}$ and the satisfaction of $(i)_k$. As for
condition \eqref{E:c1}, assume that $\bar{u}_m,\bar{w}_k \in
W^{un(w)}$. Then we have
\begin{align*}
    \bar{u}_m\mathrel{\prec^{un(w)}}\bar{w}_k &\Leftrightarrow m \leq k\,\&\,\bar{u}_m =
    \bar{w}_m\\
    &\Leftrightarrow h_k(\bar{u}_m)\mathrel{([\lhd,
v])_\approx}h_k(\bar{w}_k) &&\text{(by $(iv)_k$)}\\
&\Leftrightarrow h(\bar{u}_m)\mathrel{([\lhd,
	v])_\approx}h(\bar{w}_k) &&\text{(by definition of $h$)}
\end{align*}
As for the condition \eqref{E:c2a}, assume that
$\bar{u}_m,\bar{w}_k \in W^{un(w)}$. If we have $
\bar{u}_m\mathrel{\prec^{un(w)}}\bar{w}_k$, then we must have both
$m \leq k$ and $\bar{u}_m = \bar{w}_m$. Now, if $a \in
A_{\bar{w}_m}$, it follows from $(ii)_k$ that:
\begin{align*}
    g(\mathbb{H}_{\bar{w}_m\bar{w}_k}(a)) &=g_{\bar{w}_k}(\mathbb{H}_{\bar{w}_m\bar{w}_k}(a))\\
     &= (([\mathbb{G},v])_\approx)_{h_m(\bar{w}_m)h_k(\bar{w}_k)}(g_{\bar{w}_m}(a))\\
&= (([\mathbb{G},v])_\approx)_{h(\bar{w}_m)h(\bar{w}_k)}(g(a))
\end{align*}
It remains to check that condition \eqref{E:c3} is satisfied. We
observe, first, that for any $\bar{u}_m\in W^{un(w)}$, any $a \in
A_{\bar{u}_m}$, and any $b \in (([B,v])_\approx)_{h(\bar{u}_m)}$ we have:
\begin{equation}\label{E:lm3}
([\mathcal{N}, v])_\approx, h(\bar{u}_m)\models_\mathcal{L} P^+_a(b)
\Leftrightarrow b = g(a)
\end{equation}
Indeed, ($\Leftarrow$), if $b = g(a)$, then $b =
g_{\bar{u}_m}(a)$, therefore, for some $c \in B_{h(\bar{u}_m)}$ we
must have both $\mathcal{N}, h(\bar{u}_m) \models_\mathcal{L}
P^+_a(c)$ and $b = [c]_{\approx(h(\bar{u}_m))}$. But then also $([\mathcal{N}, v])_\approx, h(\bar{u}_m)\models_\mathcal{L} P^+_a(b)$ by the definition of congruence. In the other direction, $(\Rightarrow)$, if
$([\mathcal{N}, v])_\approx, h(\bar{u}_m)\models_\mathcal{L}
P^+_a(b)$ then we know that for some $c \in B_{h(\bar{u}_m)}$ we
must have $b = [c]_{\approx(h(\bar{u}_m))}$, whence $([\mathcal{N}, v])_\approx,
h(\bar{u}_m)\models_\mathcal{L} P^+_a([c]_{\approx(h(\bar{u}_m))})$, so that, further,
$[\mathcal{N}, v], h(\bar{u}_m)\models_\mathcal{L} P^+_a(c)$ by
the definition of congruence, and finally $\mathcal{N},
h(\bar{u}_m)\models_\mathcal{L} P^+_a(c)$ by Lemma \ref{L:generated-submodels}.1. But then we have $b =
g_{\bar{u}_m}(a)$, and hence also $b = g(a)$.

Having established \eqref{E:lm3}, we now reason as follows, for
any given $\bar{u}_m\in W^{un(w)}$, $\bar{a}_n \in
A_{\bar{u}_m}^n$, any tuple of pairwise distinct fresh constants
$\bar{c}_n$, and any $\phi \in
L(\Theta_{\mathcal{M}}\cup \{\bar{c}_n\})$:
\begin{align*}
\phi \in Tp^+_{\mathcal{L}}(\mathcal{M}^\ast, &\bar{u}_m, \bar{c}_n/\bar{a}_n) \Leftrightarrow ([\mathcal{M}^\ast, \bar{u}_m], \bar{c}_n/\bar{a}_n), \bar{u}_m \models_\mathcal{L} \phi\\
&\Leftrightarrow (\forall \bar{u}_r \succ^{un(w)} \bar{u}_m)(([\mathcal{M}^\ast, \bar{u}_m], \bar{c}_n/\bar{a}_n), \bar{u}_r \models_\mathcal{L} \phi)\\
&\text{(by Lemma \ref{L:generated-submodels}.2)}\\
&\Leftrightarrow (\forall \bar{u}_r \succ^{un(w)} \bar{u}_m)([([\mathcal{M}^\ast, \bar{u}_m], \bar{c}_n/\bar{a}_n), \bar{u}_r], \bar{u}_r \models_\mathcal{L} \phi)\\
&\text{(by Lemma \ref{L:generated-submodels}.1)}\\
&\Leftrightarrow (\forall \bar{u}_r \succ^{un(w)} \bar{u}_m)(([\mathcal{M}^\ast, \bar{u}_r], \bar{c}_n/\mathbb{H}^{un(w)}_{\bar{u}_m\bar{u}_r}\langle\bar{a}_n\rangle), \bar{u}_r \models_\mathcal{L} \phi)\\
&\text{(by Lemma \ref{L:cutoff-constants}.2)}\\ 
&\Leftrightarrow (\forall \bar{u}_r \succ^{un(w)} \bar{u}_m)(\forall \bar{b}_n \in A_{\bar{u}_r})(\bar{b}_n = \mathbb{H}_{\bar{u}_m\bar{u}_r}^{un(w)}\langle\bar{a}_n\rangle \Rightarrow ([\mathcal{M}^\ast, \bar{u}_r], \bar{c}_n/\bar{b}_n), \bar{u}_r \models_\mathcal{L} \phi)\\
&\Leftrightarrow (\forall \bar{u}_r \succ^{un(w)} \bar{u}_m)(\forall \bar{b}_n \in A_{\bar{u}_r})(([\mathcal{M}^\ast, \bar{u}_r], \bar{c}_n/\bar{b}_n), \bar{u}_r \models_\mathcal{L}\bigwedge^n_{i = 1}P^+_{a_i}(c_i) \Rightarrow\\
&\qquad\qquad\qquad\Rightarrow ([\mathcal{M}^\ast, \bar{u}_r], \bar{c}_n/\bar{b}_n), \bar{u}_r \models_\mathcal{L} \phi)\\
&\text{(by def. of $\mathcal{M}^\ast$)}\\
&\Leftrightarrow (\forall \bar{u}_r \succ^{un(w)} \bar{u}_m)(\forall \bar{b}_n \in A_{\bar{u}_r})(([\mathcal{M}^\ast, \bar{u}_r], \bar{c}_n/\bar{b}_n), \bar{u}_r \models_\mathcal{L}\bigwedge^n_{i = 1}P^+_{a_i}(c_i) \to \phi)\\
&\Leftrightarrow \mathcal{M}^\ast, \bar{u}_m \models_\mathcal{L}\forall\bar{c}_n(\bigwedge^n_{i = 1}P^+_{a_i}(c_i) \to \phi)\\
&\text{(by Corollary \ref{C:generated-submodels}.3)}\\
&\Leftrightarrow [\mathcal{N}, v]_\approx, h_m(\bar{u}_m) \models_\mathcal{L}\forall\bar{c}_n(\bigwedge^n_{i = 1}P^+_{a_i}(c_i) \to \phi)\\
&\text{(by $(iii)_m$)}\\
&\Leftrightarrow (\forall v\mathrel{[\rhd, v]_\approx} h_m(\bar{u}_m))(\forall \bar{b}_n \in ([B,v]_\approx)_v)(([[\mathcal{N}, v]_\approx, h_m(\bar{u}_m)], \bar{c}_n/\bar{b}_n), v \models_\mathcal{L}\bigwedge^n_{i = 1}P^+_{a_i}(c_i) \to \phi)\\
&\text{(by Corollary \ref{C:generated-submodels}.3)}\\
&\Leftrightarrow (\forall v\mathrel{[\rhd, v]_\approx} h_m(\bar{u}_m))(\forall \bar{b}_n \in ([B,v]_\approx)_v)(([[\mathcal{N}, v]_\approx, h_m(\bar{u}_m)], \bar{c}_n/\bar{b}_n), v \models_\mathcal{L}\bigwedge^n_{i = 1}P^+_{a_i}(c_i) \Rightarrow\\
&\qquad\qquad\qquad\Rightarrow ([[\mathcal{N}, v]_\approx, h_m(\bar{u}_m)], \bar{c}_n/\bar{b}_n), v\models_\mathcal{L} \phi)\\
&\Leftrightarrow (\forall v\mathrel{[\rhd, v]_\approx} h_m(\bar{u}_m))(\forall \bar{b}_n \in ([B,v]_\approx)_v)(\bar{b}_n = g\langle\bar{a}_n\rangle \Rightarrow ([[\mathcal{N}, v]_\approx, h_m(\bar{u}_m)], \bar{c}_n/\bar{b}_n), v\models_\mathcal{L} \phi)\\
&\text{(by \eqref{E:lm3})}\\
 &\Leftrightarrow ([[\mathcal{N}, v]_\approx, h_m(\bar{u}_m)], \bar{c}_n/g\langle\bar{a}_n\rangle), h_m(\bar{u}_m) \models_\mathcal{L} \phi\\
&\text{(reasoning as above)}\\
&\Leftrightarrow \phi \in Tp^+_{\mathcal{L}}([\mathcal{N}, v]_\approx, h_m(\bar{u}_m), \bar{c}_n/g\langle\bar{a}_n\rangle)\\
&\Leftrightarrow \phi \in Tp^+_{\mathcal{L}}([\mathcal{N}, v]_\approx, h(\bar{u}_m), \bar{c}_n/g\langle\bar{a}_n\rangle)\\
&\text{(by definition of $h$)}
\end{align*}
It follows that $Tp^+_{\mathcal{L}}(\mathcal{M}^\ast, \bar{u}_m, \bar{c}_n/\bar{a}_n) = Tp^+_{\mathcal{L}}([\mathcal{N}, v]_\approx, h(\bar{u}_m), \bar{c}_n/g\langle\bar{a}_n\rangle)$, whence clearly also $Tp_{\mathcal{L}}(\mathcal{M}^\ast, \bar{u}_m, \bar{c}_n/\bar{a}_n) = Tp_{\mathcal{L}}([\mathcal{N}, v]_\approx, h(\bar{u}_m), \bar{c}_n/g\langle\bar{a}_n\rangle)$, so that condition  \eqref{E:c3} (for $\mathcal{L}$) is
satisfied.
 \end{proof}

\begin{proposition}\label{L:saturation}
Let $\mathcal{L}\sqsupseteq \mathcal{L}' \in StIL$ be an abstract intuitionistic logic which is
preserved under $\mathcal{L}'$-asimulations, $\star$-compact, and has TUP, and let $(\mathcal{M}, w) \in Str_\mathcal{L}(\Theta)$ be an unravelled model. Then there exists an unravelled model $(\mathcal{N}', w)\in Str_\mathcal{L}(\Theta)$ such that $\mathcal{M}\preccurlyeq_\mathcal{L} \mathcal{N}'$ and every $\mathcal{L}$-type of 
$\mathcal{M}$ is realized in $\mathcal{N}'$.
\end{proposition}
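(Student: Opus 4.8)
The plan is to run the classical "one elementary extension realizing all types" argument, in the guise appropriate here, with the recovery machinery of Lemma~\ref{L:lemma1} (and Lemma~\ref{L:lemma1-cong}) playing the role of the elementary diagram. First I would pass to $\mathcal{M}^\ast\in Str_\mathcal{L}(\Theta_\mathcal{M})$ and keep in mind the contexts $Q^+_{\bar u_k},Q^-_{\bar u_k}$ (which in $\mathcal{M}^\ast$ define $\{t:\bar u_k\preceq t\}$ and $\{t:t\not\succeq\bar u_k\}$) and the predicates $P^+_a$ (which at a node $t$ hold of exactly $\mathbb{H}_{\bar u_k t}(a)$ when $a\in A_{\bar u_k}$ and $\bar u_k\preceq t$). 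I would then build $\Theta^+\supseteq\Theta_\mathcal{M}$ by adjoining, for every $\mathcal{L}$-type $\sigma$ of $\mathcal{M}$, a small bundle of fresh unary predicates: for a successor type $(\Gamma,\Delta)$ (resp.\ a universal type $\Xi$) of $(\mathcal{M},\bar u_k,\bar a_n)$ a pair $F^+_\sigma,F^-_\sigma$ intended as $P^+_{b_0},P^-_{b_0}$ for an element $b_0$ living at the would-be realizing node $t_0$ (plus, for universal and existential types, an extra $E_\sigma$ intended as $P^+_{b_0}$ for a realizing element). Writing $\widehat Q^\pm_\sigma$ for the $Q^\pm$-contexts formed with $F^\pm_\sigma$, the key observation is that a formula of the shape $\widehat Q^+_\sigma\wedge\neg(\widehat Q^+_\sigma\to\widehat Q^-_\sigma)$ isolates the single node $t_0$, so that, for $\phi(\bar c_n)\in L(\Theta\cup\{\bar c_n\})$ and $\phi^\circ(\bar y)$ the result of substituting marker variables for $\bar c_n$, the sentence
\[
\widehat Q^+_\sigma\to\Bigl(\widehat Q^-_\sigma\vee\exists\bar y\bigl(\textstyle\bigwedge_i P^+_{a_i}(y_i)\wedge\phi^\circ(\bar y)\bigr)\Bigr)
\]
is true at the root precisely when $\phi$ holds at $t_0$ for the $\mathbb{H}$-pushforward of $\bar a_n$.

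Next I would form the pair $(\Gamma_T,\Delta_T)\subseteq(L(\Theta^+),L(\Theta^+))$ consisting of $Th^+_\mathcal{L}(\mathcal{M}^\ast,w)$ together with, per type, the positive gadget sentences (the displayed sentences with $\phi$ ranging over $\Gamma$ for successor types, the axioms forcing $F^+_\sigma$ and $E_\sigma$ to activate where $Q^+_{\bar u_k}$ holds, the analogous "$\Xi$ holds at $h(\bar u_k)$" sentences for existential types), and of $Th^-_\mathcal{L}(\mathcal{M}^\ast,w)$ together with, per type, the negative gadget sentences (the displayed sentences with $\phi$ ranging over $\Delta$ for successor types, over $\Xi$ for universal types, plus the sentences forcing a witness node to exist above $h(\bar u_k)$). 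Finite satisfiability is verified by hand: a finite subset mentions finitely many types, each with a finite sub-part, and since each is a genuine $\mathcal{L}$-type of $\mathcal{M}$ there is, inside $\mathcal{M}$ itself, a node $t_0$ (and, where needed, an element $b_0$) realizing that finite sub-part; interpreting the relevant fresh predicates as $P^+_{b_0},P^-_{b_0},P^+_{b_0}$ turns $\mathcal{M}^\ast$ into an $\mathcal{L}$-admissible $\Theta^+$-expansion satisfying the finite subset (here one uses Corollary~\ref{C:types-formulas}, monotonicity (Lemma~\ref{L:satisfaction}.3), and that $Str_\mathcal{L}=Str_{\mathcal{L}'}$ so expanding by predicates preserves admissibility). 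By $\star$-compactness there is $(\mathcal{N},v)\in Str_\mathcal{L}(\Theta^+)$ with $\mathcal{N},v\models_\mathcal{L}(\Gamma_T,\Delta_T)$; taking the $\Theta_\mathcal{M}$-reduct and using the Expansion condition and closure for reducts gives $(\mathcal{N}\!\upharpoonright\!\Theta_\mathcal{M},v)\in Str_\mathcal{L}(\Theta_\mathcal{M})$ with $Th_\mathcal{L}(\mathcal{N}\!\upharpoonright\!\Theta_\mathcal{M},v)=Th_\mathcal{L}(\mathcal{M}^\ast,w)$. (TUP is not needed for this single step; it enters only when one iterates the proposition $\omega$ times to reach a saturated model.)

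Now I would apply Lemma~\ref{L:lemma1}: with $\approx$ the congruence of Lemma~\ref{L:lemma1-cong} and $\mathcal{Y}:=([\mathcal{N}\!\upharpoonright\!\Theta_\mathcal{M},v])_\approx\in Str_\mathcal{L}(\Theta_\mathcal{M})$, there is an $\mathcal{L}$-elementary embedding $(g,h):\mathcal{M}^\ast\hookrightarrow\mathcal{Y}$ with $h(w)=v$, and the equation \eqref{E:lm3} from its proof says $P^+_a$ is realized at $h(\bar u_k)$ by exactly $g(a)$. Since the gadget sentences survive the passage to the generated submodel and the congruence quotient (Lemma~\ref{L:generated-submodels} and Corollary~\ref{C:congr}), combining them with \eqref{E:lm3} shows that $\mathcal{Y}$ realizes every $\mathcal{L}$-type of the image $(g,h)(\mathcal{M}^\ast)$: for each type the gadget-witness node/element produced in $\mathcal{N}$ still lies in $\mathcal{Y}$, lies $\lhd$-above $h(\bar u_k)$, carries as parameters the $\mathbb{G}$-pushforward of $g\langle\bar a_n\rangle$, and satisfies (resp.\ falsifies) the correct sentences. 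To finish I would unravel: $\mathcal{Y}^{un(v)}\in Str_\mathcal{L}(\Theta_\mathcal{M})$ (Lemma~\ref{unravellinglemma-gen}.1) is unravelled, still realizes those types, and $\mathcal{M}^\ast$ $\mathcal{L}$-elementarily embeds into it via the ``pathwise-$h$'' map (using Lemma~\ref{unravellinglemma-gen}.3 for the type clause); applying Lemma~\ref{L:isomorphic-correction} gives $\mathcal{M}^\ast\preccurlyeq_\mathcal{L}\mathcal{M}^{\ast\ast}\cong\mathcal{Y}^{un(v)}$ with $\mathcal{M}^{\ast\ast}$ unravelled; then Lemma~\ref{L:type-realization}.4 transfers type-realization across the isomorphism and Lemma~\ref{L:type-realization}.2 pushes it down to reducts, so $\mathcal{N}':=\mathcal{M}^{\ast\ast}\!\upharpoonright\!\Theta$ is unravelled, $\mathcal{M}=\mathcal{M}^\ast\!\upharpoonright\!\Theta\preccurlyeq_\mathcal{L}\mathcal{N}'$ by Lemma~\ref{L:el-submodels}.3, and $\mathcal{N}'$ realizes every $\mathcal{L}$-type of $\mathcal{M}$.

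The main obstacle, absorbing most of the work, is the gadget design of the first two paragraphs: choosing the fresh predicates and their defining sentences so that one compactness step forces, for a successor (resp.\ universal) type, one and the same witness node at which the \emph{entire} negative part of the type is falsified — the naive ``force the type to hold/fail everywhere above $h(\bar u_k)$'' fails because all intuitionistic formulas are monotone, so distinct sentences of $\Delta$ (or $\Xi$) could otherwise be falsified at different nodes — together with the accompanying finite-satisfiability check. A secondary point needing care is the end-game bookkeeping: tracking ``realization of a type'' faithfully through the reduct $\Theta^+\rightsquigarrow\Theta_\mathcal{M}$, the generated submodel, the congruence quotient, the unravelling, and the final reduct to $\Theta$.
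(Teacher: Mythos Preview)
Your overall strategy mirrors the paper's: pass to $\mathcal{M}^\ast$, adjoin gadget predicates indexed by types, apply $\star$-compactness, invoke Lemma~\ref{L:lemma1}, isomorphic correction, and reduct. The paper differs in two respects, and one of them exposes a genuine gap in your gadget design.

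First, the order of operations: the paper unravels the compactness-obtained model \emph{before} applying Lemma~\ref{L:lemma1}, obtaining $(\mathcal{M}_1^{un(w_1)})_\approx$; you apply Lemma~\ref{L:lemma1} directly to $[\mathcal{N}\!\upharpoonright\!\Theta_\mathcal{M},v]$ and unravel only at the end. Your variant can in principle be made to work, but the paper's order is not gratuitous: it lets one trace the witness $v'\succ v_k$ found in $\mathcal{M}'_1$ to the path $(\bar v_k)^\frown v'$, which is automatically $\succ^{un(w_1)}\bar v_k=h(\bar u_r)$. With your order you must locate the witness directly above $h(\bar u_r)$ in $[\mathcal{N},v]$, and that hinges on the gadget sentences being available there.

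Second, and this is where the gap lies, the paper's gadget predicates are $n$-ary (resp.\ $(n{+}1)$-ary) and \emph{all} the gadget axioms sit in the positive part $\Upsilon$: $\forall\bar c_n(R^+\to\phi)$ for $\phi\in\Gamma$, $\forall\bar c_n(\psi\to R^-)$ for $\psi\in\Delta$, and the single ``witness'' axiom $\forall\bar c_n((R^+\to R^-)\to\bigvee_i P^-_{a_i}(c_i))$. Because everything is positive, these persist from $v$ up to $h(\bar u_r)$; instantiating with the $\approx$-representatives $\bar\beta_n$ of $g\langle\bar a_n\rangle$ (which make $\bigvee_iP^-_{a_i}$ fail) produces a \emph{single} node $v'\succ h(\bar u_r)$ where $R^+$ holds and $R^-$ fails, and then every $\phi\in\Gamma$ holds and every $\psi\in\Delta$ fails at that same $v'$. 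Your design instead places, for each $\psi\in\Delta$, the displayed sentence into $\Delta_T$. Its failure at $v$ yields, for each such $\psi$, \emph{some} node $t_\psi$ where $\widehat Q^+_\sigma$ holds, $\widehat Q^-_\sigma$ fails, and the existential fails --- but nothing in your theory forces the $t_\psi$ to coincide: in the compactness model $F^+_\sigma,F^-_\sigma$ need not arise as $P^+_{b_0},P^-_{b_0}$ for any element $b_0$, and the locus where $\widehat Q^+_\sigma$ holds while $\widehat Q^-_\sigma$ fails need not be a singleton. You correctly name this collapsing-of-witnesses issue as the main obstacle, but the unary-marker encoding as sketched does not resolve it. The same problem afflicts your ``witness node exists above $h(\bar u_k)$'' clauses in $\Delta_T$: being negative, they are not preserved upward to $h(\bar u_r)$, so the node they furnish need not lie above $h(\bar u_r)$ at all. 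The paper's fix is exactly to replace your scheme by the $R^+/R^-$ bounds, which reduces everything to the failure of a single implication and keeps all axioms positive.
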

\begin{proof}
Assume the hypothesis of the Proposition. For the given $\mathcal{M}$, consider $\mathcal{M}^\ast$ and fix a countable set $\{c_i \mid i > 0\}$ of pairwise distinct fresh individual constants; we extend the signature of this model as
follows:
\begin{itemize}
    \item For every $\bar{v}_k \in W^{un(w)}$ and every $\bar{a}_n \in
    A^n_{\bar{v}_k}$, if $\Gamma, \Delta \subseteq
    L(\Theta_{\mathcal{M}}\cup \{\bar{c}_n\})$ are such that
    $(\Gamma, \Delta)$ is a $\bar{c}_n$-successor $\mathcal{L}$-type of
    $(\mathcal{M}^\ast,\bar{v}_k, \bar{a}_n)$, then we add two fresh
    $n$-ary predicate letters $R^+_{\Gamma,\Delta,\bar{v}_k,\bar{a}_n}$
    and $R^-_{\Gamma,\Delta,\bar{v}_k,\bar{a}_n}$.

    \item For every $\bar{v}_k \in W^{un(w)}$, every $\bar{a}_n \in
    A^n_{\bar{v}_k}$, if $\Xi \subseteq
    L(\Theta_{\mathcal{M}}\cup \{\bar{c}_{n+1}\})$ is a $\bar{c}_{n+1}$-existential $\mathcal{L}$-type of
    $(\mathcal{M}^\ast,\bar{v}_k,\bar{a}_n)$, then we add a fresh
    $(n + 1)$-ary predicate letter $R^{ex}_{\Xi,\bar{v}_k,\bar{a}_n}$.

    \item For every $\bar{v}_k \in W^{un(w)}$, every $\bar{a}_n \in
    A^n_{\bar{v}_k}$, if $\Xi \subseteq
    L(\Theta_{\mathcal{M}}\cup \{\bar{c}_{n+1}\})$ is a $\bar{c}_{n+1}$-universal $\mathcal{L}$-type of
    $(\mathcal{M}^\ast,\bar{v}_k,\bar{a}_n)$, then we add a fresh
    $(n + 1)$-ary predicate letter $R^{all}_{\Xi,\bar{v}_k,\bar{a}_n}$.
\end{itemize}
We will call the resulting signature $\Theta'$. Consider next the
following $L(\Theta')$-theory
$(\Upsilon,Th^-_\mathcal{L}(\mathcal{M}^\ast, w))$, where:
\begin{align*}
\Upsilon = &Th^+_\mathcal{L}(\mathcal{M}^\ast, w) \cup\\
&\cup
\{\forall\bar{c}_n(R^+_{\Gamma,\Delta,\bar{v}_k,\bar{a}_n}(\bar{c}_n) \to
\phi), \forall\bar{c}_n(\psi \to
R^-_{\Gamma,\Delta,\bar{v}_k,\bar{a}_n}(\bar{c}_n)),
\\
&\qquad\quad\forall\bar{c}_n((R^+_{\Gamma,\Delta,\bar{v}_k,\bar{a}_n}(\bar{c}_n) \to
R^-_{\Gamma,\Delta,\bar{v}_k,\bar{a}_n}(\bar{c}_n)) \to
\bigvee^n_{i = 1}P^-_{a_i}(c_i))\mid R^+_{\Gamma,\Delta,\bar{v}_k,\bar{a}_n},
R^-_{\Gamma,\Delta,\bar{v}_k,\bar{a}_n} \in \Theta', \phi \in \Gamma,
\psi \in \Delta,
\bar{v}_k \in W^{un(w)} \}\\
&\cup \{\forall\bar{c}_{n+1}(R^{ex}_{\Xi,\bar{v}_k,\bar{a}_n}(\bar{c}_{n+1}) \to \phi),
\forall\bar{c}_{n}(\bigwedge^n_{i = 1}P^+_{a_i}(c_i) \to \exists
c_{n+1}R^{ex}_{\Xi,\bar{v}_k,\bar{a}_n}(\bar{c}_{n+1}))\mid
R^{ex}_{\Xi,\bar{v}_k,\bar{a}_n} \in \Theta', \phi \in \Xi, \bar{v}_k \in W^{un(w)} \}\\
&\cup \{\forall\bar{c}_{n+1}(\psi \to
R^{all}_{\Xi,\bar{v}_k,\bar{a}_n}(\bar{c}_{n+1})),
\forall\bar{c}_{n}(\forall c_{n+1}R^{all}_{\Xi,\bar{v}_k,\bar{a}_n}(\bar{c}_{n+1}) \to \bigvee^n_{i = 1}P^-_{a_i}(c_i))\mid R^{all}_{\Xi,\bar{v}_k,\bar{a}_n} \in
\Theta', \phi \in \Xi, \bar{v}_k \in W^{un(w)} \},
\end{align*}

$(\Upsilon,Th^-_\mathcal{L}(\mathcal{M}^\ast, w))$ is itself
finitely $\mathcal{L}$-satisfiable, since given an arbitrary $(\Xi_0, \Omega_0) \Subset (\Upsilon, Th^-_\mathcal{L}(\mathcal{M}^\ast, w))$
 we know, wlog, that, for some $n < \omega$ we have:
$$
\Xi_0 \subseteq Th^+_\mathcal{L}(\mathcal{M}^\ast, w) \cup
T_1 \cup, \ldots, \cup T_n,
$$
where for every $1 \leq i \leq n$ one of the following cases
holds:

\emph{Case 1}. For some  $\bar{v}_k \in W^{un(w)}$ and some
$\bar{a}_n \in A^n_{\bar{v}_k}$, there exist  $\Gamma, \Delta \subseteq
L(\Theta_{\mathcal{M}}\cup \{\bar{c}_n\})$ such that
$(\Gamma, \Delta)$ is a $\bar{c}_n$-successor $\mathcal{L}$-type of
$(\mathcal{M}^\ast,\bar{v}_k, \bar{a}_n)$ and $\Gamma'
    \Subset\Gamma$, $\Delta' \Subset \Delta$ such that we have:

\begin{align*}
T_i &\subseteq
\{\forall\bar{c}_n(R^+_{\Gamma,\Delta,\bar{v}_k,\bar{a}_n}(\bar{c}_n) \to
\phi), \forall\bar{c}_n(\psi \to
R^-_{\Gamma,\Delta,\bar{v}_k,\bar{a}_n}(\bar{c}_n)),
\\
&\qquad\quad\forall\bar{c}_n((R^+_{\Gamma,\Delta,\bar{v}_k,\bar{a}_n}(\bar{c}_n) \to
R^-_{\Gamma,\Delta,\bar{v}_k,\bar{a}_n}(\bar{c}_n)) \to \bigvee^n_{i = 1}P^-_{a_i}(c_i))\mid \phi \in \Gamma', \psi \in \Delta'\}
\end{align*}
By Definition \ref{D:types}, $(\Gamma', \Delta')$ is
$\mathcal{L}$-satisfied at $(([\mathcal{M}^\ast, \bar{v}_m], \bar{c}_n/\mathbb{H}_{\bar{v}_k\bar{v}_m}\langle\bar{a}_n\rangle), \bar{v}_m)$
for some $\bar{v}_m \in W^{un(w)}$ and $m \geq k$. We set $\bar{b}_n := \mathbb{H}_{\bar{v}_k\bar{v}_m}\langle\bar{a}_n\rangle$ and claim that $T_i$
will be satisfied at $w$ in the expansion of
$\mathcal{M}^\ast$ in which
$R^+_{\Gamma,\Delta,\bar{v}_k,\bar{a}_n}(\bar{y}_n)$ and
$R^-_{\Gamma,\Delta,\bar{v}_k,\bar{a}_n}(\bar{y}_n)$ are identified with $\bigwedge^n_{i = 1}P^+_{b_i}(y_i)$ and $\bigvee^n_{i = 1}P^-_{b_i}(y_i)$, respectively.

Indeed, if $\phi \in \Gamma'$, then we obviously have $\mathcal{M}^\ast, w \models_{\mathcal{L}} \forall\bar{c}_n(\bigwedge^n_{i = 1}P^+_{b_i}(c_i) \to
\phi)$. If $\bar{u}_r \in W^{un(w)}$ and $\bar{e}_n \in A^n_{\bar{u}_r}$ are such that $([\mathcal{M}^\ast, \bar{u}_r], \bar{c}_n/\bar{e}_n), \bar{u}_r \models_{\mathcal{L}} \bigwedge^n_{i = 1}P^+_{b_i}(c_i)$, then we must have $\bar{e}_n = \mathbb{H}_{\bar{v}_m\bar{u}_r}\langle\bar{b}_n\rangle$, so, in particular, $\mathbb{H}_{\bar{v}_m\bar{u}_r}\langle\bar{b}_n\rangle$ must be defined and we must therefore also have $\bar{u}_r \succ^{un(w)} \bar{v}_m$. But then, since we have, by the choice of $\bar{v}_m$, that 
$([\mathcal{M}^\ast, \bar{v}_m], \bar{c}_n/\bar{b}_n), \bar{v}_m \models_{\mathcal{L}} \phi$, so that, by Lemma \ref{L:generated-submodels}.2,
$([\mathcal{M}^\ast, \bar{v}_m], \bar{c}_n/\bar{b}_n), \bar{u}_r \models_{\mathcal{L}} \phi$, whence, by Lemma \ref{L:generated-submodels}.1,
$[([\mathcal{M}^\ast, \bar{v}_m], \bar{c}_n/\bar{b}_n), \bar{u}_r], \bar{u}_r \models_{\mathcal{L}} \phi$. But then, by Lemma \ref{L:cutoff-constants}.2, $([\mathcal{M}^\ast, \bar{u}_r], \bar{c}_n/\bar{e}_n), \bar{u}_r \models_{\mathcal{L}} \phi$, as desired.

On the other hand, if  $\psi \in \Delta'$, then we have $\mathcal{M}^\ast, w \models_{\mathcal{L}} \forall\bar{c}_n(\psi \to
\bigvee^n_{i = 1}P^-_{b_i}(c_i))$. We argue by contraposition. If $\bar{u}_r \in W^{un(w)}$ and $\bar{e}_n \in A^n_{\bar{u}_r}$ are such that every formula of the form $P^-_{b_i}(c_i)$ for $1 \leq i \leq n$ fails at $(([\mathcal{M}^\ast, \bar{u}_r], \bar{c}_n/\bar{e}_n), \bar{u}_r)$, then we must have both $\bar{u}_r \prec^{un(w)} \bar{v}_m$ and 
$\mathbb{H}_{\bar{u}_r\bar{v}_m}\langle\bar{e}_n\rangle = \bar{b}_n$.

But then clearly $$([\mathcal{M}^\ast, \bar{v}_m], \bar{c}_n/\mathbb{H}_{\bar{u}_r\bar{v}_m}\langle\bar{e}_n\rangle), \bar{v}_m \not\models_{\mathcal{L}} \psi,$$ whence, by Lemma \ref{L:cutoff-constants}.2, $[([\mathcal{M}^\ast, \bar{u}_r], \bar{c}_n/\bar{e}_n), \bar{v}_m], \bar{v}_m \not\models_{\mathcal{L}} \psi$. But then, by Lemma \ref{L:generated-submodels}.1, also $([\mathcal{M}^\ast, \bar{u}_r], \bar{c}_n/\bar{e}_n), \bar{v}_m \not\models_{\mathcal{L}} \psi$, hence, by Lemma \ref{L:generated-submodels}.2, $([\mathcal{M}^\ast, \bar{u}_r], \bar{c}_n/\bar{e}_n), \bar{u}_r \not\models_{\mathcal{L}} \psi$, as desired.

Finally, we show that also 
$$
\mathcal{M}^\ast, w \models_{\mathcal{L}} \forall\bar{c}_n((\bigwedge^n_{i = 1}P^+_{b_i}(c_i) \to
\bigvee^n_{i = 1}P^-_{b_i}(c_i)) \to
\bigvee^n_{i = 1}P^-_{a_i}(c_i)).
$$
Again, we argue by contraposition. Indeed, let  $\bar{u}_r \in W^{un(w)}$ and $\bar{e}_n \in A^n_{\bar{u}_r}$ be such that  $([\mathcal{M}^\ast, \bar{u}_r], \bar{c}_n/\bar{e}_n), \bar{u}_r \not\models_{\mathcal{L}} \bigvee^n_{i = 1}P^-_{a_i}(c_i)$. This means that $\bar{u}_{r} \prec^{un(w)} \bar{v}_k$, and hence, by transitivity, $\bar{u}_{r} \prec^{un(w)} \bar{v}_m$. On the other hand, this also means that $\mathbb{H}_{\bar{u}_r\bar{v}_k}\langle\bar{e}_n\rangle = \bar{a}_n$ and thus also $\mathbb{H}_{\bar{u}_r\bar{v}_m}\langle\bar{e}_n\rangle = \mathbb{H}_{\bar{v}_k\bar{v}_m}\langle\mathbb{H}_{\bar{u}_r\bar{v}_k}\langle\bar{e}_n\rangle\rangle = \mathbb{H}_{\bar{v}_k\bar{v}_m}\langle\bar{a}_n\rangle = \bar{b}_n$. Furthermore, we also have $([\mathcal{M}^\ast, \bar{v}_m], \bar{c}_n/\bar{b}_n), \bar{v}_m \models_{\mathcal{L}} (\{\bigwedge^n_{i = 1}P^+_{b_i}(c_i)\}, \{\bigvee^n_{i = 1}P^-_{b_i}(c_i)\})$ by definition, whence, further $([\mathcal{M}^\ast, \bar{v}_m], \bar{c}_n/\mathbb{H}^{un(w)}_{\bar{u}_r\bar{v}_m}\langle\bar{e}_n\rangle), \bar{v}_m \models_{\mathcal{L}} (\{\bigwedge^n_{i = 1}P^+_{b_i}(c_i)\}, \{\bigvee^n_{i = 1}P^-_{b_i}(c_i)\})$, and, by Lemma \ref{L:cutoff-constants}.2, $[([\mathcal{M}^\ast, \bar{u}_r], \bar{c}_n/\bar{e}_n), \bar{v}_m], \bar{v}_m \models_{\mathcal{L}} (\{\bigwedge^n_{i = 1}P^+_{b_i}(c_i)\}, \{\bigvee^n_{i = 1}P^-_{b_i}(c_i)\})$.
It follows then, by Lemma \ref{L:generated-submodels}.1, that $([\mathcal{M}^\ast, \bar{u}_r], \bar{c}_n/\bar{e}_n), \bar{v}_m \models_{\mathcal{L}} (\{\bigwedge^n_{i = 1}P^+_{b_i}(c_i)\}, \{\bigvee^n_{i = 1}P^-_{b_i}(c_i)\})$ so that, by $\bar{u}_{r} \prec^{un(w)} \bar{v}_m$, it follows that $([\mathcal{M}^\ast, \bar{u}_r], \bar{c}_n/\bar{e}_n), \bar{u}_r \not\models_{\mathcal{L}} \bigwedge^n_{i = 1}P^+_{b_i}(c_i) \to
\bigvee^n_{i = 1}P^-_{b_i}(c_i)$ as desired.

\emph{Case 2}. For some $\bar{v}_k \in W^{un(w)}$ and some $\bar{a}_n \in
A^n_{\bar{v}_k}$, there exists a $\bar{c}_{n+1}$-existential $\mathcal{L}$-type $\Xi \subseteq
L(\Theta_{\mathcal{M}}\cup \{\bar{c}_{n+1}\})$  of
$(\mathcal{M}^\ast,\bar{v}_k,\bar{a}_n)$ and a $\Xi'
    \Subset\Xi$ such that we have:

\begin{align*}
T_i &\subseteq \{\forall\bar{c}_{n+1}(R^{ex}_{\Xi,\bar{v}_k,\bar{a}_n}(\bar{c}_{n+1}) \to \phi),
\forall\bar{c}_{n}(\bigwedge^n_{i = 1}P^+_{a_i}(c_i) \to \exists
c_{n+1}R^{ex}_{\Xi,\bar{v}_k,\bar{a}_n}(\bar{c}_{n+1}))\mid \phi
\in \Xi'\}
\end{align*}
By Definition \ref{D:types}, $\Xi'$ is
$\mathcal{L}$-satisfied at $(([\mathcal{M}^\ast, \bar{v}_k], \bar{c}_{n+1}/\bar{a}_{n+1}), \bar{v}_k)$
for some $a_{n+1} \in A_{\bar{v}_k}$. But then $T_i$ will be
satisfied at $w$ in the expansion of $\mathcal{M}^\ast$ in
which $R^{ex}_{\Xi,\bar{v}_k,\bar{a}_n}$ is identified with
$\bigwedge^{n + 1}_{i = 1}P^+_{a_i}(c_i)$. Indeed, we can show that $\mathcal{M}^\ast, w \models_{\mathcal{L}} \forall\bar{c}_{n+1}(\bigwedge^{n + 1}_{i = 1}P^+_{a_i}(c_i) \to
\phi)$ for every $\phi
\in \Xi'$ arguing as in Case 1.

Furthermore, we can show that we have:
$$
\mathcal{M}^\ast, w \models_{\mathcal{L}} \forall\bar{c}_{n}(\bigwedge^n_{i = 1}P^+_{a_i}(c_i) \to \exists
c_{n+1}\bigwedge^{n + 1}_{i = 1}P^+_{a_i}(c_i)).
$$
Indeed, let  $\bar{u}_r \in W^{un(w)}$ and $\bar{e}_n \in A^n_{\bar{u}_r}$ be such that  $([\mathcal{M}^\ast, \bar{u}_r], \bar{c}_n/\bar{e}_n), \bar{u}_r \models_{\mathcal{L}} \bigwedge^n_{i = 1}P^+_{a_i}(c_i)$. This means that $\bar{u}_{r} \succ^{un(w)} \bar{v}_k$, and that $\mathbb{H}_{\bar{v}_k\bar{u}_r}\langle\bar{a}_n\rangle = \bar{e}_n$. But then, we can also set $e_{n + 1}: = \mathbb{H}_{\bar{v}_k\bar{u}_r}(a_{n + 1})$, and we clearly have $([\mathcal{M}^\ast, \bar{u}_r], \bar{c}_{n + 1}/\bar{e}_{n + 1}), \bar{u}_r \models_{\mathcal{L}} \bigwedge^{n + 1}_{i = 1}P^+_{a_i}(c_i)$, thus also, by Corollary \ref{C:generated-submodels}.4, $([\mathcal{M}^\ast, \bar{u}_r], \bar{c}_{n}/\bar{e}_{n}), \bar{u}_r \models_{\mathcal{L}} \exists
c_{n+1}\bigwedge^{n + 1}_{i = 1}P^+_{a_i}(c_i)$, as desired.

\emph{Case 3}. For some $\bar{v}_k \in W^{un(w)}$ and some $\bar{a}_n \in
A^n_{\bar{v}_k}$, there exists a $\bar{c}_{n+1}$-universal $\mathcal{L}$-type $\Xi \subseteq
L(\Theta_{\mathcal{M}}\cup \{\bar{c}_{n+1}\})$ of
$(\mathcal{M}^\ast,\bar{v}_k, \bar{a}_n)$ and some $\Xi'
    \Subset\Xi$ such that we have:
\begin{align*}
T_i &\subseteq \{\forall\bar{c}_{n+1}(\psi \to
R^{all}_{\Xi,\bar{v}_k,\bar{a}_n}(\bar{c}_{n+1})),
\forall\bar{c}_{n}(\forall c_{n+1}R^{all}_{\Xi,\bar{v}_k,\bar{a}_n}(\bar{c}_{n+1}) \to \bigvee^n_{i = 1}P^-_{a_i}(c_i))\mid \phi \in \Xi'\}
\end{align*}
By Definition \ref{D:types}, there exist $m \geq k$,
$\bar{v}_m \in W^{un(w)}$, and $b \in A_{\bar{v}_m}$
such that $\Xi'$ is
$\mathcal{L}$-falsified at $(([\mathcal{M}^\ast, \bar{v}_m], \bar{c}_{n + 1}/\mathbb{H}_{\bar{v}_k\bar{v}_m}\langle\bar{a}_n\rangle^\frown b), \bar{v}_m)$. We set $\bar{b}_{n + 1} := \mathbb{H}_{\bar{v}_k\bar{v}_m}\langle\bar{a}_n\rangle^\frown b$ and claim that $T_i$ will be
satisfied at $w$ in the expansion of $\mathcal{M}^\ast$ in
which $R^{all}_{\Xi,\bar{v}_k,\bar{a}_n}$ is identified with
$\bigvee^{n + 1}_{i = 1}P^-_{b_i}(c_i)$. Indeed, we can show that $\mathcal{M}^\ast, w \models_{\mathcal{L}} \forall\bar{c}_{n+1}(\psi \to \bigvee^{n + 1}_{i = 1}P^-_{b_i}(c_i))$ for every $\psi
\in \Xi'$ arguing as in Case 1.

Furthermore, we can show that we have:
$$
\mathcal{M}^\ast, w \models_{\mathcal{L}} \forall\bar{c}_{n}(\forall c_{n+1}\bigvee^{n + 1}_{i = 1}P^-_{b_i}(c_i) \to \bigvee^n_{i = 1}P^-_{a_i}(c_i)).
$$
Again, we argue by contraposition. Indeed, let  $\bar{u}_r \in W^{un(w)}$ and $\bar{e}_n \in A^n_{\bar{u}_r}$ be such that  $([\mathcal{M}^\ast, \bar{u}_r], \bar{c}_n/\bar{e}_n), \bar{u}_r \not\models_{\mathcal{L}} \bigvee^n_{i = 1}P^-_{a_i}(c_i)$. This means that $\bar{u}_{r} \prec^{un(w)} \bar{v}_k$, and hence, by transitivity, $\bar{u}_{r} \prec^{un(w)} \bar{v}_m$. On the other hand, this also means that $\mathbb{H}_{\bar{u}_r\bar{v}_k}\langle\bar{e}_n\rangle = \bar{a}_n$ and thus also $\mathbb{H}_{\bar{u}_r\bar{v}_m}\langle\bar{e}_n\rangle = \mathbb{H}_{\bar{v}_k\bar{v}_m}\langle\mathbb{H}_{\bar{u}_r\bar{v}_k}\langle\bar{e}_n\rangle\rangle = \mathbb{H}_{\bar{v}_k\bar{v}_m}\langle\bar{a}_n\rangle = \bar{b}_n$. On the other hand, we also have $([\mathcal{M}^\ast, \bar{v}_m], \bar{c}_{n+1}/\bar{b}_{n+1}), \bar{v}_m \not\models_{\mathcal{L}} \bigvee^{n + 1}_{i = 1}P^-_{b_i}(c_i)$ by definition, whence further $([\mathcal{M}^\ast, \bar{v}_m], \bar{c}_{n+1}/\mathbb{H}_{\bar{u}_r\bar{v}_m}\langle\bar{e}_n\rangle^\frown b), \bar{v}_m \not\models_{\mathcal{L}} \bigvee^{n + 1}_{i = 1}P^-_{b_i}(c_i)$, and, by $\bar{u}_{r} \prec^{un(w)} \bar{v}_m$ and Corollary \ref{C:generated-submodels}.4, $([\mathcal{M}^\ast, \bar{u}_r], \bar{c}_n/\bar{e}_n), \bar{u}_r \not\models_{\mathcal{L}} \forall c_{n + 1}\bigvee^{n + 1}_{i = 1}P^-_{b_i}(c_i)$, as desired. 

Note, moreover, that for all $1 \leq i < j \leq n$, the set of
predicates that needs to be added to $\mathcal{M}^\ast$ in
order to get $T_i$ satisfied at $w$ is disjoint from the set of
predicates to be added to this same model in order to get $T_j$
satisfied at $w$. Therefore, we can take the union of the
expansions required by $T_1, \ldots, T_n$ and get (by Expansion property) an expansion
$\mathcal{M}'$ of $\mathcal{M}^\ast$ such that $(\Xi_0,
\Omega_0)$ is satisfied at $(\mathcal{M}', w)$. Since $(\Xi_0,
\Omega_0) \Subset (\Upsilon,Th^-_\mathcal{L}(\mathcal{M}^\ast,
w))$ was chosen arbitrarily, this means, by the
$\star$-compactness of $\mathcal{L}$, that
$(\Upsilon,Th^-_\mathcal{L}(\mathcal{M}^\ast, w))$ itself is
$\mathcal{L}$-satisfiable.

Let $(\mathcal{M}'_1, w_1) \in Str_\mathcal{L}(\Theta')$ be a pointed $\Theta'$-model
$\mathcal{L}$-satisfying
$(\Upsilon,Th^-_\mathcal{L}(\mathcal{M}^\ast, w))$. We set
$\mathcal{M}_1: = \mathcal{M}'_1\upharpoonright\Theta_{\mathcal{M}}$; it is obvious, that we also have $(\mathcal{M}_1, w_1) \in Str_\mathcal{L}(\Theta_{\mathcal{M}})$. We know, by Expansion property,
that $(\mathcal{M}_1, w_1)$ $\mathcal{L}$-satisfies
$Th_\mathcal{L}(\mathcal{M}^\ast, w)$, therefore, by Lemma
\ref{unravellinglemma}.3, $(\mathcal{M}_1^{un(w_1)}, w_1)\in Str_\mathcal{L}(\Theta_{\mathcal{M}})$ also
$\mathcal{L}$-satisfies $Th_\mathcal{L}(\mathcal{M}^\ast, w)$
and, by Lemma \ref{L:lemma1}, there must be an
$\mathcal{L}$-elementary embedding $(g,h)$ of $\mathcal{M}^\ast$ into
$([\mathcal{M}_1^{un(w_1)},w_1])_\approx =
(\mathcal{M}_1^{un(w_1)})_\approx\in Str_\mathcal{L}(\Theta_{\mathcal{M}})$, where $\approx$ is defined as in
Lemma \ref{L:lemma1-cong}, and for this elementary embedding we will have
with $g(w) = w_1$. Note, moreover, that, by Lemma \ref{L:embedding}.3, we have then $(g,h)(\mathcal{M}^\ast) \preccurlyeq_\mathcal{L} (\mathcal{M}_1^{un(w_1)})_\approx$ where $(g,h)(\mathcal{M}^\ast)$ is a $(g,h)$-isomorphic copy of $\mathcal{M}^\ast$. Therefore, by Lemma \ref{L:isomorphic-correction}, there must exist a $\Theta_{\mathcal{M}}$-model $\mathcal{N}$ and a pair of functions $g' \supseteq g$, $h' \supseteq h$ such that $\mathcal{M}^\ast \preccurlyeq_\mathcal{L} \mathcal{N}$, and $(g', h'): \mathcal{N} \cong (\mathcal{M}_1^{un(w_1)})_\approx$. In virtue of the latter isomorphism, we also have $(\mathcal{N}, w) \in Str_\mathcal{L}(\Theta_{\mathcal{M}})$. We now prove the following:

\emph{Claim}. $\mathcal{N}$ realizes every $\mathcal{L}$-type
of $\mathcal{M}^\ast$.

To prove this Claim, we again have to consider the three cases
outlined above, keeping fixed a set $\{c_i \mid i > 0\}$ of pairwise distinct fresh individual constants.

\emph{Case 1}. For some $\bar{u}_r \in W^{un(w)}$ and some $\bar{a}_n \in  A^n_{\bar{u}_r}$ the sets $\Gamma, \Delta \subseteq
L(\Theta_{\mathcal{M}^\ast}\cup \{\bar{c}_n\})$ are such that
$(\Gamma, \Delta)$ is a $\bar{c}_n$-successor $\mathcal{L}$-type of
$(\mathcal{M}^\ast, \bar{u}_r, \bar{a}_n)$. Just by definition, we will have
$([\mathcal{M}^\ast, \bar{u}_r], \bar{c}_n/\bar{a}_n), \bar{u}_r \not\models_\mathcal{L}
\bigvee^{n}_{i = 1}P^-_{a_i}(c_i)$, hence also
$$([(\mathcal{M}_1^{un(w_1)})_\approx, h(\bar{u}_r)], \bar{c}_n/g\langle\bar{a}_n\rangle), h(\bar{u}_r)
\not\models_\mathcal{L} \bigvee^{n}_{i = 1}P^-_{a_i}(c_i).$$ Moreover, for some $\bar{v}_k \in (W_1)^{un(w_1)}$ and some $\bar{\beta}_n \in (A_1)^n_{v_k}$, we will have both $\bar{v}_k = h(\bar{u}_r)$ and  $g\langle\bar{a}_n\rangle = [\langle\bar{\beta}_n\odot \bar{v}_k\rangle]_{\approx(\bar{v}_k)}$. It follows now from Corollary \ref{C:congr}, that $([\mathcal{M}_1^{un(w_1)}, \bar{v}_k], \bar{c}_n/\bar{\beta}_n\odot \bar{v}_k), \bar{v}_k
\not\models_\mathcal{L} \bigvee^{n}_{i = 1}P^-_{a_i}(c_i)$, whence, further, by Lemma \ref{unravellinglemma-gen}, $$([\mathcal{M}_1, v_k], \bar{c}_n/\bar{\beta}_n), v_k
\not\models_\mathcal{L} \bigvee^{n}_{i = 1}P^-_{a_i}(c_i).$$ Thus, by Expansion and Lemma \ref{L:cutoff-constants}.5, also $$([\mathcal{M}'_1, v_k], \bar{c}_n/\bar{\beta}_n), v_k
\not\models_\mathcal{L} \bigvee^{n}_{i = 1}P^-_{a_i}(c_i).$$ Note that we have, in fact, $w_1 = v_1$, so that also $w_1\mathrel{\prec'_1} v_k$, whence it follows, by $\mathcal{M}'_1, w_1 \models_\mathcal{L} \Upsilon$ and Lemma \ref{L:generated-submodels}.2, that also $\mathcal{M}'_1, v_k \models_\mathcal{L} \Upsilon$. Next, we get, by Lemma \ref{L:generated-submodels}.1, that $[\mathcal{M}'_1, v_k], v_k \models_\mathcal{L} \Upsilon$, and further, by Expansion Property and Lemma \ref{L:cutoff-constants}.4, that $([\mathcal{M}'_1, v_k], \bar{c}_n/\bar{\beta}_n), v_k
\models_\mathcal{L} \Upsilon$.
This means, in particular, that:
$$
([\mathcal{M}'_1, v_k], \bar{c}_n/\bar{\beta}_n), v_k
\models_\mathcal{L} (R^+_{\Gamma,\Delta,\bar{v}_k,\bar{a}_n}(\bar{c}_n) \to
R^-_{\Gamma,\Delta,\bar{v}_k,\bar{a}_n}(\bar{c}_n)) \to \bigvee^n_{i = 1}P^-_{a_i}(c_i),
$$
so that we have:
$$
([\mathcal{M}'_1, v_k], \bar{c}_n/\bar{\beta}_n), v_k
\not\models_\mathcal{L} R^+_{\Gamma,\bar{v}_k,\bar{a}_n}(\bar{c}_n) \to
R^-_{\Delta,\bar{v}_k,\bar{a}_n}(\bar{c}_n),
$$
which means, in turn, that for some $v' \in W'_1$ such that
$v_k\mathrel{\prec'_1}v'$ we have:
$$
([\mathcal{M}'_1, v_k], \bar{c}_n/\bar{\beta}_n), v'
\models_\mathcal{L} (\{R^+_{\Gamma,\Delta,\bar{v}_k,\bar{a}_n}(\bar{c}_n)\}, \{R^-_{\Gamma,\Delta,\bar{v}_k,\bar{a}_n}(\bar{c}_n)\}), 
$$
whence we obtain, successively, that:
\begin{align*}
	[([\mathcal{M}'_1, v_k], \bar{c}_n/\bar{\beta}_n), v'], v'
	&\models_\mathcal{L} (\{R^+_{\Gamma,\Delta,\bar{v}_k,\bar{a}_n}(\bar{c}_n)\}, \{R^-_{\Gamma,\Delta,\bar{v}_k,\bar{a}_n}(\bar{c}_n)\})&&\text{(by Lemma \ref{L:generated-submodels}.1)}\\
	([\mathcal{M}'_1, v'], \bar{c}_n/(\mathbb{H}_1)_{v_kv'}\langle\bar{\beta}_n\rangle), v'
	&\models_\mathcal{L} (\{R^+_{\Gamma,\Delta,\bar{v}_k,\bar{a}_n}(\bar{c}_n)\}, \{R^-_{\Gamma,\Delta,\bar{v}_k,\bar{a}_n}(\bar{c}_n)\})&&\text{(by Lemma \ref{L:cutoff-constants}.2)}
\end{align*}
On the other hand, it follows from $w_1\mathrel{\prec'_1}v_k\mathrel{\prec'_1}v'$ and Lemma \ref{L:generated-submodels}.2, that also $\mathcal{M}'_1, v'
\models_\mathcal{L} \Upsilon$, whence, by Lemma \ref{L:generated-submodels}.1, $[\mathcal{M}'_1, v'], v'
\models_\mathcal{L} \Upsilon$, so that, by Expansion Property and Lemma \ref{L:cutoff-constants}.4, $([\mathcal{M}'_1, v'], \bar{c}_n/(\mathbb{H}_1)_{v_kv'}\langle\bar{\beta}_n\rangle), v'
\models_\mathcal{L} \Upsilon$. The latter means, in particular, that:
\begin{align*}
([\mathcal{M}'_1, v'], \bar{c}_n/(\mathbb{H}_1)_{v_kv'}\langle\bar{\beta}_n\rangle), v'
\models_\mathcal{L} \{\forall\bar{c}_n&(R^+_{\Gamma,\bar{v}_k,\bar{a}_n}(\bar{c}_n) \to
\phi),\\ 
&\forall\bar{c}_n(\psi \to
R^-_{\Delta,\bar{v}_k,\bar{a}_n}(\bar{c}_n))\mid \phi \in \Gamma, \psi \in \Delta\}	
\end{align*}
We get then $([\mathcal{M}'_1, v'], \bar{c}_n/(\mathbb{H}_1)_{v_kv'}\langle\bar{\beta}_n\rangle), v'
\models_\mathcal{L} (\Gamma, \Delta)$. By Expansion Property and Lemma \ref{L:cutoff-constants}.5, this implies that $([\mathcal{M}_1, v'], \bar{c}_n/(\mathbb{H}_1)_{v_kv'}\langle\bar{\beta}_n\rangle), v'
\models_\mathcal{L} (\Gamma, \Delta)$. Note that, by $v_k\mathrel{\prec_1}v'$, we also have both $(\bar{v}_k)^\frown v' \in W^{un(w_1)}_1$, and $\bar{v}_k\mathrel{\prec^{un(w_1)}_1}(\bar{v}_k)^\frown v'$. But then, by Lemma \ref{unravellinglemma-gen}.3, also $([\mathcal{M}^{un(w_1)}_1, (\bar{v}_k)^\frown v'], \bar{c}_n/(\mathbb{H}_1)_{v_kv'}\langle\bar{\beta}_n\rangle\odot((\bar{v}_k)^\frown v')), (\bar{v}_k)^\frown v'
\models_\mathcal{L} (\Gamma, \Delta)$. This further means, by the definition of intuitionistic unravellings, that $$([\mathcal{M}^{un(w_1)}_1, (\bar{v}_k)^\frown v'], \bar{c}_n/(\mathbb{H}_1)^{un(w_1)}_{\bar{v}_k(\bar{v}_k)^\frown v'}\langle\bar{\beta}_n\odot\bar{v}_k\rangle), (\bar{v}_k)^\frown v'
\models_\mathcal{L} (\Gamma, \Delta).$$ From the latter equation, by Corollary \ref{C:congr}, we obtain that:
$$
([(\mathcal{M}^{un(w_1)}_1)_{\approx}, \bar{v}_k^\frown v'], \bar{c}_n/[\langle(\mathbb{H}_1)^{un(w_1)}_{\bar{v}_k(\bar{v}_k)^\frown v'}\langle\bar{\beta}_n\odot\bar{v}_k\rangle\rangle]_{\approx((\bar{v}_k)^\frown v')}), (\bar{v}_k)^\frown v'
\models_\mathcal{L} (\Gamma, \Delta),
$$
whence, by definition of a congruence-based model, we get that:
$$
([(\mathcal{M}^{un(w_1)}_1)_{\approx}, (\bar{v}_k)^\frown v'], \bar{c}_n/((\mathbb{H}_1)^{un(w_1)}_\approx)_{\bar{v}_k(\bar{v}_k)^\frown v'}\langle[\langle\bar{\beta}_n\odot\bar{v}_k\rangle]_{\approx(\bar{v}_k)}\rangle), (\bar{v}_k)^\frown v'
\models_\mathcal{L} (\Gamma, \Delta).
$$
In other words, for $\mathbf{v}: = (\bar{v}_k)^\frown v'\mathrel{\succ^{un(w_1)}_1}\bar{v}_k = h(\bar{u}_r)$ we have shown that:
$$
([(\mathcal{M}^{un(w_1)}_1)_{\approx}, \mathbf{v}], \bar{c}_n/((\mathbb{H}_1)^{un(w_1)}_\approx)_{h(\bar{u}_r)\mathbf{v}}\langle g\langle\bar{a}_n\rangle\rangle), \mathbf{v}
\models_\mathcal{L} (\Gamma, \Delta).
$$
We now use the fact that $g' \supseteq g$, $h' \supseteq h$  are such that $(g', h'): \mathcal{N} \cong (\mathcal{M}_1^{un(w_1)})_\approx$, and choose a $\mathbf{u} \in U$ such that both $\bar{u}_r\mathrel{\lhd} \mathbf{u}$ and $\mathbf{v} = h'(\mathbf{u})$. This allows us to rephrase the latter equation in the following form:
$$
([(\mathcal{M}^{un(w_1)}_1)_{\approx}, h'(\mathbf{u})], \bar{c}_n/((\mathbb{H}_1)^{un(w_1)}_\approx)_{h'(\bar{u}_r)h'(\mathbf{u})}\langle g'\langle\bar{a}_n\rangle\rangle), h'(\mathbf{u})
\models_\mathcal{L} (\Gamma, \Delta).
$$
By condition \eqref{E:ic2a}  of Definition \ref{D:isomorphism}, we get that $((\mathbb{H}_1)^{un(w_1)}_\approx)_{h'(\bar{u}_r)h'(\mathbf{u})}\langle g'\langle\bar{a}_n\rangle\rangle = g'\langle\mathbb{G}_{\bar{u}_r\mathbf{u}}\langle\bar{a}_n\rangle\rangle$, so that we can infer
$$
([(\mathcal{M}^{un(w_1)}_1)_{\approx}, h'(\mathbf{u})], \bar{c}_n/g'\langle\mathbb{G}_{\bar{u}_r\mathbf{u}}\langle\bar{a}_n\rangle\rangle), h'(\mathbf{u})
\models_\mathcal{L} (\Gamma, \Delta).
$$
In other words, we have that $(\Gamma, \Delta) \subseteq Tp_\mathcal{L}((\mathcal{M}^{un(w_1)}_1)_{\approx}, h'(\mathbf{u}), \bar{c}_n/g'\langle\mathbb{G}_{\bar{u}_r\mathbf{u}}\langle\bar{a}_n\rangle\rangle)$. And, since every isomorphism is an elementary embedding by Lemma \ref{L:embedding}.2, we can apply condition \eqref{E:c3} from Definition \ref{D:embedding} and get that $(\Gamma, \Delta) \subseteq Tp_\mathcal{L}(\mathcal{N}, \mathbf{u}, \bar{c}_n/\mathbb{G}_{\bar{u}_r\mathbf{u}}\langle\bar{a}_n\rangle)$. Thus we have shown, given that $\bar{u}_r\mathrel{\lhd} \mathbf{u}$, that $(\Gamma, \Delta)$ is realized in $\mathcal{N}$.

\textit{Case 2}. For some $\bar{u}_r \in W^{un(w)}$ and some $\bar{a}_n \in  A^n_{\bar{u}_r}$ the set $\Xi \subseteq
L(\Theta_{\mathcal{M}^\ast}\cup \{\bar{c}_{n + 1}\})$ is a $\bar{c}_{n + 1}$-existential $\mathcal{L}$-type of
$(\mathcal{M}^\ast, \bar{u}_r, \bar{a}_n)$. Just by definition, we will have
$$([\mathcal{M}^\ast, \bar{u}_r], \bar{c}_n/\bar{a}_n), \bar{u}_r \models_\mathcal{L}
\bigwedge^{n}_{i = 1}P^+_{a_i}(c_i),$$ hence also
$([(\mathcal{M}_1^{un(w_1)})_\approx, h(\bar{u}_r)], \bar{c}_n/g\langle\bar{a}_n\rangle), h(\bar{u}_r)
\models_\mathcal{L}
\bigwedge^{n}_{i = 1}P^+_{a_i}(c_i)$. Moreover, for some $\bar{v}_k \in (W_1)^{un(w_1)}$ and some $\bar{\beta}_n \in (A_1)^n_{v_k}$, we will have both $\bar{v}_k = h(\bar{u}_r)$ and  $g\langle\bar{a}_n\rangle = [\langle\bar{\beta}_n\odot \bar{v}_k\rangle]_{\approx(\bar{v}_k)}$. It follows now from Corollary \ref{C:congr}, that $([\mathcal{M}_1^{un(w_1)}, \bar{v}_k], \bar{c}_n/\bar{\beta}_n\odot \bar{v}_k), \bar{v}_k
\models_\mathcal{L}
\bigwedge^{n}_{i = 1}P^+_{a_i}(c_i)$, whence, further, by Lemma \ref{unravellinglemma-gen}, $([\mathcal{M}_1, v_k], \bar{c}_n/\bar{\beta}_n), v_k
\models_\mathcal{L}
\bigwedge^{n}_{i = 1}P^+_{a_i}(c_i)$. Thus, by Expansion and Lemma \ref{L:cutoff-constants}.5, also $([\mathcal{M}'_1, v_k], \bar{c}_n/\bar{\beta}_n), v_k
\models_\mathcal{L}
\bigwedge^{n}_{i = 1}P^+_{a_i}(c_i)$. Note that we have, in fact, $w_1 = v_1$, so that also $w_1\mathrel{\prec'_1} v_k$, whence it follows, by $\mathcal{M}'_1, w_1 \models_\mathcal{L} \Upsilon$ and Lemma \ref{L:generated-submodels}.2, that also $\mathcal{M}'_1, v_k \models_\mathcal{L} \Upsilon$. Next, we get, by Lemma \ref{L:generated-submodels}.1, that $[\mathcal{M}'_1, v_k], v_k \models_\mathcal{L} \Upsilon$, and further, by Expansion Property and Lemma \ref{L:cutoff-constants}.4, that $([\mathcal{M}'_1, v_k], \bar{c}_n/\bar{\beta}_n), v_k
\models_\mathcal{L} \Upsilon$.
This means, in particular, that:
$$
([\mathcal{M}'_1, v_k], \bar{c}_n/\bar{\beta}_n), v_k
\models_\mathcal{L} \bigwedge^n_{i = 1}P^+_{a_i}(c_i) \to \exists
c_{n+1}R^{ex}_{\Xi,\bar{v}_k,\bar{a}_n}(\bar{c}_{n+1}),
$$
so that we have:
$$
([\mathcal{M}'_1, v_k], \bar{c}_n/\bar{\beta}_n), v_k
\models_\mathcal{L} \exists
c_{n+1}R^{ex}_{\Xi,\bar{v}_k,\bar{a}_n}(\bar{c}_{n+1}),
$$
which means, in turn, by Corollary \ref{C:generated-submodels}.4, that for some $\beta_{n + 1} \in (A_1)_{v_k}$ we have:
$$
([\mathcal{M}'_1, v_k], \bar{c}_{n + 1}/\bar{\beta}_{n + 1}), v_k\models_\mathcal{L} R^{ex}_{\Xi,\bar{v}_k,\bar{a}_n}(\bar{c}_{n+1}).
$$
On the other hand, it follows from $([\mathcal{M}'_1, v_k], \bar{c}_n/\bar{\beta}_n), v_k
\models_\mathcal{L} \Upsilon$, Lemma \ref{L:cutoff-constants}.3, and Lemma \ref{L:cutoff-constants}.4, that also:
\begin{align*}
	([\mathcal{M}'_1, v_k], \bar{c}_{n + 1}/\bar{\beta}_{n + 1}), v_k
	\models_\mathcal{L} \{\forall\bar{c}_n(R^{ex}_{\Xi,\bar{v}_k,\bar{a}_n}(\bar{c}_n) \to
	\phi)\mid \phi \in \Xi\}	
\end{align*}
We get then that $([\mathcal{M}'_1, v_k], \bar{c}_{n + 1}/\bar{\beta}_{n + 1}), v_k
\models_\mathcal{L} \Xi$. By Expansion Property and Lemma \ref{L:cutoff-constants}.5, this implies that $([\mathcal{M}_1, v_k], \bar{c}_{n + 1}/\bar{\beta}_{n + 1}), v_k
\models_\mathcal{L} \Xi$. Note that, by $\beta_{n + 1} \in (A_1)_{v_k}$, we also have $(\beta_{n + 1}; \bar{v}_k) \in (A^{un(w_1)}_1)_{\bar{v}_k}$. But then, by Lemma \ref{unravellinglemma-gen}.3, also $$([\mathcal{M}^{un(w_1)}_1, \bar{v}_k], \bar{c}_{n + 1}/\bar{\beta}_{n + 1}\odot\bar{v}_k), \bar{v}_k
\models_\mathcal{L} \Xi.$$ From the latter equation, by Corollary \ref{C:congr}, we obtain that:
$$
([(\mathcal{M}^{un(w_1)}_1)_\approx, \bar{v}_k], \bar{c}_{n + 1}/[\langle\bar{\beta}_{n + 1}\odot\bar{v}_k\rangle]_{\approx(\bar{v}_k)}), \bar{v}_k
\models_\mathcal{L} \Xi.
$$
In other words, we have shown that:
$$
([(\mathcal{M}^{un(w_1)}_1)_\approx, h(\bar{u}_r)], \bar{c}_{n + 1}/g\langle\bar{a}_n\rangle^\frown(\beta_{n + 1}; \bar{v}_k)), h(\bar{u}_r)
\models_\mathcal{L} \Xi.
$$
We now use the fact that $g' \supseteq g$, $h' \supseteq h$  are such that $(g', h'): \mathcal{N} \cong (\mathcal{M}_1^{un(w_1)})_\approx$, and choose an $a_{n + 1} \in B_{\bar{u}_r}$ such that $(\beta_{n + 1}; \bar{v}_k)) = g'(a_{n + 1})$. This allows us to rephrase the latter equation in the following form:
$$
([(\mathcal{M}^{un(w_1)}_1)_\approx, h'(\bar{u}_r)], \bar{c}_{n + 1}/g'\langle\bar{a}_{n + 1}\rangle), h'(\bar{u}_r)
\models_\mathcal{L} \Xi.
$$
In other words, we have that $\Xi \subseteq Tp_\mathcal{L}((\mathcal{M}^{un(w_1)}_1)_{\approx}, h'(\bar{u}_r), \bar{c}_{n + 1}/g'\langle\bar{a}_{n + 1}\rangle)$. And, since every isomorphism is an elementary embedding by Lemma \ref{L:embedding}.2, we can apply condition \eqref{E:c3} from Definition \ref{D:embedding} and get that $\Xi \subseteq Tp_\mathcal{L}(\mathcal{N}, \bar{u}_r, \bar{c}_{n + 1}/\bar{a}_{n + 1})$. Thus we have shown, given that $a_{n + 1} \in B_{\bar{u}_r}$, that $\Xi$ is realized in $\mathcal{N}$.

\textit{Case 3}. For some $\bar{u}_r \in W^{un(w)}$ and some $\bar{a}_n \in  A^n_{\bar{u}_r}$ the set $\Xi \subseteq
L(\Theta_{\mathcal{M}^\ast}\cup \{\bar{c}_{n + 1}\})$ is a $\bar{c}_{n + 1}$-universal $\mathcal{L}$-type of
$(\mathcal{M}^\ast, \bar{u}_r, \bar{a}_n)$. Just by definition, we will have
$$([\mathcal{M}^\ast, \bar{u}_r], \bar{c}_n/\bar{a}_n), \bar{u}_r \not\models_\mathcal{L}
\bigvee^{n}_{i = 1}P^-_{a_i}(c_i),$$ hence also
$$([(\mathcal{M}_1^{un(w_1)})_\approx, h(\bar{u}_r)], \bar{c}_n/g\langle\bar{a}_n\rangle), h(\bar{u}_r)
\not\models_\mathcal{L} \bigvee^{n}_{i = 1}P^-_{a_i}(c_i).$$ Moreover, for some $\bar{v}_k \in (W_1)^{un(w_1)}$ and some $\bar{\beta}_n \in (A_1)^n_{v_k}$, we will have both $\bar{v}_k = h(\bar{u}_r)$ and  $g\langle\bar{a}_n\rangle = [\langle\bar{\beta}_n\odot \bar{v}_k\rangle]_{\approx(\bar{v}_k)}$. It follows now from Corollary \ref{C:congr}, that $([\mathcal{M}_1^{un(w_1)}, \bar{v}_k], \bar{c}_n/\bar{\beta}_n\odot \bar{v}_k), \bar{v}_k
\not\models_\mathcal{L} \bigvee^{n}_{i = 1}P^-_{a_i}(c_i)$, whence, further, by Lemma \ref{unravellinglemma-gen}, $$([\mathcal{M}_1, v_k], \bar{c}_n/\bar{\beta}_n), v_k
\not\models_\mathcal{L} \bigvee^{n}_{i = 1}P^-_{a_i}(c_i).$$ Thus, by Expansion and Lemma \ref{L:cutoff-constants}.5, also $([\mathcal{M}'_1, v_k], \bar{c}_n/\bar{\beta}_n), v_k
\not\models_\mathcal{L} \bigvee^{n}_{i = 1}P^-_{a_i}(c_i)$. Note that we have, in fact, $w_1 = v_1$, so that also $w_1\mathrel{\prec'_1} v_k$, whence it follows, by $\mathcal{M}'_1, w_1 \models_\mathcal{L} \Upsilon$ and Lemma \ref{L:generated-submodels}.2, that also $\mathcal{M}'_1, v_k \models_\mathcal{L} \Upsilon$. Next, we get that, by Lemma \ref{L:generated-submodels}.1, $[\mathcal{M}'_1, v_k], v_k \models_\mathcal{L} \Upsilon$, and further, by Expansion Property and Lemma \ref{L:cutoff-constants}.4, that $$([\mathcal{M}'_1, v_k], \bar{c}_n/\bar{\beta}_n), v_k
\models_\mathcal{L} \Upsilon.$$
This means, in particular, that:
$$
([\mathcal{M}'_1, v_k], \bar{c}_n/\bar{\beta}_n), v_k
\models_\mathcal{L} \forall c_{n+1}R^{all}_{\Xi,\bar{v}_k,\bar{a}_n}(\bar{c}_{n+1}) \to \bigvee^n_{i = 1}P^-_{a_i}(c_i),
$$
so that we have:
$$
([\mathcal{M}'_1, v_k], \bar{c}_n/\bar{\beta}_n), v_k
\not\models_\mathcal{L} \forall c_{n+1}R^{all}_{\Xi,\bar{v}_k,\bar{a}_n}(\bar{c}_{n+1}),
$$
which means, in turn, that for some $v' \in W'_1$ such that
$v_k\mathrel{\prec'_1}v'$ and for some $\beta_{n + 1} \in (A_1)_{v'}$ we have, by Corollary \ref{C:generated-submodels}.4, that:
$$
	([\mathcal{M}'_1, v'], \bar{c}_{n + 1}/(\mathbb{H}_1)_{v_kv'}\langle\bar{\beta}_{n}\rangle^\frown\beta_{n+1}), v' 
	\not\models_\mathcal{L} R^{all}_{\Xi,\bar{v}_k,\bar{a}_n}(\bar{c}_{n+1}).
$$
On the other hand, it follows from $w_1\mathrel{\prec'_1}v_k\mathrel{\prec'_1}v'$ and Lemma \ref{L:generated-submodels}.2, that also $\mathcal{M}'_1, v'
\models_\mathcal{L} \Upsilon$, whence, by Lemma \ref{L:generated-submodels}.1, $[\mathcal{M}'_1, v'], v'
\models_\mathcal{L} \Upsilon$, so that, by Expansion Property and Lemma \ref{L:cutoff-constants}.4, $([\mathcal{M}'_1, v'], \bar{c}_{n + 1}/(\mathbb{H}_1)_{v_kv'}\langle\bar{\beta}_{n}\rangle^\frown\beta_{n+1}), v'
\models_\mathcal{L} \Upsilon$. The latter means, in particular, that:
\begin{align*}
([\mathcal{M}'_1, v'], \bar{c}_{n + 1}/(\mathbb{H}_1)_{v_kv'}\langle\bar{\beta}_{n}\rangle^\frown\beta_{n+1}), v'
	\models_\mathcal{L} \{\forall\bar{c}_{n+1}(\psi \to
	R^{all}_{\Xi,\bar{v}_k,\bar{a}_n}(\bar{c}_{n+1}))\mid \psi \in \Xi\}	
\end{align*}
We get then $([\mathcal{M}'_1, v'], \bar{c}_{n + 1}/(\mathbb{H}_1)_{v_kv'}\langle\bar{\beta}_{n}\rangle^\frown\beta_{n+1}), v'
\models_\mathcal{L} (\emptyset, \Xi)$. By Expansion Property, this implies that $([\mathcal{M}_1, v'], \bar{c}_{n + 1}/(\mathbb{H}_1)_{v_kv'}\langle\bar{\beta}_{n}\rangle^\frown\beta_{n+1}), v'
\models_\mathcal{L} (\emptyset, \Xi)$. Note that, by $v_k\mathrel{\prec_1}v'$, we also have both $(\bar{v}_k)^\frown v' \in W^{un(w_1)}_1$, and $\bar{v}_k\mathrel{\prec^{un(w_1)}_1}(\bar{v}_k)^\frown v'$; moreover, we have $(\beta_{n + 1}; (\bar{v}_k)^\frown v') \in (A^{un(w_1)}_1)_{(\bar{v}_k)^\frown v'}$. But then, by Lemma \ref{unravellinglemma-gen}.3, also $$([\mathcal{M}^{un(w_1)}_1, (\bar{v}_k)^\frown v'], \bar{c}_{n + 1}/((\mathbb{H}_1)_{v_kv'}\langle\bar{\beta}_{n}\rangle^\frown\beta_{n+1})\odot (\bar{v}_k)^\frown v'), (\bar{v}_k)^\frown v'
\models_\mathcal{L} (\emptyset, \Xi).$$ This further means, by the definition of intuitionistic unravellings, that 
$$
([\mathcal{M}^{un(w_1)}_1, (\bar{v}_k)^\frown v'], \bar{c}_{n + 1}/(\mathbb{H}_1)^{un(w_1)}_{\bar{v}_k(\bar{v}_k)^\frown v'}\langle\bar{\beta}_n\odot\bar{v}_k\rangle^\frown(\beta_{n + 1}; (\bar{v}_k)^\frown v')), (\bar{v}_k)^\frown v'
\models_\mathcal{L} (\emptyset, \Xi).
$$

From the latter equation (having observed that  $[(\beta_{n + 1}; (\bar{v}_k)^\frown v')]_{\approx((\bar{v}_k)^\frown v')} \in ((A^{un(w_1)}_1)_\approx)_{(\bar{v}_k)^\frown v'}$), by Corollary \ref{C:congr}, we obtain that:
$$
([(\mathcal{M}^{un(w_1)}_1)_{\approx}, (\bar{v}_k)^\frown v'], \bar{c}_{n + 1}/([\langle(\mathbb{H}_1)^{un(w_1)}_{\bar{v}_k(\bar{v}_k)^\frown v'}\langle\bar{\beta}_n\odot\bar{v}_k\rangle\rangle]_{\approx((\bar{v}_k)^\frown v')})^\frown[(\beta_{n + 1}; (\bar{v}_k)^\frown v')]_{\approx((\bar{v}_k)^\frown v')}), (\bar{v}_k)^\frown v'
\models_\mathcal{L}(\emptyset, \Xi),
$$
whence, by definition of a congruence-based model, we get that:
$$
([(\mathcal{M}^{un(w_1)}_1)_{\approx}, (\bar{v}_k)^\frown v'], \bar{c}_{n + 1}/((\mathbb{H}_1)^{un(w_1)}_\approx)_{\bar{v}_k(\bar{v}_k)^\frown v'}\langle[\langle\bar{\beta}_n\odot\bar{v}_k\rangle]_{\approx(\bar{v}_k)}\rangle^\frown[(\beta_{n + 1}; (\bar{v}_k)^\frown v')]_{\approx((\bar{v}_k)^\frown v')}), (\bar{v}_k)^\frown v'
\models_\mathcal{L} (\emptyset, \Xi).
$$
In other words, for $\mathbf{v}: = (\bar{v}_k)^\frown v'\mathrel{\succ^{un(w_1)}_1}\bar{v}_k = h(\bar{u}_r)$ we have shown that:
$$
([(\mathcal{M}^{un(w_1)}_1)_{\approx}, \mathbf{v}], \bar{c}_{n + 1}/((\mathbb{H}_1)^{un(w_1)}_\approx)_{h(\bar{u}_r)\mathbf{v}}\langle g\langle\bar{a}_n\rangle\rangle^\frown[(\beta_{n + 1}; (\bar{v}_k)^\frown v')]_{\approx((\bar{v}_k)^\frown v')}), \mathbf{v}
\models_\mathcal{L}  (\emptyset, \Xi).
$$
We now use the fact that $g' \supseteq g$, $h' \supseteq h$  are such that $(g', h'): \mathcal{N} \cong (\mathcal{M}_1^{un(w_1)})_\approx$, and choose a $\mathbf{u} \in U$ such that both $\bar{u}_r\mathrel{\lhd} \mathbf{u}$ and $\mathbf{v} = h'(\mathbf{u})$ and an $a_{n + 1} \in B_{\mathbf{u}}$ such that $[(\beta_{n + 1}, (\bar{v}_k)^\frown v')]_{\approx((\bar{v}_k)^\frown v')} = g'(a_{n + 1})$. This allows us to rephrase the latter equation in the following form:
$$
([(\mathcal{M}^{un(w_1)}_1)_{\approx}, h'(\mathbf{u})], \bar{c}_{n + 1}/((\mathbb{H}_1)^{un(w_1)}_\approx)_{h'(\bar{u}_r)h'(\mathbf{u})}\langle g'\langle\bar{a}_n\rangle\rangle^\frown g'(a_{n + 1})), h'(\mathbf{u})
\models_\mathcal{L}  (\emptyset, \Xi).
$$
By condition \eqref{E:ic2a}  of Definition \ref{D:isomorphism}, we get that $((\mathbb{H}_1)^{un(w_1)}_\approx)_{h'(\bar{u}_r)h'(\mathbf{u})}\langle g'\langle\bar{a}_n\rangle\rangle = g'\langle\mathbb{G}_{\bar{u}_r\mathbf{u}}\langle\bar{a}_n\rangle\rangle$, so that we can infer
$$
([(\mathcal{M}^{un(w_1)}_1)_{\approx}, h'(\mathbf{u})], \bar{c}_{n + 1}/g'\langle\mathbb{G}_{\bar{u}_r\mathbf{u}}\langle\bar{a}_n\rangle\rangle^\frown a_{n + 1}\rangle), h'(\mathbf{u})
\models_\mathcal{L} (\emptyset, \Xi).
$$
In other words, we have that $(\emptyset, \Xi) \subseteq Tp_\mathcal{L}((\mathcal{M}^{un(w_1)}_1)_{\approx}, h'(\mathbf{u}), \bar{c}_n/g'\langle\mathbb{G}_{\bar{u}_r\mathbf{u}}\langle\bar{a}_n\rangle\rangle^\frown a_{n + 1}\rangle)$. And, since every isomorphism is an elementary embedding by Lemma \ref{L:embedding}.2, we can apply condition \eqref{E:c3} from Definition \ref{D:embedding} and get that $(\emptyset, \Xi) \subseteq Tp_\mathcal{L}(\mathcal{N}, \mathbf{u}, \bar{c}_n/\mathbb{G}_{\bar{u}_r\mathbf{u}}\langle\bar{a}_n\rangle^\frown a_{n + 1})$. Thus we have shown, given that both $\bar{u}_r\mathrel{\lhd} \mathbf{u}$ and $a_{n + 1} \in B_{\mathbf{u}}$, that $\Xi$ is realized in $\mathcal{N}$.

Our Claim is thus proven.

It remains to notice that, by Lemma \ref{L:type-realization}.2, $\mathcal{N}':= \mathcal{N}\upharpoonright\Theta$ must realize every $\mathcal{L}$-type of $\mathcal{M} = \mathcal{M}^\ast\upharpoonright\Theta$, and that, by $(g', h'):\mathcal{N} \cong ((\mathcal{M}_1^{un(w_1)})_\approx, w_1)$, $\mathcal{N}'$ is a $\Theta$-reduct of an unravelled model and hence an unravelled $\Theta$-model itself. Finally, note that $(\mathcal{N}, w) \in Str_\mathcal{L}(\Theta_{\mathcal{M}})$ also implies that $(\mathcal{N}', w) \in Str_\mathcal{L}(\Theta)$.
\end{proof}

\begin{corollary}\label{C:saturation}
Let $\mathcal{L}\sqsupseteq \mathcal{L}' \in StIL$ be an abstract intuitionistic logic which is
preserved under $\mathcal{L}'$-asimulations, $\star$-compact, and has TUP, and let $(\mathcal{M}, w)\in Str_\mathcal{L}(\Theta)$. Then there exists a $(\mathcal{N}, w) \in Str_\mathcal{L}(\Theta)$ such that $Th_\mathcal{L}(\mathcal{M}, w) = Th_\mathcal{L}(\mathcal{N}, w)$ and $\mathcal{N}$ is $\mathcal{L}$-saturated.	
\end{corollary}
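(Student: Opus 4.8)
The plan is to obtain the $\mathcal{L}$-saturated model as the union of a countable $\mathcal{L}$-elementary chain, using Proposition \ref{L:saturation} at each step as the engine that realizes all currently pending types, and then invoking the Tarski Union Property to glue everything together. The one subtlety is that Proposition \ref{L:saturation} is stated for unravelled models, while the given $(\mathcal{M},w)$ need not be unravelled; so the first move will be to replace $(\mathcal{M},w)$ by $(\mathcal{M}^{un(w)},w)$, which by Lemma \ref{unravellinglemma-gen} (parts 1 and 3) is an admissible model with $Th_\mathcal{L}(\mathcal{M}^{un(w)},w) = Th_\mathcal{L}(\mathcal{M},w)$ — here one uses that asimulation-invariance of $\mathcal{L}$ together with $\mathcal{L}\sqsupseteq \mathcal{L}'$ gives the analogue of Lemma \ref{unravellinglemma} for $\mathcal{L}$, as already remarked after Lemma \ref{L:asimulation-l-types}. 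Note also that $Th_\mathcal{L}(\mathcal{M}^{un(w)},w) = Th_\mathcal{L}(\mathcal{M},w)$ is exactly $(\mathcal{M}^{un(w)},w)\preccurlyeq_\mathcal{L}(\mathcal{M},w)$-compatible information at the root, but since we only need the theory at $w$ (an $\emptyset$-ary type) this is all we require for the final conclusion.

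Concretely, I would build a chain $\mathcal{N}_0 \preccurlyeq_\mathcal{L} \mathcal{N}_1 \preccurlyeq_\mathcal{L} \cdots$ of unravelled $\Theta$-models, all sharing the root $w$, by setting $\mathcal{N}_0 := \mathcal{M}^{un(w)}$ and, given $\mathcal{N}_k$, letting $\mathcal{N}_{k+1}$ be the model supplied by Proposition \ref{L:saturation} applied to $(\mathcal{N}_k, w)$: thus $\mathcal{N}_k \preccurlyeq_\mathcal{L}\mathcal{N}_{k+1}$, $\mathcal{N}_{k+1}$ is again unravelled and $\mathcal{L}$-admissible, and every $\mathcal{L}$-type of $\mathcal{N}_k$ is realized in $\mathcal{N}_{k+1}$. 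Put $\mathcal{N} := \bigcup_{k\in\omega}\mathcal{N}_k$. By the closure of $Str_\mathcal{L}$ under unions of countable model chains, $(\mathcal{N},w)\in Str_\mathcal{L}(\Theta)$; by TUP, $\mathcal{N}_k \preccurlyeq_\mathcal{L}\mathcal{N}$ for every $k$, and in particular $\mathcal{N}_0 \preccurlyeq_\mathcal{L}\mathcal{N}$, so $Th_\mathcal{L}(\mathcal{N},w) = Th_\mathcal{L}(\mathcal{N}_0,w) = Th_\mathcal{L}(\mathcal{M},w)$ as required.

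It remains to check that $\mathcal{N}$ is $\mathcal{L}$-saturated, i.e.\ that it realizes all its own $\mathcal{L}$-types. Let $(\Gamma,\Delta)$ (resp.\ $\Xi$) be an $\mathcal{L}$-type of $(\mathcal{N},v,\bar a_n)$ for some $v\in U$ and $\bar a_n\in A^n_v$. By the finite-character formulation of the type conditions (Corollary \ref{C:types-formulas} together with Corollary \ref{C:generated-submodels}) each instance witnessing membership in the type involves only finitely many sentences of $L$, hence only finitely many of the parameters from $\bar a_n$ and only finitely many worlds; since $U = \bigcup_k (W_k)^{un}$ and each finite piece of the relevant data already lives in some $\mathcal{N}_k$, and since $\mathcal{N}_k\preccurlyeq_\mathcal{L}\mathcal{N}$ pins down the types of tuples from $\mathcal{N}_k$, one shows that $(\Gamma,\Delta)$ (resp.\ $\Xi$) is already an $\mathcal{L}$-type of $\mathcal{N}_k$ for some $k$ — this is the step requiring care, because one must verify the $\forall\Subset\,\exists$ quantifier pattern in Definition \ref{D:types} survives passage to the submodel, which it does precisely because $\preccurlyeq_\mathcal{L}$ preserves complete types of parameter-tuples and every finite approximant of the type is witnessed inside $\mathcal{N}_k$. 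Then $\mathcal{N}_{k+1}$ realizes it by construction, and by Lemma \ref{L:type-realization}.1 (applied along $\mathcal{N}_k\preccurlyeq_\mathcal{L}\mathcal{N}_{k+1}\preccurlyeq_\mathcal{L}\mathcal{N}$) it is realized in $\mathcal{N}$. Hence $\mathcal{N}$ is $\mathcal{L}$-saturated. The main obstacle, as indicated, is the bookkeeping in this last paragraph: making precise that an $\mathcal{L}$-type of the union ``comes from'' some finite stage, which hinges on the finitary nature of the defining conditions and on the elementarity of the chain.
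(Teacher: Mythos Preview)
Your proposal is correct and follows essentially the same route as the paper: pass to the unravelling, build the $\omega$-chain via repeated applications of Proposition \ref{L:saturation}, take the union, invoke TUP, and then argue that any $\mathcal{L}$-type of the union already is an $\mathcal{L}$-type of some finite stage and hence is realized one step higher and therefore in the union by Lemma \ref{L:type-realization}.1.

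One small wording issue in your saturation paragraph: the reason a type of $(\mathcal{N},v,\bar a_n)$ descends to some $\mathcal{N}_k$ is not that ``only finitely many of the parameters from $\bar a_n$'' or ``finitely many worlds'' are involved --- the tuple $\bar a_n$ is already finite and the type is anchored at the single node $v$. The point is simply that $v$ and $\bar a_n$ live in some $\mathcal{N}_k$ (since $\mathcal{N}=\bigcup_k\mathcal{N}_k$), and then Corollary \ref{C:types-formulas} rewrites the type condition entirely in terms of $Tp_\mathcal{L}(\mathcal{N},v,\bar c_n/\bar a_n)$, which equals $Tp_\mathcal{L}(\mathcal{N}_k,v,\bar c_n/\bar a_n)$ by $\mathcal{N}_k\preccurlyeq_\mathcal{L}\mathcal{N}$. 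This is exactly how the paper argues, splitting into the three cases (successor, existential, universal) and using the respective parts of Corollary \ref{C:types-formulas}.
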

\begin{proof}
	It follows from the preservation of $\mathcal{L}$ under $\mathcal{L}'$-asimulations and Lemma \ref{unravellinglemma-gen}.3 that $Th_\mathcal{L}(\mathcal{M}, w) = Th_\mathcal{L}(\mathcal{M}^{un(w)}, w)$. Next, aplying Proposition \ref{L:saturation} ($\omega$ times), we form an $\mathcal{L}$-elementary chain of submodels
	$$
	\mathcal{M}^{un(w)} = \mathcal{N}_1 \preccurlyeq_\mathcal{L}\ldots \preccurlyeq_\mathcal{L} \mathcal{N}_n \preccurlyeq_\mathcal{L}\ldots
	$$
	such that for every $i > 0$, we have $(\mathcal{N}_i, w) \in Str_\mathcal{L}(\Theta)$ and $\mathcal{N}_{i + 1}$ realizes every $\mathcal{L}$-type of $\mathcal{N}_{i}$. We then set $\mathcal{N} := \bigcup_{i > 0}\mathcal{N}_{i}$ so that $(\mathcal{N}, w) \in Str_\mathcal{L}(\Theta)$. Since $\mathcal{L}$ has TUP, we have then $\mathcal{N}_{i} \preccurlyeq_\mathcal{L} \mathcal{N}$ for every $i > 0$. In particular, we have $\mathcal{M}^{un(w)} = \mathcal{N}_1 \preccurlyeq_\mathcal{L} \mathcal{N}$, so that  $Th_\mathcal{L}(\mathcal{M}, w) = Th_\mathcal{L}(\mathcal{N}, w)$ follows. It remains to show the $\mathcal{L}$-saturation of $\mathcal{N}$, that is to say, that $\mathcal{N}$ realizes all of its $\mathcal{L}$-types. Since we have three sorts of types in $\mathcal{L}$, we have to consider three possible cases, having fixed a set  $\{c_i \mid i > 0\}$ of pairwise distinct fresh individual constants.
	
	\textit{Case 1}. For some $v \in U$ and some $\bar{a}_n \in B^n_{v}$
	the sets $\Gamma, \Delta \subseteq
	L(\Theta\cup \{\bar{c}_n\})$ are such that
	$(\Gamma, \Delta)$ is a $\bar{c}_n$-successor $\mathcal{L}$-type of
	$(\mathcal{N}, v, \bar{a}_n)$. Then, by Corollary \ref{C:types-formulas}.1, for all $\Gamma'\Subset \Gamma$ and $\Delta' \Subset \Delta$, we have $\bigwedge\Gamma'\to \bigvee\Delta' \in Tp_{\mathcal{L}}(\mathcal{N}, v, \bar{c}_n/\bar{a}_n)$. But then, we can choose a $j > 0$ such that $\mathcal{N}_{j}$ is the first model in the chain for which we have both $v \in U_j$ and $\bar{a}_n \in (B_j)^n_{v}$. Since we also have $\mathcal{N}_{j} \preccurlyeq_\mathcal{L} \mathcal{N}$, it follows that $Tp_{\mathcal{L}}(\mathcal{N}_j, v, \bar{c}_n/\bar{a}_n) = Tp_{\mathcal{L}}(\mathcal{N}, v, \bar{c}_n/\bar{a}_n)$ and thus, for all $\Gamma'\Subset \Gamma$ and $\Delta' \Subset \Delta$, we have $\bigwedge\Gamma'\to \bigvee\Delta' \in Tp_{\mathcal{L}}(\mathcal{N}_j, v, \bar{c}_n/\bar{a}_n)$. But then, by Corollary \ref{C:types-formulas}.1, 	$(\Gamma, \Delta)$ must be a $\bar{c}_n$-successor $\mathcal{L}$-type of
	$(\mathcal{N}_j, v, \bar{a}_n)$, and, therefore, $(\Gamma, \Delta)$ must be realized in $\mathcal{N}_{j + 1}$. But, since we also have $\mathcal{N}_{j + 1}\preccurlyeq_\mathcal{L} \mathcal{N}$, it follows from Lemma \ref{L:type-realization}.1, that $(\Gamma, \Delta)$ also must be realized in $\mathcal{N}$ itself.
	
	\textit{Case 2} and \textit{Case 3}, where we assume that our $\mathcal{L}$-type is either an existential or a universal $\mathcal{L}$-type, respectively, are solved in the same manner. The only difference from Case 1 is that we need to apply Corollary \ref{C:types-formulas}.2 and Corollary \ref{C:types-formulas}.3, respectively, in place of Corollary \ref{C:types-formulas}.1. 	 
\end{proof}

We are now in a position to prove Theorem \ref{L:main}. Indeed,
assume the hypothesis of the theorem, and assume, for
contradiction, that $\mathcal{L}\not\bowtie\mathcal{L}'$. By
Proposition \ref{L:proposition1}, there must be a $\phi \in
L(\Theta_\phi)$ and
$(\mathcal{M}_1, w_1)$, $(\mathcal{M}_2, w_2) \in Str_\mathcal{L}(\Theta_\phi)$ such that
$Th^+_{\mathcal{L}'}(\mathcal{M}_1, w_1) \subseteq Th^+_{\mathcal{L}'}(\mathcal{M}_2,
w_2)$ while $\mathcal{M}_1, w_1 \models_\mathcal{L} \phi$ and
$\mathcal{M}_2, w_2 \not\models_\mathcal{L} \phi$. By Corollary
\ref{C:saturation}, take $\mathcal{L}$-saturated
$\Theta_\phi$-models $\mathcal{N}_1$ and $\mathcal{N}_2$ such that
$(\mathcal{N}_i, w_i) \in Str_\mathcal{L}(\Theta_\phi)$, and $Th_\mathcal{L}(\mathcal{M}_i, w_i) =
Th_\mathcal{L}(\mathcal{N}_i, w_i)$ for $i \in \{ 1, 2 \}$. We
will have then, of course, that $\mathcal{N}_1, w_1
\models_\mathcal{L} \phi$, but $\mathcal{N}_2, w_2
\not\models_\mathcal{L} \phi$. On the other hand, we will still
have
\[
Th^+_{\mathcal{L}'}(\mathcal{N}_1, w_1) \subseteq
Th^+_{\mathcal{L}'}(\mathcal{N}_2, w_2),
\]
 whence, by Corollary
\ref{L:asimulationscorollary}, there must be an $\mathcal{L}'$-asimulation $A$
from $(\mathcal{N}_1, w_1, \Lambda)$ to $(\mathcal{N}_2, w_2, \Lambda)$, but then,
since $\mathcal{L}$ is preserved under $\mathcal{L}'$-asimulations, we must also
have \[
Th^+_\mathcal{L}(\mathcal{N}_1, w_1) \subseteq
Th^+_\mathcal{L}(\mathcal{N}_2, w_2).
\]
 Now, since $\phi \in
Th^+_\mathcal{L}(\mathcal{N}_1, w_1)$, we can see that we must also have $\phi \in
Th^+_\mathcal{L}(\mathcal{N}_2, w_2)$, so that $\mathcal{N}_2, w_2
\models_\mathcal{L} \phi$, which is a contradiction.

\section{Conclusion}\label{S:conclusion}

Our central result (Theorem \ref{L:main} from Section \ref{S:main}) established that no standard first-order intuitionistic logic has
proper extensions with TUP, compactness and preservation under asimulations. Hence, any such extension would have to lack one of these properties (e.g. infinitary extensions fail to have compactness and extensions obtained by adding Boolean negation fail to be preserved under asimulations).

The most important open question in this paper is whether there are other nice characterizations of intuitionistic first-order logic. That is, can we hope, for example, to replace the Tarski Union Property by some other natural condition? In the case of modal logic, TUP can be replaced by the so called ``relativization property'' \cite{vanB}. It would be interesting to see if some similar idea works in our present setting. However, such a difficult task goes beyond the scope of the present (already sufficiently long) contribution.
\section*{Acknowledgments}

Grigory Olkhovikov is supported by Deutsche
Forschungsgemeinschaft (DFG), project  OL 589/1-1.

\end{document}